\newtheorem{theorem}{Theorem}[section]
\newtheorem{definition}[theorem]{Definition}
\newtheorem*{definition*}{Definition}
\newtheorem*{theorem*}{Theorem}
\newtheorem{lemma}[theorem]{Lemma}
\newtheorem*{lemma*}{Lemma}
\newtheorem{corollary}[theorem]{Corollary}
\newtheorem{example}[theorem]{Example}
\newtheorem{remark}[theorem]{Remark}
\newtheorem*{remark*}{Remark}
\newtheorem{proposition}[theorem]{Proposition}
\newtheorem{alphtheorem}{Theorem}
\newcommand{\sbullet}{\,\begin{picture}(1,1)(-0.5,-2)\circle*{2}\end{picture}\,}
\newcommand{\wsc}{\overset{*}{\rightharpoonup}}
\newcommand{\ip}[2]{\left<#1,#2\right>}
\newcommand{\restrict}{	\begin{picture}(10,8)\put(2,0){\line(0,1){7}}\put(1.8,
0){\line(1,0){7}}\end{picture}}
\newcommand{\restrictsmall}{\begin{picture}(7,6)\put(1,0){\line(0,1){6}}\put(0.8
,0){\line(1,0){6}}\end{picture}}
\newcommand{\bnu}{\boldsymbol\nu}
\newcommand{\bsigma}{\boldsymbol\sigma}
\newcommand{\bdelta}{\boldsymbol\delta}
\newcommand{\btau}{\boldsymbol\tau}
\newcommand{\pd}{\partial}
\newcommand{\dd}{\mathrm{d}}
\newcommand{\mbR}{\mathbb{R}}
\newcommand{\mbN}{\mathbb{N}}
\newcommand{\mbfM}{\mathbf{M}}
\newcommand{\mbfE}{\mathbf{E}}
\newcommand{\mcF}{\mathcal{F}}
\newcommand{\mbfR}{\mathbf{R}}
\newcommand{\mbfY}{\mathbf{Y}}
\newcommand{\Lp}{\mathrm{L}}
\newcommand{\W}{\mathrm{W}}
\newcommand{\C}{\mathrm{C}} 
\newcommand{\bB}{\mathbb{B}}
\newcommand{\Tan}{\mathrm{Tan}}
\newcommand{\lL}{\mathcal{L}^d}
\newcommand{\Lift}{\mathbf{L}}
\newcommand{\ALift}{\mathbf{AL}}
\newcommand{\YLift}{\mathbf{LY}}	
\newcommand{\AYLift}{\mathbf{ALY}}
\newcommand{\Lrm}{\mathrm{L}}
\newcommand{\Acal}{\mathcal{A}}
\newcommand{\Bcal}{\mathcal{B}}
\newcommand{\Ccal}{\mathcal{C}}
\newcommand{\Dcal}{\mathcal{D}}
\newcommand{\Ecal}{\mathcal{E}}
\newcommand{\Fcal}{\mathcal{F}}
\newcommand{\Gcal}{\mathcal{G}}
\newcommand{\Hcal}{\mathcal{H}}
\newcommand{\Jcal}{\mathcal{J}}
\newcommand{\Lcal}{\mathcal{L}}
\newcommand{\Ncal}{\mathcal{N}}
\newcommand{\Ocal}{\mathcal{O}}
\newcommand{\Kfrak}{\mathfrak{K}}
\newcommand{\Gbf}{\mathbf{G}}
\newcommand{\Hbf}{\mathbf{H}}
\newcommand{\Mbf}{\mathbf{M}}
\newcommand{\Rbf}{\mathbf{R}}
\DeclareMathOperator{\id}{id}
\DeclareMathOperator{\diverg}{div}
\DeclareMathOperator{\dist}{dist}
\DeclareMathOperator{\supp}{supp}
\DeclareMathOperator{\gr}{gr}
\newcommand{\setBB}[2]{\biggl\{\, #1 \ \ \textup{\textbf{:}}\ \ #2 \,\biggr\}}
\newcommand{\norm}[1]{\|#1\|}
\newcommand{\altnorm}[1]{{\left\vert\kern-0.25ex\left\vert\kern-0.25ex\left\vert #1 \right\vert\kern-0.25ex\right\vert\kern-0.25ex\right\vert}}
\newcommand{\ddpr}[1]{\langle\!\langle #1 \rangle\!\rangle}
\newcommand{\ddprb}[1]{\bigl\langle\hspace{-2.5pt}\bigl\langle #1 \bigr\rangle\hspace{-2.5pt}\bigr\rangle}
\newcommand{\ddprB}[1]{\Bigl\langle\!\!\Bigl\langle #1 \Bigr\rangle\!\!\Bigr\rangle}
\newcommand{\N}{\mathbb{N}}
\newcommand{\R}{\mathbb{R}}
\newcommand{\ONE}{\mathbbm{1}}
\newcommand{\todown}{\downarrow}
\newcommand{\frarg}{\,\sbullet\,}
\newcommand{\BV}{\mathrm{BV}}
\newcommand{\toY}{\overset{\mathrm{Y}}{\to}}
\DeclarePairedDelimiter{\asc}{[}{]}
\DeclareMathOperator*{\wstarlim}{w*-lim}
\newcommand{\RL}{\mathbf{R}^\mathrm{L}}
\newcommand{\RBVw}{\mathbf{R}^{w*}}
\newcommand{\Fcalrw}{\mathcal{F}_{**}}
\newcommand{\Fcalro}{\mathcal{F}^{\,1}_{**}}
\def\Xint#1{\mathchoice
{\XXint\displaystyle\textstyle{#1}}%
{\XXint\textstyle\scriptstyle{#1}}%
{\XXint\scriptstyle\scriptscriptstyle{#1}}%
{\XXint\scriptscriptstyle\scriptscriptstyle{#1}}%
\!\int}
\def\XXint#1#2#3{{\setbox0=\hbox{$#1{#2#3}{\int}$ }
\vcenter{\hbox{$#2#3$ }}\kern-.6\wd0}}
\def\dashint{\Xint-}
\title{Liftings, Young measures, and lower semicontinuity}
\author{Filip Rindler}
\address[Filip Rindler]{Mathematics Institute, University of Warwick, Coventry CV4 7AL, United Kingdom}
\email{F.Rindler@warwick.ac.uk}
\author{Giles Shaw}
\address[Giles Shaw]{University of Reading, Department of Mathematics and Statistics, Whiteknights, PO Box 220, Reading RG6 6AX, United Kingdom}
\email[Corresponding author]{giles.shaw@gmail.com}
\begin{document}		

% !TEX root = main.tex

\begin{abstract}
This work introduces liftings and their associated Young measures as new tools to study the asymptotic behaviour of sequences of pairs $(u_j,Du_j)_j$ for $(u_j)_j\subset\BV(\Omega;\R^m)$ under weak* convergence. These tools are then used to prove an integral representation theorem for the relaxation of the functional
\[
\Fcal\colon u\mapsto\int_\Omega f(x,u(x),\nabla u(x))\;\dd x,\quad u\in\W^{1,1}(\Omega;\R^m),\quad\Omega\subset\R^d\text{ open,}
\]
to the space $\BV(\Omega;\R^m)$. Lower semicontinuity results of this type were first obtained by~Fonseca \& M\"uller [\emph{Arch. Ration. Mech. Anal.} 123 (1993), 1--49] and later improved by a number of authors, but our theorem is valid under more natural, essentially optimal, hypotheses than those currently present in the literature, requiring principally that $f$ be Carath\'eodory and quasiconvex in the final variable. The key idea is that liftings provide the right way of localising $\Fcal$ in the $x$ and $u$ variables simultaneously under weak* convergence. As a consequence, we are able to implement an optimal measure-theoretic blow-up procedure.
\end{abstract}

\maketitle
\author

\section{Introduction}\label{secintro}
Finding an integral representation for the relaxation $\Fcal_{**}$ of the functional
\begin{equation}\label{eqoriginalfunctional}
\Fcal[u]:=\int_\Omega f(x,u(x),\nabla u(x))\;\dd x,\quad u\in\W^{1,1}(\Omega;\R^m),\quad\Omega\subset\R^d\text{ open,}\quad d,m\geq 1,
\end{equation}
to the space $\BV(\Omega;\R^m)$ of functions of bounded variation, where $f \colon \Omega \times \R^m \times \R^{m \times d} \to [0,\infty)$ has linear growth in the last variable, is of great importance in the Calculus of Variations. Defined by the formula
\[
\Fcal_{**}[u]:=\inf\left\{\liminf_{j\to\infty}\Fcal[u_j]\colon\; (u_j)_j\subset\W^{1,1}(\Omega;\R^m)\text{ and }u_j\rightsquigarrow u\in\BV(\Omega;\R^m)\right\},
\]
where, for the moment, we do not specify the notion of convergence ``$\rightsquigarrow$'' with respect to which $\Fcal_{**}$ is computed, the relaxation is the greatest functional on $\BV(\Omega;\R^m)$ which is both less than or equal to $\Fcal$ on $\W^{1,1}(\Omega;\R^m)$ and lower semicontinuous (with respect to $\rightsquigarrow$) over $\BV(\Omega;\R^m)$. From a theoretical point of view, identifying $\Fcal_{**}$ is a necessary step in the application of the Direct Method to minimisation problems with linear growth: indeed, if $f$ is coercive, minimising sequences for $\Fcal$ are merely bounded in the non-reflexive space $\W^{1,1}(\Omega;\R^m)$ and can only be expected to converge weakly* in $\BV(\Omega;\R^m)$. Since candidate minimisers are only guaranteed to exist in this larger space, there is a need to find a `faithful' extension of $\Fcal$ to $\BV(\Omega;\R^m)$ to which the Direct Method can be applied. If they exist, minimisers in $\BV(\Omega;\R^m)$ of this extension of $\Fcal$ can then be seen as weak solutions to the original minimisation problem over $\W^{1,1}(\Omega;\R^m)$.
%If $f$ is quasiconvex in the final variable, so that $\Fcal$ is known \emph{a priori} to be lower semicontinuous over $\W^{1,1}(\Omega;\R^m)$~\cite{FonMul92QCIL}, then $\Fcal_{**}$ restricted to $\W^{1,1}(\Omega;\R^m)$ coincides with $\Fcal$ and so is an obvious choice of extension. Identifying optimal natural hypotheses under which an integral formula for $\Fcal_{**}$ therefore makes clear exactly which variational problems with linear growth can be treated by the Direct Method.

From the perspective of applications, if $f(x,y,A)=g(x,y)h(x,y,A)$ then the Cauchy--Schwarz inequality implies that computing $\Fcal_{**}$ provides a (usually optimal) lower bound for the $\Gamma$-limit of the sequence of singular perturbations
\begin{equation}\label{eqperturbfamily}
\Ecal_\varepsilon[u]:=\varepsilon^{-1}\int_\Omega \left[g(x,u(x))\right]^2\;\dd x+\varepsilon\int_\Omega \left[h(x,u(x),\nabla u(x))\right]^2\;\dd x,
\end{equation}
which arise in a vast number of phase transition problems from the physical sciences~\cite{AviGig87AMPR,Bald90MICP,Fonse89PTEM,FonTar89GTPT,Gurt87SRCG,Modi87TGPT,RuStKe89RDPE}. In this context, minimisers of $\Fcal_{**}$ can be seen as ``physically reasonable'' solutions to the highly non-unique problem of minimising the coarse-grain energy $\int_\Omega [g(x,u(x))]^2 \;\dd x$, which take into account the fact that transitions between phases should have an energetic cost.

The first general solution to the relaxation problem was provided by Fonseca~\&~M\"uller in~\cite{FonMul93RQFB} (see also Ambrosio \& Dal Maso~\cite{AmbDal92RBVQ} for the $u$-independent case). Motivated by problems in the theory of phase transitions, the authors showed that, if $f$ is quasiconvex in the final variable, satisfies
\[
  g(x,y)|A|\leq f(x,y,A)\leq Cg(x,y)(1+|A|)
\]
for some $g\in\C(\Omega\times\R^m;[0,\infty))$, and strong \emph{localisation hypotheses} (see below) hold, then the relaxation of $\mcF$ with respect to the strong $\Lp^1$-convergence in $\BV(\Omega;\R^m)$ is given by
\begin{align}
\begin{split}\label{eqfunctional}
\mcF_{**}[u] &= \int_\Omega f(x,u(x),\nabla u(x))\;\dd x+\int_\Omega f^\#\left(x,u(x),\frac{\dd D^cu}{\dd|D^c u|}(x)\right)\;\dd|D^cu|(x) \\
  &\qquad +\int_{\mathcal{J}_u}K_f[u](x)\;\dd\mathcal{H}^{d-1}(x),  
\end{split}
\end{align}
where $f^\#$ is the recession function of $f$ defined by $f^\#(x,y,A):=\limsup_{t\to\infty}t^{-1}f(x,y,tA)$, and
\[
\begin{aligned}
K_f[u](x):= \inf\, \setBB{\frac{1}{\omega_{d-1}}\int_{\bB^d}f^\infty(x,\varphi(y),\nabla\varphi(y))\;\dd y}{&\varphi\in\C^\infty(\bB^d;\mbR^m), \\
&{\varphi}|_{\pd\bB^d}=u^\pm(x)\text{ if }\ip{y}{n_u(x)} \gtrless 0}
%&{\varphi}|_{\pd\bB^d}=\begin{cases}u^+(x)\text{ }\ip{y}{n_u(x)}\leq 0\\ u^-(x)\text{ }\ip{y}{n_u(x)}>0\end{cases}}
\end{aligned}
\]
is the surface energy density associated with $f$. Here, $d,m\geq 1$, and we have used the usual decomposition
\[
  Du = \nabla u \Lcal^d + D^s u,  \qquad D^s u = (u^+ - u^-) \otimes n_u \, \Hcal^{d-1} \restrict \Jcal_u + D^c u,
\]
for the derivative $Du$ of a function $u\in\BV(\Omega;\R^m)$ (see, for example~\cite{AmFuPa00FBVF}). Fonseca~\&~M\"uller's result, later improved in the subsequent papers~\cite{BoFoMa98GMR,FonLeo01LSC}, makes use of the \emph{blow-up method} to obtain a lower bound for $\Fcal_{**}$: if $(u_j)_j\subset\W^{1,1}(\Omega;\R^m)$ is such that $(\Fcal[u_j])_j$ is bounded then, upon passing to a subsequence,  $(f(x,u_j(x),\nabla u_j(x))\lL\restrict\Omega)_j$ must converge weakly* in $\mbfM^+(\Omega)$ to some Radon measure $\mu$. Next, one computes lower bounds for the Radon--Nikodym Derivatives $\frac{\dd\mu}{\dd\lL}$, $\frac{\dd\mu}{\dd|D^cu|}$, $\frac{\dd\mu}{\dd\Hcal^{d-1}\restrict\Jcal_u}$ via estimates of the form
\begin{equation}\label{eqL1lscineq}
\frac{\dd\mu}{\dd\lL}(x_0)\geq\liminf_{r\to 0}\lim_{j\to\infty}\frac{1}{r^d}\frac{1}{\omega_d}\int_{B(x_0,r)}f(x,u_j(x),\nabla u_j(x))\;\dd x.
\end{equation}
To obtain the inequality ``$\geq$'' in~\eqref{eqfunctional}, it suffices to bound the right hand side of~\eqref{eqL1lscineq} from below by $f(x_0,u(x_0),\nabla u(x_0))$ and to obtain analogous results for $\frac{\dd\mu}{|D^cu|}$ and $\frac{\dd\mu}{\dd\Hcal^{d-1}\restrict\Jcal_u}$. The authors of~\cite{FonMul93RQFB} achieve this by noting that the partial coercivity property $g(x,y)|A|\leq f(x,y,A)\leq Cg(x,y)(1+|A|)$ combined with the rescaling in $r$ of each $u_j$ allows for $(u_j)_j$ to be replaced by a rescaled and truncated sequence $(w_j)_j$, which is crucially weakly* and $\Lp^\infty$ convergent in $\BV(\bB^d;\R^m)$ to a blow-up limit $z\mapsto\nabla u(x_0)z$.

%The authors then require strong \emph{localisation hypotheses} (see below) from $f$ in the $(x,y)$ variables which, combined with the weak*- and $\Lp^\infty$-convergence of $(w_j)_j$, let us reduce to the scenario where only the integrand $f(x_0,u(x_0),\frarg)$ which is ``frozen'' in all but the final variable needs to be considered. At this point, the definition of quasiconvexity combined with a boundary adjustment to $(w_j)_j$ provides the required conclusion.

Fonseca~\&~M\"uller's result confirms that, as in the case for problems posed over $\W^{1,p}(\Omega;\R^m)$ for $p>1$, quasiconvexity is still the right qualitative condition to require for variational problems with linear growth. However, it has been an open question as to whether the $(x,y)$-localisation hypotheses which have been used until now (both for the results in~\cite{FonMul93RQFB} and for the substantial later improvements in~\cite{FonLeo01LSC}) are truly necessary for~\eqref{eqfunctional} to hold. Up to an error which grows linearly in $A$, these assumptions state that $f$ must be such that $f(x_0,y_0,A) \approx f(x,y,A)$ whenever $(x,y)$ is sufficiently close to $(x_0,y_0)$ and that $f^\#(x_0,y,A) \approx f^\#(x,y,A)$ uniformly in $y$ for $|x_0-x|$ sufficiently small.

It is known in the case of superlinear growth that for $\Fcal$ to be weakly lower semicontinuous over $\W^{1,p}(\Omega;\R^m)$, $f$ need only be quasiconvex in the final variable, Carath\'eodory, and satisfy the growth bound $0\leq f(x,y,A)\leq C(1+|A|^p)$. For the situation where $f$ has linear growth and the task is to find the relaxation of $\Fcal$ to $\BV(\Omega;\R^m)$, a short overview of available results is as follows: in the scalar valued case where $m=1$, Dal Maso~\cite{DMas79IRBV} obtained the scalar counterpart to~\eqref{eqfunctional} (the term over $\Jcal_u$ admits a simpler form) under the assumption of coercivity for the general case where $f=f(x,y,A)$. If no coercivity is required of $f$, the task of finding the $\Lp^1$-relaxation $\Fcalro$ is highly non-trivial and involves the regularity of $f$ in the $x$ variable, see for instance~\cite{AmCiFu07RRBV}. In the vector-valued case, Aviles and Giga used the theory of currents to obtain~\eqref{eqfunctional} under the assumption that $f$ is continuous, convex in the final variable, coercive, and also satisfies a specific isotropy property, see~\cite{AviGig91VIMB,AviGig92MCGR}. The situation where $f$ is quasiconvex in the final variable is harder: the case where $f$ is independent of both $x$ and $u$ was settled by Ambrosio and Dal Maso~\cite{AmbDal92RBVQ}, and Kristensen together with the first author obtained~\eqref{eqfunctional} under just the assumption that $f=f(x,A)$ is $u$-independent, Carath\'eodory and such that $f^\infty$ exists~\cite{KriRin10RSI}.

For the $u$-dependent case where $f=f(x,y,A)$ and $f$ is quasiconvex in the final variable, the only available results are the original identification in~\cite{FonMul93RQFB} due to Fonseca and M\"uller, later improvements in~\cite{BoFoMa98EBER,BoFoMa98GMR}, and finally the most recent results in Fonseca and Leoni~\cite{FonLeo01LSC}. Roughly, these results require (in addition to quasiconvexity) that $f$ is Borel, such that $f^\#$ exists, and that, for every $(x_0,y_0)\in\Omega\times\R^m$ and $\varepsilon > 0$, there exists $\delta >0$ such that $|x-x_0|+|y-y_0| < \delta$ implies $f(x_0,y_0,A) - f(x,y,A) \leq \varepsilon(1+f(x,y,A))$ for all $A\in\R^{m\times d}$ and that $|x-x_0| \leq\delta$ implies $f^\#(x_0,y,A)-f^\#(x,y,A)\leq \varepsilon(1+f^\#(x,y,A))$ for all $(y,A)\in\R^m\times\R^{m\times d}$.

Reasoning by analogy with the cases where $f$ has superlinear growth over $\W^{1,p}(\Omega;\R^m)$ or where $f$ has linear growth and either $m=1$ (so that $f\colon\Omega\times\R\times\R^d\to\R$) or $f$ is $u$-independent (so that $f=f(x,A)$), the implication is that~\eqref{eqfunctional} should hold under just the assumptions that $f$ be quasiconvex in the final variable and possesses sufficient growth and regularity to ensure that the right hand side of~\eqref{eqfunctional} is well-defined over $\BV(\Omega;\R^m)$. This result is, essentially, our Theorem~\ref{wsclscthm} below.

To avoid the use of extraneous hypotheses, we must pass from $\lim_j f(x,u_j(x),\nabla u_j(x))$ to $\lim_j f(x_0,u(x_0),\nabla u_j(x))$ in~\eqref{eqL1lscineq} using only the behaviour of the sequence $(u_j)_j$ rather than any special properties of the integrand. In order to do this, we must improve our understanding of the behaviour of weakly* convergent sequences in $\BV(\Omega;\R^m)$, particularly under the blow-up rescaling $x\mapsto (x-x_0)/r$. These are much more poorly behaved than weakly convergent sequences in $\W^{1,p}(\Omega;\R^m)$ for $p>1$ thanks to interactions between $(u_j)_j$ and $(Du_j)_j$ in the limit as $j\to\infty$, see Example~\ref{exnonelementarylifting}. In the reflexive Sobolev case, powerful truncation techniques~\cite{FoMuPe98ACOE,Kris99LSSW} mean that these interactions can be neglected in the sequence under consideration, but no such tools are available in $\BV(\Omega;\R^m)$ when $m>1$.

The main contributions of this paper are the development of a new theory for understanding weakly* convergent sequences and blow-up procedures in $\BV(\Omega;\R^m)$, together with the use of this theory to provide a new proof for the integral representation of the weak* relaxation $\Fcalrw$ of $\Fcal$ to $\BV(\Omega;\R^m)$ under weaker hypotheses. This representation is valid for Carath\'eodory integrands and does not require the $(x,y)$-localisation properties of $f$ which have previously been used:

\begin{alphtheorem}\label{wsclscthm}
Let $f\colon\overline{\Omega}\times\R^m\times\R^{m\times d}\to\R$ where $d\geq 2$ and $m\geq 1$ be such that
\begin{enumerate}[(i)]
\item\label{wsthmhypoth1} $f$ is a Carath\'eodory function whose recession function $f^\infty$ exists in the sense of Definition~\ref{defrecessionfunction} and satisfies $f^\infty\geq 0$;
\item\label{wsthmhypoth2} $f$ satisfies a growth bound of the form 
\begin{equation}\label{eqintrogrowth}
-C(1+|y|^p+h(A))\leq f(x,y,A)\leq C(1+|y|^{d/(d-1)}+|A|),
\end{equation}
for some $C>0$, $p\in[1,d/(d-1))$, $h\in\C(\R^{m\times d})$ satisfying $h^\infty\equiv 0$, and for all $(x,y,A)\in\overline{\Omega}\times\R^m\times\R^{m\times d}$;
\item $f(x,y,\frarg)$ is quasiconvex for every $(x,y)\in\overline{\Omega}\times\R^m$.
\end{enumerate}
Then the sequential weak* relaxation of $\Fcal$ to $\BV(\Omega;\R^m)$ is given by
\begin{align*}
\Fcalrw[u]&=\int_\Omega f(x,u(x),\nabla u(x))\;\dd x+\int_\Omega f^\infty\left(x,u(x),\frac{\dd D^c u}{\dd|D^c u|}(x)\right)\;\dd|D^cu|(x)\\
&\qquad+\int_{\Jcal_u} K_f[u](x)\;\dd\Hcal^{d-1}(x).
\end{align*}
\end{alphtheorem}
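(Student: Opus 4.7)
I would split the proof into the two usual inequalities. The upper bound $\Fcalrw[u]\leq\text{(RHS)}$ is obtained by exhibiting a recovery sequence: for the absolutely continuous part mollify $u$; for the Cantor part use positive $1$-homogeneity of $f^\infty$ plus the approximation $D^c u\approx\nabla u_j\Lcal^d$ along a suitable sequence; at the jump set $\Jcal_u$ glue in the $K_f$-almost-optimal profiles that appear in the definition of $K_f[u]$, rescaled into thin tubes around $\Jcal_u$. Combining via a standard diagonal/Reshetnyak-type argument and using $\BV\embed\Lp^{d/(d-1)}$ together with the upper growth bound in~\eqref{eqintrogrowth} yields $\limsup_j\Fcal[u_j]\leq\text{(RHS)}$, while the lower growth bound with $h^\infty\equiv 0$ lets us absorb the negative contributions without loss.

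\textbf{Lower bound via liftings.} Let $(u_j)\subset\W^{1,1}(\Omega;\R^m)$ be any sequence with $u_j\toweakstar u$ in $\BV(\Omega;\R^m)$ and $\sup_j\Fcal[u_j]<\infty$. The plan is to associate to each $u_j$ its lifting $\nu_j$ (a measure on $\Omega\times\R^m$ as introduced earlier in the paper), which encodes the pair $(u_j,Du_j)$ in a single object carrying joint $(x,y)$-localisation information. Extracting a subsequence, $\nu_j$ generates a limit lifting $\nu$ together with a liftable Young measure $\bnu$ whose barycentre is compatible with $Du$. The key point is that $\nu$ decomposes in a canonical way into an absolutely continuous part (concentrated on the graph of $u^{ac}$), a Cantor part (concentrated on the graph of $\tilde u$ with respect to $|D^c u|$), and a jump part (supported on the segment $[u^-(x),u^+(x)]$ over each $x\in\Jcal_u$). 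Writing $f(x,u_j(x),\nabla u_j(x))\lL\restrict\Omega\wsc\mu$ in $\mbfM^+(\Omega)$, the Radon--Nikodym derivatives of $\mu$ with respect to $\Lcal^d$, $|D^cu|$, and $\Hcal^{d-1}\restrict\Jcal_u$ may be estimated at the corresponding typical points via blow-up and the structure of $\nu$.

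\textbf{Blow-up at the three point types.} At $\Lcal^d$-a.e.\ point $x_0$, rescale $u_j(x_0+r\,\sbullet\,)$; because the lifting framework forces $u_j\to u(x_0)$ in an appropriate local sense, we may \emph{replace} the argument $u_j(x)$ of $f$ by the constant $u(x_0)$ with only a vanishing error — this is exactly the step that previously required uniform $(x,y)$-continuity of $f$. Combined with quasiconvexity of $f(x_0,u(x_0),\sbullet\,)$ and the standard Fonseca--M\"uller blow-up on the truncated rescalings, this yields $\frac{\dd\mu}{\dd\Lcal^d}(x_0)\geq f(x_0,u(x_0),\nabla u(x_0))$. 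At $|D^cu|$-a.e.\ $x_0$, a Cantor blow-up together with Alberti's rank-one theorem reduces the problem to a one-dimensional recession profile, and again the lifting's joint localisation lets us freeze the $y$-variable to $\tilde u(x_0)$ without any continuity hypothesis on $f^\infty$ in $y$. At $\Hcal^{d-1}$-a.e.\ $x_0\in\Jcal_u$, blow up to obtain a profile $\varphi$ with boundary values $u^\pm(x_0)$ on the relevant half-spaces; the lifting controls how the trajectories of $u_j$ connect $u^-(x_0)$ to $u^+(x_0)$, so that after a smoothing/cut-off near $\pd\bB^d$ the profile becomes admissible in the definition of $K_f[u](x_0)$, producing $\frac{\dd\mu}{\dd\Hcal^{d-1}\restrict\Jcal_u}(x_0)\geq K_f[u](x_0)$.

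\textbf{Main obstacle.} The truly delicate step is the jump-set blow-up. Previous proofs relied on $(x,y)$-continuity of $f^\infty$ to pass from $f^\infty(x,u_j(x),\nabla\varphi_j(x))$ to $f^\infty(x_0,\varphi(x),\nabla\varphi(x))$ in the definition of $K_f$, and on strong $\Lp^\infty$-control of truncations to enforce admissible boundary values. Here neither is available: $f$ is merely Carath\'eodory, and truncation in $\BV$ for $m>1$ is unavailable. The lifting-generated Young measure substitutes for both tools, because it simultaneously remembers the profile of $(u_j)$ in $y$ and the concentration of $(Du_j)$ across the hyperplane. The technical heart of the proof will be to show that this limit object provides a competitor (or an arbitrarily close approximation thereof) for $K_f[u](x_0)$, and that the negative lower-order term $-C(1+|y|^p+h(A))$ with $p<d/(d-1)$ and $h^\infty\equiv 0$ gives an equi-integrable error that vanishes in the blow-up limit.
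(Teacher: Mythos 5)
Your lower-bound strategy --- pass to liftings, extract a Young measure over $\Omega\times\R^m$, blow up at Lebesgue, Cantor, and jump points, and exploit the joint $(x,y)$-localisation to freeze the $y$-argument before invoking quasiconvexity --- matches the paper's architecture. However, there is a genuine error in your description of the structure of limit liftings, and a structural gap in the upper bound.

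On the structure: you claim that the limit lifting $\nu$ decomposes so that its jump part is ``supported on the segment $[u^-(x),u^+(x)]$ over each $x\in\Jcal_u$.'' For $m>1$ this is false in general, and this is precisely the source of the difficulty that the paper's Young measure machinery is designed to overcome. The Structure Theorem for liftings (Theorem~\ref{thmliftingsstructure}) only yields the mutually singular decomposition $\gamma=\gamma\asc{u}\restrict((\Omega\setminus\Jcal_u)\times\R^m)+\gamma^{\mathrm{gs}}$, with $\gamma^{\mathrm{gs}}$ constrained \emph{only} by the column-wise divergence identity $\diverg_y\gamma^{\mathrm{gs}}=-|D^ju|\otimes\frac{n_u}{|u^+-u^-|}(\delta_{u^+}-\delta_{u^-})$; this does not force concentration on line segments. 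Concentration on segments holds only for \emph{minimal} liftings, i.e.\ when $|\gamma|(\Omega\times\R^m)=|Du|(\Omega)$ (Proposition~\ref{lemmustrict}, via Lemmas~\ref{lemdivmassonline}--\ref{lemdivonlineidentify}), and Examples~\ref{exstrictliftinginclusion} and~\ref{exnonelementarylifting} show that weak* limits of elementary liftings are typically non-elementary and not of this form. Because of this, the paper cannot estimate the jump density by a pointwise competitor read off from the limit lifting; instead it works at the level of the tangent \emph{Young measures} (Theorems~\ref{thmregtangentyms}--\ref{thmjumptangentyms}) and deduces the three Jensen inequalities (Lemmas~\ref{lemlebesguejensen}--\ref{lemjumpjensen}) from them, using a new graphical Besicovitch derivation theorem (Theorem~\ref{thmgeneralisedbesicovitch}) to perform the blow-ups and, for the Cantor part, the Kirchheim--Kristensen convexity-at-rank-one-points result.

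On the upper bound: you ``exhibit a recovery sequence.'' The paper shows that this is generically impossible: Example~\ref{exbadrecoveryseq} produces a continuous integrand, convex in the final variable, whose $\Fcalrw$ has a global minimiser with \emph{no} weakly* convergent minimising sequence. Only \emph{approximate} recovery sequences exist in general. The paper's construction (Theorem~\ref{thmepsilonrecoverysequences}) proceeds in two steps: first a coercified integrand $f+\rho|A|$, and then a limit $\rho\downarrow 0$ that \emph{requires the lower bound already proven} (Theorem~\ref{thmwsclsc}) to pass from $\limsup_j\Fcal_\rho[u_j^\rho]$ to the representation formula. In particular the coercive case (Step~1) also invokes the lower bound, together with the restriction operation $\bnu\mapsto\bnu\restrict(\overline\Omega\times\R^m)$, to rule out concentration at infinity. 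Your proposal omits both the impossibility of genuine recovery sequences and the logical dependence of the upper bound on the lower bound; without these the diagonal/Reshetnyak argument you gesture at does not close.
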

The hypotheses of Theorem~\ref{wsclscthm} are essentially the optimal conditions under which $\Fcalrw$ is guaranteed to be finite over all of $\BV(\Omega;\R^m)$, and this result consequently identifies the extension of $\Fcal$ to $\BV(\Omega;\R^m)$ for the purposes of the Direct Method whenever such an extension is well-defined. Theorem~\ref{wsclscthm} can therefore be seen as a $\BV$-version of the optimal Sobolev lower semicontinuity theorems obtained by Acerbi \& Fusco~\cite{AceFus84SPCV} and Marcellini~\cite{Marc85AQFL}.

Example~\ref{expositiverecession} below shows that, unlike in the case where $f=f(x,A)$ does not depend explicitly on $u$ (see~\cite{KriRin10RSI}), the positivity assumption $f^\infty \geq 0$ is necessary for $\Fcalrw$ to be finite and for a general integral formula to hold. In fact, assuming $f^\infty\geq 0$ alone without also requiring the lower bound in~\eqref{wsthmhypoth2} is not enough, see Example~\ref{exlscbreaks}. The situation here is subtle, and~\eqref{wsthmhypoth2} can be improved slightly to a bound that is truly sharp in this regard at the expense of a more complicated statement, see Definition~\ref{defrepresentationf} and Theorem~\ref{thmwscrelaxation}.

The hypotheses of Theorem~\ref{wsclscthm} are valid in situations where the $(x,y)$-localisation hypotheses used in~\cite{FonMul93RQFB,FonLeo01LSC} do not hold and also where partial coercivity fails: for example, the function $f(x,y,A)=(1+|y|)^{-1} |y|^{1-|x|}|A|$ defined on $\bB^d\times\R^m\times\R^{m\times d}$, which violates~(H4) and Equation~{(1.15)} in~\cite{FonMul93RQFB}  and~\cite{FonLeo01LSC} respectively, and functions of the form $f(x,y,A)=[B(x,y):A]^+$ where $B(x,y)\in\C(\Omega\times\R^m;\R^{m\times d})$ is bounded ($A:B$ is the usual Frobenius inner product between matrices and $[\frarg]^+$ denotes the positive part).
 
 % The function $f(x,y,A)=(1+|y|)^{-1} |y|^{1-|x|}|A|$ defined on $\bB^d\times\R^m\times\R^{m\times d}$  provides an example of an integrand which does not satisfy the hypotheses in~\cite{FonMul93RQFB} or~\cite{FonLeo01LSC} but which does meet the conditions of our theorem.

We emphasise that our $\Fcalrw$ is the relaxation of $\Fcal$ to $\BV(\Omega;\R^m)$ with respect to sequential weak* convergence, whereas the relaxation of interest from the perspective of some applications and the one which is the subject of the earlier works works~\cite{FonMul93RQFB,AmbPal93IRRF,BoFoMa98GMR,FonLeo01LSC,AviGig92MCGR} is $\Fcalro$, the relaxation of $\Fcal$ with respect to strong $\Lp^1(\Omega;\R^m)$ convergence in $\BV(\Omega;\R^m)$. In the absence of coercivity $\Fcalro$ might be strictly less than $\Fcalrw$, but it is always the case that $f$ in these applications is partially coercive in the sense that $g(x,y)|A| \leq f(x,y,A)\leq Cg(x,y)(1+|A|)$ for some continuous $g\colon\Omega\times\R^m\to[0,\infty)$ and $C>0$. In these circumstances it is therefore reasonable to expect that the problem of computing $\Fcalro$ reduces to that of computing $\Fcalrw$ `locally' in regions of $\Omega\times\R^m$ where $g(x,y)>0$ (and this is in fact the strategy followed in previous works), and so our work opens the possibility of new progress in this area. Indeed, in the sequel to this paper,~\cite{RinSha18RPCF} we use Theorem~\ref{wsclscthm} to derive an integral representation for $\Fcalro$ valid under improved hypotheses on the integrand $f$.

%In light of the partial coercivity satisfied by $f$ (indeed, if $f$ does not possess any type of coercivity then it is known that convexity alone does \emph{not} guarantee~\eqref{eqfunctional}, see for instance~\cite{GorMag05CRGC}),

Theorem~\ref{wsclscthm} assumes that $f^\infty$ exists in a stronger sense than has been classically required in the literature (see Definition~\ref{defrecessionfunction}), where only the upper recession function $f^\#$ is used. In fact, the other properties required of $f$ in~\cite{FonLeo01LSC} imply that their $f^\#$ must exist in the sense of Definition~\ref{defrecessionfunction} at every point of continuity for $f^\#$, that $f^\#$ must be lower semicontinuous, and such that $f^\#(\frarg,y,\frarg)$ is continuous in $(x,A)$ for every $y\in\R^m$.

Our proof of the lower semicontinuity "$\geq$" component of Theorem~\ref{wsclscthm} is based on the idea of understanding joint limits for pairs $(u_j,Du_j)_j$ under weak* convergence as objects in the graph space $\Omega\times\R^m$, rather than solely in $\Omega$. To do this, we develop a theory of \emph{liftings} (ideas of this type were first introduced by Jung \& Jerrard in~\cite{JunJer04SCML}), which replaces functions $u\in\BV(\Omega;\R^m)$ by graph-like measures $\gamma\asc{u}:=\gr^{u}_\#(Du)\in\mbfM(\Omega\times\R^m;\R^{m\times d})$, where $\gr^u\colon x\mapsto (x,u(x))$ is the graph map of $u$. Using a Reshetnyak-type perspective construction, the functional $\Fcal$ can be generalised to one defined on the space of liftings. The key point is that working in this more general setting means that we can think of sequences $(f(x,u_j(x),\nabla u_j(x)))_j$ as converging weakly* to Radon measures in $\mbfM^+(\Omega\times\R^m)$ rather than merely in $\mbfM^+(\Omega)$. This allows us to estimate $\Fcalrw$ more precisely from below by computing Radon--Nikodym derivatives at points $(x,u(x))\in\Omega\times\R^m$ with respect to the total variation of the (elementary) lifting $\gamma\asc{u}$ rather than merely at points $x\in\Omega$ with respect to the derivative $|Du|$. In order to carry out these computations, we lay out a framework of generalized Young measures associated to liftings under weak* convergence that allows us to ``freeze the $u$-variable'' for a wide class of functionals with linear growth by employing robust tools from Geometric Measure Theory. In particular we use a new type of Besicovitch Derivation Theorem which allows us to differentiate in $\Omega\times\R^m$ with respect to graphical measures of the form $\eta=\gr^u_\# \lambda$, $\lambda\in\mbfM^+(\Omega)$ using very general families of sets $B(x,r)\times B(y,R)\subset\Omega\times\R^m$ (see the discussion which precedes Theorem~\ref{thmgeneralisedbesicovitch}).

The idea of understanding the joint weak limits of sequences of pairs $(u_j,\nabla u_j)_j$ by considering instead objects defined over the graph space $\Omega\times\R^m$ has of course been explored before. This strategy is successfully followed in~\cite{DMas79IRBV} to identify $\Fcal_{**}$ in the case $m=1$ and, for $m>1$ with $f$ coercive, convex, and isotropic in the final variable, {currents are used} to identify $\Fcal_{**}$ in~\cite{AviGig91VIMB,AviGig92MCGR}. More generally, `graph-like' objects have been widely used to better understand nonlinear functionals in the Calculus of Variations and Geometric Measure Theory in the context of currents (in particular, Cartesian currents) and varifolds~\cite{Almg66PP,FedFle60NIC,Fede69GMT,Alla72FVV,GiMoSo98CCCV1,GiMoSo98CCCV2,Menn16WDFV}. Liftings seem to be situated at a sweet spot between the usual techniques of the Calculus of Variations and the higher abstractions of geometric analysis, admitting a simple yet surprisingly powerful calculus which appears to be well-suited for functionals of this type. We hope that this tool will prove to be useful in a variety of related problems.

The final step in the proof of Theorem~\ref{wsclscthm} is to show that the lower bound for $\Fcalrw$ obtained via our theory of liftings is optimal. This is equivalent to constructing weak* \emph{approximate recovery sequences} $(u^\varepsilon)_j\subset\C^\infty(\Omega;\R^m)$ for $\Fcalrw$ and $\varepsilon>0$ which are such that $u^\varepsilon\wsc u$ and
\[
\limsup_{j\to\infty}\left|\Fcal[u^\varepsilon_j]- \Fcalrw[u]\right|\leq\varepsilon.
\]
Perhaps surprisingly, Example~\ref{exbadrecoveryseq} demonstrates that it is not always possible to construct genuine weak* recovery sequences which satisfy
\[
u_j\wsc u\quad\text{and}\lim_{j\to\infty}\Fcal[u_j]=\Fcalrw[u],
\]
even for $f$ continuous and convex in the final variable: in contrast to the $u$-independent cases where $f=f(x,A)$ or the scalar valued case where $u\colon\Omega\to\R$, it can occur in the absence of coercivity that $\Fcalrw$ admits a global minimiser for which \emph{no} weakly* convergent minimising sequence exists. Nevertheless, we are able to construct approximate recovery sequences for $\Fcalrw$ using a novel `cut and paste' technique based around the rectifiability of the measure $K_f[u]\Hcal^{d-1}\restrict\Jcal_u$ combined with Young measure techniques.

This paper is organised as follows: after notation is established and preliminary results and concepts are introduced in Section~\ref{secpreliminaries}, liftings are defined and their theory developed in Section~\ref{chapliftings}. We prove a structure theorem, establish the convergence and compactness properties of liftings, and we show how $\Fcal$ can be extended to a functional $\Fcal_\Lrm$ defined on the space of liftings. In Section~\ref{chapyoungmeasures}, we develop a theory of Young measures associated to liftings, including representation and compactness theorems. Section~\ref{chaptangentliftingyms} introduces tangent Young measures and their associated Jensen inequalities, which together suffice to implement an optimal weak* blow-up procedure and deduce the lower semicontinuity component of Theorem~\ref{wsclscthm}. Finally, in Section~\ref{chaprecoveryseqs} we construct approximate recovery sequences for $\Fcalrw$, before combining these with the results Section~\ref{chaptangentliftingyms} to state and prove Theorem~\ref{thmwscrelaxation}, which is a slightly more general version of Theorem~\ref{wsclscthm}.

\section*{Acknowledgements}
The authors would like to thank Irene Fonseca, Jan Kristensen and Neshan Wickramasekera for several helpful discussions related to this paper. 
%This work presented in this paper forms part of G.S.'s PhD thesis, undertaken at the Cambridge Centre for Analysis at the University of Cambridge, and supported by the UK Engineering and Physical Sciences Research Council (EPSRC) grant EP/H023348/1.
%This project has received funding from the European Research Council (ERC) under the European Union's Horizon 2020 research and innovation programme, grant agreement No 757254 (SINGULARITY). F.~R.\ also acknowledges the support from an EPSRC Research Fellowship on Singularities in Nonlinear PDEs (EP/L018934/1).

\section{Preliminaries}\label{secpreliminaries}

Throughout this work, $\Omega\subset\R^d$ will always be assumed to be a bounded open domain with compact Lipschitz boundary $\pd\Omega$ in dimension $d\geq 2$, and $\bB^k$, $\pd\bB^k$ will denote the open unit ball in $\R^k$ and its boundary (the unit sphere) respectively. The open ball of radius $r$ centred at $x\in\R^k$ is $B(x,r)$, although we will sometimes write $B^k(x,r)$ if the dimension of the ambient space needs to be emphasised for clarity. The volume of the unit ball in $\R^k$ will be denoted by $\omega_k:=\Lcal^k(\bB^k)$, where $\Lcal^k$ is the usual $k$-dimensional Lebesgue measure. We will write $\R^{m\times d}$ for the space of $m\times d$ real valued matrices, and $\id_{\R^m}$ for the identity matrix living in $\R^{m\times m}$. The map $\pi\colon\Omega\times\R^m\to\Omega$ denotes the projection $\pi((x,y))=x$, and $T^{(x_0,r)}\colon\R^d\to\R^d$, $T^{(x_0,r),(y_0,s)}\colon\R^d\times\R^m\to\R^d\times\R^m$ represent the homotheties $x\mapsto (x-x_0)/r$ and $(x,y)\mapsto((x-x_0)/r,(y-y_0)/s))$. Tensor products $a\otimes b\in\R^{m\times d}$ and $f\otimes g$ for vectors $a\in\R^m$, $b\in\R^d$, and real valued functions $f$, $g$, are defined componentwise by $(a\otimes b)_{i,j}=a_ib_j$ and $(f\otimes g)(x,y)=f(x)g(y)$ respectively.

The closed subspaces of $\BV(\Omega;\R^m)$ and $\C^\infty(\Omega;\R^m)$ consisting only of the functions satisfying $(u):=\dashint_\Omega u(x)\;\dd x=0$ are denoted by $\BV_\#(\Omega;\R^m)$ and $\C^\infty_\#(\Omega;\R^m)$ respectively. We shall use the notation $(u)_\Omega$ when the domain of integration might not be clear from context, as well as the abbreviation $(u)_{x,r}:=(u)_{B(x,r)}$.  We shall sometimes use subscripts for clarity when taking the gradient with respect to a partial set of variables: that is, if $f=f(x,y)\in\C^1(\Omega\times\R^m)$ then $\nabla_x f=(\pd_{x_1}f,\pd_{x_2}f,\ldots,\pd_{x_d}f)$ and $\nabla_yf=(\pd_{y_1}f,\pd_{y_2}f,\ldots,\pd_{y_m}f)$.

\subsection{Measure theory}\label{subsecmeasuretheory}
For a separable locally convex metric space $X$, the space of vector-valued Radon measures on $X$ taking values in a normed vector space $V$ will be written as $\mbfM(X;V)$ or just $\mbfM(X)$ if $V=\R$. The cone of positive Radon measures on $X$ is $\mbfM^+(X)$, and the set of elements $\mu\in\mbfM(X;V)$ whose total variation $|\mu|$ is a probability measure, is $\mbfM^1(X;V)$. The notation $\mu_j\wsc\mu$ will denote the usual weak* convergence of measures, and we recall that $\mu_j$ is said to converge to $\mu$ \textbf{strictly} if $\mu_j\wsc \mu$ and in addition $|\mu_j|(X)\to|\mu|(X)$. Given a map $T$ from $X$ to another separable, locally convex metric space $Y$, the pushforward operator $T_\#\colon\Mbf(X;V)\to\mbfM(Y;V)$ is defined by
\[
\ip{\varphi}{T_\#\mu}:=\ip{\varphi\circ T}{\mu},\qquad\varphi\in\C_0(Y).
\]
If $T$ is continuous and proper, then $T_\#$ is continuous when $\mbfM(X;V)$ and $\mbfM(Y;V)$ are equipped with their respective weak* or strict topologies. 

We omit the proof of the following simple lemma:
\begin{lemma}\label{lempushforwardderivative}
Let $\mu\in\mbfM(X;V)$, $\nu\in\mbfM^+(X)$ satisfy $\mu\ll\nu$ and let $T\colon X\to Y$ be a continuous injective map. Then it holds that
\[
\frac{\dd T_\#\mu}{\dd T_\#\nu}\circ T=\frac{\dd \mu}{\dd\nu}\quad\text{ and }\quad |T_\#\mu|=T_\#|\mu|.
\]
\end{lemma}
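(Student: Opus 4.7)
The plan is to prove both identities via the Radon--Nikodym theorem combined with injectivity of $T$, treating the second (total variation) claim first and then using it to obtain the first (Radon--Nikodym derivative) claim.

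For the equality $|T_\#\mu|=T_\#|\mu|$, the inequality ``$\leq$'' is immediate from subadditivity: given a Borel partition $\{B_i\}_i$ of a Borel set $B\subset Y$, the family $\{T^{-1}(B_i)\}_i$ is a Borel partition of $T^{-1}(B)$, so
\[
\sum_i|T_\#\mu(B_i)|=\sum_i|\mu(T^{-1}(B_i))|\leq|\mu|(T^{-1}(B))=T_\#|\mu|(B),
\]
and taking the supremum gives the bound. For the reverse inequality I would use that $T$ is continuous and injective, combined with the fact that in the Polish setting used throughout the paper the Lusin--Souslin theorem guarantees that $T$ maps Borel sets to Borel sets. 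Given any Borel partition $\{E_j\}_j$ of $T^{-1}(B)$, injectivity then ensures $\{T(E_j)\}_j$ is a Borel partition of $B\cap T(X)$; appending the residual set $B\setminus T(X)$, which satisfies $T_\#|\mu|(B\setminus T(X))=|\mu|(\emptyset)=0$, yields a Borel partition of $B$ realising the value $|\mu|(T^{-1}(B))$ in the definition of $|T_\#\mu|(B)$.

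For the Radon--Nikodym identity, set $f:=\dd\mu/\dd\nu$ so that $\mu=f\nu$, and define $g\colon Y\to V$ by $g:=f\circ T^{-1}$ on $T(X)$ (using that $T^{-1}$ is Borel on its natural domain, again by Lusin--Souslin) and $g:=0$ on $Y\setminus T(X)$. Then for any Borel $B\subset Y$ the change-of-variables formula for pushforwards gives
\[
T_\#\mu(B)=\mu(T^{-1}(B))=\int_{T^{-1}(B)}f\,\dd\nu=\int_X \mathbbm{1}_B(T(x))\,g(T(x))\,\dd\nu(x)=\int_B g\,\dd T_\#\nu,
\]
which identifies $g$ as $\dd T_\#\mu/\dd T_\#\nu$. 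By construction $g\circ T=f$ on $X$, which is precisely the asserted identity.

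The only genuine subtlety is the measurability issue in the second half of step one (and in defining $g$): continuous images of Borel sets in arbitrary spaces need not be Borel, so one cannot naively push Borel partitions forward. Since the paper works exclusively with separable locally compact metric spaces (in practice, open subsets of Euclidean spaces and their products), invoking the Lusin--Souslin theorem is clean; an alternative, if one wishes to avoid that machinery, would be to exploit inner regularity of Radon measures by compact sets together with the observation that continuous images of compact sets are compact, which suffices to reduce the partition argument to compact pieces on which $T$ is a homeomorphism onto its image.
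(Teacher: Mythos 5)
The paper omits its own proof of this lemma, so there is no argument of the authors' to compare against; judged on its own terms, your proof is correct. Both halves are handled cleanly: the ``$\leq$'' side of the total-variation identity via preimages of partitions, the ``$\geq$'' side via pushing forward a partition of $T^{-1}(B)$ and appending the $T_\#$-null residual $B\setminus T(X)$, and the Radon--Nikodym identity by transporting the density $f=\dd\mu/\dd\nu$ through $T^{-1}$ and verifying the change-of-variables identity $T_\#\mu = g\,T_\#\nu$ with $g\circ T = f$.

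You also correctly pinpoint the one genuinely nontrivial point, namely that a continuous injection does not in general map Borel sets to Borel sets (nor have a Borel inverse on its image), so both the forward-partition step and the definition of $g$ need justification. Your primary route via Lusin--Souslin is appropriate: the spaces appearing in the paper (open or compact subsets of $\R^d\times\R^m$, $\sigma\R^m$, and products thereof) are Polish, whence $T(E)$ is Borel for Borel $E$ and $T^{-1}$ is Borel on $T(X)$. Your alternative via inner regularity is arguably more in the spirit of Radon measure theory and slightly more self-contained: for compact $K\subset X$, $T|_K$ is a homeomorphism onto $T(K)$, so $T(K)$ is Borel, and exhausting $T^{-1}(B)$ (resp.\ $X$) by compacts up to $(|\mu|+\nu)$-measure $\varepsilon$ yields both the ``$\geq$'' inequality and, with a little care, the identification of the density, without invoking descriptive set theory. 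Either path closes the argument. One cosmetic remark: the conclusion $\frac{\dd T_\#\mu}{\dd T_\#\nu}\circ T=\frac{\dd\mu}{\dd\nu}$ is an equality of $\nu$-equivalence classes; your $g$ is one concrete Borel representative of $\dd T_\#\mu/\dd T_\#\nu$, and any other representative $h$ differs from $g$ only on a $T_\#\nu$-null set, so $h\circ T=f$ holds $\nu$-almost everywhere, which is the correct reading of the stated identity.
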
 

Given a function $f\colon X\times V\to\R$ which is positively one-homogeneous in the final variable (that is, $f(x,tA)=tf(x,A)$ for all $t \geq 0$ and $A\in V$) and a measure $\mu\in\mbfM(X;V)$, we shall use the abbreviated notation
\begin{equation}\label{eqreshetnyakrepresentation}
\int_X f(x,\mu):=\int_X f\left(x,\frac{\dd\mu}{\dd|\mu|}(x)\right)\;\dd|\mu|(x).
\end{equation}
We note that, if $T\colon X\to Y$ is an injection then applying Lemma~\ref{lempushforwardderivative} to $T_\#\mu$ and $|T_\#\mu|$ lets us deduce
\begin{align}
\begin{split}\label{eqperspectiveobservation}
\int_Y f(y,T_\#\mu)&=\int_Yf\left(y,\frac{\dd T_\#\mu}{\dd|T_\#\mu|}(y)\right)\;\dd|T_\#\mu|(y)\\
&=\int_Y f\left(y,\frac{\dd T_\#\mu}{\dd T_\#|\mu|}(y)\right)\;\dd T_\#|\mu|(y)\\
&=\int_X f\left(T(x),\frac{\dd\mu}{\dd |\mu|}(x)\right)\;\dd|\mu|(x)=\int_X f(T(x),\mu).
\end{split}
\end{align}

If $\mu$ is a measure on $X\times Y$ then we recall that the \textbf{Disintegration of Measures Theorem} (see Theorem~2.28 in~\cite{AmFuPa00FBVF}) allows us to decompose $\mu$ as the (generalised) product $\mu=\pi_\#|\mu|\otimes\rho$, where $\pi_\#|\mu|$ is the pushforward of $|\mu|$ onto $X$ and $\rho$ is a ($\pi_\#|\mu|$-almost everywhere defined) parametrised measure. Here, $\pi_\#|\mu|\otimes\rho$ is defined (uniquely) via
\[
(\pi_\#|\mu|\otimes\rho)(E\times F):=\int_E\rho_x(F)\;\dd[\pi_\#|\mu|](x)\quad\text{ for all Borel subsets } E\subset X\text{ and } F\subset Y.
\]

For $k\in[0,\infty)$, the $k$-dimensional Hausdorff (outer) measure on $\R^d$ is written as $\Hcal^{k}$ and, if $A\in\Bcal(\R^d)$ is a Borel set satisfying $\Hcal^k(A)<\infty$, its restriction $\Hcal^k\restrict A$ to $A$ defined by $[\Hcal^k\restrict A](\frarg):=\Hcal^k(\frarg\cap A)$ is a finite Radon measure. A set $A\subset\R^m$ is said to be \textbf{countably $\boldsymbol{\Hcal^k}$-rectifiable} if there exists a sequence of Lipschitz functions $f_i\colon\R^k\to\R^d$ ($i \in \N$) such that
\[
\Hcal^k\left(A\setminus\bigcup_{i=1}^\infty f_i(\R^k)\right)=0,
\]
and \textbf{$\boldsymbol{\Hcal^k}$-rectifiable} if in addition $\Hcal^k(A)<\infty$.
We say that $\mu\in\mbfM(\R^d;V)$ is a \textbf{$\boldsymbol k$-rectifiable measure} if there exists a countably $\Hcal^k$-rectifiable set $A\subset\R^d$ and a Borel function $f\colon A\to V$ such that $\mu=f\Hcal^k\restrict A$.

% NOT TRUE! It is $\sigma$-finite (but we can skip this paragraph)
%If $A\subset\R^d$ is countably $k$-rectifiable, then the set function $E\mapsto[\Hcal^k\restrict A](E)$ defined on the Borel subsets of $\R^d$ is not necessarily $\sigma$-finite, but is $\sigma$-additive which implies that integrals
%\begin{equation}\label{eqcountablyrectifiableintegral}
%\int_{\R^d} g(x)\;\dd[\Hcal^k\restrict A](x)
%\end{equation}
%of Borel functions $g\colon\R^d\to [0,\infty]$ with respect to $\Hcal^k\restrict A$ are still well-defined in $[0,\infty]$ by piecewise approximation via simple functions (see Remark~2.50 in~\cite{AmFuPa00FBVF}). If $f\colon\R^d\to V$ is such that~\eqref{eqcountablyrectifiableintegral} is finite for $g=|f|$, then $f\Hcal^k\restrict A$ is $k$-rectifiable.

With $A$ assumed to be countably $\Hcal^k$-rectifiable, we can define the Radon--Nikodym derivative for any $\mu\in\mbfM(\R^d)$ with respect to $\Hcal^k\restrict A$, given for $\Hcal^k$-almost every $x\in A$, by
\begin{equation}\label{eqgenderivative}
\frac{\dd\mu}{\dd\Hcal^k\restrict A}(x):=\lim_{r\to 0}\frac{\mu(B(x,r))}{\omega_{k}r^k}.
\end{equation}
The function $\frac{\dd\mu}{\dd\Hcal^k\restrict A}$ is a Radon--Nikodym Derivative in the sense that $\frac{\dd\mu}{\dd\Hcal^k\restrict A}\Hcal^k\restrict A$ is a $k$-rectifiable measure and that we can decompose
\[
\mu=\frac{\dd\mu}{\dd\Hcal^k\restrict A}\Hcal^k\restrict A+\mu^s,\quad\text{ where $\mu^s$ satisfies }\quad\frac{\dd\mu}{\dd\Hcal^k\restrict A}\Hcal^k\restrict A\perp \mu^s,
\]
in analogy with the usual Lebesgue--Radon--Nikodym decomposition.

A measure $\mu\in\mbfM(\R^d;V)$ is said to be admit a ($k$-dimensional) \textbf{approximate tangent space} at $x_0$ if there exists an (unoriented) $k$-dimensional hyperplane $\tau\subset\R^d$ and $\theta\in V$ such that
\[
r^{-k} T_\#^{(x_0,r)}\mu\to\theta\Hcal^k\restrict(\overline{\bB^d}\cap\tau) \quad\text{ strictly in }\mbfM(\overline{\bB^d};V)\quad\text{ as }r\to 0.
\]
%As a consequence of the Lebesgue Differentiation Theorem, we can see that $f\mu$ admits an approximate tangent space at $x\in\R^d$ whenever $x$ is a $\mu$-Lebesgue point of $f$ and a point at which $\mu$ admits an approximate tangent space.
The existence of approximate tangent spaces characterises the class of rectifiable measures in the sense that $\mu\in\mbfM(\R^d;V)$ possesses a $k$-dimensional approximate tangent space at $|\mu|$-almost every $x_0\in\R^d$ if and only if $\mu$ is $k$-rectifiable (see Theorem~2.83 in~\cite{AmFuPa00FBVF}). 

The (column-wise) divergence of a measure $\mu\in\mbfM(\R^m;\R^{m\times d})$, written as $\diverg\mu$ or $\nabla\cdot\mu$, is an $\R^d$ row vector-valued distribution on $\R^m$  defined by duality via the formula
\[
\int_{\R^m}\varphi(y)\;\dd(\nabla\cdot\mu)(y)=-\int_{\R^m}\nabla\varphi(y)\;\dd\mu(y)\qquad\text{ for all }\varphi\in\C_c^\infty(\R^m).
\]

\begin{lemma}\label{lemdivmunoatoms}
Any $\mu\in\mbfM(\R^m;\R^{m\times d})$ satisfying $\diverg\mu\equiv 0$ (column-wise) is non-atomic. That is, $\mu(\{a\})=0$ for every $a\in\R^m$.
\end{lemma}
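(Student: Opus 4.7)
The plan is to argue by contradiction, first reducing to a single column and then exploiting the distributional divergence-free condition against a carefully scaled family of test bumps.

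Since the divergence is computed column-wise, it suffices to prove the scalar-columned version: if $\nu \in \mathbf{M}(\mathbb{R}^m;\mathbb{R}^m)$ has $\diverg\nu \equiv 0$, then $\nu(\{a\}) = 0$ for every $a$. Indeed, if $\mu(\{a\}) \in \mathbb{R}^{m\times d}$ were nonzero, at least one column $\nu$ of $\mu$ would satisfy $\nu(\{a\}) \neq 0$ while still being divergence-free, contradicting the columnwise claim.

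Suppose for contradiction that $\nu(\{a\}) = v \neq 0$, and decompose $\nu = v\delta_a + \sigma$ with $\sigma(\{a\}) = 0$. Since $|\sigma|$ is a finite positive measure with $\bigcap_{r>0}\bar B(a,r) = \{a\}$, continuity from above yields $|\sigma|(\bar B(a,r)) \to 0$ as $r \to 0$. The key trick is to test $\diverg\nu$ against bumps whose supports shrink to $\{a\}$ while their gradients remain of fixed size. Pick $\varphi \in \C^\infty_c(\bB^m)$ with $\nabla\varphi(0) = v$ (e.g.\ $\varphi(y) = (v\cdot y)\chi(y)$ for a cutoff $\chi \in \C^\infty_c(\bB^m)$ with $\chi \equiv 1$ near $0$), and for each $r > 0$ set
\[
\varphi_r(y) := r\,\varphi\bigl((y-a)/r\bigr) \in \C^\infty_c(B(a,r)),
\]
so that $\nabla\varphi_r(y) = \nabla\varphi((y-a)/r)$, in particular $\|\nabla\varphi_r\|_\infty = \|\nabla\varphi\|_\infty$ independently of $r$.

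Applying the divergence-free condition to the test function $\varphi_r$ and splitting $\nu$ into its atomic and diffuse parts gives
\[
0 = -\int \nabla\varphi_r \, \di\nu = -\,v\cdot\nabla\varphi(0) - \int_{B(a,r)\setminus\{a\}} \nabla\varphi\bigl((y-a)/r\bigr) \, \di\sigma(y).
\]
The remainder term is bounded by $\|\nabla\varphi\|_\infty \, |\sigma|(B(a,r)) \to 0$ as $r \to 0$, leaving $v \cdot \nabla\varphi(0) = |v|^2 = 0$, a contradiction.

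The only subtle point—and hence the "main obstacle" such as it is—is selecting the correct scaling factor in $\varphi_r$: the prefactor of $r$ (rather than $1$, which would kill the atom's contribution, or something vanishing, which would blow up the diffuse part) ensures that $\nabla\varphi_r$ is $\BigO(1)$ uniformly, so that in the limit the atom at $a$ contributes a fixed $|v|^2$ while the remainder coming from $\sigma$ vanishes. No compactness or representation machinery from the rest of the paper is needed.
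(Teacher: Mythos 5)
Your proof is correct and takes essentially the same approach as the paper: the scaling $\varphi_r(y)=r\,\varphi((y-a)/r)$ is the same rescaling the paper uses (with $g_\lambda(y)=\lambda^{-1}g(a+\lambda(y-a))$, $\lambda=1/r$), chosen precisely so that the gradient stays $\BigO(1)$ while the support shrinks to $\{a\}$. The only cosmetic differences are that the paper works with the full matrix-valued $\mu$ and passes to the limit by dominated convergence, then varies $\nabla g(a)$ to kill $\mu(\{a\})$, whereas you reduce to a single column, split $\nu$ into atom plus diffuse part, and argue by contradiction — the underlying mechanism is identical.
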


\begin{proof}
Let $g\in C_0^1(\R^m)$ be arbitrary. Defining $g_\lambda(y):=\frac{1}{\lambda}g(a+\lambda(y-a))$ for $\lambda\in\R$, we therefore have that
\[
0=-\ip{g_\lambda}{\diverg\mu}:=-\int_{\supp g_\lambda}\nabla g_\lambda(y)\;\dd\mu(y).
\]
Noting that $\nabla g_\lambda(a)=\nabla g(a)$ and $\norm{\nabla g_\lambda}_\infty=\norm{\nabla g}_\infty$, we can let $\lambda\to\infty$ so that $\mathbbm{1}_{\supp g_\lambda}\nabla g_\lambda\to\mathbbm{1}_{\{a\}}\nabla g(a)$ pointwise before using the Dominated Convergence Theorem to deduce
\[
\nabla g(a) \mu\left(\{a\}\right)=0.
\]
By varying $\nabla g(a)$ through $\R^m$, we see that $\mu(\{a\})=0$ as required.
\end{proof}

\subsection{\texorpdfstring{$BV$}{BV} functions}
Given a function $u\in\BV(\Omega;\R^m)$, we recall the mutually singular decomposition $Du=\nabla\lL\restrict\Omega +D^cu +D^ju$ of the derivative $Du$, where $|D^cu|\ll \Hcal^{d-1}$, $D^ju$ is absolutely continuous with respect to $\Hcal^{d-1}\restrict\Jcal_u$, and $\Jcal_u$ is the countably $\Hcal^{d-1}$-rectifiable jump set of $u$. Each $u\in\BV(\Omega;\R^m)$ admits a precise representative $\widetilde{u}\colon\Omega\to\R^m$ which is defined $\Hcal^{d-1}$-almost everywhere in $\Omega\setminus\Jcal_u$. The \textbf{jump interpolant} associated to $u$ is then the function $u^\theta\colon\Omega\times[0,1]\to\R^m$ defined, up to a choice of orientation $n_u$ for the jump set $\Jcal_u$ of $u$, for $\Hcal^{d-1}$-almost every $x\in\Omega$ by
\begin{equation}\label{eqdefjumpinterpolant}
u^\theta(x):=\begin{cases}
\theta u^-(x)+(1-\theta)u^+(x) &\text{if }x\in\Jcal_u,\\
\widetilde{u}(x) &\text{otherwise.}
\end{cases}
\end{equation}
The need to fix a choice of orientation for $\Jcal_u$ in order to properly define $u^\theta$ is obviated by the fact that $u^\theta$ will only appear in expressions of the form
\[
\int_0^1 \varphi(u^\theta(x))\;\dd\theta
\]
which are invariant of our choice of $n_u$.

Given (the precise representative of) a function $u\in\BV(\Omega;\R^m)$, the function associated to its graph is denoted by $\gr^u\colon x\mapsto (x,u(x))$. If $\mu$ is a measure on $\Omega$ satisfying both $|\mu|\ll\Hcal^{d-1}$ and $|\mu|(\Jcal_u)=0$ (we will usually take $\mu=|Du|\restrict(\Omega\setminus\Jcal_u$), its pushforward under $\gr^u$ then still makes sense as the Radon measure $\gr^u_\#\mu$ on $\Omega\times\R^m$.

A sequence $(u_j)_j\subset\BV(\Omega;\R^m)$ is said to converge \textbf{strictly} to $u\in\BV(\Omega;\R^m)$ if $u_j\to u$ in $\Lp^1(\Omega;\R^m)$ and $Du_j\to Du$ strictly in $\mbfM(\Omega;\R^{m\times d})$ as $j\to\infty$. We say that $u_j$ converges \textbf{area-strictly} to $u$ if $u_j\to u$ in $\Lp^1(\Omega;\R^m)$ and in addition
\[
\int_\Omega\sqrt{1+|\nabla u_j(x)|^2}\;\dd x+|D^s u_j|(\Omega)\to\int_\Omega\sqrt{1+|\nabla u(x)|^2}\;\dd x+|D^s u|(\Omega)
\]
as $j\to\infty$. It is the case that area-strict convergence implies strict convergence in $\BV(\Omega;\R^m)$ and that strict convergence implies weak* convergence. That none of these notions of convergence coincide follows from considering the sequence $(u_j)_j\subset\BV((-1,1))$ given by $u_j(x):=x+(a/j)\sin(jx)$ for some $a\neq 0$ fixed. This sequence converges weakly* to the function $x\mapsto x$ for any $a\in\R\setminus\{0\}$, strictly if and only if $|a|\leq 1$, but (since the function $z\mapsto\sqrt{1+|z|^2}$ is strictly convex away from $0$) never area-strictly. Smooth functions are area-strictly (and hence strictly) dense in $\BV(\Omega;\R^m)$: indeed, if $u\in\BV(\Omega;\R^m)$ and $(u_\rho)_{\rho>0}$ is a family of radially symmetric mollifications of $u$ then it holds that $u_\rho\to u$ area-strictly as $\rho \todown 0$.

If $\Omega\subset\R^d$ is such that $\pd\Omega$ is Lipschitz and compact, then the trace onto $\pd\Omega$ of a function $u\in\BV(\Omega;\R^m)$ is denoted by $u|_{\pd\Omega}\in\Lp^1(\pd\Omega;\R^m)$. The trace map $u\mapsto u|_{\pd\Omega}$ is norm-bounded from $\BV(\Omega;\R^m)$ to $\Lp^1(\pd\Omega;\R^m)$ and is continuous with respect to strict convergence (see Theorem~3.88 in~\cite{AmFuPa00FBVF}). If $u,v\in\BV(\Omega;\R^m)$ are such that $u|_{\pd\Omega}=v|_{\pd\Omega}$, then we shall sometimes simply say that ``$u=v$ on $\pd\Omega$''.

The following proposition, a proof for which can be found in the appendix of~\cite{KriRin10CGYM} (or Lemma~B.1 of~\cite{Bild03CVP} in the case of a Lipschitz domain $\Omega$), states that we can even require that smooth area-strictly convergent approximating sequences satisfy the trace equality ${u_j}|_{\pd\Omega}=u|_{\pd\Omega}$:
\begin{proposition}\label{propfixedbdary}
For every $u\in\BV(\Omega;\R^m)$, there exists a sequence $(u_j)_j\subset\C^\infty(\Omega;\R^m)$ with the property that 
\[
u_j\to u\qquad\text{ area-strictly in }\BV(\Omega;\R^m)\text{ as }j\to\infty \qquad\text{ and }\qquad {u_j}|_{\pd\Omega}=u|_{\pd\Omega}.
\]
Moreover, if $u|_{\pd\Omega}\in\Lp^\infty(\pd\Omega;\R^m)$ we can assume that $(u_j)_j\subset(\C^\infty\cap\Lp^\infty)(\Omega;\R^m)$  and, if $u\in\Lp^\infty(\Omega;\R^m)$, then we can also require that $\sup_j\norm{u_j}_{\Lp^\infty}\leq\norm{u}_{\Lp^\infty}$.
\end{proposition}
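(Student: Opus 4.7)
The plan is to write $u_j = v_j + w_j$, where $v_j \in \C^\infty(\Omega;\R^m)\cap\W^{1,1}(\Omega;\R^m)$ is a preliminary area-strict approximation of $u$ and $w_j$ is a smooth boundary correction, supported in a shrinking collar of $\pd\Omega$, that restores the trace exactly. First, I would take $v_j$ to be the usual partition-of-unity mollification associated to an exhaustion $\Omega_k \Subset \Omega$ with $\bigcup_k \Omega_k = \Omega$, a subordinate partition of unity, and radial mollifiers with sufficiently small, shrinking radii, so that $v_j \to u$ area-strictly. Continuity of the trace operator under strict convergence in $\BV(\Omega;\R^m)$ then yields $v_j|_{\pd\Omega} \to u|_{\pd\Omega}$ in $\Lp^1(\pd\Omega;\R^m)$, so the defect $g_j := u|_{\pd\Omega} - v_j|_{\pd\Omega}$ satisfies $\norm{g_j}_{\Lp^1(\pd\Omega)} \to 0$ but is generally non-zero.

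The correction $w_j$ is then constructed using the Lipschitz collar structure of $\pd\Omega$. The idea is to first lift $g_j$ to a $\W^{1,1}$ function supported in the tubular neighbourhood $\{x\in\Omega \colon \dist(x,\pd\Omega)<\delta_j\}$ via the standard continuous trace-lifting operator $\Lp^1(\pd\Omega;\R^m)\to\W^{1,1}(\Omega;\R^m)$, with $\delta_j \todown 0$, and then to further mollify using a variable-radius kernel whose radius vanishes as one approaches $\pd\Omega$; the vanishing radius guarantees the trace is preserved exactly, while the strictly positive radius in the interior delivers smoothness. Combining Jacobian bounds from the bi-Lipschitz bicollar parametrisation of a neighbourhood of $\pd\Omega$ with standard $\Lp^1$-contractivity of tangential mollification yields
\[
\norm{w_j}_{\Lp^1(\Omega)}\le C\delta_j\norm{g_j}_{\Lp^1(\pd\Omega)},\qquad \norm{\nabla w_j}_{\Lp^1(\Omega)}\le C\norm{g_j}_{\Lp^1(\pd\Omega)},
\]
so that $w_j\to 0$ in $\W^{1,1}(\Omega;\R^m)$.

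Setting $u_j := v_j + w_j \in \C^\infty(\Omega;\R^m)$ gives $u_j|_{\pd\Omega} = u|_{\pd\Omega}$ by construction and $u_j \to u$ in $\Lp^1(\Omega;\R^m)$. Area-strict convergence follows by combining the area-strict limit of $v_j$ with $\norm{\nabla w_j}_{\Lp^1} \to 0$ via the elementary Lipschitz bound $|\sqrt{1+|A|^2}-\sqrt{1+|B|^2}|\le|A-B|$ (for the $\limsup$) and Reshetnyak lower semicontinuity applied to $(Du_j)_j$ (for the $\liminf$). The $\Lp^\infty$ refinements are achieved by truncation: post-composing $u_j$ with a smooth retraction onto $\overline{B(0,M)}$ for $M$ equal to $\norm{u}_{\Lp^\infty}$ or $\norm{u|_{\pd\Omega}}_{\Lp^\infty}$, and, if necessary, truncating $u$ at growing levels before mollifying in order to enforce $v_j\in\Lp^\infty$. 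The main obstacle is the construction of $w_j$: producing a function smooth on the open set $\Omega$ whose $\Lp^1$-trace on the \emph{merely} Lipschitz boundary $\pd\Omega$ is \emph{exactly} a prescribed element of $\Lp^1(\pd\Omega;\R^m)$, with $\W^{1,1}$-norm controlled linearly by the $\Lp^1$-norm of that trace. The delicate points are reconciling the discontinuous nature of an arbitrary $\Lp^1$ boundary datum with interior smoothness, and verifying that the Lipschitz distortion introduced by the bicollar does not destroy the gradient bound $\norm{\nabla w_j}_{\Lp^1} \lesssim \norm{g_j}_{\Lp^1(\pd\Omega)}$ that is indispensable for preserving area-strict convergence.
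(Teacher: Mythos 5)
Your ``mollify-then-correct'' decomposition $u_j = v_j + w_j$ is a genuine detour from the direct argument used in the cited references (Bildhauer's Lemma~B.1, Kristensen--Rindler): there, one mollifies $u$ with a partition of unity subordinate to an exhaustion $\Omega_k\Subset\Omega$, with mollification radii $\rho_k\todown 0$ near $\pd\Omega$, and proves trace preservation in one step by showing $\tilde u - u$ is the \emph{strict} $\BV$-limit of the compactly supported partial sums $T_N := \sum_{k\le N}\bigl[(\varphi_k u)\conv\eta_{\rho_k} - \varphi_k u\bigr]$, so $(\tilde u-u)|_{\pd\Omega}=0$ by trace continuity under strict convergence. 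Your plan defers this and instead first builds $v_j$ with the \emph{wrong} trace, then tries to add back the defect. The problem is that the step you flag as the ``main obstacle''---producing a $\C^\infty(\Omega)$ function with exactly the prescribed $\Lp^1(\pd\Omega)$ trace $g_j$---is \emph{the same} technical difficulty as proving trace preservation directly for $u$: the Gagliardo extension of $g_j$ is only $\W^{1,1}$, not smooth, and smoothing it while keeping the trace is precisely the partition-of-unity argument applied to a $\W^{1,1}$ function in place of a $\BV$ one. The smallness of $\norm{g_j}_{\Lp^1(\pd\Omega)}$ does not help here; the detour does not reduce the problem, it merely transports it to the corrector.

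There is also a concrete gap in the $\Lp^\infty$ refinement. You propose post-composing $u_j$ with a smooth retraction $T$ onto $\overline{B(0,M)}$, $M=\norm{u}_{\Lp^\infty}$. For $T\circ u_j$ to keep the trace $u|_{\pd\Omega}$ you need $T=\id$ on $\overline{B(0,M)}$, but no $\C^1$ map $T\colon\R^m\to\overline{B(0,M)}$ with $T|_{\overline{B(0,M)}}=\id$ exists: already for $m=1$, $T'(M)=1$ forces $T(M+\varepsilon)>M$ for small $\varepsilon>0$. Using a smooth retraction that is the identity only on $B(0,M-\delta)$ destroys the exact trace equality whenever $\norm{u|_{\pd\Omega}}_{\Lp^\infty}=M$, and a diagonal in $\delta$ cannot recover it. By contrast, the direct approach inherits $\norm{u_j}_{\Lp^\infty}\le\norm{u}_{\Lp^\infty}$ for free, since averaging does not increase the sup-norm, so no truncation is needed. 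Your additive corrector $w_j$ is exactly what breaks the $\Lp^\infty$ bound and forces the (unavailable) truncation.
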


\begin{theorem}[Blowing up $\BV$ functions]\label{thmbvblowup}
Let $u\in\BV(\Omega;\R^m)$ and write $\Omega$ as the disjoint union
\[
\Omega=\Dcal_u\cup\mathcal{J}_u\cup \Ccal_u\cup\mathcal{N}_u
\]
and $Du$ as the mutually singular sum
\[
Du=\nabla u\lL+D^ju+D^cu,\qquad \nabla u\lL=Du\restrict\Dcal_u,\quad D^ju=Du\restrict\Jcal_u,\quad D^cu=Du\restrict\Ccal_u,
\]
where $\mathcal{D}_u$ denotes the set of points at which $u$ is approximately differentiable, $\mathcal{J}_u$ denotes the set of jump points of $u$, $\mathcal{C}_u$ denotes the set of points where $u$ is approximately continuous but not approximately differentiable, and $\mathcal{N}_u$ satisfies $|Du|(\Ncal_u)=\mathcal{H}^{d-1}(\mathcal{N}_u)=0$. For $r>0$ and $x\in\Dcal_u\cup\Ccal_u\cup\Jcal_u$, define $u^r\in\BV(\bB^d;\R^m)$ by
\[
u^r(z):=c_r\left(\frac{u(x+rz)-(u)_{x,r}}{r}\right),\quad c_r:=\begin{cases}1&\text{ if }x\in\Dcal_u,\\
r&\text{ if }x\in\Jcal_u,\\
\frac{r^d}{|Du|(B(x,r))}&\text{ if }x\in\Ccal_u.
\end{cases}
\]
Then the following trichotomy relative to $\lL\restrict\Omega+|Du|$ holds:
\begin{enumerate}[(i)]
\item For $\lL$-almost every $x\in\Omega$, 
\[
u^r\to \nabla u(x)\frarg \text{ strongly in }\BV(\bB^d;\R^m)\text{ as $r\todown 0$}.
\]
\item\label{bvblowupjump} For $\Hcal^{d-1}$-almost every $x\in\Jcal_u$,
\[
u^r\to \frac{1}{2}\begin{cases}u^+(x)-u^-(x)\quad\text{ if }\ip{z}{n_{u(x)}}\geq 0\\
u^-(x)-u^+(x)\quad\text{ if }\ip{z}{n_{u(x)}}<0,
\end{cases}
\]
strictly in $\BV(\bB^d;\R^m)$ as $r\downarrow 0$.
\item For $|D^c u|$-almost every $x\in\Omega$ and for any sequence $r_n\downarrow 0$, the sequence $(u^{r_n})_{r_n}$ contains a subsequence which converges weakly* in $\BV(\bB^d;\R^m)$ to a non-constant limit function of the form
\begin{equation}\label{eqcantorblowup}
\eta(x)\gamma\left(\ip{\frarg}{\zeta(x)}\right),\qquad\frac{\dd D^c u}{\dd|D^c u|}(x)=\eta(x)\otimes\zeta(x),
\end{equation}
where $\gamma\in\BV((-1,1);\R)$ is non-constant and increasing. Moreover, if $(u^{r_n})_{n}$ is a sequence converging weakly* in this fashion then, for any $\varepsilon>0$, there exists $\tau\in(1-\varepsilon,1)$ such that the sequence $(u^{\tau r_n})_{n}$ converges strictly in $\BV(\bB^d;\R^m)$ to a limit of the form described by~\eqref{eqcantorblowup}.
\end{enumerate}
In all three situations, we denote $\lim_r u^r$ (or $\lim_n u^{r_{n}}$) by $u^0$. If the base (blow-up) point $x$ needs to be specified explicitly to avoid ambiguity, then we shall write $u^r_{x}$, $u^{r_n}_{x}$ and $u^0_{x}$.
\end{theorem}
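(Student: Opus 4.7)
The plan is to treat each of the three cases separately, all based on the change-of-variables identity
\[
Du^r = c_r r^{-d}\,(T^{(x,r)})_\# Du,
\qquad |Du^r|(\bB^d) = c_r r^{-d}\,|Du|(B(x,r)).
\]
Each choice of $c_r$ is engineered so that the right-hand side has a finite, non-degenerate limit on the part of $\Omega$ carrying the relevant piece of $Du$.

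Parts (i) and (ii) are essentially classical. For (i), approximate differentiability of $u$ at $\lL$-a.e.\ $x\in\Dcal_u$ gives $u^r\to\nabla u(x)(\cdot)$ in $\Lp^1(\bB^d;\R^m)$; the Besicovitch differentiation theorem applied to the mutually singular pair $(|D^s u|,\lL)$ shows that $|D^s u|(B(x,r))/r^d\to 0$ at $\lL$-a.e.\ $x$, so $|Du^r|(\bB^d)\to\omega_d|\nabla u(x)|$, and mass matching upgrades $\Lp^1$ convergence to strong $\BV$ convergence. For (ii), the defining property of the one-sided traces $u^\pm(x)$ on the half-balls cut by $n_u(x)$ together with $(u)_{x,r}\to\tfrac12(u^+(x)+u^-(x))$ yields $\Lp^1$ convergence to the stated step profile, and the density identities $|D^ju|(B(x,r))/r^{d-1}\to\omega_{d-1}|u^+(x)-u^-(x)|$, plus $|\nabla u|\lL(B(x,r))+|D^cu|(B(x,r))=\SmallO(r^{d-1})$ at $\Hcal^{d-1}$-a.e.\ jump point, deliver the mass matching needed for strict convergence.

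Part (iii) is the main obstacle. Compactness is straightforward: the normalisation $c_r=r^d/|Du|(B(x,r))$ enforces $|Du^r|(\bB^d)=1$, and the centring by $(u)_{x,r}$ arranges $(u^r)_{\bB^d}=0$, so a Poincar\'e--Wirtinger inequality yields a uniform $\BV(\bB^d;\R^m)$-bound and weak* precompactness along any $r_n\downarrow 0$. The structural description of the limit is the technical core and rests on Alberti's rank-one theorem:
\[
\frac{\dd D^cu}{\dd|D^cu|}(x) = \eta(x)\otimes\zeta(x)\qquad\text{at $|D^cu|$-a.e.\ $x$}.
\]
I would argue that this rigidity passes to any weak* cluster point $u^0$, forcing $Du^0=(\eta(x)\otimes\zeta(x))\,\sigma$ for some $\sigma\in\mbfM^+(\bB^d)$, and that the stronger translation-invariance statement underlying the rank-one theorem shows that $\sigma$ is invariant under translations in directions orthogonal to $\zeta(x)$. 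Such a $\sigma$ is necessarily a product of a positive one-dimensional measure along the $\zeta(x)$-axis with $(d-1)$-dimensional Hausdorff measure on the orthogonal hyperplane, which forces $u^0(z)=\eta(x)\gamma(\langle z,\zeta(x)\rangle)$ for some monotone $\gamma\in\BV((-1,1))$: monotonicity from $\sigma\geq 0$, and non-constancy from the fact that $Du^r(\bB^d)\to\eta(x)\otimes\zeta(x)$ has unit Frobenius norm, so mass cannot fully escape in the limit. This last point requires care and is made rigorous via the `moreover' clause.

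For the `moreover' claim, I would observe that the nondecreasing function $\rho\mapsto|Du^0|(\rho\bB^d)$ on $(1-\varepsilon,1)$ has at most countably many discontinuities, so a $\tau\in(1-\varepsilon,1)$ can be chosen with $|Du^0|(\partial(\tau\bB^d))=0$. Since $u^{\tau r_n}$ is, up to an affine rescaling of values and an additive shift arising from $(u)_{x,r_n}-(u)_{x,\tau r_n}$, a dilation of $u^{r_n}$ restricted to $\tau\bB^d$, this choice of $\tau$ prevents boundary mass loss in the limit $n\to\infty$, and mass matching combined with $\Lp^1$ convergence upgrades the weak* convergence to strict convergence, yielding the required limit of the form~\eqref{eqcantorblowup}.
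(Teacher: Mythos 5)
Your plan is essentially the same as the paper's: cite standard $\BV$ differentiability/jump theory for (i) and (ii), obtain compactness for (iii) from the normalisation $|Du^{r_n}|(\bB^d)=1$ and $(u^{r_n})_{\bB^d}=0$ plus Poincar\'e, cite Alberti's rank-one theorem (via Theorem~3.95 in~\cite{AmFuPa00FBVF}) for the structural description of $u^0$, and then prove the \emph{moreover} clause by choosing $\tau$ with $|Du^0|(\tau\pd\bB^d)=0$ and rescaling. Two remarks on precision.

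First, the assertion that $\sigma$ is ``invariant under translations in directions orthogonal to $\zeta(x)$'' by ``the stronger translation-invariance statement underlying the rank-one theorem'' is not the right mechanism: what the cited result actually exploits is that tangent measures to $D^cu$ at $|D^cu|$-a.e.\ $x$ have the \emph{fixed} rank-one polar $\eta(x)\otimes\zeta(x)$, and that they must simultaneously be curl-free (being derivatives of $\BV$ functions); these two constraints together force $u^0(z)=\eta(x)\gamma(\lrangle{z,\zeta(x)})$. Since you ultimately defer to that reference, this is a cosmetic imprecision rather than a gap.

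Second, the \emph{moreover} clause has a genuine omission. To pass from $Du^{r_n}\wsc Du^0$ to the convergence $|Du^{r_n}|(\tau\bB^d)\to|Du^0|(\tau\bB^d)$ that your ``mass matching'' step relies on, you need to know $|Du^{r_n}|\wsc|Du^0|$ in $\mbfM^+(\bB^d)$, which is \emph{not} automatic from weak* convergence of vector measures (cancellation could a priori reduce the limiting total variation). The paper supplies the missing ingredient: for any $\mu\in\mbfM(\Omega;\R^{m\times d})$ one has $\Tan(\mu,x)=\tfrac{\dd\mu}{\dd|\mu|}(x)\,\Tan(|\mu|,x)$ for $|\mu|$-a.e.\ $x$ (Theorem~2.44 in~\cite{AmFuPa00FBVF}), so the fixed polar at $x$ rules out cancellation and lets one pass to a subsequence along which $|Du^{r_n}|\wsc|Du^0|$. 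You should also require the additional (generically satisfiable, by non-constancy of $u^0$) condition $|Du^0|(\tau\bB^d)>0$, so that the renormalisation factor $|Du|(B(x,r_n))/|Du|(B(x,\tau r_n))=1/|Du^{r_n}|(\tau\bB^d)$ converges to the finite positive number $1/|Du^0|(\tau\bB^d)$; without this the rescaling step degenerates.
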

\begin{proof}
For points $x\in\Dcal_u\cup\Jcal_u$, the conclusion of Theorem~\ref{thmbvblowup} follows directly from the approximate continuity of $\nabla u$ at $\lL$-almost every $x\in\Omega$ and the existence of the jump triple $(u^+,u^-,n_u)$ for $\Hcal^{d-1}$-almost every $x\in\Jcal_u$. For points $x\in\Ccal_u$, the weak* precompactness of sequences $(u^{r_n})_n$ follows from the fact that $(u^{r_n})_{\bB^d}=0$ and $|Du^{r_n}|(\bB^d)=1$ combined with the weak* compactness of bounded sets in $\BV(\bB^d;\R^m)$. The representation of $u^0$ is non-trivial and can only be obtained through the use of Alberti's Rank One Theorem, see Theorem~3.95 in~\cite{Albe93ROPD}. It remains for us to show that, given a weakly* convergent sequence $u^{r_n}\wsc u$ and $\varepsilon>0$, we can always find $\tau\in(1-\varepsilon,1)$ such that $(u^{\tau r_n})_n$ is strictly convergent in $\BV(\bB^d;\R^m)$.

First note that we can assume that $|Du^{r_n}|\wsc|Du^0|$ in $\mbfM^+(\bB^d)$ since (see for instance Theorem~2.44 in~\cite{AmFuPa00FBVF}) for arbitrary $\mu\in\mbfM(\Omega;\R^{m\times d})$ it is always true that
\[
\Tan(\mu,x)=\frac{\dd\mu}{\dd|\mu|}(x)\Tan(|\mu|,x)\qquad\text{ for $|\mu|$-almost every }x\in\Omega.
\]
Now let $\tau\in(1-\varepsilon,1)$ be such that $|Du^0|(\tau\pd\bB^d)=0$ and $|Du^0|(\tau\bB^d)>0$. This implies that $|Du^{r_n}|(\tau\bB^d)\to|Du^0|(\tau\bB^d)$ and hence that $Du^{r_n}\restrict\tau\bB^d\to Du^0\restrict\tau\bB^d$ strictly. Since 
\[
Du^{\tau r_n}=\frac{|Du|(B(x_0,r_n))}{|Du|(B(x_0,\tau r_n))}T_\#^{(0,\tau)}Du^{r_n}\restrict\tau\bB^d,
\]
and
\[
\frac{|Du|(B(x_0,r_n))}{|Du|(B(x_0,\tau r_n))}=\frac{1}{|Du^{r_n}|(\tau\bB^d)}\to\frac{1}{|Du^0|(\tau\bB^d)}\quad\text{ as }n\to\infty,
\]
we therefore see that
\[
Du^{\tau r_n}\to\frac{1}{|Du^0|(\tau\bB^d)}T_\#^{(0,\tau)}Du^0\restrict\tau\bB^d\quad\text{ strictly in }\BV(\bB^d;\R^m)\text{ as }n\to\infty
\]
as required.
\end{proof}

For $x\in\Jcal_u$, the function $u^0$ gives a 'vertically recentered' description of the behaviour of $u$ near $x$. It will be convenient to have a compact notation for also describing this behaviour when $u$ is not recentered.

\begin{definition}\label{defjumpblowup}
For $x\in\Jcal_u$, define $u^\pm\in\BV(\bB^d;\R^m)$ by
\[
u^\pm(z):=\begin{cases}
u^+(x) &\text{ if }\ip{z}{n_u(x)}\geq 0,\\
u^-(x) &\text{ if }\ip{z}{n_u(x)}<0.
\end{cases}
\]
If the choice of base point $x\in\Jcal_u$ needs to be emphasised for clarity, we shall write $u^\pm_x$.
\end{definition}
This definition is independent of the choice of orientation $(u^+,u^-,n_u)$ and, for $\Hcal^{d-1}$-almost every $x\in\Jcal_u$, the rescaled function $u(x+r\frarg)$ converges strictly to $u^\pm$ as $r\downarrow 0$.

The following proposition was first proved in~\cite{RinSha15SCE}:

\begin{proposition}\label{ctsembedding}
Let $\Omega\subset\mbR^d$ be a domain with Lipschitz boundary and assume that $d>1$. Then the embedding
\[\BV(\Omega;\mbR^m)\hookrightarrow \Lp^{\frac{d}{d-1}}(\Omega;\mbR^m)
\]
is continuous when $\BV(\Omega;\mbR^m)$ is equipped with the topology of strict convergence.
\end{proposition}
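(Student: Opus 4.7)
The plan is to combine three ingredients: weak $L^{d/(d-1)}$-convergence (from boundedness plus $L^1$-convergence), convergence of $L^{d/(d-1)}$-norms (the main step), and uniform convexity of $L^{d/(d-1)}$. Throughout, let $(u_n)_n\subset\BV(\Omega;\R^m)$ converge strictly to $u$, and write $p:=d/(d-1)$.

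First I would recall the classical norm-continuous Sobolev embedding $\BV(\Omega;\R^m)\hookrightarrow\Lp^{p}(\Omega;\R^m)$ on bounded Lipschitz domains to conclude that $(u_n)_n$ is bounded in $\Lp^p$. Since strict convergence implies $\Lp^1$-convergence and hence (after extracting a subsequence) pointwise almost everywhere convergence $u_n\to u$, a standard argument using the Dunford--Pettis-type characterisation of weak $\Lp^p$-compactness yields $u_n\rightharpoonup u$ weakly in $\Lp^p$. Because $\Lp^p$ is uniformly convex for $1<p<\infty$, it will suffice to prove the norm convergence $\norm{u_n}_{\Lp^p}\to\norm{u}_{\Lp^p}$ and then invoke Radon--Riesz.

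The heart of the proof is therefore to show $\limsup_n\norm{u_n}_{\Lp^p}\leq\norm{u}_{\Lp^p}$ (the $\liminf$ inequality being automatic from weak $\Lp^p$-convergence). My approach is to prove that the family $\{|u_n|^p\}_n$ is equi-integrable, which, when combined with the pointwise almost everywhere convergence, yields $u_n\to u$ in $\Lp^p$ via Vitali's convergence theorem. For the equi-integrability, I would apply the Sobolev--Poincar\'e inequality on the truncations $w_n^t:=(|u_n|-t)_+\in\BV(\Omega)$, which satisfy the chain-rule bound $|Dw_n^t|(\Omega)\leq|Du_n|(\{|\tilde{u}_n|>t\})$, to obtain
\[
\int_{\{|u_n|>2t\}}|u_n|^p\;\dd x\leq C\,|Du_n|\bigl(\{|\tilde{u}_n|>t\}\bigr)^{p}+C\,t^{-p/(d-1)}\norm{u_n}_{\Lp^1}^{p}.
\]
The second summand is uniformly controlled by $\Lp^1$-boundedness. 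For the first summand, I would use the inclusion
\[
\{|\tilde{u}_n|>t\}\subset\{|\tilde{u}|>t-\varepsilon\}\cup\{|u_n-u|>\varepsilon\}
\]
together with the fact that strict convergence of $Du_n$ to $Du$ upgrades weak* convergence of the total variations $|Du_n|\wsc|Du|$ to narrow (i.e.\ strict) convergence. On sets of the first type, narrow convergence gives $\limsup_n|Du_n|(\{|\tilde{u}|>t-\varepsilon\})\leq|Du|(\overline{\{|\tilde{u}|>t-\varepsilon\}})$, which is small for large $t$. For sets of the second type, which have vanishing Lebesgue measure, I would use the uniform integrability of the family $\{|Du_n|\}_n$ as measures (a consequence of the tightness coming from strict convergence of a limit finite measure) to show that the contribution of $|Du_n|$ on such sets is uniformly small.

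The main obstacle is precisely the uniform tail bound on $|Du_n|(\{|\tilde{u}_n|>t\})$: this is where strict (rather than merely weak*) convergence is essential, because under strict convergence the total variations converge narrowly and hence behave like a tight family of probability-type measures. Once this uniform tail estimate is in place, equi-integrability of $|u_n|^p$ follows, Vitali's theorem gives strong $\Lp^p$-convergence of a subsequence, and a standard subsequence-of-subsequence argument upgrades this to convergence of the full sequence, completing the proof of continuity.
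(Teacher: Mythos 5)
Your overall route (Sobolev inequality on truncations, equi-integrability, Vitali) is a reasonable strategy, but the two sub-arguments you offer for the crucial uniform tail estimate $\sup_n|Du_n|(\{|\tilde u_n|>t\})\to 0$ as $t\to\infty$ both have genuine gaps, and these gaps sit exactly where the difficulty of the proposition lies. (The paper itself does not prove this result but cites~\cite{RinSha15SCE}, so your argument has to stand on its own.) The first gap: you invoke Portmanteau in the form $\limsup_n|Du_n|(\{|\tilde u|>t-\varepsilon\})\leq|Du|(\overline{\{|\tilde u|>t-\varepsilon\}})$ and then assert this closure has small $|Du|$-measure for large $t$. That assertion is false in general: the topological closure of a super-level set of the precise representative can carry a fixed amount of $|Du|$-mass at \emph{every} level. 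For instance, on $\Omega=(0,1)^2$ take $u=v+\mathbbm{1}_{\{x_1>1/2\}}$ with $v(x)=\sum_k 2^{-k}|x-(1/2,q_k)|^{-\alpha}$, $\alpha\in(0,1/2)$ and $\{q_k\}$ dense in $(0,1)$. Then $u\in\BV(\Omega)$, $\tilde u$ is unbounded in every neighbourhood of each $(1/2,q_k)$, so $\overline{\{|\tilde u|>s\}}\supset\{1/2\}\times[0,1]$ for every $s>0$, and hence $|Du|(\overline{\{|\tilde u|>s\}})\geq|D^ju|(\{1/2\}\times[0,1])=1$ for all $s$.

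The second gap is the assertion that tightness of $(|Du_n|)_n$ --- which does follow from strict convergence --- yields a form of uniform integrability under which $|Du_n|(E_n)\to0$ whenever $\lL(E_n)\to0$. Tightness controls mass escaping towards $\pd\Omega$; it says nothing about concentration on shrinking sets in the interior of $\Omega$, and such concentration is unavoidable whenever $|D^su|\neq0$. Take $u=\mathbbm{1}_{\{x_1>0\}}$ on $(-1,1)^2$ and $u_n(x)=\varphi(nx_1)$ for a fixed smooth monotone profile $\varphi$; then $u_n\to u$ strictly, the set $E_n:=\{|\tilde u_n-\tilde u|>\varepsilon\}$ is a strip of width $\BigO(1/n)$ so that $\lL(E_n)\to0$, yet $|Du_n|(E_n)\to 2(1-2\varepsilon)>0$. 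The common obstruction is that strict convergence permits $|Du_n|$ to concentrate towards $|D^su|$, and this concentration can occur precisely on the sets your decomposition tries to discard. Any correct proof must confront this concentration head-on rather than route around it through Portmanteau on level sets of $\tilde u$ or through uniform $\lL$-absolute continuity of $|Du_n|$, neither of which is available. One natural first reduction is to extend across $\pd\Omega$: using the strict continuity of the trace operator (recalled just before this proposition) one can pass to a compactly supported, strictly convergent sequence in $\BV(\R^d;\R^m)$, which at least removes all boundary effects --- but the interior concentration issue still has to be dealt with by a genuinely different argument than the one you sketch.
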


\begin{theorem}[The chain rule in $\BV$]\label{bvchainrule}
Let $u\in\BV(\Omega;\R^m)$ and let $f\in\C^1(\R^m;\R^k)$ be Lipschitz. It follows that $v:=f\circ u\in\BV(\Omega;\R^k)$ and that
\begin{align*}
Dv&=\left(\int_0^1\nabla f(u^\theta(x))\;\dd\theta\right)Du\\
&=\nabla f(u)\nabla u\lL\restrict\Omega+\nabla f(u)D^cu+(f(u^+)-f(u^-))\otimes n_u\Hcal^{d-1}\restrict\Jcal_u.
\end{align*}
\end{theorem}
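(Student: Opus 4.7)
The plan is to first verify $v=f\circ u\in\BV(\Omega;\R^k)$, then identify $Dv$ via the canonical mutually singular decomposition $Du = \nabla u\,\lL\restrict\Omega + D^c u + D^j u$, treating each component in turn, and finally check that the two displayed formulas coincide. Membership in $\BV$ is immediate from the Lipschitz property of $f$: the difference-quotient characterisation of $\BV$ yields $|Dv|(\Omega)\leq \mathrm{Lip}(f)\,|Du|(\Omega)$, and in particular $Dv\ll|Du|$, so $Dv$ decomposes consistently along $\Dcal_u$, $\Ccal_u$ and $\Jcal_u$. The equivalence of the two displayed formulas reduces to a pointwise computation using the fundamental theorem of calculus, since
\[
\left(\int_0^1 \nabla f(u^\theta(x))\,\dd\theta\right)(u^+(x)-u^-(x)) = f(u^+(x))-f(u^-(x))
\]
on $\Jcal_u$, while $u^\theta(x)=\widetilde{u}(x)$ for every $\theta\in[0,1]$ at points of approximate continuity.

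The absolutely continuous and jump parts can be handled by standard arguments. For the Lebesgue part: at $\lL$-a.e.\ $x\in\Dcal_u$, $u$ is approximately differentiable and $\widetilde{u}(x)$ exists, so a direct approximate-differentiability calculation yields $v$ approximately differentiable at $x$ with $\nabla v(x)=\nabla f(\widetilde u(x))\nabla u(x)$. For the jump part: at $\Hcal^{d-1}$-a.e.\ $x_0\in\Jcal_u$, Theorem~\ref{thmbvblowup}(ii) ensures that the rescalings $u(x_0+r\,\sbullet\,)$ converge strictly in $\BV(\bB^d;\R^m)$ to a profile built from $u^\pm(x_0)$ and $n_u(x_0)$. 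Because $f$ is $\C^1$ and Lipschitz, composition preserves this strict convergence and gives $v^\pm(x_0)=f(u^\pm(x_0))$ together with $n_v(x_0)=n_u(x_0)$; consequently $\Jcal_v\subset\Jcal_u$ modulo $\Hcal^{d-1}$-negligible sets and $D^j v=(f(u^+)-f(u^-))\otimes n_u\,\Hcal^{d-1}\restrict\Jcal_u$.

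The Cantor component will be the main obstacle. At $|D^c u|$-a.e.\ $x\in\Ccal_u$ we have $u^\theta(x)=\widetilde u(x)$, so the target reduces to $D^c v\restrict\Ccal_u=\nabla f(\widetilde u)\,D^c u$. My approach is smooth approximation: taking a standard mollification $u_\rho$, one has $u_\rho\to u$ area-strictly in $\BV$ (in particular $Du_\rho\to Du$ strictly) and $u_\rho\to\widetilde u$ pointwise $(\lL+|D^c u|)$-a.e., while the classical chain rule applied to each $u_\rho\in\C^\infty$ gives $D(f\circ u_\rho)=\nabla f(u_\rho)\nabla u_\rho\,\lL$. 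Passing to the limit against test functions $\varphi\in\C_c(\Omega\setminus\Jcal_u;\R^k)$ then calls for a Reshetnyak-type continuity argument applied to the positively one-homogeneous integrand $(y,A)\mapsto\varphi(x)\nabla f(y)A$, paired against the graph-lifted measures on $\Omega\times\R^m$. The delicate point will be ensuring that the limit of $\nabla f(u_\rho)\nabla u_\rho\,\lL$ genuinely produces $\nabla f(\widetilde u)\,D^c u$ on $\Ccal_u$ without polluting the result with a spurious jump contribution; this is secured by the mutual singularity $|D^c u|\perp\Hcal^{d-1}\restrict\Jcal_u$ combined with the strict convergence of the restricted derivatives, which allows the test functions to isolate the Cantor part. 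Assembling the three contributions then yields the claimed decomposition of $Dv$.
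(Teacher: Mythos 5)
The paper does not prove this theorem; it records it as a classical result (Vol'pert's chain rule, cf.\ Theorem~3.96 in~\cite{AmFuPa00FBVF}), so there is no in-paper argument to compare against. Your Lebesgue and jump computations and the FTC reconciliation of the two displayed formulas are fine. (For the jump part you do not actually need ``composition preserves strict convergence,'' which would itself require a separate continuity argument: the identities $v^\pm = f(u^\pm)$ and $n_v=n_u$ on $\Jcal_v$ follow directly from the continuity of $f$ and the definition of approximate one-sided limits, and then the jump density of $Dv$ is read off from the blow-up because $Dv\restrict\Jcal_u$ is a $(d-1)$-rectifiable measure.)

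The genuine gap is in the Cantor part. Passing to the limit ``against test functions $\varphi\in\C_c(\Omega\setminus\Jcal_u;\R^k)$'' is not a meaningful operation: $\Jcal_u$ need not be closed (it can be dense in $\Omega$), so $\Omega\setminus\Jcal_u$ is in general not open, and no class of continuous test functions can detect the Borel-level mutual singularity $|D^cu|\perp\Hcal^{d-1}\restrict\Jcal_u$. Moreover the mollified measures $\nabla u_\rho\,\lL$ are absolutely continuous for every $\rho>0$, so ``the strict convergence of the restricted derivatives'' has no content — no choice of $\varphi$ can suppress the smeared jump contribution in the limit. The repair is not to isolate the Cantor part but to run your mollification-plus-Reshetnyak step \emph{globally}, so that all three parts of $Dv$ emerge at once. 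Fix $\varphi\in\C_c(\Omega)$ and a component $(i,j)$, and set $g(x,y,A):=\varphi(x)\bigl(\nabla f(y)A\bigr)_{ij}$. Then $g$ is continuous, linear (hence positively one-homogeneous) in $A$ with $g^\infty=g$, satisfies $|g(x,y,A)|\leq\|\varphi\|_\infty\,\mathrm{Lip}(f)\,|A|$, and therefore belongs to $\mbfR(\Omega\times\R^m)$ with the growth required by Theorem~\ref{thmareastrictcontinuity}. Taking $(u_\rho)\subset\C^\infty(\Omega;\R^m)$ with $u_\rho\to u$ area-strictly, the classical chain rule gives $\int_\Omega g(x,u_\rho,\nabla u_\rho)\,\dd x=\int_\Omega\varphi\,\dd\bigl(D(f\circ u_\rho)\bigr)_{ij}$, which converges to $\int_\Omega\varphi\,\dd(Dv)_{ij}$ because $f\circ u_\rho\to v$ in $\Lp^1$ and $\sup_\rho|D(f\circ u_\rho)|(\Omega)\leq\mathrm{Lip}(f)\sup_\rho|Du_\rho|(\Omega)<\infty$. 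Theorem~\ref{thmareastrictcontinuity} identifies the same limit as
\[
\int_\Omega\varphi\,\bigl(\nabla f(u)\nabla u\bigr)_{ij}\,\dd x+\int_\Omega\int_0^1\varphi\,\Bigl(\nabla f(u^\theta)\frac{\dd D^su}{\dd|D^su|}\Bigr)_{ij}\,\dd\theta\,\dd|D^su|.
\]
Varying $\varphi$ and $(i,j)$ gives $Dv=\bigl(\int_0^1\nabla f(u^\theta)\,\dd\theta\bigr)Du$ directly; the second displayed form follows by splitting $Du$ into its mutually singular pieces and evaluating $\int_0^1\nabla f(u^\theta)\,\dd\theta$ on each, exactly as you do. In this route your separate Lebesgue and jump computations become consistency checks rather than independent inputs, and the Cantor case requires no further work.
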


\subsection{Integrands and compactified spaces}

%The following definition has already been used.

\begin{definition}[Recession functions]\label{defrecessionfunction}
For $f\colon\overline{\Omega}\times\R^m\times\R^{m\times d}\to\R$, define the \textbf{recession function} $f^\infty\colon\overline{\Omega}\times\R^m\times\R^{m\times d}\to\R$ of $f$ by
\begin{equation*}
 f^\infty\left(x,y,A\right)=\lim_{\substack{(x_,y_k,A_k)\to (x,y,A)\\t_k\to\infty}}\frac{f\left(x_k,y_k,t_kA_k\right)}{t_k},
\end{equation*}
whenever the right hand side exists for every $(x,y,A)\in\overline{\Omega}\times\R^m\times\R^{m\times d}$ independently of the order in which the limits of the individual sequences $(x_k)_k\subset\overline{\Omega}$, $(y_k)_k\subset\R^m$, $(A_k)_k\subset\R^{m\times d}$, $(t_k)_k\subset(0,\infty)$ are taken and of the sequences used. The definition of $f^\infty$ implies that, whenever it exists, it must be continuous.
\end{definition}

The following example demonstrates that the assumption in Theorem~\ref{wsclscthm} that $f^\infty \geq 0$ cannot be relaxed independently of the other requirements on $f$.
\begin{example}\label{expositiverecession}
Define $f\in\C(\mbR^2\times\mbR^2)$ by $f(y,A)=-\frac{y_2}{1+|y_2|}\cdot A_1$ so that $f\equiv f^\infty$ and let $v_{k,j}\subset\W^{1,1}((-1,1);\mbR^2)$ be given by
\[
v_{j,k}(x):=\begin{cases}
k(1-\cos(jx),\sin(jx)) &\text{ if }x\in(0,2\pi/j),\\
0 & \text{otherwise}.
\end{cases}
\]
For each fixed $k$, it is clear that $v_{j,k}\wsc 0$ in $\BV((-1,1);\R^2)$ as $j\to\infty$. We can also see, however, that,
%\item For $u\in\C^\infty((-1,1);\mbR^2)$, let $u_{j,k}:=u+v_{j,k}$.
\begin{align*}
\lim_{j\to\infty}\int_{-1}^1f(v_{j,k},\nabla v_{j,k})\;\dd x&=\lim_{j\to\infty}kj\int_0^{2\pi/j}\frac{-k\sin(jx)}{1+k|\sin(jx)|}\sin(jx)\;\dd x\\
&=k\int_0^{2\pi}\frac{-(\sin z)^2}{1/k+|\sin z|}\;\dd z\to-\infty\text{ as }k\to\infty.
\end{align*}
It therefore follows that $\Fcalrw[0]=-\infty$. Moreover, for $u\in(\C^1\cap\BV)((-1,1);\mbR^2)$, we can repeat the procedure above with $u_{j,k} = u+v_{j,k}$ in place of $v_{j,k}$  to obtain
	\[
\mathcal{F}_{**}\equiv-\infty\quad\text{ on }\BV((-1,1);\R^2).
	\]
Defining $\widetilde{v}_{j,k}\in\BV((-1,1)^2;\R^2)$ by $\widetilde{v}_{j,k}(x,y)=v_{j,k}(x)$ (since Theorem~\ref{wsclscthm} is only stated for $d>1$) and noting both that $\widetilde{v}\wsc 0$ in $\BV((-1,1)^2;\R^2)$ and
\[
\int_{(-1,1)^2}f(\widetilde{v}_{j,k},\nabla\widetilde{v}_{j,k})\;\dd x = \int_{-1}^1f(v_{j,k},\nabla v_{j,k})\;\dd x,
\]
we see that the conclusion of Theorem~\ref{wsclscthm} cannot hold for $f$ and that no general integral formula is possible in this case.
\end{example}

Not every function $f\colon\R^{m\times d}\to\R$ with linear growth possesses a recession function in the sense of Definition~\ref{defrecessionfunction}, as simple examples show (this even holds for quasiconvex functions, see~\cite{Mull92QFHD}). The \textbf{upper recession function} defined by
\[
  f^\#(A):=\limsup_{t\to\infty}\frac{f(tA)}{t},
\]
however, always exist in $\R$, and is often denoted by $f^\infty$ in the literature.
%Here, the requirement that $f^\infty$ be defined in terms of the joint limit featuring the in Definition~\ref{defrecessionfunction} is necessary for our purposes.
If $f^\infty$ does exist in the sense of Definition~\ref{defrecessionfunction}, then it is clear that $f^\#=f^\infty$.

Define the \textbf{sphere compactification} $\sigma\R^m$ of $\R^m$ to be the locally convex metric space given by the disjoint union
\[
\sigma\R^m:=\R^m\uplus\infty\pd\bB^m,\qquad\infty\pd\bB^m:=\{\infty e\colon e\in\pd\bB^m\},
\]
endowed with the metric induced by the bijection $i\colon\sigma\R^m\to\overline{\bB^m}$,
\[
i(y)=
\begin{cases}
\frac{y}{1+|y|}&\text{ if }y\in\R^m,\\
e&\text{ if }y=\infty e\in\infty\pd\bB^m.
\end{cases}
\]
That is, $d_{\sigma\R^m}(y,w):=|i(y)-i(w)|$ and we have that $f\colon\sigma\R^m\to\R$ is continuous if and only if $f\circ i^{-1}\colon\overline{\bB^m}\to\R$ is continuous.

The space $\mbfM(\overline{\Omega}\times\sigma\R^m)$ is now abstractly defined as the dual of $\C(\overline{\Omega}\times\sigma\R^m)$, and can also be understood in terms of more familiar spaces of measures as follows: $\mu\in\mbfM(\overline{\Omega}\times\sigma\R^m)$ if and only if there exist $\rho\in\mbfM(\overline{\Omega}\times\R^m)$ and $\eta\in\mbfM(\overline{\Omega}\times\pd\bB^m)$ such that
\[
\mu(E)=\rho(E\cap(\overline{\Omega}\times\R^m))+\eta\left(\{(x,e)\in\overline{\Omega}\times\pd\bB^m\colon (x,\infty e)\in E\cap(\overline{\Omega}\times\infty\pd\bB^m)\}\right)
\]
for every Borel subset $E\subset\overline{\Omega}\times\sigma\R^m$.

A function $f$ defined on $\overline{\Omega}\times\R^m\times\R^{m\times d}$ admits a canonical extension $h$ defined on the partially compactified space $\overline{\Omega}\times\sigma\R^m\times\R^{m\times d}$, where $h$ is defined by
\begin{equation}\label{eqextensiondef}
h(x,y,A):=\begin{cases}
f(x,y,A)&\quad\text{for }(x,y,A)\in\overline{\Omega}\times\R^m\times\R^{m\times d},\\
\displaystyle\lim_{(x_j,y_j,A_j)\to(x,y,A)} f(x_j,y_j,A_j)&\quad \text{for }(x,y,A)\in\overline{\Omega}\times\infty\pd\bB^m\times\R^{m\times d},
\end{cases}
\end{equation}
whenever the limit appearing in~\eqref{eqextensiondef} exists independently of our choice of convergent sequence $((x_j,y_j,A_j))_j\subset\overline{\Omega}\times\R^m\times\R^{m\times d}$. This method of extension is canonical in the sense that $f\in\C(\overline{\Omega}\times\R^m\times\R^{m\times d})$ occurs as the restriction $f=g|_{\Omega\times\R^m\times\R^{m\times d}}$ for some $g\in\C(\overline{\Omega}\times\sigma\R^m\times\R^{m\times d})$ if and only if $g=h$.

Given $c\in\R^m$ we extend the addition operator $y\mapsto y+c$ continuously from $\R^m$ to $\sigma\R^m$ in this way by setting
\begin{equation}\label{eqextendedaddition}
y+c:=\begin{cases}
y+c \qquad & \text{ if }y\in\R^m,\\
y \qquad & \text{ if }y\in\infty\pd\bB^m.
\end{cases}
\end{equation}

\begin{definition}\label{defelemintegrands}
A function $f\in \C(\overline{\Omega}\times\R^m\times\R^{m\times d})$ is a member of $\mbfE(\Omega\times\R^m)$ if there exists a function $g_f\in\C(\overline{\Omega}\times\sigma\R^m\times\overline{\bB^{m\times d}})$ which is such that
\[
f(x,y,A)=(1+|A|)g_f\left(x,y,\frac{A}{1+|A|}\right)\quad\text{ for }(x,y,A)\in\overline{\Omega}\times\R^m\times\R^{m\times d}.
\]
\end{definition}
We see that $f\in\mbfE(\Omega\times\R^m)$ implies $|f(x,y,A)|\leq C(1+|A|)$ (with $C=\norm{g_f}_\infty$) for all $(x,y,A)\in\overline{\Omega}\times\sigma\R^m\times\R^{m\times d}$. and that the recession function $f^\infty$ exists. In addition, both $f$ and $f^\infty$ admit extensions to $\C(\overline{\Omega}\times\sigma\R^m\times\R^{m\times d})$ in the sense of~\eqref{eqextensiondef}. Note however that, as the example $f(y,A)=\exp(-(|y|-|A|)^2)|A|$ demonstrates, the existence of continuous extensions for $f$ and $f^\infty$ does not guarantee that $f\in\mbfE(\Omega\times\R^m)$. For this to be the case, we must also require that
\begin{equation}\label{eqextendedffore}
h(x,y,A)=\lim_{j\to\infty}\frac{ f(x_j,y_j,t_jA_j)}{t_j}\qquad\text{ for }(x,y,A)\in\overline{\Omega}\times\infty\pd\bB^m\times\R^{m\times d},
\end{equation}
for any sequences $((x_j,y_j,A_j))_j\subset\overline{\Omega}\times\R^m\times\R^{m\times d}$, $(t_j)_j\subset[0,\infty)$ such that $(x_j,y_j,A_j)\to(x,y,A)$ and $t_j\to\infty$, where $h$ is the extension of $f^\infty$ to $\overline{\Omega}\times\sigma\R^m\times\R^{m\times d}$ given by~\eqref{eqextensiondef}. For $f\in\mbfE(\Omega\times\R^m)$, this limit always exists by virtue of the continuity of $g_f$ at points $(x,y,A)\in\overline{\Omega}\times\sigma\R^m\times\pd\bB^{m\times d}$.

Our interest in integrands which admit extensions to $\overline{\Omega}\times\sigma\R^m\times\R^{m\times d}$ stems from the fact that, in order to compute limits of the form
\begin{equation}\label{eqlimitwewant}
\lim_{j\to\infty}\int_\Omega f(x,u_j(x),\nabla u_j(x))\;\dd x
\end{equation}
for weakly* convergent sequences $(u_j)_j\subset\W^{1,1}(\Omega;\R^m)$, it is necessary that the extension $h$ of $f^\infty$ exists in some sense. This can be seen by considering sequences of the form $u_j(x):=j^{d-1}u(jx)$ (where $u\in\C_c(\R^d;\R^m)$ is fixed), which are such that $|u_j(x)|\to\infty$ on $\supp|\nabla u_j|\lL\restrict\Omega$. It turns out (see Example~\ref{exlscbreaks} below in conjunction with Proposition~\ref{lemextendedrepresentation}) that it is not sufficient to require that this extension exists as the limit~\eqref{eqextensiondef} (with $f=f^\infty$). On the other hand, the requirement that $h$ exist in the sense of the limit~\eqref{eqextendedffore} is clearly stronger than necessary, since it precludes us from considering any integrands which are unbounded in $y$ such as $f(x,y,A)=|y|$, for which we can compute representation. Definition~\ref{defextendedrecessionfn} below provides the optimal existence requirement for an extension of $f^\infty$ from the perspective of computing~\eqref{eqlimitwewant} for as wide a class of integrands $f$ as possible:

\begin{definition}\label{defextendedrecessionfn}
Given $f\colon\overline{\Omega}\times\R^m\times\R^{m\times d}\to\R$, define the \textbf{extended recession function} $\sigma f^\infty\colon\overline{\Omega}\times\sigma\R^m\times\R^{m\times d}\to\R$ of $f$ by
\begin{equation}\label{eqdefweakextendedrecessionfunction}
\sigma f^\infty(x,y,A)=\lim_{\substack{(x_j,y_j,A_j)\to(x,y,A)\\ t_j\to\infty\\ |y_j|^{d/(d-1)}=\Ocal(t_j)}}\frac{ f(x_j,y_j,t_jA_j)}{t_j},
\end{equation}
whenever the right hand side exists for every $(x,y,A)\in\overline{\Omega}\times\sigma\R^m\times\R^{m\times d}$ independently of which sequences $t_j\uparrow\infty$ and $((x_j,y_j,A_j))_j\subset\overline{\Omega}\times\R^m\times\R^{m\times d}$ are used provided that the constraint $|y_j|^{d/(d-1)}=\Ocal(t_j)$ is satisfied, that is,
\[
\limsup_{j\to\infty}\frac{|y_j|^{d/(d-1)}}{t_j}<\infty.
\]
The definition of $\sigma f^\infty$ implies that, whenever it exists, $\sigma f^\infty$ is continuous and also that $f^\infty$ exists and satisfies $f^\infty=\sigma f^\infty\restrict(\overline{\Omega}\times\R^m\times\R^{m\times d})$.
\end{definition}
Thanks to the existence of the limit~\eqref{eqextendedffore}, we see that $\sigma f^\infty$ exists for all $f\in\mbfE(\Omega\times\R^m)$.
\begin{lemma}\label{prelimlemrecessioncontrol}
Let $f\colon\overline{\Omega}\times\R^m\times\R^{m\times d}$ be such that $f^\infty$ exists and $f^\infty\equiv 0$. Then, for any $\varepsilon>0$ and $K\Subset\R^m$, there exists $R>0$ such that $|A|\geq R$ implies
\[
|f(x,y,A)|\leq\varepsilon(1+|A|)\qquad\text{ for all }(x,y)\subset\overline{\Omega}\times K.
\]
If $f$ is such that $\sigma f^\infty$ exists with $\sigma f^\infty\equiv 0$ then, for any $\varepsilon>0$ and $k>0$, there exists $R>0$ such that $|A|\geq R$ implies 
\[
|f(x,y,A)|\leq\varepsilon(1+|A|)\qquad\text{ for all }x\in\overline{\Omega}\text{ and }y\in\R^m\text{ satisfying }|y|^{d/(d-1)}\leq k(1+|A|).
\]
\end{lemma}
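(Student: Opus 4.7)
My plan is to prove both statements by contradiction, extracting a convergent subsequence that forces $f^\infty$ (respectively $\sigma f^\infty$) to be nonzero at some point, contradicting the hypothesis.

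For the first statement, suppose the conclusion fails. Then there exist $\varepsilon > 0$, a compact $K \Subset \R^m$, and sequences $(x_j) \subset \overline{\Omega}$, $(y_j) \subset K$, $(A_j) \subset \R^{m\times d}$ with $|A_j| \to \infty$ such that $|f(x_j,y_j,A_j)| > \varepsilon(1+|A_j|)$. Set $t_j := |A_j|$ and $B_j := A_j/|A_j|$. By compactness of $\overline{\Omega}$, $K$, and $\partial \bB^{m\times d}$, we may pass to a subsequence so that $x_j \to x \in \overline{\Omega}$, $y_j \to y \in K$, and $B_j \to B$ with $|B| = 1$. Since $f^\infty$ exists and equals zero, the characterisation in Definition~\ref{defrecessionfunction} yields
\[
0 = f^\infty(x,y,B) = \lim_{j\to\infty}\frac{f(x_j,y_j,t_j B_j)}{t_j} = \lim_{j\to\infty}\frac{f(x_j,y_j,A_j)}{|A_j|}.
\]
On the other hand, $|f(x_j,y_j,A_j)|/|A_j| \geq \varepsilon(1+|A_j|)/|A_j| \to \varepsilon > 0$, a contradiction.

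For the second statement, suppose the conclusion fails. Then there exist $\varepsilon > 0$, $k > 0$, and sequences $(x_j) \subset \overline{\Omega}$, $(y_j) \subset \R^m$, $(A_j) \subset \R^{m\times d}$ with $|A_j| \to \infty$ and $|y_j|^{d/(d-1)} \leq k(1+|A_j|)$ such that $|f(x_j,y_j,A_j)| > \varepsilon(1+|A_j|)$. As before, set $t_j := |A_j|$, $B_j := A_j/|A_j|$, and extract a subsequence with $x_j \to x \in \overline{\Omega}$ and $B_j \to B \in \partial \bB^{m\times d}$. The sequence $(y_j)$ is handled in the compactification $\sigma \R^m$: either $(y_j)$ admits a subsequence converging to some $y \in \R^m$, or $|y_j| \to \infty$ and, extracting further, $y_j/|y_j| \to e \in \partial \bB^m$, which means $y_j \to \infty e$ in $\sigma \R^m$. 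In either case the admissibility constraint $|y_j|^{d/(d-1)} = \Ocal(t_j)$ holds by construction, since $|y_j|^{d/(d-1)}/t_j \leq k(1+|A_j|)/|A_j| \to k$. Hence Definition~\ref{defextendedrecessionfn} applies and
\[
0 = \sigma f^\infty(x,y,B) = \lim_{j\to\infty}\frac{f(x_j,y_j,A_j)}{|A_j|},
\]
again contradicting the lower bound $|f(x_j,y_j,A_j)|/|A_j| \geq \varepsilon$.

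The argument is essentially a compactness-plus-definition-unwinding, with no real obstacle other than checking that the admissibility condition in the definition of $\sigma f^\infty$ is respected along the extracted subsequence; the choice $t_j = |A_j|$ makes this automatic given the hypothesis $|y_j|^{d/(d-1)} \leq k(1+|A_j|)$. The only subtlety is that one must allow $y_j$ to escape to infinity in the sphere compactification in the second part, which is precisely what $\sigma f^\infty$ was designed to accommodate.
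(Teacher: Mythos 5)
Your argument is correct and is essentially the same as the paper's: both proceed by contradiction, extract convergent subsequences by compactness of $\overline{\Omega}$, $K$ (or $\sigma\R^m$), and the unit sphere in $\R^{m\times d}$, verify the admissibility constraint $|y_j|^{d/(d-1)}=\Ocal(t_j)$ for the second part, and then invoke the sequential characterisation of $f^\infty$ (respectively $\sigma f^\infty$) to contradict the lower bound $\varepsilon$. The only cosmetic difference is your choice $t_j=|A_j|$, $B_j=A_j/|A_j|$ versus the paper's $t_j=1+|A_j|$, $B_j=A_j/(1+|A_j|)$, which changes nothing.
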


\begin{proof}
To prove the first statement, assume for a contradiction that there exists a sequence of points $((x_k,y_k,A_k))_k\subset\overline{\Omega}\times K\times \R^{m\times d}$ such that $|A_k|\to\infty$ and $|f(x_k,y_k,A_k)|>\varepsilon(1+|A_k|)$ for some fixed $\varepsilon>0$. By passing to a subsequence, we can assume that $((x_k,y_k))\to(x,y)$ in $\overline{\Omega}\times K$ and that $A'_k:=A_k/(1+|A_k|)$ converges to some limit $B\in\pd\mathbb{B}^{m\times d}$ and $t_k:=(1+|A_k|)\to\infty$. Taking the limit in $f(x_k,y_k,A_k)/(1+|A_k|)=t_k^{-1}f(x_k,y_k,t_kA'_k)$ we would then have that $|f^\infty(x,y,B)|\geq\varepsilon$, a contradiction.

To prove the second statement, we proceed similarly by assuming that there exists a sequence of points $((x_j,y_j,A_j))_j\subset\overline{\Omega}\times \R^m\times \R^{m\times d}$ such that $|A_j|\to\infty$, $|y_j|^{d/(d-1)}\leq k(1+|A_j|)$, and $|f(x,y,A)|>\varepsilon(1+|A|)$. Letting $t_j:=1+|A_j|$ and passing to a subsequence we can assume that $((x_j,y_j,A_j/(1+|A_j|)))_j$ converges to some limit $(x,y,B)\in\overline{\Omega}\times\sigma\R^m\times\pd\bB^{m\times d}$. Since $|y_j|^{d/(d-1)}\leq k t_j$, the definition of $\sigma f^\infty$ then implies that
\[
0=|\sigma f^\infty(x,y,B)|=\left|\lim_{j\to\infty}\frac{f(x_j,y_j,t_j A_j/(1+|A_j|))}{t_j}\right|\geq\varepsilon,
\]
which gives us the required contradiction.
\end{proof}

\begin{definition}[Representation integrands]\label{defrepresentationf}
A function $f\colon\overline{\Omega}\times\R^m\times\R^{m\times d}\to\R$ is said to be a member of $\mbfR(\Omega\times\R^m)$ if
$f$ is Carath\'eodory and its recession function $f^\infty$ exists. We shall primarily be interested in the following subsets of $\Rbf(\Omega\times\R^m)$, defined by the growth bounds (which are understood to hold uniformly in their respective parameters for all $(x,y,A)\in\overline{\Omega}\times\R^m\times\R^{m\times d}$) satisfied by their members:
\begin{itemize}
\item $f\in\RL(\Omega\times\R^m)$ if there exists an exponent $p\in[1,d/(d-1))$ and a constant $C>0$ such that $|f(x,y,A)|\leq C(1+|y|^p+|A|)$,
\item $f\in\RBVw(\Omega\times\R^m)$ if there exists $C>0$ and a function $h\in\RL(\Omega\times\R^m)$ with $h\geq 0$ for which $\sigma h^\infty$ exists and satisfies $\sigma h^\infty\equiv 0$ such that
\begin{equation}\label{eqBVwcond}
- h(x,y,A)\leq f(x,y,A)\leq C(1+|y|^{d/(d-1)}+|A|).
\end{equation}
\end{itemize}
\end{definition}
The classes $\RL(\Omega\times\R^m)$ and $\RBVw(\Omega\times\R^m)$ are named as such because they represent the largest classes of integrands to which we will refer whilst making statements about liftings and weakly* convergent $\BV$-functions. In particular, the conclusion of Theorem~\ref{wsclscthm} also holds for all $f\in\RBVw(\Omega\times\R^m)$. The following example demonstrates that the lower bound which we require for members of $\RBVw(\Omega\times\R^m)$ is optimal in the sense that Theorem~\ref{wsclscthm} does not hold if we merely assume that $f\colon\overline{\Omega}\times\R^m\times\R^{m\times d}\to\R$ satisfies $|f(x,y,A)|\leq C(1+|A|)$, is such that $f^\infty$ exists with $f^\infty\geq 0$, and is quasiconvex (or even convex) in the final variable.

\begin{example}\label{exlscbreaks}
For $d\geq 2$, fix $e\in\pd\bB^{d+1}$ and let $\overline{u}\in(\W^{1,1}\cap\C^\infty_0)(\bB^d;\pd B^{d+1}(e,1)\setminus\{0\})$ be a homeomorphism which can be extended continuously to a map defined on $\overline{\bB^d}$ by setting $u(\pd\bB^d)=0$. For example, we can take $\overline{u}$ to be the composition $w^{-1}\circ v$ of the dilation map $v\colon x\in\bB^d\to x/(1-|x|)$ with the inverse of the stereographic projection $w\colon \pd B^{d+1}(e,1) \setminus\{0\}\to\R^d$. It follows that $x\mapsto \overline{u}(x)/|\overline{u}(x)|$ is a homeomorphism between $\bB^d$ and the upper hemisphere $\{y\in\pd\bB^{d+1}\colon y\cdot e>0\}$.

As $\overline{u}$ is not constant, there exists $\varphi\in\C_0(\bB^d;\R^{(d+1)\times d})$ such that
\[
\int_{\bB^d}\nabla \overline{u}(x):\varphi(x)\;\dd x>0.
\]
Since $\varphi|_{\pd\bB^d}=0$, we can define the zero-homogeneous function $B\in\C(\R^{d+1};\R^{(d+1)\times d})$ by
\[
{B}(y):=\begin{cases}
\varphi\left(\left(\frac{\overline{u}}{|\overline{u}|}\right)^{-1}\left(\frac{y}{|y|}\right)\right)&\quad\text{ if }y\cdot e>0 ,\\
\varphi\left(\left(\frac{\overline{u}}{|\overline{u}|}\right)^{-1}\left(-\frac{y}{|y|}\right)\right)&\quad\text{ if }y\cdot e<0 ,\\
0 &\quad\text{ if }y\cdot e=0.
\end{cases}
\]
By construction, we have that $B$ satisfies
\[
\int_{\bB^d}\nabla \overline{u}(x):B(\overline{u}(x))\;\dd x=\int_{\bB^d}\nabla \overline{u}(x):\varphi(x)\;\dd x>0.
\]
Since the function $B$ is zero-homogeneous and $u(x)\neq 0$ for $x\in\bB^d$, for every $x\in\bB^d$ it holds that 
\[
s\nabla \overline{u}(x):B(s\overline{u}(x))=s\nabla \overline{u}(x):B(\overline{u}(x))<s^{d/(d-1)}|u(x)|^{d/(d-1)} 
\]
for all $s>$ sufficiently large. We can therefore find $s>1$ such that, if we define $u\in(\W^{1,1}\cap\C_0^\infty)(\bB^d;\R^{d+1})$ by $u(x)=s\overline{u}(x)$, it holds that
\begin{equation}\label{eqexampleineq}
\int_{\{\nabla u(x)<|u(x)|^{d/(d-1)}\}}\nabla {u}(x):B({u}(x))\;\dd x>0.
\end{equation}

Now, define $f\colon\R^{d+1}\times\R^{(d+1)\times d}\to\R$ by
\[
f(x,y,A):=\begin{cases}
-|y|^{d/(d-1)}-\log(A:B(y)-|y|^{d/(d-1)}+1) &\quad\text{ if }A:B(y)\geq|y|^{d/(d-1)},\\
-A:B(y)&\quad\text{ if }A:B(y)<|y|^{d/(d-1)},
\end{cases}
\]
where $A:B=\sum_{i,j}A_{ij}B_{ij}$ denotes the Frobenius product defined on matrices in $\R^{(d+1)\times d}$. Note that $f\in\C(\R^{d+1}\times\R^{(d+1)\times d})$, satisfies $|f(y,A)|\leq 2\norm{B}_\infty|A|$, and is such that $f^\infty$ exists and is given by the formula
\[
f^\infty(y,A)=\begin{cases}
0 &\quad\text{ if }A:B(y)\geq 0,\\
-A:B(y)&\quad\text{ if }A:B(y)< 0.
\end{cases}
\]
In particular, $f^\infty\geq 0$ on $\R^{d+1}\times\R^{(d+1)\times d}$. We also note that, whilst $f^\infty$ extends continuously to a non-negative function defined on all of $\sigma\R^{d+1}\times\R^{(d+1)\times d}$ in the sense of~\eqref{eqextensiondef} (with $f=f^\infty$), $\sigma f^\infty$ does not exist according to Definition~\ref{defextendedrecessionfn}.

Finally, we claim that $f(y,\frarg)$ is convex on $\R^{(d+1)\times d}$ for every $y\in\R^{d+1}$: this follows from the fact that the function
\[
t\mapsto\begin{cases}
-a-\log(t-a+1) &\quad\text{ if }t\geq a,\\
-t&\quad\text{ if }t<a,
\end{cases}
\]
is convex on $\R$ for each fixed $a\geq 0$ and that the map $A\mapsto A:B(y)$ is linear.

Next, define the functional $\Fcal\colon\W^{1,1}(\bB^d;\R^{d+1})\to\R$ by
\[
\Fcal[u]:=\int_{\bB^d}f(u(x),\nabla u(x))\;\dd x,
\]
and for each $r\in(0,1)$ define $u_r\in(\W^{1,1}\cap\C^\infty_0)(\bB^d;\R^{d+1})$ by
\[
u_r(x):=\begin{cases}
r^{1-d}u\left(\frac{x}{r}\right)&\quad\text{ if }|x|\leq r,\\
0 &\quad\text{ if }|x|>r.
\end{cases}
\]
Using the change of variables $z=x/r$, the fact that $\lim_{r\downarrow 0}r\log(r^{-1})=0$, and the zero-homogeneity of $B$, we can compute
\begin{align*}
\lim_{r\to 0}\Fcal[u_r]=&-\int_{\{\nabla u(z):B(u(z))<|u(z)|^{d/(d-1)}\}}\nabla u(z): B(u(z))\;\dd z\\
&\qquad -\int_{\{\nabla u(z):B(u(z))\geq|u(z)|^{d/(d-1)}\}}|u(z)|^{d/(d-1)}\;\dd z.
\end{align*}
By virtue of~\eqref{eqexampleineq}, then, we have that $\lim_{r\downarrow 0}\Fcal[u_r]<0$. It is easy to see that $u_r\wsc 0$ in $\BV(\bB^d;\R^{d+1})$ as $r\to 0$ and so we deduce that $\Fcalrw[0]<0$. In fact, by replacing $u$ with $k\cdot u$, repeating the procedure above and then letting $k\to\infty$, we can even see that $\Fcalrw[0]=-\infty$. On the other hand, since $f(y,0)=0$ for all $y\in\R^{d+1}$ and $f^\infty\geq 0$ on $\R^{d+1}\times\R^{(d+1)\times d}$, the integral functional given in the statement of Theorem~\ref{wsclscthm} must be non-negative at $u\equiv 0$. Hence, the conclusion of Theorem~\ref{wsclscthm} cannot hold for the integrand $f$ defined above. 
\end{example}

\subsection{Functionals and surface energies}\label{subsecfunctionals}
For $f\in\Rbf(\Omega\times\R^m)$, we define the extended functional $\Fcal\colon\BV(\Omega;\R^m)\to\overline{\R}$ by
\begin{equation}\label{eqctsfunctional}
\Fcal[u]:=\int_\Omega f(x,u(x),\nabla u(x))\;\dd x+\int_\Omega\int_0^1 f^\infty\left(x,u^\theta(x),\frac{\dd D^su}{\dd|D^su|}(x)\right)\;\dd\theta\;\dd|D^su|(x),
\end{equation}
where $u^\theta$ is the jump interpolant defined above by~\eqref{eqdefjumpinterpolant}.

This choice of extension for $\Fcal$ to $\BV(\Omega;\R^m)$ is different to the one discussed in Section~\ref{secintro}, where $\Fcal$ is extended to $\BV(\Omega;\R^m)$ by $\Fcal_{**}$, and is used for technical reasons: whilst the method of extension for $\Fcal$ by relaxation is the right choice from the point of view of seeking existence of minimisers, $\Fcalrw$ is not continuous with respect to any convergence with respect to which $\C^\infty(\Omega;\R^m)$ is dense in $\BV(\Omega;\R^m)$ (see Example~\ref{exbadrecoveryseq}) and hence not an ideal functional to work with with respect to analysis in $\BV(\Omega;\R^m)$. By contrast, for integrands $f\in\RBVw(\Omega\times\R^m)$ (which need not be quasiconvex in the final variable), Theorem~\ref{thmareastrictcontinuity} below, states that $\Fcal$ as defined by~\eqref{eqctsfunctional} is the area-strictly continuous extension of $u\mapsto\int_\Omega f(x,u(x),\nabla u(x))\;\dd x$ from $\W^{1,1}(\Omega;\R^m)$ (and even $\C^\infty(\Omega;\R^m)$) to $\BV(\Omega;\R^m)$. Proposition~\ref{propfixedbdary} therefore implies that Theorem~\ref{wsclscthm} can equivalently be seen as identifying the weak* relaxation of this continuously extended $\Fcal$ from $\BV(\Omega;\R^m)$ to $\BV(\Omega;\R^m)$, which is the approach that we take in what follows.

\begin{theorem}\label{thmareastrictcontinuity}
 Let $\Omega\subset\mbR^d$ be a bounded domain with Lipschitz boundary and let $f\in \mbfR(\Omega\times\R^m)$ satisfy the growth bound
\begin{equation}\label{assumption1}
|f(x,y,A)|\leq C(1+|y|^{d/(d-1)}+|A|) \quad\text{ for all } (x,y,A)\in\Omega\times\mbR^m\times\mbR^{m\times d}.
\end{equation}
Then the functional $\Fcal\colon\BV(\Omega;\R^m)\to\R$ is area-strictly continuous.
\end{theorem}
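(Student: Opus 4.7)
The plan is to rewrite $\Fcal$ as a single $1$-homogeneous integral against a graph-type measure, and then to invoke a parametric Reshetnyak-type continuity theorem. Define the perspective integrand $Pf\colon\overline{\Omega}\times\R^m\times[0,\infty)\times\R^{m\times d}\to\R$ by
\[
Pf(x,y,t,A):=\begin{cases} t\, f(x,y,A/t) & \text{if } t>0,\\ f^\infty(x,y,A) & \text{if } t=0.\end{cases}
\]
Since $f$ is Carath\'eodory and $f^\infty$ exists in the strong sense of Definition~\ref{defrecessionfunction}, $Pf$ is Carath\'eodory, positively $1$-homogeneous in $(t,A)$, and satisfies $|Pf(x,y,t,A)|\le C(1+|y|^{d/(d-1)})(t+|A|)$. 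I then introduce the pairing
\[
\bar\mu_u:=(\Lcal^d\restrict\Omega,\,Du)\in\mbfM(\overline{\Omega};\R\times\R^{m\times d}),
\]
whose total variation is $|\bar\mu_u|=\sqrt{1+|\nabla u|^2}\,\Lcal^d\restrict\Omega+|D^su|$, so that area-strict convergence $u_j\to u$ in $\BV$ is equivalent to $u_j\to u$ in $\Lp^1$ together with strict convergence $\bar\mu_{u_j}\to\bar\mu_u$ in $\mbfM(\overline{\Omega};\R\times\R^{m\times d})$. Splitting the defining integral of $\Fcal[u]$ along the Lebesgue--Radon--Nikodym decomposition of $\bar\mu_u$ and recasting the jump contribution through the BV chain rule (Theorem~\ref{bvchainrule}) then yields the compact representation
\[
\Fcal[u]=\int_0^1\!\!\int_{\overline{\Omega}} Pf\!\left(x,u^\theta(x),\tfrac{\dd\bar\mu_u}{\dd|\bar\mu_u|}(x)\right)\dd|\bar\mu_u|(x)\,\dd\theta.
\]

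By Proposition~\ref{propfixedbdary} and a diagonal argument, it is enough to establish $\Fcal[u_j]\to\Fcal[u]$ whenever $(u_j)_j\subset\C^\infty(\Omega;\R^m)$ with $u_j\to u$ area-strictly. For such $u_j$ the singular part of $\bar\mu_{u_j}$ vanishes and $u_j^\theta\equiv u_j$, so the goal reduces to proving
\[
\int_{\overline{\Omega}} Pf\!\left(x,u_j,\tfrac{\dd\bar\mu_{u_j}}{\dd|\bar\mu_{u_j}|}\right)\dd|\bar\mu_{u_j}|\;\longrightarrow\;\int_0^1\!\!\int_{\overline{\Omega}}Pf\!\left(x,u^\theta,\tfrac{\dd\bar\mu_u}{\dd|\bar\mu_u|}\right)\dd|\bar\mu_u|\,\dd\theta.
\]
I would deduce this from a parametric version of Reshetnyak's theorem valid for any Carath\'eodory, positively $1$-homogeneous $h\colon\overline{\Omega}\times\R^m\times V\to\R$ satisfying $|h(x,y,v)|\le C(1+|y|^{d/(d-1)})|v|$. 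For $h$ continuous in $x$ this is done by uniformly approximating $h$ on bounded $y$-subsets by finite sums of tensor products $\varphi(x)\psi(y)\chi(v)$, with $\chi$ continuous and $1$-homogeneous on $V$: the $\chi$-factor then converges by classical Reshetnyak under strict convergence of $\bar\mu_{u_j}$, while the $\psi(u_j)$-factor converges because $u_j\to u$ in $\Lp^{d/(d-1)}$ (Proposition~\ref{ctsembedding}) and $|\bar\mu_{u_j}|\to|\bar\mu_u|$ strictly in $\mbfM^+(\overline{\Omega})$. The Carath\'eodory-in-$x$ case is then reached by Scorza-Dragoni, the growth bound providing uniform control of the error outside a large compact subset of $\overline{\Omega}\times\R^m$.

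The hard part will be justifying the emergence of the $\int_0^1\dd\theta$ average on the right-hand side, since no such averaging is present on the left. This averaging encodes the structure of smooth approximations of $\BV$-functions near their jump sets: at $\Hcal^{d-1}$-almost every $x_0\in\Jcal_u$, the blow-up analysis in Theorem~\ref{thmbvblowup}\,(\ref{bvblowupjump}) shows that a mollified $u_j$ must interpolate smoothly between $u^-(x_0)$ and $u^+(x_0)$ across the jump interface oriented by $n_u(x_0)$, and the push-forward measures $(\id,u_j)_{\#}|\bar\mu_{u_j}|$ on $\overline{\Omega}\times\R^m$ concentrate near $(x_0,u^\theta(x_0))$ with $\theta$ uniformly distributed in $[0,1]$. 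A compactness-plus-blow-up argument applied to these graph-type measures identifies the weak* limit, delivers the required $\dd\theta$-integration, and, upon applying the parametric Reshetnyak statement to $h=Pf$, completes the proof.
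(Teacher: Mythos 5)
Your scaffolding --- the perspective integrand $Pf$ with its linear growth bound, encoding area-strict convergence as strict convergence of $\bar\mu_u:=(\Lcal^d\restrict\Omega,Du)$ together with $\Lp^1$-convergence, the $\theta$-averaged Reshetnyak representation of $\Fcal$, and the reduction to smooth $u_j$ via Proposition~\ref{propfixedbdary} plus a diagonal argument --- is sound and is essentially the right setup (the paper defers the proof to~\cite{RinSha15SCE}, which works with exactly these graph-type objects). You also correctly flag the emergence of the $\theta$-average as the crux. But the two steps you bring to bear on it both have gaps, and the second is an essential missing idea.

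First, the parametric-Reshetnyak convergence you assert is actually false when $\Jcal_u\ne\emptyset$. Take $m=1$, $u(x)=\sgn(x_1)$, $u_j$ a mollification of $u$ at scale $1/j$, $\varphi\equiv1$, $\chi=|\cdot|$, $\psi(y)=y^2$; then $u_j\to u$ area-strictly and a direct computation gives
\[
\lim_{j\to\infty}\int\psi(u_j)\,\dd|\bar\mu_{u_j}|\;=\;\Lcal^d(\Omega)+\Hcal^{d-1}(\Jcal_u)\int_{-1}^1 y^2\,\dd y\;=\;\int_0^1\!\!\int\psi(u^\theta)\,\dd|\bar\mu_u|\,\dd\theta,
\]
which no substitution of a single representative $\psi(u(\cdot))$ reproduces. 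Strong $\Lp^{d/(d-1)}$-convergence of $u_j$ plus strict convergence of $|\bar\mu_{u_j}|$ does not control the mixed term --- the concentration of $|\bar\mu_{u_j}|$ happens precisely where $\psi(u_j)$ is transitioning --- so there is no $\theta$-free Reshetnyak lemma to apply first and then correct. The right object is the graph measure $\gr^{u_j}_\#\bar\mu_{u_j}$, and what has to be proved is that it converges \emph{strictly} in $\mbfM(\overline\Omega\times\R^m;\R\times\R^{m\times d})$ to the perspective measure $P\gamma\asc{u}$ of the elementary lifting; ordinary Reshetnyak continuity on the product space $\overline\Omega\times\R^m$ then finishes. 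Second --- and this is the essential gap --- ``compactness plus blow-up'' does not deliver this strict convergence. Compactness at $x_0\in\Jcal_u$ yields a subsequential weak* limit lifting $\gamma$ with barycentre $u^\pm_{x_0}$, but Theorem~\ref{thmbvblowup} is a statement about the single function $u$, not about the approximating sequence, and compactness alone does not identify the fibre of $\gamma$ over $x_0$ as the arclength-uniform measure on $[u^-(x_0),u^+(x_0)]$: a priori it is any probability measure on $\R^m$ consistent with the divergence constraint $\diverg_y(\cdot)=-\tfrac{n_u(x_0)}{|u^+-u^-|}(\delta_{u^+}-\delta_{u^-})$. The uniform structure is forced by \emph{minimality}: area-strictness gives $\lim_j|\gamma\asc{u_j}|(\Omega\times\R^m)=\lim_j|Du_j|(\Omega)=|Du|(\Omega)$, so any weak* limit $\gamma$ is a minimal lifting, and Proposition~\ref{lemmustrict} then yields $\gamma=\gamma\asc{u}$. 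That proposition in turn runs through the Structure Theorem~\ref{thmliftingsstructure} and the Brezis--Coron--Lieb rigidity Lemmas~\ref{lemdivmassonline} and~\ref{lemdivonlineidentify} (a probability measure on $\R^m$ whose divergence is $\tfrac{c}{|b-a|}(\delta_b-\delta_a)$ charges only the segment $[a,b]$; a divergence-free measure concentrated on $[a,b]$ vanishes). Nothing in your proposal supplies this rigidity, and without some equivalent of it the blow-up identifies no limit and the proof does not close.
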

Theorem~\ref{thmareastrictcontinuity} is proved under slightly more general hypotheses in Theorem~5.2 of~\cite{RinSha15SCE}.

Given $u\in\BV(\Omega;\R^m)$ and $x\in\Jcal_u$, define the class of functions $\Acal_u(x)$ by
\begin{equation}\label{eqw*energyclass}
\Acal_u(x):=\left\{\varphi\in\left(\C^\infty\cap\Lp^\infty\right)(\bB^d;\R^m)\colon\;\varphi= u^\pm_{x}\text{ on }\pd\bB^d\right\},
\end{equation}
where $u^\pm_x$ is as given in Definition~\ref{defjumpblowup}. For $f\in\Rbf(\Omega\times\R^m)$ and $u\in\BV(\Omega;\R^m)$, the surface energy density $K_f[u]$ is defined for $x\in\Jcal_u$ via
\begin{equation}\label{eqsurfacedensity}
K_f[u](x):=\inf\left\{\frac{1}{\omega_{d-1}}\int_{\bB^d}f^\infty(x,\varphi(z),\nabla\varphi(z))\;\dd z\colon\;\varphi\in\Acal_u(x)\right\}.
\end{equation}

Lemma~\ref{lemenergymeasurable} below shows that $K_f[u]$ is always $\Hcal^{d-1}$-measurable and hence that the integral
\[
\int_{\Jcal_u}K_f[u[(x)\;\dd\Hcal^{d-1}(x),
\]
is always well-defined for every $u\in\BV(\Omega;\R^m)$.

\begin{lemma}\label{lemenergymeasurable}
If $f\in\Rbf(\Omega\times\R^m)$ then $K_f[u]$ is $\Hcal^{d-1}\restrict\Jcal_u$-measurable and equal $\Hcal^{d-1}\restrict\Jcal_u$-almost everywhere to an upper-semicontinuous function.
\end{lemma}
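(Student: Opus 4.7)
The plan is to represent $K_f[u](x)$ as the evaluation of a universal function $\widetilde K$ on the jump data $(x,u^+(x),u^-(x),n_u(x))$, prove that $\widetilde K$ is upper semicontinuous, and then combine this with Borel measurability of the jump data and Lusin's theorem.

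First I will introduce the parametrised surface energy
\[
\widetilde K(x,v,w,\nu):=\inf\left\{\frac{1}{\omega_{d-1}}\int_{\bB^d}f^\infty(x,\varphi(z),\nabla\varphi(z))\,\dd z\right\},
\]
with the infimum taken over $\varphi\in(\C^\infty\cap\Lp^\infty)(\bB^d;\R^m)$ whose trace on $\pd\bB^d$ equals $v$ on $\{\langle z,\nu\rangle\ge 0\}$ and $w$ on $\{\langle z,\nu\rangle<0\}$; Definition~\ref{defjumpblowup} and \eqref{eqsurfacedensity} then give $K_f[u](x)=\widetilde K(x,u^+(x),u^-(x),n_u(x))$.

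The main step is to prove that $\widetilde K$ is upper semicontinuous on $\overline{\Omega}\times\R^m\times\R^m\times\pd\bB^d$. Fix a base point $(x_0,v_0,w_0,\nu_0)$ and $\delta>0$, and choose a near-optimal $\varphi^{*}$ in the admissible class at $(v_0,w_0,\nu_0)$ with integral at most $\omega_{d-1}(\widetilde K(x_0,v_0,w_0,\nu_0)+\delta)$. For $(x,v,w,\nu)$ close to the base point and a small parameter $\epsilon>0$ I construct a competitor by a \emph{rotate-and-cutoff} procedure. Let $R$ be a rotation with $R\nu_0=\nu$ chosen to tend to the identity as $\nu\to\nu_0$, and set $\psi:=\varphi^{*}\circ R^{-1}$; pick a smooth cutoff $\eta_\epsilon\colon\R\to[0,1]$ with $\eta_\epsilon\equiv 0$ on $(-\infty,0]$, $\eta_\epsilon\equiv 1$ on $[\epsilon,\infty)$ and $|\eta_\epsilon'|\le C/\epsilon$, and define
\[
\varphi(z):=\psi(z)+\eta_\epsilon(\langle z,\nu\rangle)(v-v_0)+\eta_\epsilon(-\langle z,\nu\rangle)(w-w_0).
\]
A direct check gives $\varphi\in\Acal_u(x)$-type admissibility for the data $(v,w,\nu)$. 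Splitting the integral by the slab $S_\epsilon:=\{|\langle z,\nu\rangle|<\epsilon\}$, outside $S_\epsilon$ one has $\nabla\varphi=\nabla\psi$, and the continuity of $f^\infty$ on compact sets of $\overline{\Omega}\times\R^m\times\R^{m\times d}$ (available since $\psi$ is uniformly bounded) combined with the uniform convergence $\psi\to\varphi^{*}$ gives
\[
\limsup_{(x,v,w,\nu)\to(x_0,v_0,w_0,\nu_0)}\int_{\bB^d\setminus S_\epsilon}f^\infty(x,\varphi,\nabla\varphi)\,\dd z\le\int_{\bB^d}f^\infty(x_0,\varphi^{*},\nabla\varphi^{*})\,\dd z+\Ocal(\epsilon).
\]
Inside $S_\epsilon$ the one-homogeneity of $f^\infty$ yields $|f^\infty(x,y,A)|\le C(y)|A|$ uniformly on bounded $y$, and combined with the gradient bound $|\nabla\varphi|\le\|\nabla\psi\|_\infty+C(|v-v_0|+|w-w_0|)/\epsilon$ and $|S_\epsilon\cap\bB^d|=\Ocal(\epsilon)$ this controls the slab contribution by $\Ocal(\epsilon+|v-v_0|+|w-w_0|)$. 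Taking $(x,v,w,\nu)\to(x_0,v_0,w_0,\nu_0)$, then $\epsilon\downarrow 0$, then $\delta\downarrow 0$ proves $\limsup\widetilde K(x,v,w,\nu)\le\widetilde K(x_0,v_0,w_0,\nu_0)$. This is the main obstacle: the competitor must remain smooth while matching a perturbed step-function trace, and it is crucial that one-homogeneity of $f^\infty$ absorbs the $\Ocal(1/\epsilon)$ normal-derivative blow-up within the $\Ocal(\epsilon)$-wide slab.

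To conclude, since $u^+$, $u^-$, $n_u$ are Borel on $\Jcal_u$ and $\widetilde K$ is USC (hence Borel), the composition $K_f[u]=\widetilde K\circ(\id,u^+,u^-,n_u)$ is Borel measurable. For the USC-almost-everywhere assertion, Lusin's theorem applied to the Borel triple $(u^+,u^-,n_u)$ relative to $\Hcal^{d-1}\restrict\Jcal_u$ produces an increasing sequence of compact sets $K_j\subset\Jcal_u$ with $\Hcal^{d-1}(\Jcal_u\setminus\bigcup_j K_j)=0$ on which this triple is continuous; on each $K_j$ the function $K_f[u]$ is the composition of a continuous map with the USC $\widetilde K$, hence is USC in the subspace topology on $K_j$. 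The USC envelope $g(x):=\limsup_{y\to x,\ y\in\bigcup_j K_j}K_f[u](y)$ is then a globally upper semicontinuous function on $\Omega$ agreeing with $K_f[u]$ at $\Hcal^{d-1}$-almost every $x\in\Jcal_u$, which establishes the claim.
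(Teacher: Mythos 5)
Your structural plan — introduce a parametrised energy $\widetilde K$, prove it upper semicontinuous, compose with the Borel jump data, then finish with Lusin's theorem and a USC envelope — is sound, and it is in fact a streamlined reorganisation of what the paper does (the paper only proves USC of $K_f[u]$ after restricting the jump triple to Lusin compacts, whereas you aim for USC of $\widetilde K$ directly on $\{v\neq w\}$). However, there is a genuine gap in the competitor construction. After rotating, $\psi:=\varphi^{*}\circ R^{-1}$ has trace $v_0$ on $\{\langle z,\nu\rangle\ge 0\}\cap\pd\bB^d$ and $w_0$ on the complement, so the trace of your
\[
\varphi=\psi+\eta_\epsilon(\langle z,\nu\rangle)(v-v_0)+\eta_\epsilon(-\langle z,\nu\rangle)(w-w_0)
\]
on the boundary band $\{z\in\pd\bB^d:0<\langle z,\nu\rangle<\epsilon\}$ is $v_0+\eta_\epsilon(\langle z,\nu\rangle)(v-v_0)$, which is strictly between $v_0$ and $v$ because $\eta_\epsilon$ is smooth with $\eta_\epsilon(0)=0$. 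Hence $\varphi$ does not have the required step-function trace, is not admissible for the data $(v,w,\nu)$, and the inequality $\widetilde K(x,v,w,\nu)\le\frac{1}{\omega_{d-1}}\int_{\bB^d}f^\infty(x,\varphi,\nabla\varphi)\,\dd z$ is not available. The USC of $\widetilde K$ therefore does not follow from this construction as written.

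The paper's fix is to modify $\varphi^*$ by an \emph{affine change of target} rather than by an additive cutoff: choose a linear map $S$ with $S v_0=v$, $S w_0=w$ and $S\to\id_{\R^m}$ as $(v,w)\to(v_0,w_0)$ (possible since $v_0\ne w_0$, which always holds on $\Jcal_u$), and set $\varphi:=S\circ\varphi^*\circ R^{-1}$. This reproduces the required step trace exactly, so $\varphi$ is admissible at the new data, and one then passes to the limit in the energy via the strict $\BV$-convergence $\varphi\to\varphi^*$ and Reshetnyak's continuity theorem (which does the job of your slab-versus-complement estimate without having to absorb any $\Ocal(1/\epsilon)$ gradient blow-up). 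With this replacement your framework — Borel measurability of $\widetilde K\circ(\id,u^+,u^-,n_u)$, Lusin on an exhausting sequence of compacts, and the USC envelope — goes through; you should just replace the envelope $g:=\limsup$ by the paper's $\inf_\varepsilon F_\varepsilon$-type construction (or extend by $+\infty$ off the compacts) so the resulting function is genuinely USC everywhere on $\Omega$.
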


\begin{proof}
First, fix a triple $(u^+,u^-,n_u)\colon\Jcal_u\to\R^m\times\R^m\times\pd\bB^d$ such that $n_u$ orients $\Jcal_u$ and $u^+$, $u^-$ are the one sided jump limits of $u$ with respect to $n_u$. Fix also $\varepsilon>0$. The triple $(u^+,u^-,n_u)$ is Borel and hence $|D^ju|$-measurable, and so Lusin's Theorem implies that there exists a compact set $K_\varepsilon\Subset\Jcal_u$ such that $|D^ju|(\Jcal_u\setminus K_\varepsilon)\leq\varepsilon$ and $(u^+,u^-,n_u)$ is continuous when restricted to $K_\varepsilon$.

Let $x\in K_\varepsilon$ and $(x_j)_j\subset K_\varepsilon$ be such that $x_j\to x$. For each $j\in\mbN$ let $R_j\colon\bB^d\to\bB^d$ be a rotation mapping $n_u(x)$ to $n_u(x_j)$ such that $R_j\to\id_{\R^d}$ as $j\to\infty$ and let $S_j\colon\R^m\to\R^m$ be a sequence of linear maps mapping $u^+(x)$ to $u^+(x_j)$ and $u^-(x)$ to $u^-(x_j)$ such that $S_j\to\id_{\R^m}$ as $j\to\infty$ (such a choice of $(R_j)_j$, $(S_j)_j$ is possible by the fact that $x,x_j\in K_\varepsilon$). Now for $\delta>0$, let $\varphi\in\Acal_u(x)$ be such that
\[
\frac{1}{\omega_{d-1}}\int_{\bB^d}f^\infty(x,\varphi(z),\nabla \varphi(z))\;\dd z\leq K_f[u](x)+\delta.
\]
Define $\varphi_j\in\C^\infty(\bB^d;\R^m)$ by $\varphi_j(z):=S_j\varphi\left(R_jz\right)$ and note that $\varphi_j\in\Acal_u(x_j)$. By the convergence properties assumed of $(R_j)$ and $(S_j)$, we have that $\varphi_j\to\varphi$ strictly in $\BV(\bB^d;\R^m)$ as $j\to\infty$. Next, define $\mu_j,\mu\in\mbfM(\Omega\times\R^m;\R^{m\times d})$ by
\[	
\mu_j:=\delta_{x_j}\otimes\varphi_\#(\nabla\varphi_j\lL\restrict\bB^d),\quad \mu:=\delta_{x}\otimes\varphi_\#(\nabla\varphi\lL\restrict\bB^d).
\]
It can easily be seen that $\mu_j$ converges strictly in $\mbfM(\bB^d\times\R^m;\R^{m\times d})$ to $\mu$ as $j\to\infty$. Using Reshetnyak's Continuity Theorem and the positive one-homogeneity of $f^\infty$, we therefore deduce
\begin{align*}
\int_{\bB^d}f^\infty(x_j,\varphi_j(z),\nabla\varphi_j(z))\;\dd z&=\int_{\bB^\times\R^m}f^\infty\left(z,y,\frac{\dd\mu_j}{\dd|\mu_j|}(z,y)\right)\;\dd|\mu_j|(z,y)\\
&\to\int_{\bB^\times\R^m}f^\infty\left(z,y,\frac{\dd\mu}{\dd|\mu|}(z,y)\right)\;\dd|\mu|(z,y)\\
&=\int_{\bB^d}f^\infty(x,\varphi(z),\nabla\varphi(z))\;\dd z
\end{align*}
as $j\to\infty$. By our choice of $\varphi$ and the boundary condition satisfied by each $\varphi_j$, we therefore have that
\begin{align*}
K_f[u](x)+\delta&\geq\lim_{j\to\infty}\frac{1}{\omega_{d-1}}\int_{\bB^d}f^\infty(x_j,\varphi_j(z),\nabla\varphi_j(z))\;\dd z\geq\limsup_{j\to\infty}K_f[u](x_j).
\end{align*}
It follows from the arbitrariness of $x\in K_\varepsilon$ and $\delta>0$ that $K_f[u]$ is upper semicontinuous when restricted to $K_\varepsilon$. Finally, define $F_\varepsilon\colon\Jcal_u\to[0,\infty]$ by
\[
F_\varepsilon(x):=\begin{cases}
K_f[u](x)&\text{ if }x\in K_\varepsilon,\\
\infty&\text{ otherwise},
\end{cases}
\]
and note that $F:=\inf_{\varepsilon>0}F_\varepsilon$ is equal to $K_f[u]$ at $|D^ju|$-almost every $x\in\Jcal_u$ and hence $\Hcal^{d-1}\restrict\Jcal_u$-almost every $x\in\Jcal_u$. The conclusion now follows from the fact that the pointwise infimum of a collection of upper semicontinuous functions is upper semicontinuous and hence measurable.
\end{proof}

\begin{corollary}
If $f\in\Rbf(\Omega\times\R^m)$ satisfies $f^\infty \geq 0$ and $u\in\BV(\Omega;\R^m)$ is such that 
\[
\int_{\Jcal_u}K_f[u](x)\;\dd\Hcal^{d-1}(x)<\infty,
\]
then $K_f[u]\Hcal^{d-1}\restrict\Jcal_u\in\mbfM^+(\Omega)$ is a $(d-1)$-rectifiable measure.
\end{corollary}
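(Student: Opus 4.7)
The plan is to verify the three defining properties of a $k$-rectifiable measure as laid out after equation~\eqref{eqgenderivative}: (i) the underlying set supporting the measure is countably $\mathcal{H}^{d-1}$-rectifiable, (ii) the density $K_f[u]$ can be taken Borel, and (iii) the resulting measure lies in $\mathbf{M}^+(\Omega)$, i.e.\ it is positive, $\sigma$-additive, and locally finite.

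For (i), I would simply invoke the standard structure theory for $\mathrm{BV}$ functions (Federer--Vol'pert): the jump set $\mathcal{J}_u$ of any $u\in\BV(\Omega;\R^m)$ is countably $\Hcal^{d-1}$-rectifiable. For (ii), Lemma~\ref{lemenergymeasurable} already provides that $K_f[u]$ is equal $\Hcal^{d-1}\restrict\Jcal_u$-almost everywhere to an upper semicontinuous function $\widetilde{K}$; since upper semicontinuous functions are Borel, the measure $K_f[u]\Hcal^{d-1}\restrict\Jcal_u$ coincides with $\widetilde{K}\Hcal^{d-1}\restrict\Jcal_u$, exhibiting the required Borel density on a countably $\Hcal^{d-1}$-rectifiable set.

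For (iii), positivity follows because $f^\infty\geq 0$ forces the infimum defining $K_f[u](x)$ in~\eqref{eqsurfacedensity} to be taken over non-negative quantities; hence $K_f[u]\geq 0$ pointwise on $\Jcal_u$ and the product measure is non-negative. Finiteness as a Radon measure is immediate from the hypothesis $\int_{\Jcal_u}K_f[u]\,\dd\Hcal^{d-1}<\infty$, which gives total mass bounded by this integral and in particular local finiteness on $\Omega$.

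I do not foresee any serious obstacle: the whole argument is essentially a bookkeeping exercise assembling the previously recorded ingredients (Lemma~\ref{lemenergymeasurable}, rectifiability of $\mathcal{J}_u$, $f^\infty\geq 0$, and the finite-mass hypothesis) against the definition of a $k$-rectifiable measure. The only subtlety worth flagging is the passage from ``$\Hcal^{d-1}\restrict\Jcal_u$-measurable'' in Lemma~\ref{lemenergymeasurable} to a genuinely Borel density, which is handled by replacing $K_f[u]$ with its upper semicontinuous representative --- this alters $K_f[u]$ only on an $\Hcal^{d-1}$-null subset of $\Jcal_u$ and therefore leaves the measure $K_f[u]\,\Hcal^{d-1}\restrict\Jcal_u$ unchanged.
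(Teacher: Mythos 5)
Your proposal is correct and matches the paper's own (one-line) proof, which simply cites Lemma~\ref{lemenergymeasurable} together with the rectifiability definitions in Section~\ref{subsecmeasuretheory}; you have merely spelled out the bookkeeping explicitly, including the useful observation that passing to the upper semicontinuous representative from Lemma~\ref{lemenergymeasurable} furnishes the Borel density required by the definition of a $(d-1)$-rectifiable measure without altering the measure itself.
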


\begin{proof}
This follows directly from Lemma~\ref{lemenergymeasurable} combined with the discussion about rectifiability in Section~\ref{subsecmeasuretheory}.
\end{proof}

%*******************************************************************************
%****************************** Third Chapter *********************************
%*******************************************************************************
\section{Liftings}\label{chapliftings}
In this section, we develop a theory of liftings. In turn we investigate their functional analytic properties, their relationship to $\BV$-functions, a structure theorem, some blow-up results, and a discussion of how integral functionals over $\BV(\Omega;\R^m)$ can be represented in terms of liftings. Two separate spaces are introduced: the space of liftings $\Lift(\Omega\times\R^m)$, and the space of approximable liftings $\ALift(\Omega\times\R^m)\subset\Lift(\Omega\times\R^m)$. The space $\Lift(\Omega\times\R^m)$ is larger than strictly necessary for our purposes, but has good compactness properties and is sufficiently well behaved that working in this setting of extra generality allows for a cleaner presentation of most of the results described here and in Sections~\ref{chapyoungmeasures} and~\ref{chaptangentliftingyms}. The exceptions to this rule is the Jensen inequalities for $\Fcal$, derived in Theorems~\ref{thmregtangentyms} and~\ref{thmjumptangentyms}, Section~\ref{secjensenineq}, for which we must work within the more restrictive class $\ALift(\Omega\times\R^m)$.

%The idea of understanding the joint weak limits of sequences of pairs $(u_j,\nabla u_j)_j$ by considering instead objects defined over the graph space $\Omega\times\R^m$ has been explored before, see~\cite{DMas79IRBV,AviGig91VIMB,AviGig92MCGR,Almg66PP,FedFle60NIC,Fede69GMT,Alla72FVV,GiMoSo98CCCV1,GiMoSo98CCCV2}.

The assumption that $\Omega\subset\R^d$ with $d\geq 2$ is used in this paper only in Proposition~\ref{ctsembedding} and the Young measure theory developed in Section~\ref{chapyoungmeasures}. Consequently, the results presented here are also valid for domains $\Omega\subset\R$.

\subsection{Functional Analysis and the Structure Theorem}

\begin{definition}\label{defliftings}
A \textbf{lifting} is a measure $\gamma\in\mbfM(\Omega\times\R^m;\R^{m\times d})$ for which there exists a function $u\in\BV_\#(\Omega;\R^m)$ such that the \textbf{chain rule} formula
\begin{equation}\label{eqchainrule}
\int_{\Omega}\nabla_x\varphi(x,u(x))\;\dd x+\int_{\Omega\times\R^m}\nabla_y\varphi(x,y)\;\dd\gamma(x,y)=0\quad\text{for all }\varphi\in\C_{0}^1(\Omega\times\R^m)
\end{equation}
holds. The space of all liftings is denoted by $\Lift(\Omega\times\R^m)$. Weak* convergence of liftings in $\Lift(\Omega\times\R^m)$ means weak* convergence of the liftings in considered as measures in $\mbfM(\Omega\times\R^m;\R^{m\times d})$.
\end{definition}
Definition~\ref{defliftings} was first given by Jung \& Jerrard in~\cite{JunJer04SCML} where the authors initiated the study of elementary liftings (which they refer to as minimal liftings), introduced below in Definition~\ref{defelementaryliftings}. This paper also contains the first proofs of Lemma~\ref{lemliftingspushforward} and Proposition~\ref{lemmustrict} below. Our proofs for these results are new and, in the case of Proposition~\ref{lemmustrict}, are obtained as a corollary of the Structure Theorem (Theorem~\ref{thmliftingsstructure}), which does not feature in~\cite{JunJer04SCML}.

The following lemma implies that each $\gamma\in\Lift(\Omega\times\R^m)$ is associated to a \textit{unique} $u\in\BV_\#(\Omega;\R^m)$ satisfying~\eqref{eqchainrule}. We shall refer to this $u$ as the \textbf{barycentre} of $\gamma$, writing $\asc{\gamma}=u$.
\begin{lemma}\label{lemliftingspushforward}
If $\gamma\in\Lift(\Omega\times\R^m)$ satisfies~\eqref{eqchainrule}, then it holds that
\[
\pi_\#\gamma=Du\text{ in }\mbfM(\Omega;\R^{m\times d})\text{ and }\pi_\#|\gamma|\geq|Du|\text{ in }\mbfM^+(\Omega).
\]
In particular, if $\gamma\in\Lift(\Omega\times\R^m)$, then the element $u\in\BV_\#(\Omega;\R^m)$ with respect to which $\gamma$ verifies~\eqref{eqchainrule} is unique.
\end{lemma}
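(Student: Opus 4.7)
The plan is to verify the three claims in order by testing the chain rule identity~\eqref{eqchainrule} against carefully chosen test functions and passing to a limit, then to conclude uniqueness of the barycentre.

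To show $\pi_\#\gamma = Du$, I would test~\eqref{eqchainrule} against $\varphi(x,y) := \psi(x)\, y_i\, \chi_R(y)$, where $\psi \in \C_c^1(\Omega)$ is a scalar test function, $i \in \{1,\dots,m\}$ is fixed, and $\chi_R(y) := \chi(y/R)$ for some $\chi \in \C_c^\infty(\R^m)$ with $\chi \equiv 1$ on a neighborhood of $0$. Computing derivatives gives $\nabla_x\varphi(x,y) = \nabla\psi(x)\, y_i\, \chi_R(y)$ and $\partial_{y_k}\varphi(x,y) = \psi(x)\bigl(\delta_{ik}\chi_R(y) + y_i\, \partial_{y_k}\chi_R(y)\bigr)$. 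Substituting into the $j$-th row of~\eqref{eqchainrule} and rearranging yields
\begin{equation*}
\int_\Omega \partial_{x_j}\psi(x)\, u_i(x)\, \chi_R(u(x))\, \dd x + \int_{\Omega\times\R^m} \psi(x)\chi_R(y)\, \dd\gamma^{ij}(x,y) = -\sum_{k=1}^m \int_{\Omega\times\R^m} \psi(x)\, y_i\, \partial_{y_k}\chi_R(y)\, \dd\gamma^{kj}(x,y).
\end{equation*}

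The crux is to send $R \to \infty$ and show that the right-hand side vanishes while the left-hand side converges to the desired distributional identity. For the first term on the left, $|\chi_R(u(x))| \leq \|\chi\|_\infty$ and $\chi_R(u(x)) \to 1$ pointwise since $u$ is $\R^m$-valued, so dominated convergence (using $u \in \Lp^1$) gives convergence to $\int_\Omega \partial_{x_j}\psi\, u_i\, \dd x$. For the second term, $\chi_R(y) \to 1$ pointwise and is uniformly bounded, so since $|\gamma|(\Omega\times\R^m) < \infty$, dominated convergence yields convergence to $\int \psi(x)\, \dd\gamma^{ij}(x,y) = \int_\Omega \psi\, \dd(\pi_\#\gamma^{ij})$. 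For the right-hand side, observe that $y\mapsto y_i\partial_{y_k}\chi_R(y)$ is uniformly bounded in $R$ (by scaling) and supported on an annulus $\{|y| \sim R\}$; since $|\gamma|(\Omega \times \{|y| > R\}) \to 0$ as $R \to \infty$ by finiteness of $|\gamma|$, the right-hand side vanishes in the limit. The resulting identity
\begin{equation*}
-\int_\Omega \partial_{x_j}\psi\, u_i\, \dd x = \int_\Omega \psi\, \dd(\pi_\#\gamma^{ij})
\end{equation*}
holds for every $\psi \in \C_c^1(\Omega)$ and every $i,j$, so $\pi_\#\gamma^{ij} = D_j u_i$ as distributions and hence as Radon measures.

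For the inequality $\pi_\#|\gamma| \geq |Du|$, I would use the general fact that pushforwards can only decrease total variation: for any Borel partition $\{E_l\}$ of a Borel set $E \subset \Omega$, the sets $\{E_l \times \R^m\}$ form a Borel partition of $\pi^{-1}(E) = E \times \R^m$, so
\begin{equation*}
\sum_l |\pi_\#\gamma(E_l)| = \sum_l |\gamma(E_l\times\R^m)| \leq |\gamma|(E\times\R^m) = \pi_\#|\gamma|(E).
\end{equation*}
Taking the supremum over partitions of $E$ and recalling that $\pi_\#\gamma = Du$ yields $|Du|(E) \leq \pi_\#|\gamma|(E)$.

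Finally, uniqueness of the barycentre follows from the first identity: if both $u, v \in \BV_\#(\Omega;\R^m)$ satisfy~\eqref{eqchainrule} with the same $\gamma$, then $Du = \pi_\#\gamma = Dv$, so $D(u-v) = 0$. Since $\Omega$ is a (connected) domain, $u - v$ is constant, and the zero-mean normalization $(u) = (v) = 0$ forces $u = v$. The main technical obstacle is the cut-off argument above: keeping careful track that the annular correction term genuinely vanishes despite the factor of $y_i$ growing with $R$. This uses the finiteness of $|\gamma|$ in an essential way.
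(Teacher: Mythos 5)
Your proof is correct and follows essentially the same route as the paper: test the chain rule against $\varphi(x,y)=\psi(x)\,y_i\,\chi_R(y)$, show by scaling that the annular correction term $\int \psi\, y_i\,\partial_{y_k}\chi_R\,\dd\gamma^{kj}$ stays bounded and migrates to infinity so that tightness of $|\gamma|$ kills it, then pass $R\to\infty$ to obtain $\pi_\#\gamma=Du$ componentwise. The partition argument for $\pi_\#|\gamma|\geq|Du|$ and the connectedness-plus-zero-mean argument for uniqueness also match the paper; the only cosmetic difference is the cutoff normalisation ($\chi(y/R)$ versus the paper's family with $\|\nabla\chi_R\|_\infty\leq R^{-2}$ and support up to radius $3R^2$), both of which achieve the same uniform bound on $|y|\,|\nabla\chi_R(y)|$.
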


\begin{proof}
Let $\psi(x,y)=f(x)y$, where $f\in\C_0(\Omega)$ is arbitrary, so that $\nabla\psi=(\nabla_x\psi,\nabla_y\psi)=(\nabla f \otimes y,f\id_{\R^m})$. Now let $(\chi_R)_{R>0}\subset\C^1_c(\R^m)$ be a family of cut-off functions satisfying $\mathbbm{1}_{B(0,R)}\leq\chi_R\leq \mathbbm{1}_{B(0,3R^2)}$, $\norm{\nabla\chi_R}_\infty\leq 1/R^2$, $\chi_R\uparrow \mathbbm{1}$ as $R\to\infty$. Setting $\varphi_R(x,y):=\chi_R(y)\psi(x,y)$, we see that $\nabla\varphi_R\in\C_0(\Omega\times\R^m)$, $\sup_R\norm{\nabla\varphi_R}_\infty<\infty$, and $\nabla\varphi_R\to\nabla\varphi$ pointwise as $R\to\infty$. The chain rule~\eqref{eqchainrule} then implies that
\[
\int_\Omega\nabla_x\varphi_R(x,u(x))\;\dd x+\int_{\Omega\times\R^m}\nabla_y\varphi_R(x,y)\;\dd\gamma(x,y)=0.
\]
Letting $R\to\infty$ and using the Dominated Convergence Theorem, we therefore obtain
\begin{align*}
0&=\int_\Omega \nabla f(x)u(x)\;\dd x+\int_{\Omega\times\R^m}f(x)\id_{\R^m}\;\dd\gamma(x,y)\\
&=\int_\Omega\nabla f(x)u(x)\;\dd x+\int_\Omega f(x)\;\dd(\pi_\#\gamma)(x).
\end{align*}
Hence, after an integration by parts,
\[
\int_\Omega f(x)\;\dd Du(x)=\int_\Omega f(x)\;\dd(\pi_\#\gamma)(x)\quad\text{ for all }f\in\C^1_0(\Omega),
\]
which implies the first result. For $A\in\mathcal{B}(\Omega)$, we can now compute
\begin{align*}
\pi_\#|\gamma|\left(A\right)&=|\gamma|\left(A\times\R^m\right) \\
& =\sup\left\{\sum_{h=0}^\infty |\gamma\left(B_h\right)|:\left(B_h\right)\subset\mathcal{B}(\R^d\times\R^m)\text{ is a partition of }A\times\R^m.\right\} \\ &
\geq\sup\left\{\sum_{h=0}^\infty|\gamma\left(A_h\times\R^m\right)|:\left(A_h\right)\subset\mathcal{B}(\R^d)\text{ is a partition of }A.\right\} \\
&=\sup\left\{\sum_{h=0}^\infty|Du(A_h)|:\left(A_h\right)\subset\mathcal{B}(\R^d)\text{ is a partition of }A.\right\}\\
&=|Du|\left(A\right),
\end{align*}
from which it follows that $\pi_\#|\gamma|\geq|Du|$ as desired.
\end{proof}

\begin{lemma}[Compactness for $\Lift$]\label{lemliftingcompactness}
Let $(\gamma_j)_j\subset\Lift(\Omega\times\R^m)$ be such that $\sup_j|\gamma_j|(\Omega\times\R^m)<\infty$. Then there exists a subsequence $(\gamma_{j_k})_k\subset(\gamma_j)_j$ and a limit $\gamma\in\Lift(\Omega\times\R^m)$ such that
\[
\gamma_{j_k}\wsc\gamma\text{ in $\mbfM(\Omega\times\R^m;\R^{m\times d})$ and }\asc{\gamma_{j_k}}\wsc\asc{\gamma}\text{ in }\BV_\#(\Omega;\R^m).
\]
Moreover, the map $\gamma\mapsto\asc{\gamma}$ is sequentially weakly* continuous from $\Lift(\Omega\times\R^m)$ to $\BV_\#(\Omega;\R^m)$ and the space $\Lift(\Omega\times\R^m)$ is sequentially weakly* closed.
\end{lemma}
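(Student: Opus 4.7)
The plan is to combine weak* compactness in $\mbfM(\Omega\times\R^m;\R^{m\times d})$ with $\BV$-compactness applied to the barycentres, and then to verify stability of the chain rule~\eqref{eqchainrule} under the resulting joint convergence.

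First, since $C_0(\Omega\times\R^m;\R^{m\times d})$ is separable, Banach--Alaoglu yields a weakly* convergent subsequence $\gamma_{j_k}\wsc\gamma$ in $\mbfM(\Omega\times\R^m;\R^{m\times d})$ for some limit measure $\gamma$. To control the barycentres, invoke Lemma~\ref{lemliftingspushforward}: for each $j$,
\[
|D\asc{\gamma_j}|(\Omega) \le \pi_\#|\gamma_j|(\Omega) = |\gamma_j|(\Omega\times\R^m) \le C.
\]
Since $\asc{\gamma_j}\in\BV_\#(\Omega;\R^m)$ has zero mean, the Poincar\'e inequality on the Lipschitz domain $\Omega$ gives $\|\asc{\gamma_j}\|_{\Lp^1}\le C'|D\asc{\gamma_j}|(\Omega)$, so $(\asc{\gamma_j})_j$ is bounded in $\BV(\Omega;\R^m)$. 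Pass to a further subsequence (not relabelled) so that $\asc{\gamma_{j_k}}\wsc u$ in $\BV_\#(\Omega;\R^m)$ with $\asc{\gamma_{j_k}}\to u$ strongly in $\Lp^1$ and $\Lcal^d$-almost everywhere.

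Next, verify that $\gamma\in\Lift(\Omega\times\R^m)$ with $\asc{\gamma}=u$. For arbitrary $\varphi\in\C_0^1(\Omega\times\R^m)$, the chain rule applied to $\gamma_{j_k}$ reads
\[
\int_\Omega\nabla_x\varphi(x,\asc{\gamma_{j_k}}(x))\,\dd x + \int_{\Omega\times\R^m}\nabla_y\varphi(x,y)\,\dd\gamma_{j_k}(x,y) = 0.
\]
The second term converges to $\int_{\Omega\times\R^m}\nabla_y\varphi\,\dd\gamma$ directly from $\gamma_{j_k}\wsc\gamma$, since $\nabla_y\varphi\in\C_0(\Omega\times\R^m;\R^m)$. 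For the first term, $\nabla_x\varphi$ is bounded and continuous in $y$, and $\asc{\gamma_{j_k}}\to u$ pointwise $\Lcal^d$-a.e., so dominated convergence (majorant $\|\nabla_x\varphi\|_\infty\mathbbm{1}_{\Omega}$) yields
\[
\int_\Omega\nabla_x\varphi(x,\asc{\gamma_{j_k}}(x))\,\dd x\;\longrightarrow\;\int_\Omega\nabla_x\varphi(x,u(x))\,\dd x.
\]
Passing to the limit gives the chain rule~\eqref{eqchainrule} for $(\gamma,u)$, so $\gamma\in\Lift(\Omega\times\R^m)$ and by the uniqueness part of Lemma~\ref{lemliftingspushforward} one has $\asc{\gamma}=u$. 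This proves the compactness statement.

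Finally, the continuity of the barycentre map and the sequential weak* closedness of $\Lift(\Omega\times\R^m)$ follow by a standard subsequence principle. If $\gamma_j\wsc\gamma$ in $\mbfM(\Omega\times\R^m;\R^{m\times d})$ with $(\gamma_j)\subset\Lift(\Omega\times\R^m)$, then $\sup_j|\gamma_j|(\Omega\times\R^m)<\infty$ by the uniform boundedness principle, and every subsequence of $(\asc{\gamma_j})_j$ admits a further subsequence converging weakly* in $\BV_\#$ to the unique barycentre $\asc{\gamma}$ of $\gamma$ (by the argument above applied to the subsequence, whose weak* limit must still be $\gamma$); uniqueness of the limit forces the whole sequence $\asc{\gamma_j}$ to converge weakly* to $\asc{\gamma}$, giving both sequential weak* closedness of $\Lift(\Omega\times\R^m)$ and sequential weak* continuity of $\gamma\mapsto\asc{\gamma}$. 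The main conceptual point—passing the first term through the limit—is straightforward once one has $\Lp^1$-convergence of the barycentres, which is why the $\BV$-compactness step is carried out before checking stability of the chain rule.
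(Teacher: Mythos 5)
Your proof is correct and follows essentially the same route as the paper: bound the barycentres via Lemma~\ref{lemliftingspushforward}, extract jointly convergent subsequences in $\mbfM(\Omega\times\R^m;\R^{m\times d})$ and $\BV_\#(\Omega;\R^m)$, pass to the limit in the chain rule~\eqref{eqchainrule} by dominated convergence, and then deduce continuity and closedness via the subsequence principle together with the uniform boundedness theorem. You have merely spelled out a few steps that the paper leaves implicit (the Poincar\'e inequality giving $\BV$-boundedness from zero mean, the further subsequence for a.e.\ convergence, and the explicit majorant for dominated convergence).
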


\begin{proof}
Lemma~\ref{lemliftingspushforward} implies that $(\asc{\gamma_j})_j$ is bounded in $\BV_\#(\Omega;\R^m)$ whenever $(\gamma_j)_j$ is bounded in $\mbfM(\Omega\times\R^m;\R^{m\times d})$.  If $(\gamma_j)_j\subset\Lift(\Omega\times\R^m)$ satisfies $\sup_j|\gamma_j|(\Omega\times\R^m)<\infty$, we can therefore use the sequential weak* compactness of bounded sets in $\mbfM(\Omega\times\R^m;\R^{m\times d})$ and $\BV_\#(\Omega;\R^m)$ to pass to a subsequence $(\gamma_{j_k})_k\subset(\gamma_j)_j$ converging weakly* in $\mbfM(\Omega\times\R^m;\R^{m\times d})$ to a limit $\gamma$ and such that $\asc{\gamma_{j_k}}\wsc u$ for some $u\in\BV_\#(\Omega;\R^m)$. Taking the limit in~\eqref{eqchainrule} as $j\to\infty$ and using the Dominated Convergence Theorem, it follows that the pair $(u,\gamma)$ satisfies~\eqref{eqchainrule} and hence that $\gamma\in\Lift(\Omega\times\R^m)$ with $u=\asc{\gamma}$. The space $\Lift(\Omega\times\R^m)$ is therefore sequentially weakly* closed.
%The sequential weak* continuity of the map $\gamma\mapsto\asc{\gamma}$ follows since the map $\gamma \mapsto D\asc{\gamma} = \pi_\# \gamma$ is weakly* continuous and weak* convergence of derivatives is enough for weak* convergence in $\BV_\#(\Omega;\R^m)$ (by the Poincar\'{e} inequality).

To see that $\gamma\mapsto\asc{\gamma}$ is sequentially weak* continuous, note that if $(\gamma_j)_j\subset\Lift(\Omega\times\R^m)$ is such that $\gamma_j\wsc\gamma$ in $\Lift(\Omega\times\R^m)$ as $j\to\infty$ then $(\gamma_j)_j$ must be uniformly bounded in $\mbfM(\Omega\times\R^m)$. The preceding discussion therefore implies that, upon passing to a further subsequence, $\asc{\gamma_j}\wsc u$ for some $u\in\BV_\#(\Omega;\R^m)$. Passing to the limit again in~\eqref{eqchainrule}, we find that $u=\asc{\gamma}$ and so, since this argument can be applied to any subsequence of $(\gamma_j)_j$, we reach the desired conclusion.
%or some $\delta>0$ fixed, 
%\begin{equation}\label{lemliftingcontinuityeq}
%\norm{\asc{\gamma_j}-\asc{\gamma}}_{\Lp^1}\geq\delta\qquad\text{for every }j\in\mbN.
%\end{equation}
%Invoking the compactness result already obtained, we could pass to a further non-relabelled subsequence to obtain that $\asc{\gamma_j}\wsc\asc{\gamma}$ and $\asc{\beta_j}\wsc\asc{\gamma}$, contradicting~\eqref{lemliftingcontinuityeq}.
\end{proof}

\begin{corollary}\label{corliftingtographcts}
If $\gamma_j\wsc\gamma$ in $\Lift(\Omega\times\R^m)$ then $\gr^{\asc{\gamma_j}}_\#(\lL\restrict\Omega)\to\gr^{\asc{\gamma}}_\#(\lL\restrict\Omega)$ strictly in $\mbfM^1(\Omega\times\R^m)$.
\end{corollary}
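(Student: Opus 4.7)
The plan is to deduce the conclusion from weak* compactness for liftings together with the Rellich--Kondrachov compactness theorem for $\BV$, reducing strict convergence to two almost-free ingredients: weak* convergence of the pushforwards via Dominated Convergence, and equality of total masses by injectivity of graph maps.

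Write $u_j := \asc{\gamma_j}$ and $u := \asc{\gamma}$. By Lemma~\ref{lemliftingcompactness}, the weak* continuity of the barycentre map gives $u_j \wsc u$ in $\BV_\#(\Omega;\R^m)$. Since $\Omega$ has Lipschitz boundary, the compact embedding $\BV(\Omega;\R^m) \cembed \Lp^1(\Omega;\R^m)$ then yields $u_j \to u$ strongly in $\Lp^1$, so along a (not relabelled) subsequence $u_j \to u$ pointwise $\Lcal^d$-almost everywhere on $\Omega$.

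To obtain weak* convergence of the pushforwards, fix $\varphi \in \C_0(\Omega \times \R^m)$. Continuity of $\varphi$ gives $\varphi(x,u_j(x)) \to \varphi(x,u(x))$ pointwise $\Lcal^d$-a.e., and the uniform bound $|\varphi(x,u_j(x))| \leq \norm{\varphi}_\infty$ together with $\Lcal^d(\Omega) < \infty$ legitimises the Dominated Convergence Theorem:
\[
\int_{\Omega\times\R^m}\varphi\;\dd\bigl(\gr^{u_j}_\#(\lL\restrict\Omega)\bigr)=\int_\Omega \varphi(x,u_j(x))\;\dd x \;\longrightarrow\; \int_\Omega \varphi(x,u(x))\;\dd x=\int_{\Omega\times\R^m}\varphi\;\dd\bigl(\gr^{u}_\#(\lL\restrict\Omega)\bigr).
\]
A standard Urysohn subsequence argument promotes this to convergence of the full original sequence, not just of the subsequence extracted via Rellich--Kondrachov.

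For the mass statement, observe that each graph map $\gr^w\colon \Omega \to \Omega\times\R^m$ is injective, so Lemma~\ref{lempushforwardderivative} gives $|\gr^w_\#(\lL\restrict\Omega)| = \gr^w_\#(\lL\restrict\Omega)$ with total mass $\Lcal^d(\Omega)$; in particular the total variations of $\gr^{u_j}_\#(\lL\restrict\Omega)$ and $\gr^u_\#(\lL\restrict\Omega)$ are equal and their convergence is trivial. Combining this with the weak* convergence just established yields strict convergence in $\mbfM(\Omega\times\R^m)$. The argument contains no genuine obstacle: the only nontrivial input is Rellich--Kondrachov for $\BV$, while mass preservation makes the ``strict'' part automatic.
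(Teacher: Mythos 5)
Your argument is correct and follows essentially the same route as the paper's: $\Lp^1$-convergence of barycentres via Lemma~\ref{lemliftingcompactness} (the paper states this more directly as "weak* convergence in $\BV$ implies strong $\Lp^1$ convergence" rather than invoking Rellich--Kondrachov by name), followed by the Dominated Convergence Theorem applied along an a.e.-convergent subsequence. The only difference is presentational: the paper tests against $\varphi\in\C_b(\Omega\times\R^m)$ in a single step, which for positive measures directly gives narrow convergence, already equivalent to strict convergence; you test against $\C_0$ and then handle the mass identity separately. Both are fine. One small remark: citing Lemma~\ref{lempushforwardderivative} for the mass claim is both unnecessary and not quite applicable as stated, since that lemma assumes $T$ continuous while $\gr^w$ is merely measurable when $w\in\BV$. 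The conclusion you want is elementary anyway: the pushforward of a positive measure by any measurable map is positive, so $\bigl|\gr^w_\#(\lL\restrict\Omega)\bigr|=\gr^w_\#(\lL\restrict\Omega)$ with total mass $\Lcal^d(\Omega)$, independently of $w$.
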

\begin{proof}
Since weak* convergence in $\BV(\Omega;\R^m)$ implies strong $\Lp^1(\Omega;\R^m)$ convergence, Lemma~\ref{lemliftingcompactness} implies that $\asc{\gamma_j}\to\asc{\gamma}$ in $\Lp^1(\Omega;\R^m)$ whenever $\gamma_j\wsc\gamma$ in $\Lift(\Omega\times\R^m)$. Using the Dominated Convergence Theorem, we therefore deduce that, for any $\varphi\in\C_b(\Omega\times\R^m)$,
\begin{align*}
\int\varphi(x,y)\;\dd \gr^{\asc{\gamma_j}}_\#(\lL)(x,y)&=\int_\Omega \varphi(x,\asc{\gamma_j}(x))\;\dd x\\
&\to\int_\Omega\varphi(x, \asc{\gamma}(x))\;\dd x =\int\varphi(x,y)\;\dd \gr^{\asc{\gamma}}_\#(\lL)(x,y)\quad\text{ as $j\to\infty$},
\end{align*}
which is what was to be shown.
\end{proof}

\begin{definition}[Elementary Liftings]\label{defelementaryliftings}
Given $u\in\BV_\#(\Omega;\R^m)$, the \textbf{elementary lifting} $\gamma\asc{u}\in\Lift(\Omega\times\R^m)$ associated to $u$ is defined by
\[
\gamma\asc{u}:=|Du|\otimes\left(\frac{\dd Du}{\dd|Du|}\int_0^1\delta_{u^\theta}\;\dd\theta\right),
\]
that is,
\[
\ip{\varphi}{\gamma\asc{u}}=\int_\Omega\int_0^1\varphi(x,u^\theta(x))\;\dd\theta\;\dd Du(x)\quad\text{for all }\varphi\in\C_0(\Omega\times\R^m),
\]
where $u^\theta$ is the jump interpolant defined in Section~\ref{secpreliminaries}.
\end{definition}
That $\gamma\asc{u}\in\Lift(\Omega\times\R^m)$ follows from the chain rule for $\BV$-functions: if $u\in\BV(\Omega;\R^m)$ and $\varphi\in\C^1_0(\Omega\times\R^m)$, then the composition $\varphi\circ\gr^u=\varphi(\frarg,u(\frarg))$ is an element of $\BV(\Omega)$ satisfying $(\varphi\circ\gr^u)|_{\pd\Omega}=0$. By Stokes' Theorem, this implies
\[
\int_\Omega \;\dd D(\varphi\circ\gr^u)(x)=\int_{\pd\Omega}(\varphi\circ\gr^u)|_{\pd\Omega} \; n_{\pd\Omega}(x)\;\dd\Hcal^{d-1}(x)=0,
\]
where $n_{\pd\Omega}$ is the (inwards pointing) normal orientation vector for $\pd\Omega$.  Applying the chain rule, Theorem~\ref{bvchainrule}, to $\varphi\circ\gr^u$ and writing $\nabla\varphi=(\nabla_x\varphi,\nabla_y\varphi)$, we see that
\begin{align*}
D (\varphi\circ\gr^u)&=\left(\int_0^1\nabla\varphi\circ(\gr^u)^\theta\;\dd \theta\right) \cdot D(\gr^u)\\
&=\left(\int_0^1\nabla\varphi(\frarg,u^\theta(\frarg))\;\dd \theta\right) \cdot (\id_{\R^d}\lL\restrict\Omega,Du)\\
&=\nabla\varphi(\frarg,u(\frarg)) \cdot (\id_{\R^d},\nabla u)\lL\restrict\Omega+\nabla\varphi(\frarg,u(\frarg)) \cdot (0,D^cu)\\
&\qquad+\left(\int_0^1\nabla\varphi(\frarg,u^\theta(\frarg))\;\dd \theta\right) \cdot (0,D^ju)\\
&=\nabla_x\varphi(\frarg,u(\frarg)) \lL \restrict \Omega +\nabla_y\varphi(\frarg,u(\frarg))\nabla u(\frarg) \lL \restrict \Omega \\
&\qquad+\nabla_y\varphi(\frarg,u(\frarg))D^cu +\left(\int_0^1\nabla\varphi_y(\frarg,u^\theta(\frarg))\;\dd \theta\right)D^ju.
\end{align*}
Integrating over $\Omega$ with respect to $x$, we deduce
\[
\int_\Omega\nabla_x\varphi(x,u(x))\;\dd x+\int_{\Omega\times\R^m}\nabla_y\varphi(x,y)\;\dd\gamma\asc{u}(x,y)=0,
\]
as required.

\begin{remark}
We note here that~\eqref{eqchainrule} and Definition~\ref{defelementaryliftings} both make sense for $u\in\BV(\Omega;\R^m)$ (rather than just $\BV_\#(\Omega;\R^m)$) and $\gamma\in\mbfM(\Omega\times\R^m;\R^{m\times d})$ and could be used to define liftings associated to arbitrary $\BV$-functions. The downside for this extra generality is that the barycentre $\asc{0}$ of the zero lifting is no longer unique and, more importantly, that the control $\sup_j|\gamma_j|(\Omega\times\R^m)<\infty$ no longer enforces $\sup_j\norm{\asc{\gamma_j}}_\BV<\infty$. As a result, the map $\gamma\mapsto\asc{\gamma}$ is no longer continuous and Lemma~\ref{lemliftingcompactness} is no longer true. Instead, the discussion surrounding~\eqref{eqliftingrep} in Section~\ref{secliftingperspective} shows how the behaviour of arbitrary weakly* convergent sequences in $\BV(\Omega;\R^m)$ can be described in terms of liftings as we have chosen to define them.
\end{remark}

\begin{definition}[Approximable liftings]\label{defapproxliftings}
A lifting $\gamma$ is said to be \textbf{approximable} if it arises as the weak* limit of a sequence of elementary liftings. The space of all approximable liftings is denoted by $\ALift(\Omega\times\R^m)$,
\[
\ALift(\Omega\times\R^m):=\bigl\{\gamma\in\Lift(\Omega\times\R^m) \colon\; \gamma=\wstarlim\gamma\asc{u_j}\text{ for some }(u_j)_j\subset\BV_\#(\Omega;\R^m)\bigr\}.
\]
\end{definition}
Note that, despite being defined as a sequential closure, it is an open question as to whether $\ALift(\Omega\times\R^m)$ is either sequentially weakly* closed or weakly* closed since weak* topologies are not in general metrizable on unbounded sets. Example~\ref{exstrictliftinginclusion} demonstrates that, in the one-dimensional case at least, the inclusion $\ALift(\Omega\times\R^m)\subset\Lift(\Omega\times\R^m)$ is strict:

\begin{example}\label{exstrictliftinginclusion}
There exists $\gamma\in\Lift((-1,1)\times\R^2)$ such that $\gamma\notin\ALift((-1,1)\times\R^2)$. Consequently, $\ALift((-1,1)\times\R^2)$ and $\Lift((-1,1)\times\R^2)$ do not coincide.
\end{example}

\begin{proof}
First, we claim that any $\gamma\in\ALift((-1,1)\times\R^2)$ must satisfy 
\[
\supp\gamma\subset(-1,1)\times B(0,R)\quad\text{ for some $R>0$}.
\]
 To see this, let $(u_j)_j\subset\BV_\#((-1,1);\R^2)$ be such that
\[
\gamma\asc{u_j}\wsc\gamma\quad\text{ in }\mbfM((-1,1)\times\R^2;\R^2).
\]
Since the sequence $(u_j)_j$ is norm-bounded in $\BV((-1,1);\R^2)$, the one-dimensional Sobolev Embedding Theorem implies that $\sup_j\norm{u_j}_\infty<\infty$. Letting $R>0$ be such that $\norm{u_j}_\infty< R$ for all $j\in\mbN$ we see that, for all $\varphi\in\C_0((-1,1)\times\R^2)$ satisfying $\varphi|_{(-1,1)\times B(0,R)}=0$,
\[
\int\varphi(x,y)\;\dd\gamma\asc{u_j}(x,y)=\int_{-1}^1\int_0^1\varphi(x,u^\theta_j(x))\;\dd\theta\;\dd Du(x)=0.
\]
Letting $j\to\infty$, we deduce
\[
\int\varphi(x,y)\;\dd\gamma(x,y)=0\quad\text{ for all }\varphi\in\C_0((-1,1)\times\R^2)\text{ with }\varphi|_{(-1,1)\times B(0,R)}=0
\]
and hence that $|\gamma|((-1,1)\times(\R^2\setminus B(0,R)))=0$, which implies $\supp\gamma\subset(-1,1)\times B(0,R)$ as required.

Next, define $\mu_k\in\mbfM((-1,1)\times\R^2;\R^2)$ for each $k\in\mbN$ by
\[
\int\varphi(x,y)\;\dd\mu_k(x,y)=k\int_0^{2\pi}\varphi\left(0,k\begin{pmatrix}\cos z\\ \sin z\end{pmatrix}\right)\begin{pmatrix}\sin z\\ -\cos z\end{pmatrix}\;\dd z.
\]
Clearly, $\supp\mu_k =\{0\}\times \pd B(0,k)$, $|\mu_k|((-1,1)\times\R^2)=2\pi k$, and, since
\begin{align*}
\int\nabla_y\varphi(x,y)\;\dd\mu_k(x,y)&=k\int_0^{2\pi}\nabla_y\varphi\left(0,k\begin{pmatrix}\cos z\\ \sin z\end{pmatrix}\right)\begin{pmatrix}\sin z\\ -\cos z\end{pmatrix}\;\dd z\\
&=\int_0^{2\pi}\nabla\left(\varphi\left(0,k\begin{pmatrix}\cos\frarg\\ \sin\frarg\end{pmatrix}\right)\right)(z)\;\dd z\\
&=\varphi\left(0,k\begin{pmatrix}\cos 2\pi\\ \sin 2\pi\end{pmatrix}\right)-\varphi\left(0,k\begin{pmatrix}\cos 0\\ \sin 0\end{pmatrix}\right)=0
\end{align*}
for every $\varphi\in\C_0^1((-1,1)\times\R^2)$, we have that $\mu_k\in\Lift((-1,1)\times\R^2)$ with $\asc{\mu_k}=0$ for each $k$. Defining
\begin{equation}\label{eqmeasuresum}
\mu:=\sum_{i=k}^\infty k^{-3}\mu_k,
\end{equation}
we have that $\mu\in\mbfM((-1,1)\times\R^2;\R^2)$ with $|\mu|(\{0\}\times\pd B(0,k))=2\pi/k^2>0$ for each $k$. Moreover, since the sum~\eqref{eqmeasuresum} is strongly convergent in $\mbfM((-1,1)\times\R^2;\R^2)$, we see that
\[
\int\nabla_y\varphi(x,y)\;\dd\mu(x,y)=\lim_{n\to\infty}\sum_{k=1}^n\int\nabla_y\varphi(x,y)\;\dd\mu_k(x,y)=0
\]
for every $\varphi\in\C_0^1((-1,1)\times\R^2)$, which implies that $\mu\in\Lift((-1,1)\times\R^2)$ with $\asc{\mu}=0$. Since $|\mu|(\{0\}\times\pd B(0,k))>0$ for every $k\in\mbN$, however, this implies that $\supp\mu\not\subset(-1,1)\times B(0,R)$ for any $R>0$, and so $\mu\not\in\ALift((-1,1)\times\R^2)$.
\end{proof}

The following is a direct corollary of Lemma~\ref{lemliftingcompactness} combined with Definition~\ref{defapproxliftings}:
\begin{corollary}[Lifting generation from $\BV$]
Let $(u_j)_j\subset\BV_\#(\Omega;\R^m)$ be a bounded sequence with $u_j \wsc u$ in $\BV_\#(\Omega;\R^m)$. Then there exists a (non-relabelled) subsequence and a limit $\gamma\in\ALift(\Omega\times\R^m)$ with $\asc{\gamma}=u$ such that
\[
\gamma\asc{u_j}\wsc\gamma\text{ in }\ALift(\Omega\times\R^m).
\]
\end{corollary}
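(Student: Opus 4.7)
The proof is essentially a matter of assembling the pieces already developed: bound the sequence of elementary liftings $(\gamma\asc{u_j})_j$ in total variation, apply the compactness lemma (Lemma~\ref{lemliftingcompactness}) to extract a weakly* convergent subsequence in $\Lift(\Omega\times\R^m)$, then use Definition~\ref{defapproxliftings} and the weak* continuity of the barycentre map to identify the limit in $\ALift(\Omega\times\R^m)$ with barycentre $u$.

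Step one is the total variation bound. For any $\varphi\in\C_0(\Omega\times\R^m)$ with $\|\varphi\|_\infty\leq 1$, Definition~\ref{defelementaryliftings} gives
\[
\absB{\dprb{\varphi}{\gamma\asc{u_j}}}=\absBB{\int_\Omega\int_0^1\varphi(x,u_j^\theta(x))\;\dd\theta\;\dd Du_j(x)}\leq |Du_j|(\Omega),
\]
so $|\gamma\asc{u_j}|(\Omega\times\R^m)\leq|Du_j|(\Omega)\leq\sup_k\norm{u_k}_{\BV}<\infty$. Hence $(\gamma\asc{u_j})_j$ is a bounded sequence in $\Lift(\Omega\times\R^m)$, and Lemma~\ref{lemliftingcompactness} delivers a (non-relabelled) subsequence and a lifting $\gamma\in\Lift(\Omega\times\R^m)$ with $\gamma\asc{u_j}\wsc\gamma$ in $\Mbf(\Omega\times\R^m;\R^{m\times d})$ and $\asc{\gamma\asc{u_j}}\wsc\asc{\gamma}$ in $\BV_\#(\Omega;\R^m)$.

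Step two identifies $\gamma$ and its barycentre. Since $\gamma$ is by construction the weak* limit of a sequence of elementary liftings, Definition~\ref{defapproxliftings} yields $\gamma\in\ALift(\Omega\times\R^m)$ immediately. The chain rule computation preceding Definition~\ref{defelementaryliftings} verifies that $\gamma\asc{u_j}$ satisfies~\eqref{eqchainrule} with $u_j$, so the uniqueness half of Lemma~\ref{lemliftingspushforward} gives $\asc{\gamma\asc{u_j}}=u_j$. Combining the weak* continuity of the barycentre with the hypothesis $u_j\wsc u$, we conclude $\asc{\gamma}=u$, which completes the proof.

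There is no real obstacle here: the only thing being verified beyond a direct invocation of Lemma~\ref{lemliftingcompactness} is the elementary estimate $|\gamma\asc{u}|(\Omega\times\R^m)\leq|Du|(\Omega)$, which is immediate from the defining formula for elementary liftings, and the identification $\asc{\gamma\asc{u_j}}=u_j$, which is built into the construction.
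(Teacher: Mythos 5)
Your proof is correct in its overall structure and matches exactly the route the paper has in mind (the paper offers no proof, merely observing that the corollary is immediate from Lemma~\ref{lemliftingcompactness} and Definition~\ref{defapproxliftings}; you have spelled out the assembly). One technical point in your Step~1 deserves repair, though it is easily fixed.

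You establish the bound $\absb{\ip{\varphi}{\gamma\asc{u_j}}}\leq|Du_j|(\Omega)$ for \emph{scalar} $\varphi\in\C_0(\Omega\times\R^m)$ with $\norm{\varphi}_\infty\leq 1$, and conclude ``so $|\gamma\asc{u_j}|(\Omega\times\R^m)\leq|Du_j|(\Omega)$.'' But for a matrix-valued measure $\mu\in\mbfM(X;\R^{m\times d})$, the supremum over scalar test functions of $\absb{\ip{\varphi}{\mu}}$ can be \emph{strictly smaller} than $|\mu|(X)$: to recover total variation by duality one must test against $\R^{m\times d}$-valued $\varphi$ and pair via the Frobenius product. (As a toy example in $\mbfM(\R;\R^2)$, take $\mu=\delta_0(e_1+e_2)+\delta_1(-e_1+e_2)$: then $|\mu|(\R)=2\sqrt 2$ but $\sup_\varphi|\ip{\varphi}{\mu}|=2$ over scalar $\varphi$ with $\norm{\varphi}_\infty\leq 1$.) Two clean repairs: (a) run the same computation with $\varphi\in\C_0(\Omega\times\R^m;\R^{m\times d})$, $\norm{\varphi}_\infty\leq 1$, so that
\[
\absBB{\int_\Omega\int_0^1\varphi(x,u_j^\theta(x)):\frac{\dd Du_j}{\dd|Du_j|}(x)\;\dd\theta\;\dd|Du_j|(x)}\leq|Du_j|(\Omega),
\]
and the supremum now equals $|\gamma\asc{u_j}|(\Omega\times\R^m)$; or (b) read the bound directly off Definition~\ref{defelementaryliftings}: the disintegration $\gamma\asc{u_j}=|Du_j|\otimes\bigl(\tfrac{\dd Du_j}{\dd|Du_j|}\int_0^1\delta_{u_j^\theta}\;\dd\theta\bigr)$ has parametrised measures of unit total variation $|Du_j|$-a.e., so $|\gamma\asc{u_j}|(\Omega\times\R^m)=|Du_j|(\Omega)$ exactly. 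Once the bound is secured, the rest of your argument (Lemma~\ref{lemliftingcompactness} for a weakly* convergent subsequence, Definition~\ref{defapproxliftings} to place the limit in $\ALift(\Omega\times\R^m)$, and the weak* continuity of the barycentre map together with $\asc{\gamma\asc{u_j}}=u_j$ to identify $\asc{\gamma}=u$) is complete and correct.
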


Example~\ref{exnonelementarylifting} below demonstrates how non-elementary liftings can arise as weak* limits of sequences of elementary liftings, and shows that this phenomenon gives rise to behaviour for integrands which is very different to the $u$-independent and scalar valued cases:

\begin{example}\label{exnonelementarylifting}
Define $(u_j)_j\subset\W_\#^{1,1}((-1,1);\R^2)$ by
\[
u_j(x):=
-\begin{pmatrix}
1\\0
\end{pmatrix}\mathbbm{1}_{(-1,0]}(x)-
\begin{pmatrix}
\cos(jx)\\
\sin(jx)
\end{pmatrix}\mathbbm{1}_{(0,\pi/j]}(x)+
\begin{pmatrix}
1\\0
\end{pmatrix}\mathbbm{1}_{(\pi/j,1)}(x)
+\frac{1}{j}\left[\frac{\pi}{2}\begin{pmatrix}
1\\0
\end{pmatrix}+
\begin{pmatrix}
0\\1
\end{pmatrix}
\right]
\]
and note that 
\[
u_j\wsc u^0:=-\begin{pmatrix}1\\0\end{pmatrix}\mathbbm{1}_{(-1,0]}+\begin{pmatrix}1\\0\end{pmatrix}\mathbbm{1}_{(0,1)}
\]
in $\BV((-1,1);\R^2)$. By passing to a non-relabelled subsequence if necessary, we can assume that $\gamma\asc{u_j}\wsc\gamma$ for some $\gamma\in\ALift((-1,1)\times\R^2)$ with $\asc{\gamma}=u^0$.

Let $\psi\in\C_c((-1,1)\times[0,\infty);[0,1])$ be such that $\psi(t,s)=1$ for $(t,s)\in[-1/2,1/2]\times[0,2]$, and define $\varphi\in\C_0(\R^2)$ by 
\[
\varphi\left(x,y\right)=\psi(x,|y|)(|y|^2-1)^2.
\]	
We can compute for $j\geq 4$,
\[
\int_{(-1,1)\times\R^2}\varphi(x,y)\;\dd\gamma\asc{u_j}(x,y)=j\int_0^{\pi/j}\left(\left|\begin{pmatrix}
\cos(jx)+\frac{\pi}{2j}\\
\sin(jx)+\frac{1}{j}
\end{pmatrix}\right|^2-1\right)^2\begin{pmatrix}-\sin(jx)\\ \cos(jx)\end{pmatrix}\;\dd x.
\]
Thus,
\[
\lim_{j\to\infty}\int_{(-1,1)\times\R^2}\varphi(x,y)\;\dd\gamma\asc{u_j}(x,y)=\int_0^{\pi}\left(\left|\begin{pmatrix}
\cos(z)\\
\sin(z)
\end{pmatrix}\right|^2-1\right)^2\begin{pmatrix}-\sin(z)\\ \cos(z)\end{pmatrix}\;\dd z
\]
and so, by the construction of $\varphi$,
\[
\int_{(-1,1)\times\R^2}\varphi(x,y)\;\dd\gamma=0.
\]
However, we also see that
\begin{align*}
\int_{(-1,1)\times\R^2}\varphi(x,y)\;\dd\gamma\asc{u^0}(x,y)&=\int_0^1\left(\left|\begin{pmatrix}2\theta-1\\0\end{pmatrix}\right|^2-1\right)^2\begin{pmatrix}2\\0\end{pmatrix}\;\dd \theta\\
&=\int_0^1\left(4\theta^2+4\theta\right)^2\begin{pmatrix}2\\0\end{pmatrix}\;\dd \theta\neq 0,
\end{align*}
from which we can conclude that $\gamma\neq\gamma\asc{u^0}$.
\end{example}

\subsection{The Structure Theorem}
We now investigate the structure of liftings $\gamma\in\Lift(\Omega\times\R^m)$ and, in particular, how a general lifting $\gamma$ must relate to the elementary lifting $\gamma\asc{u}$ where $\asc{\gamma}=u$. It is clear that $\mu+\gamma\asc{u}\in\Lift(\Omega\times\R^m)$ with $\asc{\mu+\gamma\asc{u}}=u$ whenever $\mu\in\mbfM(\Omega\times\R^m;\R^{m\times d})$ satisfies $\diverg_y\mu=0$ (column-wise divergence), since $\diverg_y\mu=0$ implies that $\int f(x)\nabla_yg(y)\;\dd\mu(x,y)=0$ for all $f\in\C_0(\Omega)$ and $g\in\C_0^1(\R^m)$. It turns out that \emph{every} $\gamma\in\Lift(\Omega\times\R^m)$ can be written in this form:
\begin{theorem}[Structure Theorem for Liftings]\label{thmliftingsstructure}
If $\gamma\in\Lift(\Omega\times\R^m)$ with $u=\asc{\gamma}$, then $\gamma$ admits the following decomposition into mutually singular measures:
\begin{equation*}
\gamma=\gamma\asc{u}\restrict((\Omega\setminus\mathcal{J}_u)\times\R^m)+\gamma^{\mathrm{gs}}.
\end{equation*}
Moreover, $\gamma^{\mathrm{gs}}\in\mbfM(\Omega\times\R^m;\R^{m\times d})$ satisfies 
\begin{equation*}
\diverg_y\gamma^{\mathrm{gs}}=-|D^ju|\otimes\frac{n_u}{|u^+-u^-|}(\delta_{u^+}-\delta_{u^-}),
\end{equation*}
and it is \textbf{graph-singular} with respect to $u$ in the sense that $\gamma^{\mathrm{gs}}$ is singular with respect to all measures of the form $\gr^u_\#\lambda$ where $\lambda\in\mbfM(\Omega)$ satisfies both $\lambda\ll\Hcal^{d-1}$ and $\lambda(\Jcal_u)=0$.
\end{theorem}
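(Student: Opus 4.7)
The plan is to set $\gamma^{\mathrm{gs}}:=\gamma-\gamma\asc{u}\restrict((\Omega\setminus\Jcal_u)\times\R^m)$ and verify the three asserted properties in turn. It is convenient to introduce the auxiliary difference $\nu:=\gamma-\gamma\asc{u}$, so that $\gamma^{\mathrm{gs}}=\nu+\gamma\asc{u}\restrict(\Jcal_u\times\R^m)$. Since the computation following Definition~\ref{defelementaryliftings} shows that $\gamma\asc{u}\in\Lift(\Omega\times\R^m)$ has barycentre $u$, and $\gamma$ is by assumption a lifting with the same barycentre, subtracting the two chain-rule identities~\eqref{eqchainrule} gives $\int\nabla_y\varphi\,\dd\nu=0$ for every $\varphi\in\C_0^1(\Omega\times\R^m)$; equivalently, $\diverg_y\nu\equiv 0$ on $\Omega\times\R^m$.

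For the divergence identity on $\gamma^{\mathrm{gs}}$, it therefore suffices to compute $\diverg_y$ of the jump part of $\gamma\asc{u}$. Unfolding the definition of the elementary lifting and inserting $u^\theta=\theta u^-+(1-\theta)u^+$ on $\Jcal_u$, the integrand $\nabla_y\varphi(x,u^\theta(x))\cdot\bigl((u^+(x)-u^-(x))\otimes n_u(x)\bigr)$ reduces, after integration in $\theta$ via the one-dimensional fundamental theorem of calculus, to $(\varphi(x,u^+(x))-\varphi(x,u^-(x)))\,n_u(x)$. Combining this with $|D^ju|=|u^+-u^-|\,\Hcal^{d-1}\restrict\Jcal_u$ produces precisely the claimed formula for $\diverg_y\gamma^{\mathrm{gs}}$.

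The heart of the argument is graph-singularity. I would disintegrate $\nu=\pi_\#|\nu|\otimes\rho_x$ with $\rho_x\in\mbfM^1(\R^m;\R^{m\times d})$ defined for $\pi_\#|\nu|$-a.e.\ $x$. Testing $\diverg_y\nu=0$ against tensor products $\varphi(x,y)=f(x)g(y)$ with $f\in\C_c(\Omega)$ and $g\in\C_c^1(\R^m)$ and letting $g$ range over a countable dense family, one obtains $\diverg_y\rho_x\equiv 0$ on $\R^m$ for $\pi_\#|\nu|$-a.e.\ $x\in\Omega$. Lemma~\ref{lemdivmunoatoms} then forces $\rho_x(\{a\})=0$, hence also $|\rho_x|(\{a\})=0$, for every $a\in\R^m$. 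Whenever $\lambda\in\mbfM(\Omega)$ satisfies $\lambda\ll\Hcal^{d-1}$ and $\lambda(\Jcal_u)=0$, the measure $\gr^u_\#\lambda$ concentrates on $E:=\{(x,u(x)):x\in\Omega\setminus\Jcal_u\}$, and the disintegration formula gives
\begin{equation*}
|\nu|(E)=\int_{\Omega\setminus\Jcal_u}|\rho_x|(\{u(x)\})\,\dd(\pi_\#|\nu|)(x)=0.
\end{equation*}
The other summand $\gamma\asc{u}\restrict(\Jcal_u\times\R^m)$ of $\gamma^{\mathrm{gs}}$ lives on $\Jcal_u\times\R^m$, which is disjoint from $\supp(\gr^u_\#\lambda)$, so $\gamma^{\mathrm{gs}}\perp\gr^u_\#\lambda$.

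The mutual singularity in the decomposition is then a special case of graph-singularity: since $|\gamma\asc{u}\restrict((\Omega\setminus\Jcal_u)\times\R^m)|$ equals the graph measure $\gr^u_\#(|Du|\restrict(\Omega\setminus\Jcal_u))$, and the choice $\lambda:=|Du|\restrict(\Omega\setminus\Jcal_u)$ satisfies $\lambda\ll\Hcal^{d-1}$ (use $|\nabla u|\,\lL\ll\lL\ll\Hcal^{d-1}$ in $\R^d$ together with $|D^cu|\ll\Hcal^{d-1}$) and $\lambda(\Jcal_u)=0$, the previous step applies. The main obstacle is the disintegration-plus-no-atoms step: one must pass the distributional identity $\diverg_y\nu=0$ to the slices $\rho_x$ on a $\pi_\#|\nu|$-conull set and then pass from atomicity of the $\R^{m\times d}$-valued measures $\rho_x$ to that of the scalar $|\rho_x|$; everything else is a direct unwinding of definitions.
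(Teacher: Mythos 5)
Your proposal is correct and takes essentially the same route as the paper's proof: subtract the two chain-rule identities to get $\diverg_y(\gamma-\gamma\asc u)=0$, compute the divergence of the jump part of $\gamma\asc u$ by the fundamental theorem of calculus in $\theta$, disintegrate $\gamma-\gamma\asc u$ over $\Omega$ and slice the divergence-free condition, invoke Lemma~\ref{lemdivmunoatoms} to rule out atoms, and conclude graph-singularity (with mutual singularity as the special case $\lambda=|Du|\restrict(\Omega\setminus\Jcal_u)$). If anything you are slightly more careful than the printed argument in two spots: you spell out that $\rho_x(\{a\})=0$ implies $|\rho_x|(\{a\})=0$ because $|\rho_x|(\{a\})=|\rho_x(\{a\})|$ on singletons, and you note that $g$ (not just $f$) must be varied over a countable dense family to obtain a single $\pi_\#|\nu|$-conull set where $\diverg_y\rho_x=0$ holds.
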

\begin{proof}
Since $\gamma$ and $\gamma\asc{u}$ both satisfy the chain rule~\eqref{eqchainrule}, we can test with functions of the form $\varphi=f\otimes g$ for $f\in\C^1_0(\Omega)$, $g\in\C^1_0(\R^m)$ to obtain
\[
\int_\Omega\nabla_x f(x) g(u(x))\;\dd x+\int_{\Omega\times\R^m}f(x)\nabla_yg(y)\;\dd\gamma\asc{u}(x,y)=0
\]
and
\[
\int_\Omega\nabla_x f(x) g(u(x))\;\dd x+\int_{\Omega\times\R^m}f(x)\nabla_yg(y)\;\dd\gamma(x,y)=0.
\]
Taking the difference of these two equations then leads us to the identity
\begin{equation*}
\int_{\Omega\times\R^m} f(x)\nabla_y g(y)\;\dd(\gamma-\gamma\asc{u})(x,y)=0\qquad\text{ for all }f\in\C^1_0(\Omega),\; g\in\C^1_0(\R^m).
\end{equation*}
Now write $\eta:=\pi_\#|\gamma-\gamma\asc{u}|$ and disintegrate $\gamma-\gamma\asc{u}=\eta\otimes\rho$ for some weakly* $\eta$-measurable parametrised measure $\rho\colon\Omega\mapsto\mbfM^1(\R^m;\R^{m\times d})$ so that
\[
\int_\Omega f(x)\left\{\int_{\R^m}\nabla_y g(y)\;\dd\rho_x(y)\right\}\;\dd\eta(x)=0
\]
for every $f\in\C_0^1(\Omega)$ and $g\in\C_0^1(\R^m)$. Varying $f$ through a countable dense subset of $\C_0(\Omega)$, we deduce
\[
\int_{\R^m}\nabla_y g(y)\;\dd\rho_x(y)=0\qquad\text{ for }\eta\text{-almost every }x\in\Omega.
\]
Since $g\in\C_0^1(\R^m)$ was arbitrary, it follows that $\diverg_y\rho_x=0$ (column-wise divergence) for $\eta$-almost every $x\in\Omega$ and hence that $\diverg_y(\gamma-\gamma\asc{u})=0$. Defining
\[
  \gamma^{\mathrm{gs}}:=\gamma-\gamma\asc{u}\restrict((\Omega\setminus\Jcal_u)\times\R^m),
\]
we therefore immediately obtain from the definition of $\gamma\asc{u}$ that
\begin{align}
\begin{split}\label{eqdivergidentity}
\diverg_y\gamma^{\mathrm{gs}}=\diverg_y\left(\gamma\asc{u}\restrict(\Jcal_u\times\R^m)\right)
%&=\diverg_y\left(|D^ju|\otimes\left(\frac{\dd D^ju}{\dd |D^ju|}\int_0^1\delta_{u^\theta}\;\dd\theta\right)\right)\\
&=|D^ju|\otimes \left(\diverg_y\left(\frac{\dd D^ju}{\dd |D^ju|}\int_0^1\delta_{u^\theta}\;\dd\theta\right)\right).
\end{split}
\end{align}
Abbreviating $\mu_x:=\frac{\dd D^ju}{\dd |D^ju|}(x)\int_0^1\delta_{u^\theta(x)}\;\dd\theta$, we can compute, for $\eta$-almost every $x\in\Omega$,
\begin{align*}
-\ip{g}{\diverg_y\mu_x}=&\ip{\nabla g}{\frac{\dd D^ju}{\dd |D^ju|}(x)\int_0^1\delta_{u^\theta(x)}\;\dd\theta}\\
=&\int_0^1\nabla g(u^\theta(x))\frac{\dd D^ju}{\dd|D^ju|}(x)\;\dd\theta\\
=&\left(\int_0^1 \nabla g(u^\theta(x))\;\dd\theta\right)\left[\frac{(u^+(x)-u^-(x))}{|u^+(x)-u^-(x)|}\otimes n_u(x)\right]\\
=&\left[\left(\int_0^1 \nabla g(u^\theta(x))\;\dd\theta\right)\cdot(u^+(x)-u^-(x))\right]\frac{n_u(x)}{|u^+(x)-u^-(x)|}\\
=&\left(\int_0^1\frac{\dd }{\dd\theta}g(u^\theta(x))\;\dd\theta\right)\frac{n_u(x)}{|u^+(x)-u^-(x)|} \\
=&\frac{g(u^+(x))-g(u^-(x))}{|u^+(x)-u^-(x)|}n_u(x),
\end{align*}
which implies
\begin{equation*}
\diverg_y\left[\frac{\dd D^ju}{\dd |D^ju|}(x)\int_0^1\delta_{u^\theta(x)}\;\dd\theta\right]=-\frac{n_u(x)}{|u^+(x)-u^-(x)|}(\delta_{u^+(x)}-\delta_{u^-(x)})
\end{equation*}
and hence that
\[
\diverg_y\gamma^{\mathrm{gs}}=-|D^ju|\otimes\frac{n_u	}{|u^+-u^-|}(\delta_{u^+}-\delta_{u^-})
\]
as required.

To show that $\gamma^{\mathrm{gs}}$ is graph-singular with respect to $u$, we argue as follows: let $\lambda\in\mbfM^+(\Omega)$ satisfy $\lambda\ll\Hcal^{d-1}$, $\lambda(\Jcal_u)=0$, and let $E\subset\Omega$ be a set such that $\lambda(E)=\lambda(\Omega)$, the precise representative $u(x)$ is defined for all $x\in E$, and $E\cap\Jcal_u=\emptyset$. By Lemma~\ref{lemdivmunoatoms}, $\rho_x(\{u(x)\})=0$ for $\eta$-almost every $x\in\Omega$, and so, since $E\cap\Jcal_u=\emptyset$,
\[
|\gamma^{\mathrm{gs}}|(\gr^u(E))=|\gamma-\gamma\asc{u}|(\gr^u(E))=\int_E|\rho_x|(\{u(x)\})\;\dd\eta(x)=0.
\]
Because $(\gr^u_\#\lambda)(\gr^u(E))=(\gr^u_\#\lambda)(\Omega\times\R^m)$, it follows that $\gamma^{\mathrm{gs}}$ and $\gr^u_\#\lambda$ charge disjoint sets, which suffices to prove the claim. Since $\gamma\asc{u}\restrict((\Omega\setminus\Jcal_u)\times\R^m)=\gr^u_\#(\nabla u\lL+D^cu)$ is a $u$-graphical measure, we therefore also deduce that $\gamma\asc{u}((\Omega\setminus\Jcal_u)\times\R^m)\perp\gamma^{\mathrm{gs}}$, as required.
\end{proof}
As we have discussed, no more can be said about the structure of $\gamma\in\Lift(\Omega\times\R^m)$ beyond the conclusion of Theorem~\ref{thmliftingsstructure} in general, but a lot more can be said for the special cases where either $m=1$ or $\pi_\#|\gamma|(\Omega)=|Du|(\Omega)$ for $u=\asc{\gamma}$:

\begin{corollary}\label{cor1dliftings}
Every lifting $\gamma\in\Lift(\Omega\times\R)$ is elementary: $\gamma=\gamma\asc{u}$ for some $u\in\BV_\#(\Omega;\R)$.
\end{corollary}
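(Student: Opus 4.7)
The plan is to subtract the elementary lifting $\gamma\asc{u}$ (where $u := \asc{\gamma}$) from $\gamma$ and to exploit the fact that, when $m = 1$, the scalar variable $y$ makes the vanishing of the distributional $\partial_y$-derivative strong enough to force the resulting measure to be zero. Concretely, $\gamma\asc{u}$ is itself a lifting with barycentre $u$, so both $\gamma$ and $\gamma\asc{u}$ satisfy the chain rule~\eqref{eqchainrule} with the same $u$, and subtracting them gives
\[
\int_{\Omega\times\R}\partial_y\varphi(x,y)\;\dd(\gamma-\gamma\asc{u})(x,y) = 0 \qquad\text{for all }\varphi\in\C_0^1(\Omega\times\R),
\]
i.e., $\partial_y(\gamma-\gamma\asc{u}) = 0$ in the distributional sense on $\Omega\times\R$.

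Next, I would disintegrate $\gamma-\gamma\asc{u} = \eta \otimes \rho$ with $\eta := \pi_\#|\gamma-\gamma\asc{u}|\in\mbfM^+(\Omega)$ and $\rho_x\in\mbfM^1(\R;\R^d)$. Testing the identity above against separated test functions $\varphi(x,y) = f(x)g(y)$ with $f\in\C_0(\Omega)$ and $g\in\C_0^1(\R)$, and then varying $f$ through a countable dense family exactly as in the proof of Theorem~\ref{thmliftingsstructure}, shows that for $\eta$-almost every $x$ the vector-valued measure $\rho_x$ satisfies $\partial_y\rho_x = 0$ distributionally on $\R$.

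The main analytic input --- and essentially the only one --- is the easy one-dimensional rigidity statement: \emph{any finite $\R^d$-valued Radon measure on $\R$ with vanishing distributional derivative is zero}. Indeed, each component is a constant distribution, hence proportional to Lebesgue measure, and finiteness of the total variation forces the constant to vanish. This is strictly stronger than Lemma~\ref{lemdivmunoatoms}, which in this situation would only rule out atoms; it is precisely the rigidity that fails when $m\geq 2$, as the construction in Example~\ref{exstrictliftinginclusion} illustrates. Applied pointwise it forces $\rho_x = 0$ for $\eta$-a.e.\ $x$, which contradicts $|\rho_x|(\R) = 1$ unless $\eta\equiv 0$. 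Hence $\gamma - \gamma\asc{u} = 0$, and the $\BV_\#$-function provided by Lemma~\ref{lemliftingspushforward} completes the proof.
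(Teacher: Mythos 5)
Your proof is correct and takes essentially the same route as the paper: subtract $\gamma\asc{u}$, establish $\partial_y(\gamma-\gamma\asc{u})=0$ via the chain rule, disintegrate, and apply the one-dimensional rigidity that a finite measure on $\R$ with vanishing distributional derivative is zero. The only cosmetic difference is that you work directly with $\gamma-\gamma\asc{u}$ instead of routing through the Structure Theorem's $\gamma^{\mathrm{gs}}$, but since $\gamma^{\mathrm{gs}}-\gamma\asc{u}\restrict(\Jcal_u\times\R)=\gamma-\gamma\asc{u}$, these are the same measure and the disintegration step is identical to the one inside the proof of Theorem~\ref{thmliftingsstructure}.
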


\begin{proof}
If $m=1$, then the operators $\diverg_y$ and $\nabla_y$ coincide, and so~\eqref{eqdivergidentity} now states
\[
\nabla_y \gamma^{\mathrm{gs}}=\nabla_y\left(\gamma\asc{u}\restrict\left(\Jcal_u\times\R\right)\right).
\]
Disintegrating 
\[
\gamma^{\mathrm{gs}}-\gamma\asc{u}\restrict(\Jcal_u\times\R)=\pi_\#|\gamma^{\mathrm{gs}}-\gamma\asc{u}\restrict(\Jcal_u\times\R)|\otimes\rho,
\]
we therefore have that $\nabla_y\rho_x\equiv 0$ for $\pi_\#|\gamma^{\mathrm{gs}}-\gamma\asc{u}\restrict(\Jcal_u\times\R)|$-almost every $x\in\Omega$. Since any distribution whose gradient vanishes must be constant, it must follow that
\[
\rho_x= c_x\Lcal^1
\]
for some constant $c_x\in\R^d$. As $\rho_x\in\mbfM^1(\R;\R^d)$ is a finite measure, however, we see that $c_x=0$ and hence that $\gamma^{\mathrm{gs}}=\gamma\asc{u}\restrict(\Jcal_u\times\R^m)$. It then follows from Theorem~\ref{thmliftingsstructure} that
\[
\gamma=\gamma\asc{u}\restrict(\Omega\setminus\Jcal_u\times\R)+\gamma^{\mathrm{gs}}=\gamma\asc{u}
\]
as required.
\end{proof}

If $\pi_\#|\gamma|(\Omega)=|Du|(\Omega)$, Lemma~\ref{lemliftingspushforward} implies that $|\gamma|(A\times\R^m)=|Du|(A)=|\gamma\asc{u}|(A\times\R^m)$ for any Borel set $A\subset\Omega$. Theorem~\ref{thmliftingsstructure} then yields
\[
|\gamma\asc{u}|(A\times\R^m)=|\gamma|(A\times\R^m)=|\gamma\asc{u}|((A\setminus\Jcal_u)\times\R^m)+|\gamma^{\mathrm{gs}}|(A\times\R^m).
\]
In particular, if $A\cap\Jcal_u=\emptyset$, then $|\gamma\asc{u}|((A\setminus\Jcal_u)\times\R^m)=|\gamma\asc{u}|(A\times\R^m)$ and we deduce 
\[
|\gamma^{\mathrm{gs}}|(A\times\R^m)=0\qquad\text{ for all Borel sets }A\subset\Omega\setminus\Jcal_u.
\]
Thus $\gamma^{\mathrm{gs}}=\gamma\restrict(\Jcal_u\times\R^m)$, and hence
\[
\gamma\restrict((\Omega\setminus\Jcal_u)\times\R^m)=\gamma\asc{u}\restrict((\Omega\setminus\Jcal_u)\times\R^m),\qquad \pi_\#|\gamma^{\mathrm{gs}}|=|D^ju|.
\]
Disintegrating $\gamma^{\mathrm{gs}}=|D^ju|\otimes\theta$ for some weakly* $|D^ju|$-measurable parametrised measure $\theta\colon\Omega\to\mbfM^1(\R^m;\R^{m\times d})$, we see that the second part of Theorem~\ref{thmliftingsstructure} now reads
\begin{equation}\label{eqdivtheta}
\diverg_y \theta_x=-\frac{n_u(x)}{|u^+(x)-u^-(x)|}(\delta_{u^+(x)}-\delta_{u^-(x)})\qquad\text{ for }|D^ju|\text{-almost every }x\in\Jcal_u.
\end{equation}
Lemmas~\ref{lemdivmassonline} and~\ref{lemdivonlineidentify} below, which are special cases of Theorems~5.3 and~D.1 respectively from~\cite{BrCoLi86HMD} and for which simplified proofs can be found in~\cite{RinSha15SCE}, show that, since $|\theta_x|(\R^m)=1$, the identity~\eqref{eqdivtheta} in fact forces 
\[
\int_{\R^m} g(y)\;\dd\theta_x(y)=\int_0^1g(u^\theta(x))\frac{\dd Du}{\dd|Du|}(x)\;\dd\theta,
\]
and hence that $\gamma=\gamma\asc{u}$.

\begin{lemma}\label{lemdivmassonline}
Let $a,b\in\mbR^m$ with $a\neq b$, $c\in\pd\bB^d$ and $\mu\in\mbfM^1(\mbR^m;\mbR^{m\times d})$ be such that
\[
\diverg\mu=\frac{c}{|b-a|}(\delta_b-\delta_a)\text{ in }\mbfM(\R^m;\R^d).
\]
Then
\[
|\mu|([a,b])=|\mu|(\mbR^m),
\]
where $[a,b]$ denotes the (closed) straight line segment between $a$ and $b$.
\end{lemma}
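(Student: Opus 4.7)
The plan is to show in two stages that $|\mu|$ is concentrated on $[a,b]$. Let $n := (b-a)/|b-a|$. Stage one, via a coarea-type argument along the axial direction $n$, simultaneously confines $\supp |\mu|$ to the closed slab $\bar S := \{y \in \R^m : 0 \le \langle y-a,n\rangle \le |b-a|\}$ and pins down the polar form $d\mu/d|\mu| \equiv -n\otimes c$. Stage two uses this rigidity to derive a scalar PDE for $|\mu|$ alone, and a transverse disintegration then collapses the slab onto the line through $a$ and $b$.

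For stage one, I would apply the divergence identity to test functions $\psi = \varphi \circ f$ with $f(y) := \langle y-a, n\rangle/|b-a|$ (so $f(a)=0$, $f(b)=1$) and $\varphi \in C_c^\infty(\R)$, and disintegrate $\mu$ along $f$ as $\mu = \int \mu_t \, d\sigma(t)$, where $\sigma := f_\# |\mu|$ is a probability measure on $\R$ and each $|\mu_t|$ is a probability measure on the hyperplane $f^{-1}(t)$. The chain rule reduces the PDE to
\[
\int \varphi'(t)\, g(t)\, d\sigma(t) = -c\bigl(\varphi(1) - \varphi(0)\bigr) \quad \text{for all } \varphi \in C_c^\infty(\R), \qquad g(t) := \int n \cdot d\mu_t \in \R^d.
\]
This identifies the finite $\R^d$-valued Radon measure $g\sigma$ as the unique one with distributional derivative $c(\delta_1 - \delta_0)$, namely $g\sigma = -c\, \mathbf{1}_{(0,1)}\mathcal{L}^1$. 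Taking total variations gives $|g\sigma|(\R) = 1 = \sigma(\R)$, and the chain $|g\sigma| \le |g|\sigma \le \sigma$ --- using $|g| \le 1$, inherited from the inequality $|n^\top A| \le |A|_F$ --- therefore collapses to equalities. This forces $\sigma = \mathbf{1}_{(0,1)}\mathcal{L}^1$ (whence $\supp |\mu| \subset \bar S$), $g \equiv -c$ $\sigma$-a.e., and the equality case in $|n^\top A| \le |A|_F$ (rank-one with $A = n\otimes v$ for some unit $v$) then identifies $d\mu/d|\mu| \equiv -n \otimes c$.

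For stage two, substituting $d\mu = -(n\otimes c)\, d|\mu|$ into the divergence identity and using $c \ne 0$ to cancel the common vector factor yields the scalar PDE
\[
\partial_n |\mu| = \frac{1}{|b-a|}(\delta_a - \delta_b) \qquad \text{in } \mathcal{D}'(\R^m).
\]
Disintegrating $|\mu| = (P_\# |\mu|) \otimes \tilde\lambda_{y'}$ along the orthogonal projection $P \colon \R^m \to n^\perp$, both $a$ and $b$ project to the same point $P(a) = P(b)$, so the right-hand side of the above PDE interacts only with the fiber over $P(a)$. For $P_\# |\mu|$-a.e.\ $y' \ne P(a)$ the fiber $\tilde\lambda_{y'}$ must therefore have vanishing distributional derivative; but $\tilde\lambda_{y'}$ is a probability measure supported on the bounded interval inherited from the slab constraint, and the only distributions on $\R$ with zero derivative are multiples of $\mathcal{L}^1$, so no such nontrivial fiber can exist. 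Therefore $P_\# |\mu| = \delta_{P(a)}$, meaning $|\mu|$ concentrates on the line $\{a + tn : t \in \R\}$; intersecting with $\bar S$ yields $\supp |\mu| \subset [a,b]$, and hence $|\mu|([a,b]) = |\mu|(\R^m)$.

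The main technical obstacle will be rigorously implementing the two disintegrations (particularly the matrix-valued one in stage one) and carefully extracting the scalar PDE from the vectorial divergence identity. Conceptually, the key ingredient is the rigidity of the inequality $|n^\top A| \le |A|_F$, which converts the sharp mass constraint $|\mu|(\R^m) = 1$ into the explicit rank-one polar identification $d\mu/d|\mu| = -n\otimes c$ and thereby reduces the matrix-valued problem to a tractable scalar one.
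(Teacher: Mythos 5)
The paper defers to external references for this lemma (Theorem~5.3 of Brezis--Coron--Lieb, with a simplified treatment in [RinSha15SCE]), so there is no in-house argument to compare against; I assess your proposal on its own terms. Taken as a whole, your two-stage scheme is correct: saturating the total mass along the slicing direction $n$ forces $\sigma=f_\#|\mu|=\mathbbm{1}_{(0,1)}\Lcal^1$ together with the rank-one polar $\frac{\dd\mu}{\dd|\mu|}\equiv-n\otimes c$, and substituting this polar into the divergence equation reduces the problem to a scalar PDE whose transverse slices away from $P(a)$ would have to be nonzero multiples of $\Lcal^1$ on a line --- impossible for probability fibers. This does give $|\mu|([a,b])=|\mu|(\R^m)$.

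One genuine gap to address before stage one can begin: $\psi=\varphi\circ f$ is \emph{not} compactly supported --- it is constant along the unbounded slab $f^{-1}(\supp\varphi)$ --- so the duality formula defining $\diverg\mu$, stated in the paper only for $\varphi\in\C_c^\infty(\R^m)$, does not directly apply to it. Because $|\mu|$ and $\diverg\mu=\frac{c}{|b-a|}(\delta_b-\delta_a)$ are both finite measures, the identity extends to bounded $\C^1$ test functions with bounded gradient by a cut-off argument (truncate by $\eta_R$ and observe $\int\psi\,\nabla\eta_R\,\dd\mu\to0$), but this must be made explicit; the same extension is silently re-used in stage two when you test the scalar PDE against the product functions $\psi(Py)\,\chi(\ip{y}{n})$. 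Two smaller compressions: the inference from ``$g\equiv-c$ and rank-one polar'' to ``$\frac{\dd\mu}{\dd|\mu|}\equiv-n\otimes c$'' also needs the triangle-inequality equality case for the fiber averages $g(t)=\int n^\top\frac{\dd\mu}{\dd|\mu|}\,\dd\lambda_t$, which forces the unit direction $v(y)$ to be constant on each slice before it can be identified with $-c$; and once $P_\#|\mu|=\delta_{P(a)}$ is in hand, reading the scalar PDE on that single surviving fiber already pins $\tilde\lambda_{P(a)}$ to $\frac{1}{|b-a|}\mathbbm{1}_{(\ip{a}{n},\ip{b}{n})}\Lcal^1$, so the slab confinement from stage one is in fact redundant for the final conclusion (stage one is still essential for the polar identity, but not for the slab).
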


\begin{lemma}\label{lemdivonlineidentify}
Let $a,b\in\mbR^m$ with $a\neq b$ and let $\mu\in\mbfM(\mbR^m;\mbR^{m\times d})$ be such that $\diverg\mu\equiv 0$ and $|\mu|(\mbR^m\setminus[a,b])=0$. Then $\mu=0$.
\end{lemma}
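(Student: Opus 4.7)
The plan is to parameterise the segment $[a,b]$ and use the ambient divergence-free hypothesis to produce enough test gradients to annihilate $\mu$. First I would set $e := (b-a)/L$ with $L := |b-a|$ and $\phi\colon[0,L]\to\R^m$, $\phi(t) := a + te$. Since $\phi$ is injective and $|\mu|(\R^m\setminus[a,b]) = 0$, there is a unique $\nu \in \mbfM([0,L];\R^{m\times d})$ with $\mu = \phi_\#\nu$.

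The heart of the argument is the following construction: given an arbitrary smooth $h\colon[0,L]\to\R^m$, I would build $\varphi \in \C^\infty(\R^m)$ with $\nabla\varphi(a+te) = h(t)^T$ for every $t\in[0,L]$. Decomposing $h = h_\|\, e + h_\perp$ with $h_\|(t) := h(t)\cdot e$ and $h_\perp(t) \in e^\perp$, and writing $t(y) := (y-a)\cdot e$, $w(y) := y - a - t(y)\,e$, the explicit function
\[
\varphi(y) := \int_0^{t(y)} h_\|(s)\,\dd s \,+\, h_\perp(t(y))\cdot w(y)
\]
satisfies $\nabla\varphi(a+te) = h(t)^T$ by direct differentiation, using that $w$ vanishes on the segment and that $I - ee^T$ acts as the identity on $e^\perp$. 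Multiplying by a cutoff $\chi \in \C^\infty_c(\R^m)$ equal to $1$ in a neighbourhood of $[a,b]$ yields a compactly supported test function whose gradient still equals $h^T$ on $\supp\mu \subset [a,b]$.

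Substituting into $\diverg\mu = 0$ then gives
\[
0 \,=\, \int_{\R^m} \nabla(\chi\varphi)\,\dd\mu \,=\, \int_0^L h(t)^T\,\dd\nu(t) \quad \text{in } \R^{1\times d}.
\]
Varying $h$ column by column and invoking the Riesz representation theorem together with the density of smooth functions in $\C([0,L];\R^m)$ forces $\nu = 0$, whence $\mu = \phi_\#\nu = 0$. The only step I expect to require genuine thought is the gradient-prescription construction above: the conceptual content is that the ambient divergence-free condition is a $d$-dimensional rather than a $1$-dimensional constraint on $\nu$, but the first-order Whitney-style extension along the affine segment $[a,b]$ is completely explicit and presents no real difficulty.
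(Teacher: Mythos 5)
Your argument is correct and self-contained. The paper does not actually prove this lemma in-text: it is cited as a special case of Theorem~D.1 of Brezis--Coron--Lieb~\cite{BrCoLi86HMD}, with a simplified proof referenced in~\cite{RinSha15SCE}, so there is no in-paper argument to compare against. The crux of your proof, the gradient-prescription step, checks out: with $\nabla t = e^T$ and $\nabla w = I - ee^T$, a direct computation gives
\[
\nabla\varphi(y) \,=\, h_\|(t(y))\,e^T \,+\, \bigl(h_\perp'(t(y))\cdot w(y)\bigr)\,e^T \,+\, h_\perp(t(y))^T,
\]
and the middle term vanishes on $[a,b]$ since $w\equiv 0$ there, so indeed $\nabla\varphi(a+te)=h(t)^T$. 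Your comment that the purely tangential divergence constraint would only control $e^T\nu$, whereas the ambient condition permits the normal derivative to be prescribed freely through the Whitney-type lift, is exactly the right way to see why the lemma is a genuine rigidity statement and not trivial. One small point to tidy: the formula for $\varphi$ evaluates $h_\|$ and $h_\perp$ at $t(y)$ for arbitrary $y\in\R^m$, and $t(y)$ ranges over all of $\R$, so you should first extend $h$ smoothly beyond $[0,L]$ (or equivalently take $h$ to be the restriction of an element of $\C_c^\infty(\R;\R^m)$, which remains dense in $\C([0,L];\R^m)$ and is all the Riesz argument needs). With that adjustment, varying $h$ componentwise forces $\nu_{ij}=0$ for every $i,j$, and hence $\mu=\phi_\#\nu=0$.
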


Applying Lemma~\ref{lemdivmassonline} to $\theta_x$ and then applying Lemma~\ref{lemdivonlineidentify} to the measure
\[
\theta_x-\frac{\dd Du}{\dd|Du|}(x)\int_0^1\delta_{u^\theta(x)}\;\dd\theta,
\]
we therefore arrive at the following proposition:

\begin{proposition}\label{lemmustrict}
Let $\gamma\in\Lift(\Omega\times\R^m)$ with $u=\asc{\gamma}$ be \textbf{minimal} in the sense that $|\gamma|(\Omega\times\R^m)=|Du|(\Omega)$. Then $\gamma$ must be elementary, $\gamma=\gamma\asc{u}$. In particular, if $u_j\to u$ in $\BV_\#(\Omega;\R^m)$ strictly, then $\gamma\asc{u_j}\to\gamma\asc{u}$ strictly in $\Lift(\Omega\times\R^m)$.
\end{proposition}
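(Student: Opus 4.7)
The first assertion is essentially contained in the discussion preceding the statement; I would organize it in three steps. Combining $|\gamma|(\Omega \times \R^m) = |Du|(\Omega)$ with Lemma~\ref{lemliftingspushforward} forces $\pi_\#|\gamma|(A) = |Du|(A)$ for every Borel $A \subset \Omega$, hence $\gamma$ and $\gamma\asc{u}$ agree on $(\Omega \setminus \Jcal_u) \times \R^m$ and $\gamma^{\mathrm{gs}} = \gamma \restrict (\Jcal_u \times \R^m)$ satisfies $\pi_\#|\gamma^{\mathrm{gs}}| = |D^j u|$. Disintegrate $\gamma^{\mathrm{gs}} = |D^j u| \otimes \theta_x$ with $\theta_x \in \mbfM^1(\R^m; \R^{m \times d})$; Theorem~\ref{thmliftingsstructure} then gives
\[
\diverg_y \theta_x \;=\; -\frac{n_u(x)}{|u^+(x)-u^-(x)|}\bigl(\delta_{u^+(x)}-\delta_{u^-(x)}\bigr) \qquad\text{for } |D^j u|\text{-a.e. } x.
\]
Lemma~\ref{lemdivmassonline}, applied with $c=-n_u(x)$, forces $\theta_x$ to be concentrated on the segment $[u^-(x),u^+(x)]$. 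Since the same divergence identity was explicitly verified inside the proof of Theorem~\ref{thmliftingsstructure} for the elementary candidate $\frac{\dd Du}{\dd|Du|}(x)\int_0^1 \delta_{u^\theta(x)}\,\dd\theta$, which also lives on $[u^-(x),u^+(x)]$, Lemma~\ref{lemdivonlineidentify} applied to the difference identifies $\theta_x$ uniquely with this candidate, so $\gamma = \gamma\asc{u}$.

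For the strict convergence claim I would argue by compactness and uniqueness of the limit. Given $u_j \to u$ strictly in $\BV_\#(\Omega;\R^m)$, one has $|\gamma\asc{u_j}|(\Omega \times \R^m) = |Du_j|(\Omega)$, a uniformly bounded quantity. Lemma~\ref{lemliftingcompactness} therefore yields a (non-relabelled) subsequence with $\gamma\asc{u_j} \wsc \gamma$ in $\Lift(\Omega \times \R^m)$, and sequential weak* continuity of the barycentre together with $u_j \wsc u$ in $\BV_\#$ gives $\asc{\gamma} = u$. Weak* lower semicontinuity of total variation then yields
\[
|\gamma|(\Omega \times \R^m) \;\leq\; \liminf_{j \to \infty} |\gamma\asc{u_j}|(\Omega \times \R^m) \;=\; \lim_{j \to \infty} |Du_j|(\Omega) \;=\; |Du|(\Omega),
\]
while Lemma~\ref{lemliftingspushforward} supplies the reverse bound $|\gamma|(\Omega \times \R^m) = \pi_\#|\gamma|(\Omega) \geq |Du|(\Omega)$. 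Consequently $\gamma$ is minimal and the first part of the proposition identifies $\gamma = \gamma\asc{u}$. Since this limit is independent of the extracted subsequence, the full sequence $\gamma\asc{u_j}$ converges weakly* to $\gamma\asc{u}$; combining this with the convergence of total variations $|\gamma\asc{u_j}|(\Omega \times \R^m) \to |\gamma\asc{u}|(\Omega \times \R^m)$ is precisely strict convergence in $\mbfM(\Omega \times \R^m; \R^{m \times d})$.

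The principal work has already been done in Theorem~\ref{thmliftingsstructure} and in the one-dimensional Lemmas~\ref{lemdivmassonline}--\ref{lemdivonlineidentify}; once these are in hand, the only delicate point is cleanly coupling minimality of $\gamma$ (a statement about total masses) with rigidity of the disintegrated fibres $\theta_x$ (a statement about divergences of measures on $\R^m$), which is exactly what the two cited lemmas make possible. The upgrade from weak* to strict convergence is then automatic once uniqueness of the weak* limit has been established via minimality.
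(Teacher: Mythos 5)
Your proposal is correct and follows essentially the same route as the paper: the first assertion reproduces the argument given in the discussion immediately preceding Proposition~\ref{lemmustrict} (minimality $\Rightarrow$ $\pi_\#|\gamma|=|Du|$, the Structure Theorem to localise $\gamma^{\mathrm{gs}}$ on $\Jcal_u\times\R^m$ with $\pi_\#|\gamma^{\mathrm{gs}}|=|D^ju|$, disintegration, then Lemmas~\ref{lemdivmassonline}--\ref{lemdivonlineidentify}), and the second assertion uses the same compactness-plus-minimality squeeze and subsequence principle as the paper's formal proof.
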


\begin{proof}
It remains only to show the second statement: this follows from the fact that the strict convergence of $u_j$ to $u$ implies that $\lim_{j}|\gamma\asc{u_j}|(\Omega\times\R^m)=|Du|(\Omega)$. Thus, for any (non-relabelled) subsequence converging to a limit $\gamma$, we can apply Lemma~\ref{lemliftingspushforward} and the lower semicontinuity of the total variation on open sets to deduce that $|Du|(\Omega)\geq|\gamma|(\Omega\times\R^m)\geq|Du|(\Omega)$. It follows that $\gamma=\gamma\asc{u}$ and, since this argument can be applied to any subsequence of $(u_j)_j$, that the entire sequence converges $\gamma\asc{u_j}\to\gamma\asc{u}$.
\end{proof}

\subsection{Rescaled liftings and tangent liftings}\label{secliftinghomothety}
\begin{lemma}[Rescaled Liftings]\label{lemrestrictionsofliftings}
Let $\gamma\in\Lift(\Omega\times\R^m)$ and, for $r>0$ and $x_0\in\Dcal_{\asc{\gamma}}\cup\Ccal_{\asc{\gamma}}\cup\Jcal_{\asc{\gamma}}$, let $T^{(x_0,r,s)}_\gamma\colon B(x_0,r)\times\R^m\to\bB^d\times\R^m$ be the homothety given by 
\[
T^{(x_0,r,s)}_\gamma:=\left(\frac{x-x_0}{r},s\frac{y-(\asc{\gamma})_{x_0,r}}{r}\right),
\]
where
\[
(\asc{\gamma})_{x_0,r}=\dashint_{B(x_0,r)}\asc{\gamma}(x)\;\dd x.
\]
Defining the measure $\gamma^{r,s}\in\mbfM({\bB^d}\times\R^m;\R^{m\times d})$ by
\[
\gamma^{r,s}:=\frac{s}{r^d}\left(T^{(x_0,r,s)}_\gamma\right)_\#\gamma,
\]
we have that $\gamma^{r,s}\in\Lift(\bB^d\times\R^m)$ with
\[
\asc{\gamma^{r,s}}(z)=s\frac{\asc{\gamma}(x_0+rz)-(\asc{\gamma})_{x_0,r}}{r},\quad\gr^{\asc{\gamma^{r,s}}}_\#(\lL\restrict\bB^d)=\frac{1}{r^d}(T^{(x_0,r,s)}_{\gamma})_\#\gr^{\asc{\gamma}}_\#(\lL\restrict B(x_0,r)),
\]
and
\[
|\gamma^{r,s}|(\bB^d\times\R^m)=\frac{s}{r^d}|\gamma|(B(x_0,r)\times\R^m).
\]
In addition $\gamma^{r,s}\in\ALift(\bB^d\times\R^m)$ if $\gamma\in\ALift(\Omega\times\R^m)$ and $(\gamma\asc{u})^{r,s}=\gamma\asc{(s/c_r)u^r}$, where $u^r$ and $c_r$ are defined as in Theorem~\ref{thmbvblowup}.
%\[
%c_r:=\begin{cases}1&\text{ if }x_0\in\Dcal_{u},\\
%r&\text{ if }x_0\in\Jcal_{u},\\
%\frac{r^d}{|Du|(B(x_0,r))}&\text{ if }x_0\in\Ccal_{u},
%\end{cases}
%\]
%as in the statement of Theorem~\ref{thmbvblowup}.
\end{lemma}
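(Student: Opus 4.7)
The strategy is to reduce every claim to a change of variables under the homothety $T := T^{(x_0,r,s)}_\gamma$, with the verification of the chain rule~\eqref{eqchainrule} for the pair $(\tilde u, \gamma^{r,s})$ being the only genuinely non-formal step. Write $u := \asc{\gamma}$, $c := (u)_{x_0,r}$, and set $\tilde u(z) := s(u(x_0+rz) - c)/r$; the mean-zero condition $\tilde u \in \BV_\#(\bB^d; \R^m)$ is immediate from the definition of $c$. To verify~\eqref{eqchainrule}, I would test against $\varphi \in \C^1_0(\bB^d \times \R^m)$ by pulling back: $\psi := \varphi \circ T$ extends by zero to an element of $\C^1_0(\Omega \times \R^m)$ (at least once $r$ is small enough that $B(x_0,r) \Subset \Omega$, which is the setting of interest), with
\[
\nabla_x \psi = r^{-1}(\nabla_z \varphi) \circ T, \qquad \nabla_y \psi = (s/r)(\nabla_y \varphi) \circ T.
\]
Substituting $\psi$ into~\eqref{eqchainrule} for $\gamma$, applying $z = (x-x_0)/r$ to the Lebesgue integral (Jacobian $r^d$) and the defining pushforward identity together with the $s/r^d$ weight of $\gamma^{r,s}$ on the $\gamma$-integral, both terms acquire a common factor of $r^{d-1}$. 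Cancelling this factor yields the chain rule for $(\tilde u, \gamma^{r,s})$, establishing $\gamma^{r,s} \in \Lift(\bB^d \times \R^m)$ with $\asc{\gamma^{r,s}} = \tilde u$.

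The remaining identities follow formally. The graph-pushforward formula is a direct Lebesgue change of variables, using that $T(x, u(x)) = (z, \tilde u(z))$ for $z = (x-x_0)/r$. The total variation identity $|\gamma^{r,s}|(\bB^d \times \R^m) = (s/r^d)|\gamma|(B(x_0,r) \times \R^m)$ follows from Lemma~\ref{lempushforwardderivative} applied to the continuous injection $T$, together with the positive scalar $s/r^d$. For approximability, observe that $\mu \mapsto (s/r^d) T_\# \mu$ is weak*-continuous on bounded subsets of $\mbfM(\Omega \times \R^m; \R^{m\times d})$ (via duality with $\varphi \mapsto \varphi \circ T$), so $\gamma\asc{u_j} \wsc \gamma$ in $\Lift(\Omega \times \R^m)$ forces $(\gamma\asc{u_j})^{r,s} \wsc \gamma^{r,s}$ in $\Lift(\bB^d \times \R^m)$.

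For the last identity $(\gamma\asc{u})^{r,s} = \gamma\asc{(s/c_r)u^r}$, note first that by the definition of $u^r$ and $c_r$ in Theorem~\ref{thmbvblowup} we have $(s/c_r) u^r = \tilde u$, so it suffices to show $(\gamma\asc{u})^{r,s} = \gamma\asc{\tilde u}$. Since liftings with a given barycentre are not unique (this is the content of the Structure Theorem), the point is to check that pushforward under $T$ preserves the elementary structure. Against any $\varphi \in \C_0(\bB^d \times \R^m)$, Definition~\ref{defelementaryliftings} expresses both sides as double integrals over $Du$ and $D\tilde u$ respectively, and combining the transformation rule $D\tilde u = (s/r^d) T^{(x_0,r)}_\# Du$ (with $T^{(x_0,r)}(x) := (x-x_0)/r$) with the affine-equivariance $(\tilde u)^\theta(z) = s(u^\theta(x_0+rz) - c)/r$ of the jump interpolant shows the two agree. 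The chief obstacle throughout is book-keeping: the factors $s/r^d$, $r^d$, $r^{-1}$, and $s/r$ must be tracked carefully, but once assembled they cancel cleanly and every claim reduces to a pushforward formula.
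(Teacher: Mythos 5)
Your proposal follows essentially the same route as the paper: the chain rule for $\gamma^{r,s}$ is verified by testing with $\varphi\circ T$ and changing variables, the remaining identities are direct pushforward computations, and the elementary case is handled by unwinding Definition~\ref{defelementaryliftings}. This is the paper's argument up to whether one starts from $\ip{\nabla_y\varphi}{\gamma^{r,s}}$ and simplifies or from the chain rule for $\gamma$ with the test function $\varphi\circ T$; these are the same computation read in opposite directions.

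One point deserves tightening. Your approximability argument asserts weak* continuity of the \emph{fixed} map $\mu\mapsto(s/r^d)T_\#\mu$ and concludes $(\gamma\asc{u_j})^{r,s}\wsc\gamma^{r,s}$, but $(\gamma\asc{u_j})^{r,s}$ is defined via $T^{(x_0,r,s)}_{\gamma\asc{u_j}}$, which recentres by $(u_j)_{x_0,r}$ rather than $(u)_{x_0,r}$ — so the transformation varies with $j$, and the stated continuity of the fixed pushforward does not by itself give the claim. The gap is small and closes easily: $u_j\to u$ in $\Lp^1$ gives $(u_j)_{x_0,r}\to(u)_{x_0,r}$, hence $T^{(x_0,r,s)}_{\gamma\asc{u_j}}\to T^{(x_0,r,s)}_\gamma$ uniformly on compacts; combined with $\sup_j|\gamma\asc{u_j}|(\Omega\times\R^m)<\infty$ and the uniform continuity of a test function $\varphi$, the discrepancy $\int(\varphi\circ T^{(x_0,r,s)}_{\gamma\asc{u_j}}-\varphi\circ T^{(x_0,r,s)}_\gamma)\,\dd\gamma\asc{u_j}$ vanishes. (The paper itself compresses this step to a single sentence, so the same caveat applies there.) With that repair, the proof is complete and matches the paper.
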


In the special case where $s=c_r$, we shall use the abbreviations
\begin{equation} \label{eqrescaleabbrev}
\gamma^r:=\gamma^{r,c_r},\qquad T^{(x_0,r)}_\gamma:=T^{(x_0,r,c_r)}_\gamma.
\end{equation}

\begin{proof}
That $\gamma^{r,s}\in\Lift(\bB^d\times\R^m)$ follows from the fact that, for $\varphi\in\C^1_0(\bB^d\times\R^m)$, we can compute	
\begin{align*}
\ip{\nabla_y\varphi}{\frac{s}{r^d}{(T^{(x_0,r,s)}_\gamma)}_\#(\gamma)}&=\frac{s}{r^d}\ip{(\nabla_y\varphi)\circ T^{(x_0,r,s)}_\gamma}{\gamma}\\
&=r^{1-d}\ip{\nabla_y\left(\varphi\circ T^{(x_0,r,s)}_\gamma\right)}{\gamma}\\
&=-r^{1-d}\int\nabla_x\bigl(\varphi\circ T^{(x_0,r,s)}_\gamma\bigr)(x,\asc{\gamma}(x))\;\dd x\\
&=-r^{-d}\int\nabla_x\varphi\left(\frac{x-x_0}{r},s\frac{\asc{\gamma}(x)-(\asc{\gamma})_{x_0,r}}{r}\right)\;\dd x\\
&=-\int\nabla_x\varphi\left(z,s\frac{\asc{\gamma}(x_0+rz)-(\asc{\gamma})_{x_0,r}}{r}\right)\;\dd z.
\end{align*}
It is also clear that
\[
\int_{\bB^d}s\frac{\asc{\gamma}(x_0+rz)-(\asc{\gamma})_{x_0,r}}{r}\;\dd z=0,
\]
and so we see that $\gamma^{r,s}\in\Lift(\bB^d\times\R^m)$ with $\asc{\gamma^{r,s}}(z)=s\frac{\asc{\gamma}(x_0+rz)-(\asc{\gamma})_{x_0,r}}{r}$ as required. To identify $\gr^{\asc{\gamma^{r,s}}}_\#(\lL\restrict\bB^d)$, we need simply observe that
\begin{align*}
\int\varphi(z,w)\;\dd \gr^{\asc{\gamma^{r,s}}}_\#(\lL\restrict\bB^d)(z,w)&=\int\varphi\left(z,s\frac{\asc{\gamma}(x_0+rz)-(\asc{\gamma})_{x_0,r}}{r}\right)\;\dd z\\
&=\frac{1}{r^d}\int\varphi\left(\frac{x-x_0}{r},s\frac{\asc{\gamma}(x)-(\asc{\gamma})_{x_0,r}}{r}\right)\;\dd z\\
&=\frac{1}{r^d}\int_{B(x_0,r)\times\R^m}\varphi\left(\frac{x-x_0}{r},s\frac{y-(\asc{\gamma})_{x_0,r}}{r}\right)\;\dd\gr^{\asc{\gamma}}_\#(\lL)(x,y)\\
&=\frac{1}{r^d}\int\varphi(z,w)\;\dd\left[(T^{(x_0,r,s)}_{\gamma})_\#\gr^{\asc{\gamma}}_\#(\lL\restrict B(x_0,r))\right](z,w)
\end{align*}
for all $\varphi\in\C_0(\bB^d\times\R^m)$.

Moreover,
\[
|\gamma^{r,s}|(\bB^d\times\R^m)=\frac{s}{r^d}|\gamma|\left((T^{(x_0,r,s)}_\gamma)^{-1}(\bB^d\times\R^m)\right)=\frac{s}{r^d}|\gamma|(B(x_0,r)\times\R^m).
\]
To show that $\gamma^{r,s}\in\ALift(\bB^d\times\R^m)$ if $\gamma\in\ALift(\Omega\times\R^m)$, it suffices to note that, if $(\gamma\asc{u_j})_j$ is such that $\gamma\asc{u_j}\wsc\gamma$, then the liftings $\gamma_{j,r,s}\in\Lift(\bB^d\times\R^m)$ defined by
\[
\int\varphi(z,w)\;\dd\gamma_{j,r,s}(z,w)={s}\int\varphi\left(\frac{x-x_0}{r},s\frac{y-(u_j)_{x_0,r}}{r}\right)\;\dd\gamma_j(x,y)\quad\text{ for }\varphi\in\C_0(\Omega\times\R^m)
\]
converge weakly* to $\gamma^{r,s}$ as $j\to\infty$.

Finally,
\begin{align*}
\int \varphi(z,w)\;\dd(\gamma\asc{(u})^{r,s}(z,w)&={s}\int\int_0^1\varphi\left(\frac{x-x_0}{r},s\frac{u^\theta(x)-(u)_{x_0,r}}{r}\right)\;\dd\theta\;\dd Du(x)\\
&=\int\int_0^1\varphi\left(z,\frac{s}{c_r}(u^r)^\theta(x)\right)\;\dd\theta\;\dd D \left[\frac{s}{c_r}u^r\right](z)\\
&=\int\varphi(z,w)\;\dd\gamma\left[\frac{s}{c_r}u^r\right](z,w),
\end{align*}
which shows $\gamma\asc{(s/c_r)u^r}=(\gamma\asc{u})^{r,s}$.
%That $(\gamma\asc{u})^r=\gamma\asc{u^r}$ follows immediately from the fact that $\asc{(\gamma\asc{u})^r}=u^r$ and the fact that $|(\gamma\asc{u})^r|=|\gamma\asc{u}|$
\end{proof}

The following result describes how liftings can be blown up around points $x_0\in\Dcal_{\asc{\gamma}}\cup\Ccal_{\asc{\gamma}}$ in a similar fashion to the results presented for $\BV$-functions in Theorem~\ref{thmbvblowup}. Theorem~\ref{thmdiffusetangentlifting} will be an indispensable tool for the localisation arguments carried out in Section~\ref{chaptangentliftingyms}. A similar procedure can be carried out at points $x_0\in\Jcal_u$ but this is not necessary for the rest of this paper and is therefore omitted for brevity.

\begin{theorem}[Tangent Liftings at diffuse points]\label{thmdiffusetangentlifting}
Let $x_0\in\Dcal_u\cup\Ccal_u$ and $u^r$, $u^0\in\BV(\bB^d;\R^m)$ be as defined in Theorem~\ref{thmbvblowup}. Then, for $\lL+|D^c u|$-almost every $x_0\in\Dcal_u\cup\Ccal_u$,
\begin{itemize}
\item if $x_0\in\Dcal_u$ then $u^r\to u^0$ and $\gamma\asc{u^r}\to\gamma\asc{u^0}$ strictly as $r\to 0$,
\item if $x_0\in\Ccal_u$ then for any sequence $r_n\downarrow 0$ and $\varepsilon>0$, there exists $\tau\in(1-\varepsilon,1)$ such that $u^{\tau r_n}\to u^0$ and $\gamma\asc{u^{\tau r_n}}\to\gamma\asc{u^0}$ strictly as $n\to\infty$.
\end{itemize}
\end{theorem}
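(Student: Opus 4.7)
The plan is to observe that this theorem is essentially an immediate consequence of two earlier results in the paper: Theorem~\ref{thmbvblowup}, which furnishes strict convergence of the rescaled $\BV$-functions $u^r$ (or $u^{\tau r_n}$) to $u^0$ in $\BV(\bB^d;\R^m)$, and Proposition~\ref{lemmustrict}, which asserts that the map $u\mapsto\gamma\asc{u}$ is continuous from $\BV_\#$ equipped with strict convergence into $\Lift$ equipped with strict convergence. The role of Lemma~\ref{lemrestrictionsofliftings} is just to provide the identification $(\gamma\asc{u})^r=\gamma\asc{u^r}$ (where $r$ on the left uses the convention~\eqref{eqrescaleabbrev}), which ensures that blowing up the elementary lifting of $u$ is the same as taking the elementary lifting of the blown-up function.

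First, I would verify that the rescaled maps lie in $\BV_\#(\bB^d;\R^m)$: the definition in Theorem~\ref{thmbvblowup} subtracts $(u)_{x_0,r}$ before rescaling, so $(u^r)_{\bB^d}=0$, and consequently also $(u^0)_{\bB^d}=0$ by taking limits, so Proposition~\ref{lemmustrict} is applicable to these functions.

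For $x_0\in\Dcal_u$, Theorem~\ref{thmbvblowup}(i) applies at $\lL$-almost every such point and yields $u^r\to u^0=\nabla u(x_0)\,\sbullet$ strongly in $\BV(\bB^d;\R^m)$ as $r\downarrow 0$; strong convergence entails strict convergence, so Proposition~\ref{lemmustrict} immediately gives $\gamma\asc{u^r}\to\gamma\asc{u^0}$ strictly in $\Lift(\bB^d\times\R^m)$. For $x_0\in\Ccal_u$, Theorem~\ref{thmbvblowup}(iii) applies at $|D^c u|$-almost every such point: given a sequence $r_n\downarrow 0$, after extracting a (non-relabelled) weakly* convergent subsequence with limit $u^0$ of the form~\eqref{eqcantorblowup}, the second part of that theorem guarantees, for every $\varepsilon>0$, the existence of $\tau\in(1-\varepsilon,1)$ with $u^{\tau r_n}\to u^0$ strictly in $\BV(\bB^d;\R^m)$. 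Applying Proposition~\ref{lemmustrict} once more produces the desired strict convergence $\gamma\asc{u^{\tau r_n}}\to\gamma\asc{u^0}$.

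There is no real obstacle here — the work has already been done in Theorem~\ref{thmbvblowup} (where choosing the dilation parameter $\tau$ so that $|Du^0|(\tau\partial\bB^d)=0$ is what upgrades weak* to strict convergence in the Cantor case) and in Proposition~\ref{lemmustrict} (whose proof relies on the minimality characterisation $|\gamma\asc{u}|(\Omega\times\R^m)=|Du|(\Omega)$ together with lower semicontinuity of the total variation). The only care required is to ensure the same $\tau$ works for both the $\BV$-strict convergence of $u^{\tau r_n}$ and the lifting-strict convergence of $\gamma\asc{u^{\tau r_n}}$, but this is automatic because the latter is a direct consequence of the former via Proposition~\ref{lemmustrict}.
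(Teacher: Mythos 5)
Your proposal is correct and takes exactly the route the paper itself uses; the paper's proof of this statement consists of the single line ``Combine Theorem~\ref{thmbvblowup} with Proposition~\ref{lemmustrict},'' and you have simply unpacked this, verifying the applicability of Proposition~\ref{lemmustrict} (that $u^r \in \BV_\#(\bB^d;\R^m)$) and tracking the Lebesgue vs.\ Cantor cases. Your observation that one should extract a weakly* convergent subsequence before invoking the strict-convergence upgrade in Theorem~\ref{thmbvblowup}(iii) is a fair reading of that slightly informal step.
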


\begin{proof}
Combine Theorem~\ref{thmbvblowup} with Proposition~\ref{lemmustrict}.
\end{proof}

\subsection{Perspective constructions and integral representations}\label{secliftingperspective}

Given an integrand $f\in\Rbf(\Omega\times\R^m)$, define the corresponding \textbf{perspective integrand}, $Pf\colon\overline{\Omega}\times\R^m\times\R^{m\times d}\times\R\to\R$ by
\[
(Pf)(x,y,(A,t)):=
\begin{cases}
|t|f(x,y,|t|^{-1}A)&\text{ if }|t|>0,\\
f^\infty(x,y,A)&\text { if }t=0.
\end{cases}
\]
Similarly, the \textbf{perspective measure} $P\gamma\in\mbfM(\Omega\times\R^m;\R^{m\times d}\times\R)$ of a lifting $\gamma\in\Lift(\Omega\times\R^m)$ is defined by
\[
P\gamma:=\left(\gamma,\gr^{\asc{\gamma}}_\#(\lL\restrict\Omega)\right).
\]
Note that $Pf$ is positively one-homogeneous in the $(A,t)$-argument. Corollary~\ref{corliftingtographcts} implies that $P\gamma_j\wsc P\gamma$ if $\gamma_j\wsc\gamma$ in $\Lift(\Omega\times\R^m)$ and, letting
\[
\gamma=\nabla\asc{\gamma}\gr^{\asc{\gamma}}_\#(\lL\restrict\Omega)+\gamma^s
\]
be the Radon--Nikodym decomposition of $\gamma$ with respect to $\gr^{\asc{\gamma}}_\#(\lL\restrict\Omega)$ (that $\frac{\dd\gamma}{\dd\gr^{\asc{\gamma}}_\#(\lL\restrict\Omega)}=\nabla\asc{\gamma}$ follows from Theorem~\ref{thmliftingsstructure}), we also see that $P\gamma$ admits the following decomposition with respect to $\gr^{\asc{\gamma}}_\#(\lL\restrict\Omega)$,
\begin{equation*}
P\gamma=(\nabla \asc{\gamma},1)\gr^{\asc{\gamma}}_\#(\lL\restrict\Omega)+(\gamma^s,0).
\end{equation*}
Thus, $P\gamma_j\to P\gamma$ strictly in $\mbfM(\Omega\times\R^m;\R^{m\times d}\times\R)$ if and only if $\gamma_j\wsc\gamma$ and
\[
\int_\Omega\sqrt{|\nabla \asc{\gamma_j}(x)|^2+1}\;\dd x+|\gamma^s_j|(\Omega\times\R^m)\to\int_\Omega\sqrt{|\nabla\asc{\gamma}(x)|^2+1}\;\dd x+|\gamma^s|(\Omega\times\R^m)
\]
as $j\to\infty$. Note that this holds for $(\gamma\asc{u_j})_j$ and $\gamma\asc{u}$ whenever $u_j\to u$ area-strictly in $\BV_\#(\Omega;\R^m)$.

Recalling the notation introduced in~\eqref{eqreshetnyakrepresentation} in Section~\ref{secpreliminaries}, we see that these perspective constructions permit the following computation:
\begin{align*}
\int_{\Omega\times\R^m} Pf(x,y,P\gamma)=&\int_{\Omega\times\R^m}Pf\left(x,y,(\nabla \asc{\gamma},1)\gr^{\asc{\gamma}}_\#(\lL\restrict\Omega)\right)+\int_{\Omega\times\R^m}Pf\left(x,y,(\gamma^s,0)\right)\\
=&\int_{\Omega}Pf\left(x,\asc{\gamma}(x),\frac{(\nabla\asc{\gamma}(x),1)}{\sqrt{|\nabla\asc{\gamma}(x)|^2+1}}\right)\sqrt{|\nabla\asc{\gamma}(x)|^2+1}\;\dd x\\
&\qquad\qquad+\int_{\Omega\times\R^m}f^\infty\left(x,y,\gamma^s\right)\\
=&\int_\Omega f(x,\asc{\gamma}(x),\nabla\asc{\gamma}(x))\;\dd x+\int_{\Omega\times\R^m}f^\infty\left(x,y,\gamma^s\right).
\end{align*}
Defining $\Fcal_\Lrm\colon\Lift(\Omega\times\R^m)\to\R$ by
\[
\Fcal_\Lrm[\gamma]:=\int_{\Omega\times\R^m} Pf(x,y,P\gamma)=\int_\Omega f(x,\asc{\gamma}(x),\nabla\asc{\gamma}(x))\;\dd x+\int_{\Omega\times\R^m}f^\infty\left(x,y,\gamma^s\right),
\]
we therefore recover
\[
\Fcal[u]=\Fcal_\Lrm[\gamma\asc{u}]\qquad\text{ for }u\in\BV_\#(\Omega;\R^m).
\]
For a general element $u\in\BV(\Omega;\R^m)$, we can still write
\begin{equation}\label{eqliftingrep}
\Fcal[u]=\int_{\Omega\times\R^m}Pf(x,y+(u),P\gamma\asc{u-(u)}),\qquad (u):=\dashint_\Omega u(x)\;\dd x.
\end{equation}
Lemma~\ref{lemliftingsperturb} below and Proposition~\ref{lemextendedrepresentation} in the next section imply that, whenever $(u_j)_j$ is a sequence converging weakly* to $u$ in $\BV(\Omega;\R^m)$, we can expect that
\begin{equation*}
\lim_{j\to\infty}\left|\int_{\Omega\times\R^m}Pf(x,y+(u_j),P\gamma\asc{u_j-(u_j)})-\int_{\Omega\times\R^m} Pf(x,y+(u),P\gamma\asc{u_j-(u_j)})\right|=0,
\end{equation*}
for a large class of integrands $f$, and so for these we can gain a complete understanding of the functional $\Fcal$ on all of $\BV(\Omega;\R^m)$ by considering $\Fcal_\Lrm$ on $\Lift(\Omega;\R^m)$.

\begin{lemma}\label{lemliftingsperturb}
If $f\in\mbfE(\Omega\times\R^m)$, $(\gamma_j)_j\subset\Lift(\Omega\times\R^m)$ is a bounded sequence, and $(c_j)_j\subset\R^m$ is such that $c_j\to c$ as $j\to\infty$, then
\[
\lim_{j\to\infty}\left|\int_{\Omega\times\R^m} Pf(x,y+c_j,P\gamma_j)-\int_{\Omega\times\R^m} Pf(x,y+c,P\gamma_j)\right|=0.
\]
\end{lemma}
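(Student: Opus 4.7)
The plan rests on exploiting the compact representation available to any $f\in\mbfE(\Omega\times\R^m)$, namely $f(x,y,A)=(1+|A|)g_f(x,y,A/(1+|A|))$ with $g_f\in\C(\overline{\Omega}\times\sigma\R^m\times\overline{\bB^{m\times d}})$. By positive $1$-homogeneity of $Pf$ in the $(A,t)$-variable, a direct calculation yields
\[
Pf(x,y,(A,t))=(|t|+|A|)\,g_f\!\left(x,y,\frac{A}{|t|+|A|}\right)\qquad\text{for }(A,t)\neq(0,0),
\]
with the expression vanishing at $(0,0)$. Since $g_f$ lives on a compact metric space, it is uniformly continuous there. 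This reduces the lemma to controlling how translation of the $y$-variable by a converging constant $c_j\to c$ behaves in the compactified space $\sigma\R^m$.

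Next I would verify that the translation map $\Psi_c\colon\sigma\R^m\to\sigma\R^m$, $y\mapsto y+c$, defined via~\eqref{eqextendedaddition}, depends continuously on $c$ uniformly in $y$. For $y\in\infty\pd\bB^m$ this is trivial since such points are fixed by every $\Psi_c$. For $y\in\R^m$, the elementary Lipschitz estimate
\[
\left|\frac{v_1}{1+|v_1|}-\frac{v_2}{1+|v_2|}\right|\leq 2|v_1-v_2|,
\]
applied to $v_1=y+c_j$ and $v_2=y+c$, produces $d_{\sigma\R^m}(y+c_j,y+c)\leq 2|c_j-c|$ independently of $y$. Hence $\Psi_{c_j}\to\Psi_c$ uniformly on $\sigma\R^m$.

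Finally, writing $\sigma_j:=\dd P\gamma_j/\dd|P\gamma_j|$ (a Borel map into the unit sphere of $\R^{m\times d}\times\R$), the difference of interest takes the form
\[
\int_{\Omega\times\R^m}\bigl[Pf(x,y+c_j,\sigma_j(x,y))-Pf(x,y+c,\sigma_j(x,y))\bigr]\,\dd|P\gamma_j|(x,y).
\]
The first step bounds the integrand pointwise by $2\sqrt{2}\,|g_f(x,y+c_j,B_j(x,y))-g_f(x,y+c,B_j(x,y))|$ for some $B_j(x,y)\in\overline{\bB^{m\times d}}$, and the second step together with the uniform continuity of $g_f$ forces this quantity to be smaller than any prescribed $\varepsilon>0$ for $j$ sufficiently large, uniformly in $(x,y)$. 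Since
\[
|P\gamma_j|(\Omega\times\R^m)\leq |\gamma_j|(\Omega\times\R^m)+\lL(\Omega)
\]
is uniformly bounded by the hypothesis on $(\gamma_j)_j$, the integral tends to zero as $j\to\infty$. The only point genuinely requiring care is the uniform control of the translation maps $\Psi_{c_j}$ on the full compactification $\sigma\R^m$, especially at the sphere at infinity; everything else is a routine consequence of the structural definition of $\mbfE(\Omega\times\R^m)$ and the Radon--Nikodym representation of $P\gamma_j$.
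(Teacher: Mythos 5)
Your proof is correct and follows the same broad strategy as the paper—reduce to uniform continuity of an integrand on a compact set and exploit the uniform bound on $|P\gamma_j|(\Omega\times\R^m)$—but the mechanics differ in a way worth noting. The paper first establishes that $Pf$ extends continuously to $\overline{\Omega}\times\sigma\R^m\times\R^{m\times d}\times\R$ (handling the $t=0$ locus via the strong existence of $\sigma f^\infty$ for $f\in\mbfE$) and then invokes a modulus of continuity $m$ for $Pf$ on the compact set $\overline{\Omega}\times\sigma\R^m\times\pd\bB^{m\times d+1}$. You instead write down the closed formula
\[
Pf(x,y,(A,t))=(|t|+|A|)\,g_f\!\left(x,y,\frac{A}{|t|+|A|}\right),\qquad (A,t)\neq 0,
\]
and work directly with $g_f$, whose continuity on $\overline{\Omega}\times\sigma\R^m\times\overline{\bB^{m\times d}}$ is built into the definition of $\mbfE(\Omega\times\R^m)$; this makes the relevant uniform continuity manifest rather than a separate claim. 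You also supply the explicit uniform estimate $d_{\sigma\R^m}(y+c_j,y+c)\leq 2|c_j-c|$ (which one can in fact sharpen to constant $1$), a step the paper's proof leaves implicit when it passes from the modulus $m$ to the expression $m(|c_j-c|)$; without this uniformity over all $y\in\sigma\R^m$—including at the sphere at infinity, where the translations act as the identity—one could not conclude that the integrand is uniformly small. Both proofs are sound; yours spells out a couple of implicit steps.
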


\begin{proof}
We claim first that $f\in\mbfE(\Omega\times\R^m)$ implies that $Pf\in\C(\overline{\Omega}\times\sigma\R^m\times\R^{m\times d}\times\R)$. The continuity of $Pf$ at points $(x,y,A,t)\in\overline{\Omega}\times\sigma\R^m\times\R^{m\times d}\times(\R\setminus \{0\})$ is clear from the continuity of $f$, whereas continuity at points $(x,y,A,0)$ for $(x,y,A)\in\overline{\Omega}\times\sigma\R^m\times\R^{m\times d}$ follows directly from the definition of $Pf$ combined with the fact that $\sigma f^\infty$ exists in the stronger sense that the limit~\eqref{eqextendedffore} always returns $\sigma f^\infty$ as noted in Section~\ref{secpreliminaries}. Consequently, $Pf$ must be uniformly continuous on the compact set $\overline{\Omega}\times\sigma\R^m\times\pd\bB^{m\times d +1}$ and so we can estimate
\begin{align*}
&\lim_{j\to\infty}\left|\int_{\Omega\times\R^m} Pf(x,y+c_j,P\gamma_j)-\int_{\Omega\times\R^m} Pf(x,y+c,P\gamma_j)\right|\\
&\qquad\leq \lim_{j\to\infty}\int_{\Omega\times\R^m} \left| Pf\left(x,y+c_j,\frac{\dd P\gamma_j}{\dd|P\gamma_j|}(x,y)\right)- Pf\left(x,y+c,\frac{\dd P\gamma_j}{\dd|P\gamma_j|}(x,y)\right)\right|\;\dd|P\gamma_j|(x,y)\\
&\qquad\leq \lim_{j\to\infty} m(|c_j-c|)|P\gamma_j|(\Omega\times\R^m)\\
&\qquad=0,
\end{align*}
where $m\colon [0,\infty)\to[0,\infty)$ is a modulus of continuity for $Pf$ so that 
\[
|Pf(x_1,y_1,A_1,t_1)-Pf(x_2,y_2,A_2,t_2)|\leq m(|(x_1,y_1,A_1,t_1)-(x_2,y_2,A_2,t_2)|)
\]
for all $(x_1,y_1,A_1,t_1),(x_2,y_2,A_2,t_2)\in\overline{\Omega}\times\sigma\R^m\times\pd\bB^{m\times d +1}$ and $m(r)\to 0$ as $r\to 0$.
\end{proof}

\section{Young measures for liftings}\label{chapyoungmeasures}
This section considers Young measures associated to liftings and presents the technical machinery required to manipulate them. These Young measures are compactness tools associated to weakly* convergent sequences $(\gamma_j)_j\subset\Lift(\Omega\times\R^m)$ whose purpose is to allow us to compute the limiting behaviour of sequences $(\int Pf(x,y,P\gamma_j))_j$ for as large a class of integrands $f$ as possible simultaneously. Crucially, every bounded sequence $(\gamma_j)_j\subset\Lift(\Omega\times\R^m)$ can (upon passing to a non-relabelled subsequence) be assumed to generate a Young measure $\bnu$ with the key property that
\[
\lim_{j\to\infty}\int Pf(x,y,P\gamma_j)=\ddprb{f,\bnu}\qquad\text{ for all }f\in\mbfE(\Omega\times\R^m),
\]
where the duality product $\ddpr{f,\bnu}$ is defined below in Section~\ref{secyms}. Together with the localisation principles developed in Section~\ref{chaptangentliftingyms}, the theory developed in this section will lead to a proof of the lower semicontinuity component of Theorem~\ref{wsclscthm}.

The plan for this section is as follows: first, in Section~\ref{secyms}, Young measures over $\Omega\times\R^m$ with target space $\R^{m\times d}$ are defined abstractly. The elementary Young measures associated to elements of $\Lift(\Omega\times\R^m)$ are then introduced, as are the concepts of Young measure convergence, generation, and the duality product $\ddpr{\frarg,\frarg}$. Section~\ref{secymscompact} is concerned with the proof of the Young measure Compactness Theorem (Theorem~\ref{thmymsseqcompact}), which shows that any bounded sequence $(\gamma_j)_j\subset\Lift(\Omega\times\R^m)$ possesses a subsequence generating a Young measure $\bnu$. Section~\ref{secymsmanipulate} then introduces several techniques for manipulating Young measures and, finally, Section~\ref{secymsrepresentation} proves that Young measures can be also used to gain insight into $\lim_{j}\int Pf(x,y,P\gamma_j)$ for integrands $f$ from the larger class $\mbfR(\Omega\times\R^m)$.

\subsection{Young measures}\label{secyms}
In what follows, we will make use of the compactified space $\sigma\R^m:=\R^m\uplus\infty\pd\bB^m$ and the associated spaces of continuous functions and measures introduced and discussed in Section~\ref{secpreliminaries}.
\begin{definition}[Generalised Young measures]\label{defyms}
An $(\Omega\times\R^m;\R^{m\times d})$-\textbf{Young measure} is a quadruple $\bnu=\left(\iota_\nu,\nu,\lambda_\nu,\nu^\infty\right)$ such that
\begin{enumerate}
 \item\label{ymdefcond1} $\iota_\nu\in\mbfM^+(\Omega\times\R^m)$ satisfies $\pi_\#(\iota_\nu)=\mathcal{L}^d\restrict\Omega$ and $\lambda_\nu\in\mbfM^+(\overline{\Omega}\times\sigma\R^m)$;
 \item\label{ymdefcond2} $\nu:\Omega\times\R^m\to\mbfM^1(\R^{m\times d})$ and $\nu^\infty:\overline{\Omega}\times\sigma\R^m\to\mbfM^1(\pd\mathbb{B}^{m\times d})$ are parametrised measures which are weakly*-measurable with respect to $\iota_\nu$ and $\lambda_\nu$ respectively. That is, $(x,y)\mapsto\nu_{x,y}(A)$ is $\iota_\nu$-measurable for every Borel set $A\subset\R^{m\times d}$ and $(x,y)\mapsto\nu^\infty_{x,y}(B)$ is $\lambda_\nu$-measurable for every Borel set $B\subset\pd\bB^{m\times d}$;
 \item\label{ymdefcond3} $(x,y)\mapsto\ip{|\frarg|}{\nu_{x,y}}	\in \Lp^1\left(\Omega\times\R^m,\iota_\nu\right)$;
 \item\label{ymdefcond4} $(\iota_\nu,\nu,\lambda_\nu)$ satisfies the \textbf{Poincar\'e condition}
\begin{equation}\label{eqpoincare}
\mkern-18mu \int_{\Omega\times\R^m}|y|^{d/(d-1)}\;\dd\iota_\nu(x,y)\leq M_d\left(\int_{\Omega\times\R^m}\ip{|\frarg|}{\nu_{x,y}}\;\dd\iota_\nu(x,y)+\lambda_\nu(\overline{\Omega}\times\sigma\R^m)\right)^{d/(d-1)}
\end{equation}
where $M_d:=\inf\{|Du|(\Omega)\colon u\in\BV_{\sharp}(\Omega;\R^m)\text{ and }\norm{u}_{\Lp^{d/(d-1)}}= 1\}$.
\end{enumerate}
We denote the set of all $(\Omega\times\R^m;\R^{m\times d})$-Young measures by $\mathbf{Y}(\Omega\times\R^m;\R^{m\times d})$. When there is no risk of confusion, we will usually abbreviate this to $\mathbf{Y}$.
\end{definition}
We shall refer to $\iota_\nu$ and $\lambda_\nu$ as the \textbf{reference} and \textbf{concentration measures} of $\bnu$, respectively, and $\nu$, $\nu^\infty$ as the \textbf{regular} and \textbf{singular oscillation measures} of $\bnu$.

Recall from Definition~\ref{defelemintegrands} that $\mbfE(\Omega\times\R^m)$ is defined to be the set of functions $f\in\C(\Omega\times\R^m\times\R^{m\times d})$ for which there exists $g_f\in\C(\overline{\Omega}\times\sigma\R^m\times\overline{\bB^{m\times d}})$ satisfying
\[
f(x,y,A)=(1+|A|)g_f\left(x,y,\frac{A}{1+|A|}\right).
\]
Under the \textbf{duality product}
\begin{equation}\label{eqdualityproduct}
\ddprb{f,\bnu}:=\int_{\Omega\times\R^m}\ip{f(x,y,\frarg)}{\nu_{x,y}}\;\dd \iota_\nu(x,y)+\int_{\overline{\Omega}\times\sigma\R^m}\ip{\sigma f^\infty\left(x,y,\frarg\right)}{\nu_{x,y}^\infty}\;\dd\lambda_\nu(x,y)
\end{equation}
for $f\in\mbfE(\Omega\times\R^m)$, Young measures can be seen as continuous linear functionals on $\mbfE(\Omega\times\R^m)$.

 Whilst conditions~\eqref{ymdefcond1},~\eqref{ymdefcond2} and~\eqref{ymdefcond3} are natural from the point of ensuring that~\eqref{eqdualityproduct} is well-defined for integrands $f\in\mbfE(\Omega\times\R^m)$,~\eqref{ymdefcond4} is a technical condition (used in the proofs of Corollary~\ref{corymsclosed} and Proposition~\ref{lemextendedrepresentation}) to ensure that the sequence of measures $(\iota_{\nu_j})_{j\in\mbN}$ is tight in $\mbfM^+(\Omega\times\R^m)$ whenever the sequence $(\bnu_j)_{j\in\mbN}\subset\mbfY(\Omega\times\R^m;\R^{m\times d})$ converges in the sense of Definition~\ref{defymconvergence} below.

\begin{definition}[Elementary Young measures]\label{defelemyms}
Given a lifting $\gamma\in\Lift(\Omega\times\R^m)$, the \textbf{elementary Young measure} $\bdelta=\bdelta\asc{\gamma}=(\iota_\delta,\delta,\lambda_\delta,\delta^\infty)\in\mbfY(\Omega\times\R^m;\R^{m\times d})$ associated to $\gamma$ is defined by
\begin{align*}
\iota_{\delta}:={\gr_\#^{\asc{\gamma}}}(\lL\restrict\Omega),\quad& \delta_{x,y}:=\delta_{\nabla \asc{\gamma}(x)},\\
\lambda_{\delta}:=|\gamma^s|,\quad& \delta^\infty_{x,y}:=\delta_{\frac{\dd\gamma^s}{\dd|\gamma^s|}(x,y)},
\end{align*}
where
\begin{equation*}
\gamma^s:=\gamma-\gr_\#^{\asc{\gamma}}(\nabla\asc{\gamma}\,\lL\restrict\Omega)={\gr_\#^{\asc{\gamma}}} D^c \asc{\gamma}+\gamma^{\mathrm{gs}}
\end{equation*}
is the \textbf{Lebesgue singular part} of $\gamma$.
\end{definition}
If $\gamma\in\Lift(\Omega\times\R^m)$, it follows that we have the representation
\[
  \ip{Pf}{P\gamma}=\ddprb{f,\bdelta\asc{\gamma}}
\]
for $f\in\mbfE(\Omega\times\R^m)$. Note that if $\gamma\asc{u}$ is an elementary lifting and $\bdelta\asc{\gamma\asc{u}}$ is the elementary Young measure associated to $\gamma\asc{u}$, then
\[
\lambda_\delta=|D^su|\otimes\left(\int_0^1\delta_{u^\theta}\;\dd\theta\right),\quad \delta^\infty_{x,y}=\delta_{p(x)},\quad p(x)=\frac{\dd D^s u}{\dd|D^s u|}(x),
\]
so that
\[
  \ddprb{f,\bdelta\asc{\gamma\asc{u}}}=\ip{Pf}{P\gamma\asc{u}}=\Fcal[u].
\]

\begin{definition}[Young measure convergence]\label{defymconvergence}
We say that a sequence $(\bnu_j)_j\subset\mbfY(\Omega\times\R^m;\R^{m\times d})$ weakly* converges to $\bnu\in\mbfY(\Omega\times\R^m;\R^{m\times d})$ as $j\to\infty$ (written $\bnu_j\wsc\bnu$) if
\[
 \ddprb{f,\bnu_j}\to\ddprb{f,\bnu}\text{ as }j\to\infty
\]
for every $f\in\mbfE(\Omega\times\R^m)$.
\end{definition}
If $\bnu\in\mbfY$ is such that $\bdelta\asc{\gamma_j}\wsc\bnu$ for some sequence of elementary Young measures associated to $(\gamma_j)_j\subset\Lift(\Omega\times\R^m)$, then we simply write $\gamma_j\toY\bnu$, saying that $(\gamma_j)_j$ \textbf{generates} $\bnu$. Equivalently, $(\gamma_j)_j$ generates $\bnu$ if and only if
\[
\lim_{j\to\infty}\int Pf(x,y,P\gamma_j)=\ddprb{f,\bnu}\quad\text{ for all }f\in\mbfE(\Omega\times\R^m).
\]

\begin{definition}[Young measure mass]
The \textbf{Young measure mass} $\norm{\bnu}_\mbfY$ of an element $\bnu\in\mbfY(\Omega\times\R^m;\R^{m\times d})$ is defined by
\[
 \norm{\bnu}_{\mbfY}:=\ddprb{\mathbbm{1}\otimes|\frarg|,\bnu},
\]
where $\mathbbm{1}\otimes|\frarg|\in\mbfE(\Omega\times\R^m)$ denotes the function $(x,y,A)\mapsto|A|$.
\end{definition}
The set of Young measures $\mbfY(\Omega\times\R^m;\R^{m\times d})$ is not a linear space and so $\norm{\frarg}_\mbfY$ cannot be a norm in the technical sense of being positively homogeneous and subadditive. Nevertheless, $\norm{\frarg}_\mbfY$-bounded sequences in $\mbfY(\Omega\times\R^m;\R^{m\times d})$ share the same compactness properties as norm-bounded sequences in Banach spaces.

We note that the Poincar\'e condition~\eqref{eqpoincare} can now be rephrased as
\[
\left(\int_{\Omega\times\R^m}|y|^{d/(d-1)}\;\dd\iota_\nu(x,y)\right)^{(d-1)/d}\leq M_d\norm{\bnu}_\mbfY.
\]

\begin{definition}[$\YLift$]
The space $\YLift(\Omega\times\R^m)\subset\mbfY(\Omega\times\R^m;\R^{m\times d})$ is defined to be the weak*~sequential closure of the set of elementary Young measures associated to liftings. That is, $\bnu\in\YLift(\Omega\times\R^m)$ if and only if there exists a sequence $(\gamma_j)_j\subset\Lift(\Omega\times\R^m)$ such that $\gamma_j\toY\bnu$.
\end{definition}

\begin{definition}
The space $\AYLift(\Omega\times\R^m)\subset\YLift(\Omega\times\R^m)$ is defined to be the weak* sequential closure of the set of elementary Young measures associated to elementary liftings. That is, $\bnu\in\AYLift(\Omega\times\R^m)$ if and only if there exists a sequence $(\gamma\asc{u_j})_j\subset\Lift(\Omega\times\R^m)$ such that $\gamma\asc{u_j}\toY\bnu$.
\end{definition}

Since the convergence $\toY$ preserves more information than weak* convergence in the space of liftings, we can show that $\AYLift(\Omega\times\R^m)$ is sequentially closed under weak* convergence in $\mbfY(\Omega\times\R^m;\R^{m\times d})$, even though it is not known whether $\ALift(\Omega\times\R^m)$ is sequentially weakly* closed in $\mbfM(\Omega\times\R^m;\R^{m\times d})$:
\begin{lemma}\label{lemliftymsclosed}
The spaces $\YLift(\Omega\times\R^m)$ and $\AYLift(\Omega\times\R^m)$ are closed under weak* Young measure convergence.
\end{lemma}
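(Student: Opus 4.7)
The plan is a diagonal extraction based on the separability of $\mbfE(\Omega\times\R^m)$. Let $(\bnu_k)_k\subset\YLift(\Omega\times\R^m)$ (resp.\ $\AYLift(\Omega\times\R^m)$) satisfy $\bnu_k\wsc\bnu$ in $\mbfY$. The goal is to produce a single sequence $(\tilde\gamma^k)_k$ in $\Lift(\Omega\times\R^m)$ (resp.\ a sequence of elementary liftings) with $\tilde\gamma^k\toY\bnu$, which by definition forces $\bnu\in\YLift(\Omega\times\R^m)$ (resp.\ $\AYLift(\Omega\times\R^m)$).

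Via the bijection $f\leftrightarrow g_f$ of Definition~\ref{defelemintegrands}, the normed space $(\mbfE(\Omega\times\R^m),\|\frarg\|_\mbfE)$ with $\|f\|_\mbfE:=\|g_f\|_\infty$ is isometric to a space of continuous functions on the compact metric space $\overline{\Omega}\times\sigma\R^m\times\overline{\bB^{m\times d}}$, and is therefore separable. Fix a countable dense subset $(f_n)_n$ with $f_1:=\mathbbm{1}\otimes|\frarg|$. Using $\max\{|f(x,y,A)|,|\sigma f^\infty(x,y,A)|\}\leq\|f\|_\mbfE(1+|A|)$ together with the fact that $\nu_{x,y}$, $\nu^\infty_{x,y}$ are probability measures, one obtains the uniform estimate
\[
|\ddpr{f,\btau}|\leq\|f\|_\mbfE\bigl(\iota_\tau(\Omega\times\R^m)+\|\btau\|_\mbfY+\lambda_\tau(\overline{\Omega}\times\sigma\R^m)\bigr)
\]
for every $\btau=(\iota_\tau,\tau,\lambda_\tau,\tau^\infty)\in\mbfY$. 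Applying $\bnu_k\wsc\bnu$ to $f_1$ yields $\|\bnu_k\|_\mbfY\to\|\bnu\|_\mbfY$, so $M:=1+\sup_k\|\bnu_k\|_\mbfY<\infty$.

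For each $k$, choose a generating sequence $(\gamma^k_j)_j\subset\Lift(\Omega\times\R^m)$ (resp.\ of elementary liftings) for $\bnu_k$. Since $\bdelta\asc{\gamma^k_j}\wsc\bnu_k$, both $\ddpr{f_n,\bdelta\asc{\gamma^k_j}}\to\ddpr{f_n,\bnu_k}$ (for each fixed $n$) and $\|\bdelta\asc{\gamma^k_j}\|_\mbfY\to\|\bnu_k\|_\mbfY$ as $j\to\infty$. Hence I may pick $j(k)$ so large that
\[
\max_{1\le n\le k}|\ddpr{f_n,\bdelta\asc{\gamma^k_{j(k)}}}-\ddpr{f_n,\bnu_k}|<\tfrac1k,\qquad \|\bdelta\asc{\gamma^k_{j(k)}}\|_\mbfY\le M.
\]
Setting $\tilde\gamma^k:=\gamma^k_{j(k)}$ and $\tilde\bnu^k:=\bdelta\asc{\tilde\gamma^k}$, the triangle inequality combined with $\bnu_k\wsc\bnu$ then gives $\ddpr{f_n,\tilde\bnu^k}\to\ddpr{f_n,\bnu}$ as $k\to\infty$ for every fixed $n$.

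To upgrade this convergence to arbitrary $f\in\mbfE(\Omega\times\R^m)$, fix $\varepsilon>0$ and choose $n$ with $\|f-f_n\|_\mbfE<\varepsilon$. Observing that $\iota_{\tilde\nu^k}(\Omega\times\R^m)=\lL(\Omega)$ and $\lambda_{\tilde\nu^k}(\overline{\Omega}\times\sigma\R^m)=|(\tilde\gamma^k)^s|(\Omega\times\R^m)\le\|\tilde\bnu^k\|_\mbfY\le M$, the displayed estimate applied to $f-f_n$ bounds $|\ddpr{f-f_n,\tilde\bnu^k}|$ and $|\ddpr{f-f_n,\bnu}|$ uniformly by $C\varepsilon$ for a constant $C=C(\Omega,M,\bnu)$. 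Combined with the convergence on $f_n$, this gives $\ddpr{f,\tilde\bnu^k}\to\ddpr{f,\bnu}$ for every $f\in\mbfE(\Omega\times\R^m)$, i.e.\ $\tilde\gamma^k\toY\bnu$; when each $\gamma^k_j$ is elementary, so is $\tilde\gamma^k$, proving the $\AYLift$ case. No serious obstacle arises: the only technical ingredients are the separability of $\mbfE$ and the uniform $\mbfY$-mass bound guaranteed by the choice of $j(k)$.
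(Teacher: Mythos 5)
Your proof is correct and takes essentially the same route as the paper: the paper's proof is a terse diagonal extraction over the double array of elementary Young measures $\bdelta_j^k$, made to work by first observing the uniform $\mbfY$-mass bound $\sup_k\norm{\bnu_k}_\mbfY<\infty$, and you have simply spelled out the two ingredients the paper leaves implicit (the uniform duality estimate $|\ddprb{f,\btau}|\leq\norm{f}_\mbfE(\iota_\tau(\Omega\times\R^m)+2\norm{\btau}_\mbfY)$ and the separability of $\mbfE(\Omega\times\R^m)\cong\C(\overline{\Omega}\times\sigma\R^m\times\overline{\bB^{m\times d}})$, the latter essentially being Lemma~\ref{remtensorprods}). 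The only cosmetic difference is that your mass bound $M:=1+\sup_k\norm{\bnu_k}_\mbfY$ is cleaner than the paper's $2+\norm{\bnu}_\mbfY$, which strictly speaking only dominates $\norm{\bnu_k}_\mbfY$ for $k$ large.
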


\begin{proof}
Since $\mathbbm{1}\otimes|\frarg|\in\mbfE(\Omega\times\R^m)$, $\bnu_j\wsc\bnu$ in $\mbfY$ implies that $\norm{\bnu_j}_\mbfY\to\norm{\bnu}_\mbfY$. Thus, if $(\bnu_j)_j\subset\YLift(\Omega\times\R^m)$ is a mass-bounded sequence such that $\bnu_j\wsc\bnu$ in $\mbfY$, we can assume that each $\bnu_j$ is the limit of a sequence $(\bdelta_j^k)_{k\in\mbN}$ of elementary Young measures (associated either to liftings in the case $\YLift(\Omega\times\R^m)$ or elementary liftings in the case $\AYLift(\Omega\times\R^m)$) satisfying $\sup_k\norm{\bdelta_j^k}_\mbfY\leq 1+\norm{\bnu_j}_\mbfY\leq 2+\norm{\bnu}_\mbfY$. We can then use a diagonal argument to extract a sequence $(\bdelta_j^{k_j})_j$ of elementary Young measures which generates $\bnu$.
\end{proof}

Let $\bnu\in\YLift(\Omega\times\R^m)$ and $(\gamma_j)_j\subset\Lift(\Omega\times\R^m)$ be such that $\gamma_j\toY\bnu$. Testing with integrands $f(x,y,A)=\varphi(x,y)A\in\mbfE(\Omega\times\R^m)$ for $\varphi\in\C_0(\Omega\times\R^m)$, we see that $({\gamma_j})_j$ is a weakly* convergent sequence in $\mbfM(\Omega\times\R^m;\R^{m\times d})$, whose limit we denote by $\gamma$. Since
\[
\int_{\Omega\times\R^m}\varphi(x,y)\;\dd{\gamma}(x,y)=\ddprb{\varphi\otimes\id,\bnu},
\]
it follows that $\gamma$ is determined solely by $\bnu$ independently of the choice of generating sequence $(\bdelta_j)_j$ and that the following definition is coherent:
\begin{definition}
The \textbf{barycentre} of a Young measure $\bnu\in\mbfY(\Omega\times\R^m;\R^{m\times d})$ is the measure $\asc{\bnu}\in\mbfM(\Omega\times\R^m;\R^{m\times d})$ defined by
\begin{equation*}
\ip{\varphi}{\asc{\bnu}}=\ddprb{\varphi\otimes\id,\bnu}\quad\text{ for all } \varphi\in\C_{0}(\Omega\times\R^m).
\end{equation*}
For elements $\bnu\in\YLift(\Omega\times\R^m)$, if $\asc{\bnu}=\gamma$ and $\asc{\gamma}=u$, we will use the notation $\llbracket\bnu\rrbracket:=u$.
\end{definition}

Clearly, if $\bdelta$ is the elementary Young measure associated to $\gamma\in\Lift(\Omega\times\R^m)$ then $\asc{\bdelta}=\gamma$. It is also clear that $\asc{\bnu}\in\Lift(\Omega\times\R^m)$ whenever $\bnu\in\YLift(\Omega\times\R^m)$ and that  $\asc{\bnu}\in\ALift(\Omega\times\R^m)$ whenever $\bnu\in\AYLift(\Omega\times\R^m)$.

It follows immediately that $\asc{\bnu_j}\wsc\asc{\bnu}$ in $\Lift(\Omega\times\R^m)$ whenever $\bnu_j\wsc\bnu$ in $\YLift(\Omega\times\R^m)$. Note that it need not be the case that $\bnu\in\AYLift(\Omega\times\R^m)$ if $\asc{\bnu}\in\ALift(\Omega\times\R^m)$.

The following corollary is now a direct consequence of Corollary~\ref{corliftingtographcts} combined with the identity $\ddpr{\varphi\otimes\mathbbm{1},\bnu}=\int\varphi(x,y)\;\dd\iota_\nu(x,y)$ for $\varphi\in\C_0(\Omega\times\R^m)$.
\begin{corollary}\label{lemstructureliftingym}
If $\bnu\in\YLift(\Omega\times\R^m)$, then
\[
\iota_\nu=\gr_\#^{\llbracket\bnu\rrbracket}(\lL\restrict\Omega).
\]
\end{corollary}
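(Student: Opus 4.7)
My plan is to unpack what it means for $\bnu \in \YLift(\Omega\times\R^m)$, use the two types of test integrands available in $\mbfE(\Omega\times\R^m)$ to extract two pieces of information about a generating sequence, and then combine them via Corollary~\ref{corliftingtographcts}. Since both $\iota_\nu$ and $\gr^{\llbracket\bnu\rrbracket}_\#(\lL\restrict\Omega)$ are finite Radon measures with the same projection onto $\Omega$ (namely $\lL\restrict\Omega$, by condition~(1) of Definition~\ref{defyms}), it suffices to show that they agree when tested against arbitrary $\varphi\in\C_0(\Omega\times\R^m)$.

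First I would fix a sequence $(\gamma_j)_j\subset\Lift(\Omega\times\R^m)$ with $\gamma_j\toY\bnu$, and test the generation against integrands $f(x,y,A):=\psi(x,y)\cdot A$ for $\psi\in\C_0(\Omega\times\R^m;\R^{m\times d})$, which obviously lie in $\mbfE(\Omega\times\R^m)$. This gives $\gamma_j\wsc\asc{\bnu}$ in $\mbfM(\Omega\times\R^m;\R^{m\times d})$, so by Lemma~\ref{lemliftingcompactness} one has $\asc{\bnu}\in\Lift(\Omega\times\R^m)$ together with $\asc{\gamma_j}\wsc\llbracket\bnu\rrbracket$ in $\BV_\#(\Omega;\R^m)$. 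Corollary~\ref{corliftingtographcts} then upgrades this to the strict convergence
\[
\gr^{\asc{\gamma_j}}_\#(\lL\restrict\Omega)\to\gr^{\llbracket\bnu\rrbracket}_\#(\lL\restrict\Omega)\quad\text{in }\mbfM^1(\Omega\times\R^m).
\]

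Next I would test against integrands that ignore the matrix variable. For $\varphi\in\C_0(\Omega\times\R^m)$, set $f(x,y,A):=\varphi(x,y)$. Taking $g_f(x,y,B):=(1-|B|)\varphi(x,y)$ yields $g_f\in\C(\overline{\Omega}\times\sigma\R^m\times\overline{\bB^{m\times d}})$ since $\varphi$ extends continuously by zero to $\overline{\Omega}\times\sigma\R^m$, so $f\in\mbfE(\Omega\times\R^m)$ with $\sigma f^\infty\equiv 0$. From the perspective representation of Section~\ref{secliftingperspective} and the definition of $\bdelta\asc{\gamma_j}$, both sides of the identity $\int Pf(x,y,P\gamma_j)=\ddprb{f,\bdelta\asc{\gamma_j}}$ reduce to $\int\varphi\,\dd\gr^{\asc{\gamma_j}}_\#(\lL\restrict\Omega)$, whereas, directly from~\eqref{eqdualityproduct} and $\sigma f^\infty\equiv 0$, we have $\ddpr{f,\bnu}=\int\varphi\,\dd\iota_\nu$.

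Combining, the definition of $\gamma_j\toY\bnu$ together with the weak* convergence obtained in the first step yields
\[
\int\varphi\,\dd\iota_\nu=\lim_{j\to\infty}\int\varphi\,\dd\gr^{\asc{\gamma_j}}_\#(\lL\restrict\Omega)=\int\varphi\,\dd\gr^{\llbracket\bnu\rrbracket}_\#(\lL\restrict\Omega)
\]
for every $\varphi\in\C_0(\Omega\times\R^m)$, and Riesz representation then forces $\iota_\nu=\gr^{\llbracket\bnu\rrbracket}_\#(\lL\restrict\Omega)$. There is no serious obstacle here; the only point requiring a little care is the verification that $f(x,y,A)=\varphi(x,y)$ genuinely belongs to $\mbfE(\Omega\times\R^m)$ (so that it may be used to test Young measure convergence), and this is handled by the explicit choice of $g_f$ above.
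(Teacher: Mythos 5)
Your proof is correct and follows essentially the same route as the paper's: the paper also derives the result directly from Corollary~\ref{corliftingtographcts} together with the identity $\ddpr{\varphi\otimes\mathbbm{1},\bnu}=\int\varphi(x,y)\,\dd\iota_\nu(x,y)$ for $\varphi\in\C_0(\Omega\times\R^m)$. Your writeup merely spells out the intermediate verifications (that $\gamma_j\wsc\asc{\bnu}$ by testing against $\psi(x,y)\cdot A$, and that $\varphi\otimes\mathbbm{1}\in\mbfE(\Omega\times\R^m)$) which the paper leaves implicit.
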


\subsection{Duality and compactness}\label{secymscompact}

For $f \in \mbfE(\Omega\times\R^m)$ let $g_f\in\C(\overline{\Omega}\times\sigma\R^m\times\overline{\bB^{m\times d}})$ be such that 
\[
f(x,y,A)=(1+|A|)g_f\left(x,y,\frac{A}{1+|A|}\right)
\]
according to Definition~\ref{defelemintegrands}. Define the linear operator $S\colon \mbfE(\Omega\times\R^m)\to \C(\overline{\Omega}\times\sigma\R^m\times\overline{\mathbb{B}^{m\times d}})$ by
\[
Sf=g_f,
\]
%or, more explicitly and appealing only to the values taken by $f$, by the equivalent formula
%\[
%Sf(x,y,B):=\begin{cases}
%(1-|B|)\displaystyle\lim_{(x',y',B')\to(x,y,B)} f\left(x',y',\frac{B'}{1-|B'|}\right) & \text{ if }(x,y,B)\in\overline{\Omega}\times\sigma\R^m\times\bB^{m\times d},\\
%\sigma f^\infty(x,y,B) & \text{ if }(x,y,B)\in\overline{\Omega}\times\sigma\R^m\times\pd\bB^{m\times d},
%\end{cases}
%\]
and note that $S$ admits an inverse $S^{-1}\colon\C(\overline{\Omega}\times\sigma\R^m\times\overline{\mathbb{B}^{m\times d}})\to\mbfE(\Omega\times\R^m)$ given by
\begin{equation}\label{eqdefisom}
(S^{-1}g)(x,y,A)=(1+|A|)g\left(x,y,\frac{A}{1+|A|}\right)\qquad\text{ for }(x,y,A)\in\overline{\Omega}\times\R^m\times\R^{m\times d}.
\end{equation}
It follows that $S$ is a linear isomorphism between $\mbfE(\Omega\times\R^m)$ and $\C(\overline{\Omega}\times\sigma\R^m\times\overline{\bB^{m\times d}})$, and so $\mbfE(\Omega\times\R^m)$ can be given the structure of a Banach space by setting 
\[
\norm{f}_{\mbfE}:=\norm{Sf}_{\C(\overline{\Omega}\times\sigma\R^m\times\overline{\mathbb{B}^{m\times d}})}.
\]
%It follows that $S$ is a linear isomorphism between $\mbfE(\Omega\times\R^m)$ and $\C(\overline{\Omega}\times\sigma\R^m\times\overline{\bB^{m\times d}})$, and so $\mbfE(\Omega\times\R^m)$ can be given the structure of a separable Banach space by defining
%\[
%\norm{f}_{\mbfE}:=\norm{g}_{\infty}.
%\]
 
We therefore have that $(S^*)^{-1}$ (hereafter denoted by $S^{-*}$) is itself an isometric isomorphism between $(\mbfE(\Omega\times\R^m))^*$ and $\mbfM(\overline{\Omega}\times\sigma\R^m\times\overline{\mathbb{B}^{m\times d}})$ (via the Riesz Representation Theorem). In particular,
Note that if $\bnu\in\mbfY(\Omega\times\R^m;\R^{m\times d})$, then
\begin{align}
|\Omega| + \norm{\bnu}_\mbfY=\ddprb{1 + \mathbbm{1}\otimes|\frarg|,\bnu}=\ip{S(1 + \mathbbm{1}\otimes|\frarg|)}{S^{-*}\bnu}&=\int_{\overline{\Omega}\times\sigma\R^m\times\overline{\bB^{m\times d}}}\;\dd S^{-*}\bnu(x,y,B) \notag\\
&=(S^{-*}\bnu)(\overline{\Omega}\times\sigma\R^m\times\overline{\bB^{m\times d}}), \label{eqnumumass}
\end{align}
for all $\nu \in \mbfY$.

With $S$ and $S^{-*}$ so defined, the duality product between Young measures $\bnu\in\mbfY(\Omega\times\R^{m\times d};\R^{m\times d})$ and integrands $f\in\mbfE(\Omega\times\R^m)$ can be understood in terms of a more familiar integral product between measures and continuous functions:
\begin{equation}
\ddprb{f,\bnu}=\int_{\overline{\Omega}\times\sigma\R^m\times\overline{\bB^{m\times d}}} Sf(x,y,B)\;\dd S^{-*}\bnu(x,y,B).
\end{equation}

Given $f\in\RL(\Omega\times\R^m)$ or $f\in\RBVw(\Omega\times\R^m)$, we can define $Sf\colon\overline{\Omega}\times\R^m\times\overline{\bB^{m\times d}}\to\R$ by analogy via
\[
(Sf)(x,y,B):=\begin{cases}(1-|B|)f\left(x,y,\frac{B}{1-|B|}\right)\quad & \text{ if }(x,y,B)\in\overline{\Omega}\times\R^m\times\bB^{m\times d},\\
f^\infty(x,y,B)\quad & \text{ if }(x,y,B)\in\overline{\Omega}\times\R^m\times\pd\bB^{m\times d}.
\end{cases}
\]
So defined, $Sf$ is always Carath\'eodory on $\overline{\Omega}\times\R^m\times\overline{\bB^{m\times d}}$ and is continuous at each point $(x,y,B)\in\overline{\Omega}\times\R^m\times\pd\bB^{m\times d}$. Moreover, if $f$ is continuous and satisfies $|f(x,y,A)|\leq C(1+|A|)$ then it is clear that $Sf\in\C_b(\overline{\Omega}\times\R^m\times\overline{\bB^{m\times d}})$.

\begin{proposition}\label{thmymsasmeasures}
 The set $S^{-*}\mbfY(\Omega\times\R^m;\R^{m\times d})$ consists precisely of those measures $\mu\in\mbfM^+(\overline{\Omega}\times\sigma\R^m\times\overline{\mathbb{B}^{m\times d}})$ that satisfy the conditions
\begin{equation}\label{eqymcond2a}
\mu\bigl(\overline{\Omega}\times\infty\pd\bB^m\times\bB^{m\times d}\bigr)=0,
\end{equation}

\begin{equation}\label{eqymcond2b}
	\left(\int_{\overline{\Omega}\times\R^m\times{\bB^{m\times d}}}(1-|B|)|y|^{d/(d-1)}\;\dd\mu(x,y,B)\right)^{(d-1)/d}\leq M_d\left(\mu(\overline{\Omega}\times\sigma\R^m\times\overline{\bB^{m\times d}}) - |\Omega|\right)
\end{equation}
(where $M_d$ is as given in Definition~\ref{defyms}), and for which there exists $\kappa\in\mbfM^+(\Omega\times\R^m)$ with $\pi_\#\kappa=\lL\restrict\Omega$ such that
\begin{equation}\label{eqymcond1}
\int_{\overline{\Omega}\times\sigma\R^m\times\overline{\bB^{m\times d}}}(1-|B|)\varphi(x,y)\;\dd\mu(x,y,B)=\int_{\Omega\times\R^m}\varphi(x,y)\;\dd \kappa(x,y)
\end{equation}
for every $\varphi\in\C_0(\overline{\Omega}\times\R^m)$.
\end{proposition}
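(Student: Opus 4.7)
The proposition characterises the image of $\mbfY=\mbfY(\Omega\times\R^m;\R^{m\times d})$ under the isometric isomorphism $S^{-*}$, and I will prove both inclusions using the transport map $\Phi\colon\R^{m\times d}\to\bB^{m\times d}$, $\Phi(A):=A/(1+|A|)$, whose key property is the cancellation identity $(1+|A|)\bigl(1-|\Phi(A)|\bigr)\equiv 1$. The forward direction follows by computing $S^{-*}\bnu$ explicitly; the reverse requires reconstructing $\bnu$ from a given $\mu$ by disintegration and pull-back through $\Phi$.

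\textbf{Forward inclusion.} For $\bnu=(\iota_\nu,\nu,\lambda_\nu,\nu^\infty)\in\mbfY$ and $f\in\mbfE(\Omega\times\R^m)$, the identity $f(x,y,A)=(1+|A|)\,Sf(x,y,\Phi(A))$ together with the fact that $Sf$ on $\pd\bB^{m\times d}$ coincides with $\sigma f^\infty$ yields
\begin{equation*}
S^{-*}\bnu \;=\; \iota_\nu\otimes\Phi_\#\bigl((1+|\frarg|)\,\nu_{x,y}\bigr)\;+\;\lambda_\nu\otimes\nu^\infty_{x,y},
\end{equation*}
with the first summand supported in $\overline{\Omega}\times\R^m\times\bB^{m\times d}$ and the second in $\overline{\Omega}\times\sigma\R^m\times\pd\bB^{m\times d}$. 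Condition~\eqref{eqymcond2a} is therefore immediate. Testing $(1-|B|)\varphi(x,y)$ against $S^{-*}\bnu$ and invoking the cancellation identity gives $\int\varphi\,\dd\iota_\nu$, so~\eqref{eqymcond1} holds with $\kappa:=\iota_\nu$; the requirement $\pi_\#\iota_\nu = \lL\restrict\Omega$ is built into the definition of $\mbfY$. Finally, applying this same identity with $\varphi(x,y)=|y|^{d/(d-1)}$ (justified by monotone approximation) and using the Poincar\'e condition~\eqref{eqpoincare} together with the mass identity~\eqref{eqnumumass} establishes~\eqref{eqymcond2b}.

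\textbf{Reverse inclusion.} Given $\mu\in\mbfM^+(\overline{\Omega}\times\sigma\R^m\times\overline{\bB^{m\times d}})$ satisfying~\eqref{eqymcond2a}--\eqref{eqymcond1}, I decompose $\mu=\mu^r+\mu^s$ with $\mu^r:=\mu\restrict(\,\cdot\,\times\bB^{m\times d})$ and $\mu^s:=\mu\restrict(\,\cdot\,\times\pd\bB^{m\times d})$, observing that~\eqref{eqymcond2a} forces $\mu^r$ to be supported on $\overline{\Omega}\times\R^m\times\bB^{m\times d}$. Set $\iota_\nu:=\kappa$, let $\lambda_\nu$ be the $(x,y)$-marginal of $\mu^s$, and use the Disintegration of Measures Theorem to write $\mu^r=(\pi_{(x,y)\#}\mu^r)\otimes\sigma$ and $\mu^s=\lambda_\nu\otimes\nu^\infty$, with weakly*-measurable parametrised probabilities $\sigma\colon\overline{\Omega}\times\R^m\to\mbfM^1(\bB^{m\times d})$ and $\nu^\infty\colon\overline{\Omega}\times\sigma\R^m\to\mbfM^1(\pd\bB^{m\times d})$. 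Writing $h(x,y):=\ip{1-|\frarg|}{\sigma_{x,y}}$, condition~\eqref{eqymcond1} translates into $\iota_\nu=h\cdot\pi_{(x,y)\#}\mu^r$, and I define the regular oscillation measure via the inverse transport $\Phi^{-1}(B)=B/(1-|B|)$ by
\begin{equation*}
\nu_{x,y}(E) := \frac{1}{h(x,y)}\int_E\frac{1}{1+|A|}\,\dd(\Phi^{-1})_\#\sigma_{x,y}(A)\quad\text{on }\{h>0\},\qquad \nu_{x,y}:=\delta_0\text{ elsewhere.}
\end{equation*}
The cancellation identity then ensures $\nu_{x,y}\in\mbfM^1(\R^{m\times d})$, that $(x,y)\mapsto\ip{|\frarg|}{\nu_{x,y}}\in\Lp^1(\iota_\nu)$, and that $S^{-*}\bnu=\mu$; condition~\eqref{eqymcond2b} transfers directly into the Poincar\'e inequality~\eqref{eqpoincare} via the same computation used in the forward direction.

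\textbf{Main obstacle.} The delicate step is the inverse-transport construction of $\nu_{x,y}$: since $\Phi^{-1}$ blows up as $|B|\to 1$ and the parametrised probabilities $\sigma_{x,y}$ may charge points arbitrarily close to $\pd\bB^{m\times d}$, the construction is only well-defined because the weight $1/(1+|A|)$ exactly absorbs this singularity through the cancellation $(1+|A|)(1-|\Phi(A)|)\equiv 1$. This is precisely what allows $\nu_{x,y}$ to be a genuine probability with an $\iota_\nu$-integrable first moment. Handling the null set $\{h=0\}$---which is $\iota_\nu$-null by construction but need not be null for $\pi_{(x,y)\#}\mu^r$---and verifying weak*-measurability of the constructed disintegrations are the remaining routine bookkeeping steps.
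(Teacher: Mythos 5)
Your proof is correct and uses the same disintegrate-and-reconstruct strategy as the paper, but with a reorganisation that genuinely simplifies one step. The paper disintegrates $\mu=\omega\otimes\eta$ over its $(x,y)$-marginal $\omega\in\mbfM^+(\overline{\Omega}\times\sigma\R^m)$ and then splits $\omega=p\kappa+\omega^s$; from condition~\eqref{eqymcond1} it derives the identity~\eqref{kappanulebrel}, namely $\ip{1-|B|}{\eta_{x,y}}\omega^s=[1-p(x,y)\ip{1-|B|}{\eta_{x,y}}]\kappa$, and since the two sides are carried by disjoint sets both must vanish, giving at once the probability normalisation for $\nu_{x,y}$ and the fact that $\eta_{x,y}$ concentrates on $\pd\bB^{m\times d}$ for $\omega^s$-a.e.\ $(x,y)$. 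You instead split $\mu=\mu^r+\mu^s$ at the outset, build $\nu_{x,y}$ from $\mu^r$ alone via $(\Phi^{-1})_\#$, and the normalisation by $h(x,y)=\ip{1-|\frarg|}{\sigma_{x,y}}$ makes the probability property automatic, so the mutual-singularity argument disappears. Once unwound, the quadruple you construct agrees with the paper's: your $\nu_{x,y}$ coincides with the paper's $\langle g,\nu_{x,y}\rangle=p(x,y)\int_{\bB^{m\times d}}Sg(x,y,B)\,\dd\eta_{x,y}(B)$ after the change of variables $A=\Phi^{-1}(B)$, and the Poincar\'e condition for $\bnu$ is~\eqref{eqymcond2b} verbatim once one substitutes $\iota_\nu=h\cdot\pi_{(x,y)\#}\mu^r$ and uses the mass identity~\eqref{eqnumumass}.

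Two remarks. Your flagged ``main obstacle'' concerning $\{h=0\}$ is a non-issue: because $\mu^r$ charges no mass on $\pd\bB^{m\times d}$, the disintegrating probabilities $\sigma_{x,y}$ are concentrated on the open ball for $\pi_{(x,y)\#}\mu^r$-a.e.\ $(x,y)$, whence $h(x,y)=\int_{\bB^{m\times d}}(1-|B|)\,\dd\sigma_{x,y}(B)>0$ holds $\pi_{(x,y)\#}\mu^r$-a.e., not merely $\iota_\nu$-a.e., and no special handling is needed. By contrast, the assertion $S^{-*}\bnu=\mu$, which is the substantive conclusion of the reverse inclusion, is stated without proof; it deserves the short computation (cancel $h$ against $\iota_\nu=h\cdot\pi_{(x,y)\#}\mu^r$, pull back through $\Phi$ using the cancellation identity to recover $\int Sf\,\dd\mu^r$, and note $Sf=\sigma f^\infty$ on $\pd\bB^{m\times d}$ to recover $\int Sf\,\dd\mu^s$) rather than an appeal to ``the cancellation identity then ensures.''
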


\begin{proof}
First, we will show that for any $\bnu\in\mbfY(\Omega\times\R^m;\R^{m\times d})$, $S^{-*}\bnu$ satisfies the given conditions. Since $Sf\geq 0$ if and only if $f\geq 0$, and $\ddpr{f,\bnu}\geq 0$ for $f\geq 0$, we have that $S^{-*}\bnu\in\mbfM^+(\overline{\Omega}\times\sigma\R^m\times\overline{\bB^{m\times d}})$. To see that $S^{-*}\bnu$ satisfies \eqref{eqymcond1} with $\kappa=\iota_\nu$, note that, for $\varphi\in \C_0(\overline{\Omega}\times\R^m)$, the function $\psi(x,y,A):=\varphi(x,y)$ is contained in $\mbfE(\Omega\times\R^m)$ with $\psi^\infty\equiv 0$ and that $S\psi(x,y,B)=(1-|B|)\varphi(x,y)$. Every $\nu_{x,y}$ satisfies $\nu_{x,y}(\R^{m\times d})=1$ and so it follows that
\begin{align}\label{eqymidentify}
\begin{split}
\int_{\overline{\Omega}\times\sigma\R^m\times\overline{\bB^{m\times d}}}(1-|B|)\varphi(x,y)\;\dd S^{-*}\bnu(x,y,B)=&\int_{\Omega\times\R^m}\ip{\psi(x,y,\frarg)}{\nu_{x,y}}\;\dd \iota_\nu(x,y)\\
=&\int_\Omega\varphi(x,y)\;\dd \iota_\nu.
\end{split}
\end{align}
Since $\pi_\#\iota_\nu=\mathcal{L}^d\restrict\Omega$, this condition is verified.
Conditions~\eqref{eqymcond2a} and~\eqref{eqymcond2b} follow directly from the definition of $\iota_\nu$, $\lambda_\nu$, the construction of $S$, and the Poincar\'e inequality~\eqref{eqpoincare}.

Now let $\mu\in\mbfM^+(\overline{\Omega}\times\sigma\R^m\times\overline{\bB^{m\times d}})$ satisfy conditions~\eqref{eqymcond2a},~\eqref{eqymcond2b}, and~\eqref{eqymcond1}. It remains to show that $S^*\mu\in\mbfY(\Omega\times\R^m;\R^{m\times d})$. By the Disintegration of Measures Theorem, we can write $\mu=\omega\otimes\eta$ where $\omega\in\mbfM^+(\overline{\Omega}\times\sigma\R^m)$ is the pushforward of $\mu$ onto $\overline{\Omega}\times\sigma\R^m$ and $\eta\colon\overline{\Omega}\times\sigma\R^m\to \mbfM^1(\overline{\mathbb{B}^{m\times d}})$ is weakly* $\omega$-measurable. Let $\omega=p\kappa+\omega^s$ be the Radon--Nikodym decomposition of $\omega$ with respect to $\kappa$. Since $\C_0(\overline{\Omega}\times\R^m)^*=\mbfM(\overline{\Omega}\times\R^m)$ and~\eqref{eqymidentify} is true for arbitrary $\varphi\in \C_0(\overline{\Omega}\times\R^m)$, this implies that
\begin{equation}\label{kappanulebrel}
\ip{1-|B|}{\eta_{x,y}}\omega^s=\left[1-p(x,y)\ip{1-|B|}{\eta_{x,y}}\right]\kappa
\end{equation}
in $\mbfM(\overline{\Omega}\times\R^m)$. By construction, however, $\omega^s$ and $\kappa$ charge disjoint sets. This implies that both sides of~\eqref{kappanulebrel} must be the zero measure. It follows that $\ip{1-|B|}{\eta_{x,y}}=0$ for $\omega^s$-almost every $(x,y)\in\overline{\Omega}\times\sigma\R^m$, and hence that, for $\omega^s$-almost every $(x,y)\in\overline{\Omega}\times\sigma\R^m$, 
\[
\eta_{x,y}(\overline{\bB^{m\times d}})=\eta_{x,y}(\pd\mathbb{B}^{m\times d}).
\]
For $f\in \mbfE(\Omega\times\R^m)$, we may therefore write
\begin{align*}
\ip{Sf\left(x,y,\frarg\right)}{\eta_{x,y}}\omega &=p(x,y)\ip{Sf\left(x,y,\frarg\right)}{\eta_{x,y}}\kappa+\ip{Sf\left(x,y,\frarg\right)}{\eta_{x,y}}\omega^s \\ 
&=\left(p(x,y)\int_{\mathbb{B}^{m\times d}}Sf\left(x,y,B\right)\;\dd\eta_{x,y}(B)\right)\kappa \\ 
&\qquad+\left(\dashint_{\pd\mathbb{B}^{m\times d}}Sf(x,y,B)\;\dd\eta_{x,y}(B)\right)\eta_{x,y}(\pd\mathbb{B}^{m\times d})p(x,y)\kappa \\
&\qquad+\left(\int_{\pd\mathbb{B}^{m\times d}}Sf(x,y,B)\;\dd\eta_{x,y}(B)\right)\omega^s.
\end{align*}
The above decomposition combined with the fact that $\eta_{x,y}(\pd\bB^{m\times d})=1$ $\omega^s$-almost everywhere suggests that we construct the relevant Young measure by defining $\left(\iota_\nu,\nu,\lambda_\nu,\nu^\infty\right)$ as follows:
\begin{equation}\label{eqymidentifydecomp}
\begin{array}{ll}
\displaystyle\langle h,\nu_{x,y}\rangle & \displaystyle:=p(x,y)\int_{\mathbb{B}^{m\times d}}Sh(x,y,B)\;\dd\eta_{x,y}(B) \\
\displaystyle\iota_\nu &\displaystyle :=\kappa\\
\displaystyle\langle Sh,\nu^\infty_{x,y}\rangle & \displaystyle:= \dashint_{\pd\mathbb{B}^{m\times d}}h^\infty(x,y,B)\;\dd\eta_{x,y}(B)\\
\displaystyle\lambda_\nu & \displaystyle:=\eta_{x,y}(\pd\mathbb{B}^{m\times d})\omega.
\end{array}
\end{equation}
It is clear that $\nu_{x,y},\nu_{x,y}^\infty$ and $\iota_\nu,\lambda_\nu$ inherit positivity from $\kappa$, $\eta_{x,y}$ and $\omega$. From \eqref{kappanulebrel}, we see that 
\[
p(x,y)\ip{\left(1-|B|\right)}{\eta_{x,y}}=1 \quad\text{ for }\kappa\text{-a.e.\ }(x,y)\in\overline{\Omega}\times\sigma\R^m,
\]
and so, since by definition  $\ip{\ONE}{\nu_{x,y}}=p(x)\ip{\left(1-|B|\right)}{\eta_{x,y}}$, we have that $\nu_{x,y}\in\mbfM^1(\R^{m\times d})$. Since it is defined as an average, $\nu_{x,y}^\infty$ is also a probability measure for $\lambda_\nu$-almost every $(x,y)\in\overline{\Omega}\times\sigma\R^m$. That $(x,y)\to\ip{|\frarg|}{\nu_{x,y}}\in \Lp^1\left(\Omega\times\R^m;\iota_\nu\right)$ follows from the definition of $S$ applied to the integrand $f(x,y,A)=1+|A|$. The desired weak* measurability properties for $\nu$ and $\nu^\infty$ follow from the definition of $p$ and the fact that $\eta$ is weakly* $\omega$-measurable. Finally, the Poincar\'e inequality~\eqref{eqpoincare} follows directly from condition~\eqref{eqymcond2b}.
\end{proof}

\begin{corollary}\label{corymsclosed}
The set $S^{-*}\mbfY(\Omega\times\R^m;\R^{m\times d})$ is sequentially weakly* closed in $\mbfM(\overline{\Omega}\times\sigma\R^m\times\overline{\bB^{m\times d}})$.
\end{corollary}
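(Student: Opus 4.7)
The plan is to verify that the weak* limit $\mu$ of a sequence $\mu_j = S^{-*}\bnu_j$ satisfies the three conditions characterising $S^{-*}\mbfY(\Omega\times\R^m;\R^{m\times d})$ in Proposition~\ref{thmymsasmeasures}: the support condition~\eqref{eqymcond2a}, the Poincar\'e-type inequality~\eqref{eqymcond2b}, and the marginal identity~\eqref{eqymcond1}. Positivity passes to the limit since $\mu \geq 0$ follows by testing against non-negative $\varphi \in \C$. Moreover, since the ambient space $\overline{\Omega}\times\sigma\R^m\times\overline{\bB^{m\times d}}$ is compact, the constant function $1$ is a valid test function, so $\mu_j(\overline{\Omega}\times\sigma\R^m\times\overline{\bB^{m\times d}}) \to \mu(\overline{\Omega}\times\sigma\R^m\times\overline{\bB^{m\times d}})$, and the uniform boundedness principle guarantees this sequence is bounded.

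For~\eqref{eqymcond1}, I would define $\kappa$ directly via $\kappa(\varphi):=\int(1-|B|)\varphi(x,y)\;\dd\mu(x,y,B)$ for $\varphi\in\C_0(\overline{\Omega}\times\R^m)$; the extension of $\varphi$ by zero to $\infty\pd\bB^m$ is continuous (since $\varphi$ vanishes at infinity), so $(1-|B|)\varphi \in \C(\overline{\Omega}\times\sigma\R^m\times\overline{\bB^{m\times d}})$ is a legitimate test function for weak* convergence. Using~\eqref{eqymcond1} for each $\mu_j$ gives $\kappa(\varphi) = \lim_j \int\varphi\;\dd\kappa_j$. To verify $\pi_\#\kappa = \lL\restrict\Omega$, I would test $\kappa$ against $\psi(x)\chi_R(y)$ for $\psi \in \C(\overline{\Omega})$ and cutoffs $\chi_R \in \C_c(\R^m)$, exploit $\pi_\#\kappa_j = \lL\restrict\Omega$, and send $j\to\infty$ then $R\to\infty$; the latter limit is handled by monotone convergence since each $\kappa_j(\Omega\times\R^m) = \lL(\Omega)$ is finite. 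Finally, $\pi_\#\kappa = \lL\restrict\Omega$ forces $\kappa(\pd\Omega\times\R^m) = 0$, so $\kappa$ is concentrated on $\Omega\times\R^m$.

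For~\eqref{eqymcond2a} and~\eqref{eqymcond2b}, the essential device is a family of monotone cutoffs $\psi_R(y) := \phi(|y|/R) \in \C_b(\R^m)$ (with $\phi$ non-decreasing, $\phi(0)=0$, $\phi \equiv 1$ on $[1,\infty)$), which extend continuously to $\sigma\R^m$ with value $1$ at $\infty\pd\bB^m$ and satisfy $\psi_R \downarrow \mathbbm{1}_{\infty\pd\bB^m}$ as $R\to\infty$. Using~\eqref{eqymcond1} for $\mu_j$ together with the Poincar\'e bound~\eqref{eqymcond2b} for $\bnu_j$ yields $\int\psi_R(1-|B|)\;\dd\mu_j = \int\psi_R\;\dd\kappa_j \leq R^{-d/(d-1)}\int|y|^{d/(d-1)}\;\dd\kappa_j \leq CR^{-d/(d-1)}$ uniformly in $j$; passing to the weak* limit and then $R\to\infty$ (dominated convergence) produces $\int_{\overline{\Omega}\times\infty\pd\bB^m\times\bB^{m\times d}}(1-|B|)\;\dd\mu = 0$, which forces $\mu(\overline{\Omega}\times\infty\pd\bB^m\times\bB^{m\times d}) = 0$ since $(1-|B|)>0$ on $\bB^{m\times d}$. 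For~\eqref{eqymcond2b}, I would truncate by $g_R(y) := \min(|y|^{d/(d-1)},R^{d/(d-1)})$, which extends continuously to $\sigma\R^m$, apply weak* convergence to the continuous function $(1-|B|)g_R$, and then let $R\to\infty$ via monotone convergence.

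The sole substantive obstacle is that the limiting integrand $(1-|B|)|y|^{d/(d-1)}$ has a genuine indeterminate form $\infty\cdot 0$ on the set $\overline{\Omega}\times\infty\pd\bB^m\times\bB^{m\times d}$, so the Poincar\'e verification cannot be carried out until~\eqref{eqymcond2a} is already in hand for $\mu$; once the correct order of the arguments is arranged, the cancellation $\mu(\overline{\Omega}\times\infty\pd\bB^m\times\bB^{m\times d})=0$ renders the ambiguity $\mu$-negligible and the truncation–limit procedure goes through cleanly.
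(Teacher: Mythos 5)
Your plan matches the paper's: reduce to the closure of the three characterising conditions of Proposition~\ref{thmymsasmeasures}, deriving uniform tightness of the image measures from the Poincar\'e bound together with the Uniform Boundedness Theorem. The differences are in execution. For~\eqref{eqymcond2a} the paper argues by contradiction (a failure of tightness in $\{|y|\ge R_j\}\times\delta\bB^{m\times d}$ would force $\|S^{-*}\bnu_j\|\to\infty$), while you estimate the escaping mass directly via monotone cutoffs $\psi_R$; this is fine, though the pointwise bound $\psi_R(y)\le R^{-d/(d-1)}|y|^{d/(d-1)}$ you invoke requires a specific profile such as $\phi(t)=\min(1,t^{d/(d-1)})$ (a generic $\phi$ with $\phi(t)=t$ violates it on $(0,1)$), or else take $\phi\equiv 0$ on $[0,1/2]$ and apply Chebyshev directly to $\kappa_j(\{|y|\ge R/2\})$. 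For~\eqref{eqymcond2b} the paper goes through lower semicontinuity of $\psi(x,y,B)=(1-|B|)|y|^{d/(d-1)}\mathbbm{1}_{\R^m\times\bB^{m\times d}}(y,B)$ on the compactified product and Proposition~1.62 of~\cite{AmFuPa00FBVF}; your truncation-plus-monotone-convergence route reaches the same conclusion. For~\eqref{eqymcond1} the paper's route is more economical: for $\psi\in\C_0(\Omega)$ the function $(x,y,B)\mapsto(1-|B|)\psi(x)$ is \emph{already} continuous on $\overline{\Omega}\times\sigma\R^m\times\overline{\bB^{m\times d}}$ (it does not depend on $y$), so $\int(1-|B|)\psi\,\dd\mu_j=\int_\Omega\psi\,\dd x$ passes to the limit immediately with no cutoff exchange and hence no Poincar\'e input; your two-step $j\to\infty$, $R\to\infty$ argument works but needs the uniform Chebyshev estimate to justify the interchange of limits, not merely the finiteness of each $\kappa_j(\Omega\times\R^m)$. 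Finally, the ``substantive obstacle'' you flag is illusory: the integral in~\eqref{eqymcond2b} is by definition restricted to $\overline{\Omega}\times\R^m\times\bB^{m\times d}$, so it suffices to bound $\int_{\overline{\Omega}\times\R^m\times\bB^{m\times d}}(1-|B|)g_R\,\dd\mu$ uniformly in $R$ — which follows by passing $j\to\infty$ in $\int(1-|B|)g_R\,\dd\mu_j\le\bigl(M_d\,\mu_j(\overline{\Omega}\times\sigma\R^m\times\overline{\bB^{m\times d}})\bigr)^{d/(d-1)}$ together with total-mass convergence $\mu_j(\cdots)\to\mu(\cdots)$ — and then let $R\to\infty$ by monotone convergence on that set alone; the behaviour of $\mu$ on $\overline{\Omega}\times\infty\pd\bB^m\times\bB^{m\times d}$ never enters, so~\eqref{eqymcond2a} is not logically prior to~\eqref{eqymcond2b}.
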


\begin{proof}
It suffices to show that conditions~\eqref{eqymcond2a}, and~\eqref{eqymcond2b}, and~\eqref{eqymcond1} from Proposition~\ref{thmymsasmeasures} are sequentially weakly* closed. This is immediate for condition~\eqref{eqymcond1}, since the functions $\psi(x,y,A):=\varphi(x,y)$ for $\varphi\in\C_0(\overline{\Omega}\times\R^m)$ are elements of the predual of $\mbfM(\overline{\Omega}\times\sigma\R^m\times\overline{\bB^{m\times d}})$. 

Now let $(\bnu_j)_j\subset\mbfY(\Omega\times\R^m;\R^{m\times d})$ and $\mu\in\mbfM^+(\overline{\Omega}\times\sigma\R^m\times\overline{\bB^{m\times d}})$ be such that $S^{-*}\bnu_j\wsc\mu$ in $\mbfM(\overline{\Omega}\times\sigma\R^m\times\overline{\bB^{m\times d}})$. The condition $\mu(\overline{\Omega}\times\infty\pd\bB^m\times\bB^{m\times d})=0$ is satisfied if and only if the sequence $\left(S^{-*}\bnu_j\right)_j$ is tight in $\mbfM(\overline{\Omega}\times[(\R^m\times\overline{\bB^{m\times d}})\cup(\sigma\R^m\times\pd\bB^{m\times d})])$. This fails to be the case only if there exists $\varepsilon>0$, $\delta\in[0,1)$, and a sequence of radii $R_j\uparrow\infty$ such that 
\[
\limsup_{j\to\infty}S^{-*}\bnu_j\bigl(\overline{\Omega}\times \{|y|\geq R_j\}\times \delta\bB^{m\times d}\bigr)\geq\varepsilon.
\]
The Poincar\'e inequality~\eqref{eqymcond2b} for $\bnu_j$ implies
\begin{align*}
	&M_d \cdot \left(S^{-*}\bnu_j(\overline{\Omega}\times\sigma\R^m\times\overline{\bB^{m\times d}})-|\Omega|\right)\\
	&\quad\geq\left(\int_{\overline{\Omega}\times \{|y|\geq R_j\}\times \delta\bB^{m\times d}}(1-|B|)|y|^{d/(d-1)}\;\dd S^{-*}\bnu_j(x,y,B)\right)^{(d-1)/d}.
\end{align*}
We would therefore obtain
\begin{align*}
M_d \cdot \limsup_{j\to\infty} S^{-*}\bnu_j(\overline{\Omega}\times\sigma\R^m\overline{\times\bB^{m\times d}}) - M_d\cdot|\Omega|&\geq\limsup_{j\to\infty}\left(\varepsilon(1-\delta)R_j^{d/(d-1)}\right)^{(d-1)/d}\\
&=\infty,
\end{align*}
contradicting the fact that $(S^{-*}\bnu_j)_j$ must be a norm-bounded sequence in $\mbfM(\overline{\Omega}\times\sigma\R^m\times\overline{\bB^{m\times d}})$ (by the Uniform Boundedness Theorem). Finally, since the function $\psi\colon\overline{\Omega}\times\sigma\R^m\times\overline{\bB^{m\times d}}\to[0,\infty)$ given by
\[
\psi(x,y,B):=\begin{cases}
(1-|B|)|y|^{d/(d-1)}&\text{ if } (y,B)\in\R^m\times\bB^{m\times d},\\
0 &\text{ otherwise},
\end{cases}
\]
is lower semicontinuous, it follows (see for instance Proposition~1.62 in~\cite{AmFuPa00FBVF}) that
\begin{align*}
\ip{\psi}{\mu}\leq\liminf_{j\to\infty}\ip{\psi}{S^{-*}\bnu_j}&\leq \lim_{j\to\infty}\left(M_d \left(\cdot S^{-*}\bnu_j(\overline{\Omega}\times\sigma\R^m\times\overline{\bB^{m\times d}}) - |\Omega|\right)\right)^{d/(d-1)}\\
&=\left(M_d \cdot \left(\mu(\overline{\Omega}\times\sigma\R^m\times\overline{\bB^{m\times d}}) - |\Omega|\right)\right)^{d/(d-1)}
\end{align*}
and that condition~\eqref{eqymcond2b} is therefore satisfied.
\end{proof}

As a consequence of Proposition~\ref{thmymsasmeasures} combined with the usual density results for tensor products of continuous functions over a product domain, the following lemma is now immediate:
\begin{lemma}\label{remtensorprods}
There exists a countable family of products $\left\{\varphi_k\otimes h_k\right\}_{k\in\mathbb{N}}\subset\mbfE(\Omega\times\R^m)$, where $\varphi_k\in\C(\overline{\Omega}\times\sigma\R^m)$ and $h_k\in\C(\R^{m\times d})$, such that if for a mass-bounded sequence $(\bnu_j)_j\subset\mbfY(\Omega\times\R^m;\R^{m\times d})$ it holds that
\[
\lim_{j\to\infty}\ddprb{\varphi_k\otimes h_k,\bnu_j}=\ddprb{\varphi_k\otimes h_k,\bnu}\quad\text{ for each }k\in\mbN,
\]
then $\bnu_j\wsc\bnu$.
\end{lemma}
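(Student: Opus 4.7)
The plan is to transfer the problem via the isometric isomorphism $S\colon\mbfE(\Omega\times\R^m)\to\C(\overline{\Omega}\times\sigma\R^m\times\overline{\bB^{m\times d}})$ of Section~\ref{secymscompact}, which reduces weak* Young-measure convergence to weak* convergence of bounded positive measures on the compact metric space $K:=\overline{\Omega}\times\sigma\R^m\times\overline{\bB^{m\times d}}$. Since $K$ is a finite product of compact metric spaces, it is itself compact metric, so $\C(K)$ is separable. By the Stone--Weierstrass theorem, the unital subalgebra generated by tensor products $\varphi(x,y)\tilde{h}(B)$ with $\varphi\in\C(\overline{\Omega}\times\sigma\R^m)$ and $\tilde{h}\in\C(\overline{\bB^{m\times d}})$ is dense in $\C(K)$; selecting countable dense subsets of each factor and forming pairwise products produces a countable family $\{\varphi_k\cdot\tilde{h}_k\}_{k\in\mbN}$ whose linear span remains dense in $\C(K)$.

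I then translate back to $\mbfE(\Omega\times\R^m)$ by setting $h_k(A):=(1+|A|)\tilde{h}_k(A/(1+|A|))$, so that $h_k\in\C(\R^{m\times d})$ and, using~\eqref{eqdefisom}, $\varphi_k\otimes h_k\in\mbfE(\Omega\times\R^m)$ with $S(\varphi_k\otimes h_k)=\varphi_k\cdot\tilde{h}_k$. The claim is that $\{\varphi_k\otimes h_k\}_{k\in\mbN}$ is the desired family.

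To verify it, suppose $(\bnu_j)_j\subset\mbfY(\Omega\times\R^m;\R^{m\times d})$ is mass-bounded and $\ddprb{\varphi_k\otimes h_k,\bnu_j}\to\ddprb{\varphi_k\otimes h_k,\bnu}$ for every $k\in\mbN$. Setting $M:=\sup_j\norm{\bnu_j}_\mbfY+\norm{\bnu}_\mbfY<\infty$, identity~\eqref{eqnumumass} yields uniform bounds on the masses of $S^{-*}\bnu_j$ and $S^{-*}\bnu$ in $\mbfM^+(K)$. Given $f\in\mbfE(\Omega\times\R^m)$ and $\varepsilon>0$, density supplies $c_1,\ldots,c_N\in\R$ and indices $k_1,\ldots,k_N\in\mbN$ with $\norm{Sf-\sum_{i=1}^Nc_i\varphi_{k_i}\tilde{h}_{k_i}}_\infty<\varepsilon$; equivalently, $g:=f-\sum_{i=1}^Nc_i\,\varphi_{k_i}\otimes h_{k_i}\in\mbfE(\Omega\times\R^m)$ satisfies $\norm{g}_\mbfE<\varepsilon$. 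Using the elementary bound $|\ddprb{g,\bnu'}|\leq\norm{g}_\mbfE\norm{\bnu'}_\mbfY$, valid for any $\bnu'\in\mbfY(\Omega\times\R^m;\R^{m\times d})$ since $\ddprb{g,\bnu'}=\int_K Sg\,\dd S^{-*}\bnu'$, a standard three-epsilon estimate yields
\[
\limsup_{j\to\infty}\bigl|\ddprb{f,\bnu_j}-\ddprb{f,\bnu}\bigr|\leq 2M\varepsilon,
\]
and letting $\varepsilon\downarrow 0$ gives $\bnu_j\wsc\bnu$ in $\mbfY(\Omega\times\R^m;\R^{m\times d})$. I do not anticipate a genuine obstacle here: the entire argument amounts to Stone--Weierstrass plus a $3\varepsilon$ reduction, made possible by the compactness and metrizability of the auxiliary space $K$ together with the linearity and boundedness of $f\mapsto\ddprb{f,\bnu'}$ on $\mbfE(\Omega\times\R^m)$.
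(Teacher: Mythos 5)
Your argument is correct and is exactly the route the paper has in mind: transfer through the isometry $S$ to the compact metric space $K=\overline{\Omega}\times\sigma\R^m\times\overline{\bB^{m\times d}}$, invoke density of tensor products (Stone--Weierstrass) in $\C(K)$, extract a countable subfamily, and close with the $\varepsilon$-estimate $|\ddprb{g,\bnu'}|\leq\norm{Sg}_\infty(S^{-*}\bnu')(K)=\norm{g}_{\mbfE}\norm{\bnu'}_\mbfY$ coming from positivity of $S^{-*}\bnu'$ and~\eqref{eqnumumass}. The paper states this as ``immediate from Proposition~\ref{thmymsasmeasures} and the usual tensor-product density'' without spelling it out; you have supplied precisely those details.
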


\begin{theorem}[Sequential Compactness in $\mbfY$]\label{thmymsseqcompact}
 If a sequence $(\bnu_j)_j\subset\mbfY(\Omega\times\R^m;\R^{m\times d})$ satisfies
\[
 \sup_j\norm{\bnu_j}_{\mbfY}<\infty,
\]
then there exists a Young measure $\bnu\in\mbfY(\Omega\times\R^m;\R^{m\times d})$ and a subsequence $(\bnu_{j_k})_k\subset(\bnu_j)_j$ such that $\bnu_{j_k}\wsc\bnu$ as $k\to\infty$.
\end{theorem}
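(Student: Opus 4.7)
The plan is to reduce the Young-measure compactness statement to classical weak* sequential compactness on a compact metric space via the isometric isomorphism $S^{-*}\colon(\mbfE(\Omega\times\R^m))^{*}\to\mbfM(\overline{\Omega}\times\sigma\R^m\times\overline{\bB^{m\times d}})$, and then invoke the closedness result Corollary~\ref{corymsclosed} to land back inside $\mbfY(\Omega\times\R^m;\R^{m\times d})$.

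First, I would translate the mass bound. By the identity~\eqref{eqnumumass},
\[
(S^{-*}\bnu_j)\bigl(\overline{\Omega}\times\sigma\R^m\times\overline{\bB^{m\times d}}\bigr)=\norm{\bnu_j}_{\mbfY},
\]
so the hypothesis $\sup_j\norm{\bnu_j}_\mbfY<\infty$ says exactly that the sequence $(S^{-*}\bnu_j)_j$ is bounded in the total variation norm on $\mbfM(\overline{\Omega}\times\sigma\R^m\times\overline{\bB^{m\times d}})$. Since $\overline{\Omega}\times\sigma\R^m\times\overline{\bB^{m\times d}}$ is a compact metric space, its space of continuous functions is separable, so its dual is sequentially weak* compact on bounded sets. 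Extract a (non-relabelled) subsequence and a limit $\mu\in\mbfM^+(\overline{\Omega}\times\sigma\R^m\times\overline{\bB^{m\times d}})$ with $S^{-*}\bnu_j\wsc\mu$; positivity of $\mu$ is inherited from positivity of each $S^{-*}\bnu_j$ (Proposition~\ref{thmymsasmeasures}).

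Next, I would apply Corollary~\ref{corymsclosed}, which asserts that the image $S^{-*}\mbfY(\Omega\times\R^m;\R^{m\times d})$ is sequentially weakly* closed in $\mbfM(\overline{\Omega}\times\sigma\R^m\times\overline{\bB^{m\times d}})$. This immediately gives $\mu=S^{-*}\bnu$ for some $\bnu\in\mbfY(\Omega\times\R^m;\R^{m\times d})$. Finally, for every $f\in\mbfE(\Omega\times\R^m)$, the function $Sf$ lies in $\C(\overline{\Omega}\times\sigma\R^m\times\overline{\bB^{m\times d}})$, and so
\[
\ddprb{f,\bnu_j}=\int Sf\,\dd S^{-*}\bnu_j\;\longrightarrow\;\int Sf\,\dd S^{-*}\bnu=\ddprb{f,\bnu},
\]
which is precisely the definition of $\bnu_j\wsc\bnu$ in the sense of Definition~\ref{defymconvergence}.

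There is essentially no obstacle beyond bookkeeping: the heavy lifting was already done in the preceding results. The only subtlety worth double-checking is that the compactification $\overline{\Omega}\times\sigma\R^m\times\overline{\bB^{m\times d}}$ is indeed a compact metric space (so that weak* sequential compactness on bounded sets is available), which is built into the construction of $\sigma\R^m$ via the bijection $i$ recalled in Section~\ref{secpreliminaries}. The closedness step, Corollary~\ref{corymsclosed}, already handles the potentially delicate point of ensuring no mass escapes to $\overline{\Omega}\times\infty\pd\bB^m\times\bB^{m\times d}$, which is where the Poincar\'e-type condition~\eqref{eqpoincare} is essential.
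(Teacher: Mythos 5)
Your proof is correct and follows essentially the same route as the paper: translate the $\norm{\cdot}_\mbfY$-bound into a total-variation bound on $(S^{-*}\bnu_j)_j$ via~\eqref{eqnumumass}, extract a weak* convergent subsequence by Banach--Alaoglu on the separable space $\C(\overline{\Omega}\times\sigma\R^m\times\overline{\bB^{m\times d}})$, and conclude via Corollary~\ref{corymsclosed}. You have simply spelled out a few steps (positivity of the limit, the compactness of the underlying space, the unwinding of the duality product) that the paper leaves implicit.
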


\begin{proof}
Note that by~\eqref{eqnumumass} for $\bnu\in\mbfY(\Omega\times\R^m;\R^{m\times d})$ it holds that
\[
|\Omega| + \norm{\bnu}_\mbfY = (S^{-*}\bnu)(\overline{\Omega}\times\sigma\R^m\times\overline{\bB^{m\times d}}),
\]
which implies that $(\bnu_j)_j\subset\mbfY(\Omega\times\R^m;\R^{m\times d})$ is uniformly $\norm{\frarg}_\mbfY$-bounded if and only if $(S^{-*}\bnu_j)_j$ is uniformly norm-bounded in $\mbfM(\overline{\Omega}\times\sigma\R^m\times\overline{\bB^{m\times d}})$. The result now follows from the sequential weak* closure of $S^{-*}\mbfY(\Omega\times\R^m;\R^{m\times d})$ in $\mbfM(\overline{\Omega}\times\sigma\R^m\times\overline{\bB^{m\times d}})$ combined with the Banach--Alaoglu Theorem.
\end{proof}
 
\begin{corollary}
 Let $(\gamma_j)_j\subset\Lift(\Omega\times\R^m)$ satisfy $\sup_j|\gamma_j|(\Omega\times\R^m)<\infty$. Then, upon passing to a (non-relabelled) subsequence, there exists a Young measure $\bnu\in\YLift(\Omega\times\R^m)$ such that
\[
\gamma_j\toY\bnu\text{ and }\gamma_j\wsc\asc{\bnu}.
\]
\end{corollary}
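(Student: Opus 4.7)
The plan is to apply Theorem~\ref{thmymsseqcompact} to the sequence of elementary Young measures $(\bdelta\asc{\gamma_j})_j$, and then identify the weak* limit as an element of $\YLift(\Omega\times\R^m)$ whose barycentre coincides with the weak* limit of $(\gamma_j)_j$.

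First, I would verify that the sequence $(\bdelta\asc{\gamma_j})_j$ is bounded in the Young measure mass. Using the definition of the elementary Young measure associated to $\gamma_j$ and the Radon--Nikodym decomposition $\gamma_j=\nabla\asc{\gamma_j}\,\gr^{\asc{\gamma_j}}_\#(\lL\restrict\Omega)+\gamma_j^s$ into mutually singular parts, a direct computation with the integrand $(x,y,A)\mapsto|A|$ gives
\[
\norm{\bdelta\asc{\gamma_j}}_\mbfY=\int_\Omega|\nabla\asc{\gamma_j}(x)|\;\dd x+|\gamma_j^s|(\Omega\times\R^m)=|\gamma_j|(\Omega\times\R^m),
\]
so the hypothesis on $(\gamma_j)_j$ yields $\sup_j\norm{\bdelta\asc{\gamma_j}}_\mbfY<\infty$.

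Next, I would invoke Theorem~\ref{thmymsseqcompact} to extract a (non-relabelled) subsequence and a limit $\bnu\in\mbfY(\Omega\times\R^m;\R^{m\times d})$ such that $\bdelta\asc{\gamma_j}\wsc\bnu$. By the definition of generation, this is exactly the statement $\gamma_j\toY\bnu$, and since $\bnu$ is a weak* limit of elementary Young measures associated to liftings, the very definition of $\YLift(\Omega\times\R^m)$ (together with Lemma~\ref{lemliftymsclosed}) places $\bnu$ in $\YLift(\Omega\times\R^m)$.

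Finally, to identify the weak* limit of $(\gamma_j)_j$ with $\asc{\bnu}$, I would use that the barycentre map $\bnu\mapsto\asc{\bnu}$ is continuous under weak* convergence of Young measures: testing against $f(x,y,A)=\varphi(x,y)A$ with $\varphi\in\C_0(\Omega\times\R^m)$, which lies in $\mbfE(\Omega\times\R^m)$, gives
\[
\ip{\varphi}{\gamma_j}=\ddprb{\varphi\otimes\id,\bdelta\asc{\gamma_j}}\longrightarrow\ddprb{\varphi\otimes\id,\bnu}=\ip{\varphi}{\asc{\bnu}},
\]
since $\asc{\bdelta\asc{\gamma_j}}=\gamma_j$. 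Hence $\gamma_j\wsc\asc{\bnu}$ in $\mbfM(\Omega\times\R^m;\R^{m\times d})$, as required. No step here is a serious obstacle --- the main work was already done in establishing Theorem~\ref{thmymsseqcompact} and the structural identity $\norm{\bdelta\asc{\gamma}}_\mbfY=|\gamma|(\Omega\times\R^m)$; this corollary is essentially a bookkeeping consequence.
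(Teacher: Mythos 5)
Your proof is correct and follows the same route as the paper: compute $\norm{\bdelta\asc{\gamma_j}}_\mbfY=|\gamma_j|(\Omega\times\R^m)$, apply the compactness theorem (Theorem~\ref{thmymsseqcompact}), and appeal to the sequential closure of $\YLift(\Omega\times\R^m)$ via Lemma~\ref{lemliftymsclosed}. Your explicit verification that $\gamma_j\wsc\asc{\bnu}$ by testing against $\varphi\otimes\id$ is something the paper leaves to the earlier discussion of barycentre continuity, but it is the same argument.
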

\begin{proof}
Observe that, if $\bdelta$ is the elementary Young measure associated to a lifting $\gamma\in\Lift(\Omega\times\R^m)$,
\begin{align*}
\norm{\bdelta}_\mbfY&=\int_{\Omega\times\R^m}\ip{|\frarg|}{{\delta}_{x,y}}\;\dd\iota_{\delta}(x,y)+\int_{\overline{\Omega}\times\sigma\R^m}\ip{|\frarg|}{{\delta}_{x,y}^\infty}\;\dd\lambda_{\delta}(x,y)\\
&=\int_\Omega|\nabla \asc{\gamma}(x)|\;\dd x+|\gamma^s|(\Omega\times\R^m)=|\gamma|(\Omega\times\R^m),
\end{align*}
where $\gamma^s$ is the Lebesgue singular part of $\gamma$ introduced in Definition~\ref{defelemyms}. It follows that the sequence of elementary Young measures $(\bdelta\asc{\gamma_j})_j$ is bounded in $\mbfY(\Omega\times\R^m;\R^{m\times d})$ and so we can combine Theorem~\ref{thmymsseqcompact} with Lemma~\ref{lemliftymsclosed} to achieve the desired result.
\end{proof}

\subsection{Manipulating Young measures}\label{secymsmanipulate}
The following theorem will be of great importance in the computation of tangent Young measures in Section~\ref{chaptangentliftingyms}.
\begin{theorem}[Restriction Theorem for $\YLift$]\label{thm:restrictionsofyms}
Let $\bnu=(\iota_\nu,\nu,\lambda_\nu,\nu^\infty)\in\YLift(\Omega\times\R^m)$ (or $\bnu\in\AYLift(\Omega\times\R^m)$). Then the restricted Young measure $\bnu\restrict(\overline{\Omega}\times\R^m)$ defined by
\[
\bnu\restrict(\overline{\Omega}\times\R^m):=(\iota_\nu,\nu,{\lambda_\nu}{\restrict(\overline{\Omega}\times\R^m)},\nu^\infty)
\]
is also a member of $\YLift(\Omega\times\R^m)$ (or $\bnu\restrict(\overline{\Omega}\times\R^m)\in\AYLift(\Omega\times\R^m))$.
\end{theorem}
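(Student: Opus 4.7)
The first task is to check that $\tilde\bnu:=\bnu\restrict(\overline{\Omega}\times\R^m)$ is a valid element of $\mbfY(\Omega\times\R^m;\R^{m\times d})$. Conditions~\eqref{ymdefcond1}--\eqref{ymdefcond3} of Definition~\ref{defyms} are inherited trivially from $\bnu$ since the regular part $(\iota_\nu,\nu)$ and the parametrised measure $\nu^\infty$ are unchanged and $\lambda_\nu\restrict(\overline{\Omega}\times\R^m)\leq\lambda_\nu$. The only delicate point is the Poincar\'e condition~\eqref{eqpoincare}, whose right-hand side strictly decreases when we throw away the mass of $\lambda_\nu$ on $\overline{\Omega}\times\infty\pd\bB^m$. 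Here I would use Corollary~\ref{lemstructureliftingym} together with the fact that $\iota_\nu=\gr^u_\#(\lL\restrict\Omega)$ where $u=\llbracket\bnu\rrbracket\in\BV_\#(\Omega;\R^m)$, so that
\[
\Bigl(\int_{\Omega\times\R^m}|y|^{d/(d-1)}\,\dd\iota_\nu\Bigr)^{(d-1)/d}=\norm{u}_{\Lp^{d/(d-1)}}.
\]
The classical Sobolev--Poincar\'e inequality for $\BV_\#$ bounds the right-hand side by $M_d|Du|(\Omega)$, and the barycentre formula gives $Du=\pi_\#\asc{\bnu}$, whence
\[
|Du|(\Omega)\leq|\asc{\bnu}|(\Omega\times\R^m)\leq\int\ip{|\frarg|}{\nu_{x,y}}\,\dd\iota_\nu+\lambda_\nu(\overline{\Omega}\times\R^m),
\]
which is precisely what~\eqref{eqpoincare} demands for $\tilde\bnu$.

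Next I construct a generating sequence. Given $(\gamma_j)_j\subset\Lift(\Omega\times\R^m)$ (resp. elementary) with $\gamma_j\toY\bnu$, set $u_j:=\asc{\gamma_j}$ and let $\tau_R\colon\R^m\to\overline{B(0,R)}$ be a smooth Lipschitz retraction, $\tau_R(y)=y$ for $|y|\leq R$ and $|\tau_R|\leq R$. Define
\[
\tilde u_j^R:=\tau_R\circ u_j-(\tau_R\circ u_j)_\Omega\in(\BV_\#\cap\Lp^\infty)(\Omega;\R^m)
\]
and, in the $\AYLift$-case, $\tilde\gamma_j^R:=\gamma\asc{\tilde u_j^R}$. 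In the $\YLift$-case I would instead use the Structure Theorem~\ref{thmliftingsstructure} to decompose $\gamma_j=\gamma\asc{u_j}\restrict((\Omega\setminus\Jcal_{u_j})\times\R^m)+\gamma_j^{\mathrm{gs}}$ and truncate the graph-like part via the same retraction applied pointwise in $y$, after which a small correction on $\Jcal_{u_j}$ restores the chain rule~\eqref{eqchainrule}; this keeps $\tilde\gamma_j^R$ supported in $\Omega\times\overline{B(0,CR)}$ so that its elementary Young measure has no concentration at $\infty\pd\bB^m$.

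The crux is to choose $R_j\uparrow\infty$ slowly enough that $\tilde\gamma_j^{R_j}\toY\tilde\bnu$. I would test weak* convergence against the countable dense family $\{\varphi_k\otimes h_k\}$ supplied by Lemma~\ref{remtensorprods}. For any $f\in\mbfE(\Omega\times\R^m)$ of compact support in $y$, once $R_j$ exceeds that support we have $\tilde u_j^{R_j}=u_j-c_j$ on the relevant set for $c_j:=(\tau_{R_j}\circ u_j)_\Omega\to 0$, and Lemma~\ref{lemliftingsperturb} combined with $\gamma_j\toY\bnu$ yields
\[
\int Pf(x,y,P\tilde\gamma_j^{R_j})\to\ddprb{f,\bnu}=\ddprb{f,\tilde\bnu},
\]
the last equality holding because $f$ vanishes on $\overline{\Omega}\times\infty\pd\bB^m$. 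To upgrade to arbitrary $f\in\mbfE$, I approximate $f$ by $\chi_k\cdot f$ where $\chi_k\in\C_c(\R^m;[0,1])$ is an increasing sequence of cutoffs. Uniform smallness of the remainder is obtained from the Poincar\'e-based bound $\sup_j\norm{\tilde u_j^{R_j}}_{\Lp^{d/(d-1)}}\leq M_d\sup_j|Du_j|(\Omega)<\infty$, which via Chebyshev controls $\Lcal^d\{|\tilde u_j^{R_j}|\geq k\}$ uniformly in $j$; together with the $|A|$-linear growth bound built into the definition of $\mbfE$, this forces $|P\tilde\gamma_j^{R_j}|(\{|y|\geq k\})\to 0$ as $k\to\infty$ uniformly in $j$. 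A standard diagonal extraction then produces the required $R_j\uparrow\infty$.

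The main obstacle is the $\YLift$-case, where we must truncate a general lifting rather than an elementary one and show that the resulting measure remains a lifting, i.e.\ still satisfies~\eqref{eqchainrule} with an appropriate barycentre. The Structure Theorem is essential here: it pins down the only possible obstruction, namely the graph-singular divergence identity $\diverg_y\gamma^{\mathrm{gs}}=-|D^j u|\otimes\frac{n_u}{|u^+-u^-|}(\delta_{u^+}-\delta_{u^-})$, which must be preserved under truncation. Once this structural bookkeeping is handled, the uniform integrability argument above transfers verbatim, and closedness of both $\YLift$ and $\AYLift$ under weak* Young-measure convergence (Lemma~\ref{lemliftymsclosed}) completes the proof.
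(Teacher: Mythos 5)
Your overall strategy — truncate the sequence in the $y$-variable so that no mass can escape to $\infty\pd\bB^m$, then appeal to closedness of $\YLift$ and $\AYLift$ under weak* Young-measure convergence — is the same as the paper's. But there are two genuine gaps in your execution.

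\textbf{The injectivity of the truncation map is essential, and you give it up.} You take $\tau_R\colon\R^m\to\overline{B(0,R)}$ a smooth Lipschitz retraction with $\tau_R=\id$ on $B(0,R)$. Such a map cannot be injective: it sends all of $\{|y|>R\}$ into $\pd B(0,R)$, an $(m-1)$-dimensional set. The paper instead uses an injective diffeomorphism $\eta_R\colon\R^m\to B(0,2R)$, radial and with $|\nabla\eta_R(y)|\to 0$ as $|y|\to\infty$ (e.g.\ $\eta_R(y)=f_R(|y|)y/|y|$ with $f_R(t)=R(2-\ee^{(R-t)/R})$ for $t\geq R$). Injectivity is what makes the transformed object
\[
\gamma_j^R:=(T_j^R)_\#(\nabla\eta_R\,\gamma_j), \qquad T_j^R(x,y)=(x,\eta_R(y)-c_j^R),
\]
again a \emph{lifting}: one checks directly via the chain rule $\nabla_y(\varphi\circ T_j^R)=(\nabla_y\varphi\circ T_j^R)\nabla\eta_R$ that $\gamma_j^R$ satisfies~\eqref{eqchainrule} with barycentre $\eta_R(\asc{\gamma_j})-c_j^R$, and Lemma~\ref{lempushforwardderivative} (which needs injectivity of $T_j^R$) is what lets one rewrite $\int Pf(\cdot,P\gamma_j^R)=\int Pf_{\eta_R}(\cdot,P\gamma_j)$ with $f_{\eta_R}\in\mbfE(\Omega\times\R^m)$, so that Young-measure convergence of $(\gamma_j)_j$ yields that of $(\gamma_j^R)_j$. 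With a non-injective retraction this clean bookkeeping collapses: the pushforward $(T)_\#(\nabla\tau_R\gamma_j)$ need not satisfy the chain rule (the Jacobian-weighted pushforward argument fails where $\tau_R$ folds), and Lemma~\ref{lempushforwardderivative} is unavailable.

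\textbf{The $\YLift$ case is where this really bites, and your sketch for it does not close.} You propose to decompose $\gamma_j=\gamma\asc{u_j}\restrict((\Omega\setminus\Jcal_{u_j})\times\R^m)+\gamma_j^{\mathrm{gs}}$ via Theorem~\ref{thmliftingsstructure}, ``truncate the graph-like part'' and do ``a small correction on $\Jcal_{u_j}$''. But $\gamma_j^{\mathrm{gs}}$ is not graph-like; it can carry mass on arbitrary subsets of $\Omega\times\R^m$ (cf.\ Example~\ref{exstrictliftinginclusion}), and its defining property is a $\diverg_y$-constraint supported on $\Jcal_{u_j}$. Simply cutting off $\gamma_j^{\mathrm{gs}}$ in $y$ destroys this constraint, and no ``small correction on $\Jcal_{u_j}$'' is specified that would restore it; a correction concentrated on $\Jcal_{u_j}\times\R^m$ would in general have to be a large measure, not a small one. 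The paper avoids this entirely because $\gamma_j^R=(T_j^R)_\#(\nabla\eta_R\gamma_j)$ applies a single global change of variables to the \emph{whole} lifting, automatically preserving the chain-rule identity with no decomposition needed.

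\textbf{Secondary point.} Your ``uniform smallness of the remainder'' step glosses over a real uniform-integrability issue: Chebyshev on $\|\tilde u_j^{R_j}\|_{\Lp^{d/(d-1)}}$ controls $\Lcal^d(\{|\tilde u_j^{R_j}|\geq k\})$, hence the $\gr_\#\lL$-component of $P\tilde\gamma_j^{R_j}$ on $\{|y|\geq k\}$, but it does \emph{not} by itself control $|\tilde\gamma_j^{R_j}|(\{|y|\geq k\})$, which involves $\int_{\{|u_j|\geq k\}}|\nabla u_j|$. The paper sidesteps this by working at the level of the limit: it shows $\ddprb{f,\bnu^R}=\ddprb{f_{\eta_R},\bnu}$ and then applies dominated convergence in $R$ against the fixed integrable majorant $\|Sf\|_\infty(1+|A|)$ with respect to $\iota_\nu$ and $\lambda_\nu$, which is cleaner and avoids needing uniform tightness of the sequence $(\tilde\gamma_j^{R_j})_j$.

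The initial verification that $\tilde\bnu$ satisfies~\eqref{eqpoincare} is correct but redundant: once you exhibit a generating sequence of elementary Young measures, membership in $\YLift\subset\mbfY$ is automatic.
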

Note that the definition of $\bnu\restrict(\overline{\Omega}\times\R^m)$ is equivalent to the statement that for all $f\in\mbfE(\Omega\times\R^m)$,
\[
\ddprb{f,\bnu\restrict(\overline{\Omega}\times\R^m)}=\int_{\Omega\times\R^m}\ip{f(x,y,\frarg)}{\nu_{x,y}}\;\dd\iota_\nu(x,y)+\int_{\overline{\Omega}\times\R^m}\ip{f^\infty(x,y,\frarg)}{\nu_{x,y}^\infty}\;\dd\lambda_\nu(x,y).
\]
\begin{proof}
Let $(\gamma_j)_j\subset\Lift(\Omega\times\R^m)$ be such that $\gamma_j\toY\bnu$. For each $R\in\mbN$, let $\eta_R\in\C^1(\R^m;\R^m)$ be an injective function such that $\norm{\eta_R}_\infty \leq 2R$, $\eta_R(y)=y$ for $y\in B(0,R)$, $\norm{\nabla\eta_R}_\infty$ is bounded independently of $R$, and $|\nabla\eta_R(y)|\to 0$ as $|y|\to\infty$. It suffices, for example, to take
\[
\eta_R(y):=f_R(|y|)\frac{y}{|y|},\quad\text{ where }\quad f_R(t):=\begin{cases} t &\text{ if }0\leq t<R,\\
R\left(2-\exp\left(\frac{R-t}{R}\right)\right) & \text{ if }t\geq R.
\end{cases}
\]
For each $j,R\in\mbN$, define
\[
c_j^R:=\dashint_\Omega\eta_R(\asc{\gamma_j}(x))\;\dd x,\qquad c_0^R:=\dashint_\Omega\eta_R(\llbracket\bnu\rrbracket(x))\;\dd x,
\]
and denote by $T_j^R\colon\Omega\times\R^m\to\Omega\times\R^m$ the map given by
\[
T_j^R(x,y)=(x,\eta_R(y)-c_j^R).
\]
Now define $\gamma_j^R\in\mbfM(\Omega\times\R^m;\R^{m\times d})$ by
\[
  \gamma_j^R:= (T_j^R)_\#(\nabla\eta_R \, \gamma_j),
\]
so that
\[
\int_{\Omega\times\R^m}\varphi(x,y)\;\dd\gamma_j^R(x,y)=\int_{\Omega\times\R^m}(\varphi\circ T_j^R) (x,y)\nabla\eta_R(y) \;\dd\gamma_j(x,y),\quad\varphi\in\C_0(\Omega\times\R^m).
\]
In particular, for every $\varphi\in\C_0(\Omega\times B(0,R))$,
\[
\int_{\Omega\times\R^m}\varphi(x,y)\;\dd\gamma_j^R(x,y)=\int_{\Omega\times\R^m}\varphi(x,y-c^R_j)\;\dd\gamma_j(x,y).
\]
By the chain rule combined with the fact that $\nabla_y T_j^R=\nabla\eta_R$, we have
\[
\nabla_y(\varphi\circ T_j^R)(x,y)=(\nabla_y\varphi\circ T_j^R)(x,y)\nabla \eta_R(y),
\]
which implies
\begin{align*}
\int_{\Omega\times\R^m}\nabla_y\varphi(x,y)\;\dd\gamma_j^R(x,y)&=\int_{\Omega\times\R^m}\nabla_y(\varphi\circ T_j^R)(x,y)\;\dd\gamma_j(x,y)\\
&=-\int_{\Omega}\nabla_x(\varphi\circ T_j^R)(x,\asc{\gamma_j}(x))\;\dd x\\
&=-\int_\Omega\nabla_x\varphi(x,\eta_R(\asc{\gamma_j}(x))-c_j^R)\;\dd x.
\end{align*}
We therefore see that $\gamma_j^R\in\Lift(\Omega\times\R^m)$ with $\asc{\gamma_j^R}=\eta_R(\asc{\gamma_j})-c_j^R$ and $(x, \asc{\gamma_j^R})=T^R_j(x,\asc{\gamma_j})$,
%\[
%\asc{\gamma_j^R}=\eta_R(\asc{\gamma_j})\asc{\gamma_j}-c_j^R=T^R_j\circ\asc{\gamma_j},\qquad \nabla\asc{\gamma_j^R}=\left(\nabla T^R_j\circ\asc{\gamma_j}\right)\nabla\asc{\gamma_j},
%\]
from which we get
\[
\gr^{\asc{\gamma_j^R}}_\#(\lL\restrict\Omega)=(T_j^R)_\#\left(\gr^{\asc{\gamma_j}}_\#(\lL\restrict\Omega)\right)
\]
and hence that
\[
P\gamma_j^R=(T_j^R)_\# \left[A_R P\gamma_j\right],\qquad\text{where}\qquad A_R(y):=\begin{pmatrix}\nabla\eta_R(y) & 0\\0 & 1 \end{pmatrix}.
\]
Since each $T_j^R\colon\Omega\times\R^m\to\Omega\times\R^m$ is injective (because each $\eta_R$ is injective), Lemma~\ref{lempushforwardderivative} implies that
\[
\frac{\dd A_R P\gamma_j}{\dd\left|A_R P\gamma_j\right|}=\frac{\dd P\gamma_j^R}{\dd|P\gamma_j^R|}\circ T_j^R\quad\text{ and }\quad |P\gamma_j^R|=(T_j^R)_\#|A_RP\gamma_j|.
\]
For $f\in\mbfE(\Omega\times\R^m)$, we can therefore compute
\begin{align*}
\int Pf(x,y,P\gamma_j^R)&=\int Pf\left(x,y,\frac{\dd P\gamma_j^R}{\dd|P\gamma_j^R|}(x,y)\right)\;\dd|P\gamma_j^R|(x,y)\\
&=\int Pf\left(T_j^R(x,y),\frac{\dd P\gamma_j^R}{\dd|P\gamma_j^R|}\circ T_j^R (x,y)\right)\;\dd|A_R P\gamma_j|(x,y)\\
&=\int Pf\left(T_j^R(x,y),\frac{\dd A_R P\gamma_j}{\dd\left|A_R P\gamma_j\right|}(x,y)\right)\frac{\dd|A_R P\gamma_j|}{\dd|P\gamma_j|}(x,y)\;\dd|P\gamma_j|(x,y)\\
&=\int Pf\left(T_j^R(x,y),\left(\frac{\dd A_R P\gamma_j}{\dd\left|A_R P\gamma_j\right|}\cdot\frac{\dd|A_R P\gamma_j|}{\dd|P\gamma_j|}\right)(x,y)\right)\;\dd|P\gamma_j|(x,y)\\
&=\int Pf\left(T_j^R(x,y),\frac{\dd A_R P\gamma_j}{\dd\left|P\gamma_j\right|}(x,y)\right)\;\dd|P\gamma_j|(x,y)\\
&=\int Pf\left(T_j^R(x,y),A_R(y) \frac{\dd P\gamma_j}{\dd\left|P\gamma_j\right|}(x,y)\right)\;\dd|P\gamma_j|(x,y).
\end{align*}
Define $f_{\eta_R}\in\C(\overline{\Omega}\times\R^m\times\R^{m\times d})$, $R>0$, by
\[
f_{\eta_R}(x,y,A):=f\left(T_{0}^R(x,y),\nabla \eta_R(y)A\right).
\]
Since $T_0^R$ can be extended continuously to $\overline{\Omega}\times\sigma\R^m$ by setting $T_0^R(x,\infty e):=(x,2Re)$ for $(x,\infty e)\in\overline{\Omega}\times\infty\pd\bB^m$ and $\nabla\eta_R$ can be continuously extended to $\sigma\R^m$ by setting $\nabla\eta_R(\infty e)=0$ for all $\infty e\in\infty\pd\bB^m$, the fact that $f\in\mbfE(\Omega\times\R^m)$ implies that $f_{\eta_R}\in\mbfE(\Omega\times\R^m)$ for each $R>0$ with
\[
\sigma f_{\eta_R}^\infty(x,y,A)=\begin{cases}
f^\infty(T_0^R(x,y),\nabla\eta_R(y)A)&\quad\text{ if }(x,y,A)\in\overline{\Omega}\times\R^m\times\R^{m\times d},\\
0 &\quad\text{ if }(x,y,A)\in\overline{\Omega}\times\infty\pd\bB^m\times\R^{m\times d}.	
\end{cases}
\]

Since $T_j^R(x,y)=T_0^R(x,y) + (0,c_j^R-c_0^R)$ and $c_j^R\to c_0^R$ for each $R>0$ as $j\to\infty$, we can use the fact that $\gamma_j\toY\bnu$ in combination with Lemma~\ref{lemliftingsperturb} and our previous calculation to deduce that
\begin{align*}
\lim_{j\to\infty}\int Pf(x,y,P\gamma_j^R)&=\lim_{j\to\infty}\int Pf_{\eta_R}(x,y+c_j^R-c_0^R,P\gamma_j)\\
&=\lim_{j\to\infty}\int Pf_{\eta_R}(x,y,P\gamma_j) \\
&=\ddprb{f_{\eta_R},\bnu}.
\end{align*}

Since this limit holds for all $f\in\mbfE(\Omega\times\R^m)$ it follows from Definition~\ref{defymconvergence} and Theorem~\ref{thmymsseqcompact} that each sequence $(\gamma_j^R)_j\subset\Lift(\Omega\times\R^m)$ generates a Young measure $\bnu^R\in\YLift(\Omega\times\R^m)$ defined via
\[
\ddprb{f,\bnu^R}=\ddprb{f_{\eta_R},\bnu}\quad\text{ for all }f\in\mbfE(\Omega\times\R^m).
\]
Since $(\sigma f_{\eta_R}^\infty)\restrict(\overline{\Omega}\times\infty\pd\bB^m\times\overline{\bB^{m\times d}})\equiv 0$ for every $R>0$ and $f_{\eta_R}\to f$, $f_{\eta_R}^\infty\to f^\infty$ pointwise in $\overline{\Omega}\times\R^m\times\R^{m\times d}$ as $R\to\infty$, the Dominated Convergence Theorem (note that each $f_{\eta_R}$ is dominated by the $\iota_\nu$-integrable function $\norm{Sf}_\infty(1+|A|)$) lets us deduce
\begin{align*}
\lim_{R\to\infty}\ddprb{f,\bnu^R}&=\lim_{R\to\infty}\ddprb{f_{\eta_R},\bnu}\\
&=\lim_{R\to\infty}\left\{\int_{\Omega\times\R^m}\ip{f_{\eta_R}}{\nu_{x,y}}\;\dd\iota_\nu(x,y)+\int_{\overline{\Omega}\times\R^m}\ip{f_{\eta_R}^\infty}{\nu^\infty_{x,y}}\;\dd\lambda_\nu(x,y)\right\}\\
&=\int_{\Omega\times\R^m}\ip{f}{\nu_{x,y}}\dd\iota_\nu(x,y)+\int_{\overline{\Omega}\times\R^m}\ip{f^\infty}{\nu^\infty_{x,y}}\;\dd\lambda_\nu(x,y).
\end{align*}
It follows then that the family $(\bnu^R)_{R>0}\subset\YLift(\Omega\times\R^m)$ converges as $R\to\infty$ to a Young measure $\bnu_0\in\YLift(\Omega\times\R^m)$ satisfying
\[
\ddprb{f,\bnu_0}=\int_{\Omega\times\R^m}\ip{f}{\nu_{x,y}}\dd\iota_\nu(x,y)+\int_{\overline{\Omega}\times\R^m}\ip{f^\infty}{\nu^\infty_{x,y}}\;\dd\lambda_\nu(x,y)\quad\text{ for all }f\in\mbfE(\Omega\times\R^m),
\]
from which it is apparent that $\bnu_0=\bnu\restrict(\overline{\Omega}\times\R^m)$, as required. That membership of $\AYLift(\Omega\times\R^m)$ is preserved follows from the fact that $(\gamma\asc{u})^R_j=\gamma\asc{\eta_R(u(x))-c_j^R}$.
\end{proof}

\begin{lemma}\label{lemapproxymprescribedboundary}
If $\bnu\in\AYLift(\Omega\times\R^m)$, then $\bnu$ admits a generating sequence $(\gamma\asc{u_j})_j$ with $(u_j)_j\subset\C^\infty_\#(\Omega;\R^m)$.
\end{lemma}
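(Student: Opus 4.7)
The plan is to start from an arbitrary generating sequence $(\gamma\asc{v_j})_j$ with $(v_j)_j\subset\BV_\#(\Omega;\R^m)$, smooth each $v_j$ by area-strict approximation, and then extract a diagonal sequence. First, since $\bnu\in\AYLift(\Omega\times\R^m)$, by definition there exists $(v_j)_j\subset\BV_\#(\Omega;\R^m)$ with $\gamma\asc{v_j}\toY\bnu$. For each fixed $j$, Proposition~\ref{propfixedbdary} (combined with subtracting the mean, which by the $\Lp^1$-component of area-strict convergence converges to $0$ and leaves gradients unchanged) produces a sequence $(u_j^k)_k\subset\C^\infty_\#(\Omega;\R^m)$ with $u_j^k\to v_j$ area-strictly in $\BV_\#(\Omega;\R^m)$ as $k\to\infty$.

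Next, I would invoke the perspective construction of Section~\ref{secliftingperspective}: area-strict convergence $u_j^k\to v_j$ is equivalent to strict convergence of the perspective measures
\[
P\gamma\asc{u_j^k}\to P\gamma\asc{v_j} \quad\text{in }\mbfM(\Omega\times\R^m;\R^{m\times d}\times\R).
\]
For any $f\in\mbfE(\Omega\times\R^m)$, the perspective integrand $Pf$ extends continuously to $\overline{\Omega}\times\sigma\R^m\times\R^{m\times d}\times\R$ (as established in the proof of Lemma~\ref{lemliftingsperturb}) and is positively $1$-homogeneous in $(A,t)$. Applying Reshetnyak's continuity theorem in the compactified setting (after passing through the isomorphism $S^{-*}$ developed in Section~\ref{secymscompact}, so that all measures live on the compact space $\overline{\Omega}\times\sigma\R^m\times\overline{\bB^{m\times d+1}}$ and strict convergence becomes weak* convergence of positive measures of the same mass) yields
\[
\ddprb{f,\bdelta\asc{\gamma\asc{u_j^k}}}=\int Pf(x,y,P\gamma\asc{u_j^k})\;\tolong\;\int Pf(x,y,P\gamma\asc{v_j})=\ddprb{f,\bdelta\asc{\gamma\asc{v_j}}}
\]
as $k\to\infty$ for every $f\in\mbfE(\Omega\times\R^m)$. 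In other words, $\bdelta\asc{\gamma\asc{u_j^k}}\wsc\bdelta\asc{\gamma\asc{v_j}}$ in $\mbfY(\Omega\times\R^m;\R^{m\times d})$ as $k\to\infty$.

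Finally, Lemma~\ref{remtensorprods} furnishes a countable collection $\{f_n\}_{n\in\mbN}\subset\mbfE(\Omega\times\R^m)$ whose testing is sufficient to detect weak* convergence of mass-bounded sequences of Young measures. A standard diagonal extraction then produces an increasing map $j\mapsto k_j$ such that, setting $u_j:=u_j^{k_j}\in\C^\infty_\#(\Omega;\R^m)$,
\[
\bigl|\ddprb{f_n,\bdelta\asc{\gamma\asc{u_j}}}-\ddprb{f_n,\bdelta\asc{\gamma\asc{v_j}}}\bigr|\leq 1/j\quad\text{for all }n\leq j,
\]
and also $\sup_j\norm{\bdelta\asc{\gamma\asc{u_j}}}_\mbfY<\infty$ (by the analogous diagonal bound on $\norm{\frarg}_\mbfY$, whose corresponding integrand $\mathbbm{1}\otimes|\frarg|\in\mbfE$ can be included in $\{f_n\}$). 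Combining this with $\gamma\asc{v_j}\toY\bnu$, we obtain $\ddpr{f_n,\bdelta\asc{\gamma\asc{u_j}}}\to\ddpr{f_n,\bnu}$ for every $n$, and Lemma~\ref{remtensorprods} upgrades this to $\gamma\asc{u_j}\toY\bnu$, as required.

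The only delicate step is verifying the passage from area-strict convergence of $(u_j^k)$ to weak* Young measure convergence of $(\bdelta\asc{\gamma\asc{u_j^k}})$, which requires identifying the right compactification in which $Pf$ behaves as a continuous, positively one-homogeneous integrand so that Reshetnyak applies cleanly; everything else is routine functional analysis.
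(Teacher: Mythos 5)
Your proof is correct and takes essentially the same route as the paper's: start from a $\BV_\#$ generating sequence, approximate each term area-strictly by a smooth mean-zero function, use continuity of $\int Pf(x,y,P\gamma\asc{\,\cdot\,})$ under area-strict convergence for $f\in\mbfE(\Omega\times\R^m)$, and extract a diagonal sequence via the countable determining family from Lemma~\ref{remtensorprods}. The only cosmetic difference is that the paper cites Theorem~\ref{thmareastrictcontinuity} directly for the area-strict continuity, whereas you unpack that step by hand via the perspective construction and a Reshetnyak-type argument on the compactified space — this is exactly the mechanism underlying Theorem~\ref{thmareastrictcontinuity}, so the two presentations coincide in substance.
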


\begin{proof}
Let $(v_j)_j\subset\BV_\#(\Omega;\R^m)$ be such that $\gamma\asc{v_j}\toY\bnu$ and let $\Hbf:=\{f_k\}_k\subset\mbfE(\Omega\times\R^m)$ be a countable collection of functions which determines Young measure convergence as given in Lemma~\ref{remtensorprods}. By the area-strict density of $\C^\infty(\bB^d;\R^m)$ in $\BV(\Omega;\R^m)$ ensured by Proposition~\ref{propfixedbdary} combined with Theorem~\ref{thmareastrictcontinuity}, we can find a $\BV$-bounded sequence $(u_j)_j\subset\C^\infty_\#(\Omega;\R^m)$ which satisfies
\[
\left|\int Pf_k(x,y,P\gamma\asc{v_j})-\int Pf_k(x,y,P\gamma\asc{u_j})\right|\leq\frac{1}{j}\quad\text{ for all }k\leq j.
\]
It follows that
\[
\left|\ddprb{f_k,\bnu}-\lim_{j\to\infty}\int Pf_k(x,y,P\gamma\asc{u_j})\right|=0\quad\text{ for each }k\in\mbN,
\]
which, upon making use of the density property of $\Hbf$, implies that $\gamma\asc{u_j}$ generates $\bnu$, as required.
\end{proof}

\begin{lemma}\label{lemymprescribedboundary}
Let $\bnu\in\AYLift(\Omega\times\R^m)$ satisfy $\lambda_\nu(\pd\Omega\times\sigma\R^m)=0$. Then there exist sequences $(u_j)_j\subset\C^\infty_\#(\Omega;\R^m)$, $(c_j)_j\subset\R^m$ with $\gamma\asc{u_j}\toY\bnu$ and $\lim_j c_j=0$ satisfying
\[
u_j|_{\pd\Omega}=\llbracket\bnu\rrbracket|_{\pd\Omega}+c_j.
\]
\end{lemma}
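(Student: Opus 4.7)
The plan is to begin with an arbitrary generating sequence $(v_j)_j \subset \C^\infty_\#(\Omega;\R^m)$ for $\bnu$ (which exists by Lemma~\ref{lemapproxymprescribedboundary}) and then to modify it in a shrinking boundary layer so as to acquire the prescribed trace, using the hypothesis $\lambda_\nu(\pd\Omega \times \sigma\R^m) = 0$ as the central tool. Let $u := \llbracket\bnu\rrbracket$ and, for $\delta > 0$, write $A_\delta := \set{x \in \Omega}{\dist(x,\pd\Omega) < \delta}$.

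The first crucial step is to show that $v_j|_{\pd\Omega} \to u|_{\pd\Omega}$ strongly in $\Lp^1(\pd\Omega;\R^m)$. I would use the standard Gagliardo-type trace estimate for Lipschitz domains, namely $\norm{f|_{\pd\Omega}}_{\Lp^1(\pd\Omega)} \leq C(\delta^{-1}\norm{f}_{\Lp^1(A_\delta)} + |Df|(A_\delta))$ for all $f \in \BV(\Omega;\R^m)$, applied to $f = v_j - u$. Since $v_j \to u$ in $\Lp^1(\Omega;\R^m)$, for each fixed $\delta > 0$ the first term vanishes in the limit $j \to \infty$, leaving
\[
\limsup_{j\to\infty} \norm{(v_j - u)|_{\pd\Omega}}_{\Lp^1(\pd\Omega)} \leq C\bigl(\limsup_{j\to\infty}|Dv_j|(A_\delta) + |Du|(A_\delta)\bigr).
\]
Testing the Young measure convergence against integrands of the form $\chi_\delta(x)|A| \in \mbfE(\Omega\times\R^m)$ with $\chi_\delta \in \C(\overline{\Omega};[0,1])$ supported near $\pd\Omega$, the first term on the right is bounded by $\int \chi_\delta \ip{|\frarg|}{\nu} \dd\iota_\nu + \int \chi_\delta \ip{|\frarg|}{\nu^\infty}\dd\lambda_\nu$, which vanishes as $\delta \to 0$ because $\iota_\nu(\pd\Omega\times\R^m) = 0$ (from $\pi_\#\iota_\nu = \lL\restrict\Omega$) and $\lambda_\nu(\pd\Omega \times \sigma\R^m) = 0$ by hypothesis. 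Together with $|Du|(A_\delta) \downarrow 0$, this yields the trace convergence.

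Next, I fix $W \in \C^\infty(\Omega;\R^m)\cap\W^{1,1}(\Omega;\R^m)$ satisfying $W|_{\pd\Omega} = u|_{\pd\Omega}$, obtained as any term of the area-strictly convergent sequence from Proposition~\ref{propfixedbdary}. For each $\varepsilon > 0$ I choose a cutoff $\phi_\varepsilon \in \C^\infty_c(\Omega;[0,1])$ with $\phi_\varepsilon \equiv 1$ on $\Omega \setminus A_{2\varepsilon}$, $\phi_\varepsilon \equiv 0$ on $A_\varepsilon$, and $\norm{\nabla\phi_\varepsilon}_\infty \leq C/\varepsilon$, and define
\[
u_j^\varepsilon := \phi_\varepsilon v_j + (1-\phi_\varepsilon)W - c_j^\varepsilon \in \C^\infty_\#(\Omega;\R^m),
\]
where $c_j^\varepsilon \in \R^m$ is the mean of $\phi_\varepsilon v_j + (1-\phi_\varepsilon)W$. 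By construction $u_j^\varepsilon|_{\pd\Omega} = u|_{\pd\Omega} - c_j^\varepsilon$, and since $v_j \to u$ in $\Lp^1(\Omega;\R^m)$ and $(u)_\Omega = 0$, one checks that $c_j^\varepsilon$ converges in the iterated limit $\lim_{\varepsilon \to 0}\lim_{j\to\infty}$ to zero.

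The main estimate to verify is that $\lim_{\varepsilon \to 0}\lim_{j\to\infty} \int_\Omega f(x,u_j^\varepsilon, \nabla u_j^\varepsilon)\dd x = \ddprb{f,\bnu}$ for each $f \in \mbfE(\Omega\times\R^m)$ (this suffices by Lemma~\ref{remtensorprods} combined with a diagonal extraction). I split the integral into contributions from $\Omega\setminus A_{2\varepsilon}$ and $A_{2\varepsilon}$. On $\Omega\setminus A_{2\varepsilon}$, $u_j^\varepsilon = v_j - c_j^\varepsilon$ and $\nabla u_j^\varepsilon = \nabla v_j$; introducing a smooth cutoff $\chi_\varepsilon \in \C(\overline{\Omega};[0,1])$ equal to $1$ on $\Omega \setminus A_{2\varepsilon}$ and $0$ on $A_\varepsilon$, the integrand $\chi_\varepsilon(x) f(x,y,A)$ lies in $\mbfE(\Omega\times\R^m)$, and Lemma~\ref{lemliftingsperturb} absorbs the small shift $c_j^\varepsilon \to c^\varepsilon := \lim_j c_j^\varepsilon$; the Young measure convergence applied to $\chi_\varepsilon(x)f(x,y-c^\varepsilon,A) \in \mbfE$ then yields the limit $\ddprb{f,\bnu}$ after sending $\varepsilon\to 0$ (using dominated convergence against $\iota_\nu$ and $\lambda_\nu$). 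The hard step is the boundary strip: using $|f|\leq C(1+|A|)$ together with
\[
|\nabla u_j^\varepsilon| \leq |\nabla v_j| + |\nabla W| + \varepsilon^{-1}|v_j - W|,
\]
I bound the remaining contribution by $C|A_{2\varepsilon}| + C\int_{A_{2\varepsilon}}(|\nabla v_j| + |\nabla W|)\dd x + C\varepsilon^{-1}\int_{A_{2\varepsilon}}|v_j - W|\dd x$. The first three summands vanish in the iterated limit because $|A_{2\varepsilon}|\to 0$, $W \in \W^{1,1}$, and by the Young measure testing argument used above. For the cross term produced by $\nabla\phi_\varepsilon$, I apply the one-dimensional-slicing inequality
\[
\int_{A_{2\varepsilon}}|v_j - W|\dd x \leq C\varepsilon\left(\normb{(v_j - W)|_{\pd\Omega}}_{\Lp^1(\pd\Omega)} + \int_{A_{2\varepsilon}}|\nabla(v_j - W)|\dd x\right),
\]
dividing by $\varepsilon$ yields three terms each of which was shown to vanish: the boundary integral by the trace convergence result, and the bulk gradient terms by the preceding estimates. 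A final diagonal extraction against the countable test family of Lemma~\ref{remtensorprods}, together with $c_j^\varepsilon \to 0$, produces indices $(j_k,\varepsilon_k)$ such that $u_k := u_{j_k}^{\varepsilon_k}$ and $c_k := -c_{j_k}^{\varepsilon_k}$ satisfy all the required properties. The principal obstacles are the trace convergence step (which is where the hypothesis $\lambda_\nu(\pd\Omega\times\sigma\R^m)=0$ is essential) and controlling the cross term $\varepsilon^{-1}\int_{A_{2\varepsilon}}|v_j - W|\dd x$ introduced by differentiating the cutoff.
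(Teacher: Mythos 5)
Your proof is correct, and it reaches the same conclusion as the paper's, but via a genuinely different implementation of the boundary-layer patching step. The structural difference is this: you glue the interior generating sequence $v_j$ to a single \emph{fixed} function $W \in (\C^\infty\cap\W^{1,1})(\Omega;\R^m)$ with the correct trace, whereas the paper glues $v_j$ to a \emph{sequence} $w_j\to\llbracket\bnu\rrbracket$ produced by Proposition~\ref{propfixedbdary}, also with the correct trace. This choice has a real consequence for the cross term $\norm{\nabla\eta}_\infty\norm{v_j - (\text{boundary competitor})}_{\Lp^1(A_\varepsilon)}$ created by differentiating the cutoff. In the paper's version this is $\norm{\nabla\eta_k}_\infty\norm{v_j - w_j}_{\Lp^1}$, which vanishes as $j\to\infty$ (for each fixed $k$) for free, because both $v_j$ and $w_j$ converge to $\llbracket\bnu\rrbracket$ in $\Lp^1$; no trace estimate is needed at all. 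In your version, $v_j - W$ does not go to zero in $\Lp^1$, so you must pay with the one-dimensional slicing inequality $\varepsilon^{-1}\int_{A_{2\varepsilon}}|v_j - W|\dd x \lesssim \norm{(v_j - W)|_{\pd\Omega}}_{\Lp^1(\pd\Omega)} + \int_{A_{2\varepsilon}}|\nabla(v_j - W)|\dd x$, and then you need the separate trace-convergence step $v_j|_{\pd\Omega}\to\llbracket\bnu\rrbracket|_{\pd\Omega}$ in $\Lp^1(\pd\Omega)$, which you establish via a Gagliardo-type trace bound plus the Young-measure tightness at $\pd\Omega$ coming from $\lambda_\nu(\pd\Omega\times\sigma\R^m)=0$. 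Both of those extra lemmas are sound (and the slicing and trace estimates you invoke are standard for Lipschitz domains), so your argument closes — but you are effectively invoking the hypothesis $\lambda_\nu(\pd\Omega\times\sigma\R^m)=0$ twice (once for trace convergence, once for the bulk gradients near $\pd\Omega$), whereas the paper invokes it once. The net effect is that the paper's proof is shorter; yours is more self-contained in that it makes the trace behaviour of the generating sequence explicit, which is a fact of some independent interest but not strictly needed here.
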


\begin{proof}
Invoking Lemma~\ref{lemapproxymprescribedboundary}, let $(v_j)_j\subset\C^\infty_\#(\Omega;\R^m)$ be such that $\gamma\asc{v_j}\toY\bnu$. From Proposition~\ref{propfixedbdary}, we know that there exists a sequence $(w_j)_j\subset\C^\infty_\#(\Omega;\R^m)$ such that ${w_j}|_{\pd\Omega}=\llbracket\bnu\rrbracket|_{\pd\Omega}$ for each $j\in\mbN$ and $w_j\to \llbracket\bnu\rrbracket$ area-strictly in $\BV_\#(\Omega;\R^m)$. For $\varepsilon>0$, let $\eta_k\in\C^\infty_c(\Omega;[0,1])$ be such that
\[
\eta_k(x)=
\begin{cases}
1&\text{ if }\dist(x,\pd\Omega)\geq \frac{2}{k},\\
0&\text{ if }\dist(x,\pd\Omega)\leq\frac{1}{k},
\end{cases}
\]
and define $(u_j^k)_{j,k}\subset\C^\infty_\#(\Omega;\R^m)$ by
\[
u_j^k:=(1-\eta_k)w_j+\eta_kv_j+c_j^k
\]
where $c_j^k$ is the constant which ensures that $\int u_j^k(x)\;\dd x=0$. For sufficiently large $j$ we have that
\begin{align*}
|Du_j^k|(\Omega)&\leq|Dw_j|(\Omega)+|Dv_j|(\Omega)+\norm{\nabla\eta_k}_\infty\norm{w_j-v_j}_{\Lp^1}\\
&\leq 2|D\llbracket\bnu\rrbracket|(\Omega)+|Dv_j|(\Omega)+\norm{\nabla\eta_k}_\infty\norm{w_j-v_j}_{\Lp^1},
\end{align*}
and $\norm{w_j-v_j}_{\Lp^1}\leq\norm{w_j-\llbracket\bnu\rrbracket}_{\Lp^1}+\norm{v_j-\llbracket\bnu\rrbracket}_{\Lp^1}\to 0$ as $j\to\infty$. It follows that $(Du_j^k)_{j,k}$ is uniformly bounded in $\mbfM(\Omega\times\R^m;\R^{m\times d})$ in $k$ and $j$ subject to the constraint $j\geq J_k$ for some increasing sequence $(J_k)_k\subset\mbN$. Note that $u_j^k$ satisfies ${u_j^k}|_{\pd\Omega}=\llbracket\bnu\rrbracket|_{\pd\Omega}+c_j^k$, and
\begin{align*}
c_j^k&=\dashint_\Omega \eta_k(x)\left[w_j(x)-v_j(x)\right]-w_j(x)\;\dd x\\
&=\dashint_\Omega \eta_k(x)\left[w_j(x)-v_j(x)\right]\;\dd x,
\end{align*}
which implies $|c_j^k|\leq[\lL(\Omega)]^{-1}\norm{w_j-v_j}_{\Lp^1}$. Hence, $c_j^k\to 0$ as $j\to\infty$ uniformly in $k$.

For $f\in\mbfE(\Omega\times\R^m)$ with $\norm{Sf}_\infty=1$, we can then compute
\begin{align*}
&\left|\int Pf(x,y,P\gamma\asc{u_j^k})-\int Pf(x,y,P\gamma\asc{v_j})\right|\\
&\qquad \leq 2\bigl(\lL+|Dw_j|+|Dv_j|\bigr)\left(\left\{x\in\Omega\colon\dist(x,\pd\Omega)\leq\frac{2}{k}\right\}\right)
\end{align*}
for every $j,k\in\mbN$. Taking the limit as $j\to\infty$, we obtain
\begin{align*}
&\lim_{j\to\infty} \left|\int Pf(x,y,P\gamma\asc{u_j^k})-\int Pf(x,y,P\gamma\asc{v_j})\right|\\
&\qquad \leq 2\bigl(\lL+|D\llbracket\bnu\rrbracket|+\pi_\#\lambda_\nu\bigr)\left(\left\{x\in\overline{\Omega}\colon\dist(x,\pd\Omega)\leq\frac{2}{k}\right\}\right).
\end{align*}
Since $\pi_\#\lambda_\nu(\pd\Omega)=0$, the right hand side of this expression must converge to $0$ as $k\to\infty$ and we can therefore use a diagonal argument to find a sequence $(j_k)_k$ such that $(\gamma\asc{u_{j_k}^k})_k$ generates $\bnu$ and $c_{j_k}^{k}\to 0$. Since ${u_{j_k}^{k}}|_{\pd\Omega}=\llbracket\bnu\rrbracket|_{\pd\Omega}+c_{j_k}^{k}$, this suffices to prove the lemma.
\end{proof}

\subsection{Extended representation}\label{secymsrepresentation}
All of the results obtained so far in this section have helped us to understand $\lim_{j\to\infty}\int Pf(x,y,P\gamma_j)$ when $f\in\mbfE(\Omega\times\R^m)$. However, the integrands $f$ which are of interest for the applications that we have in mind are not members of $\mbfE(\Omega\times\R^m)$, only of $\mbfR(\Omega\times\R^m)$. In particular, the requirement that both $f$ and $f^\infty$ extend continuously to $\overline{\Omega}\times\sigma\R^m\times\R^{m\times d}$ is too strong to be satisfied by any integrand which is unbounded in the middle variable.

If $\bnu\in\YLift(\Omega\times\R^m)$ satisfies $\lambda_\nu(\overline{\Omega}\times\infty\pd\bB^m)=0$ and we assume just that $f\in\RBVw(\Omega\times\R^m)$, then the duality product
\[
\ddprb{f,\bnu}=\int_{\Omega\times\R^m}\ip{f(x,y,\frarg)}{\nu_{x,y}}\;\dd\iota_\nu(x,y)+\int_{\overline{\Omega}\times\R^m}\ip{f^\infty(x,y,\frarg)}{\nu_{x,y}^\infty}\;\dd\lambda_\nu(x,y)
\]
is still well-defined in $\R$. For the singular part this follows from the fact that $\lambda_\nu$ is a positive measure and that $f^\infty$ is positive and continuous. For the regular part this follows from $\iota_\nu=\gr_\#^{\llbracket\bnu\rrbracket}(\lL\restrict\Omega)$ and $\llbracket\bnu\rrbracket\in\BV(\Omega;\R^m)$. Indeed, the fact that $f$ is Carath\'eodory and satisfies $|f(x,y,A)|\leq C(1+|y|^{d/(d-1)}+|A|)$, the $\iota_\nu$-weak* measurability of the parametrised measure $\nu$, and the Poincar\'{e} Condition~\eqref{eqpoincare} together imply that the map $(x,y)\mapsto\ip{f(x,y,\frarg)}{\nu_{x,y}}$ is $\iota_\nu$-measurable and $\iota_\nu$-integrable.

Similarly, if $f\in\RBVw(\Omega\times\R^m)$ is such that $\sigma f^\infty$ exists, then
\[
\ddprb{f,\bnu}=\int_{\Omega\times\R^m}\ip{f(x,y,\frarg)}{\nu_{x,y}}\;\dd\iota_\nu(x,y)+\int_{\overline{\Omega}\times\sigma\R^m}\ip{\sigma f^\infty(x,y,\frarg)}{\nu_{x,y}^\infty}\;\dd\lambda_\nu(x,y)
\]
is also well-defined even if $\lambda_\nu(\overline{\Omega}\times\infty\pd\bB^m)>0$.

Proposition~\ref{lemextendedrepresentation} and Corollary~\ref{corymlsc} below show that knowledge of $\bnu$ allows us to say something meaningful about $\lim_{j}\int Pf(x,y,P\gamma_j)$ for a very large class of integrands $f$ and, at the same time, that this limiting behaviour does not change if $f$ is replaced by a perturbation $f(\frarg,\frarg+c_j,\frarg)$ where $(c_j)_j\subset\R^m$ is a convergent sequence. As a consequence, the limiting behaviour of $(\Fcal[u_j])_j$ for any $f$ and any weakly* convergent sequence in $\BV(\Omega;\R^m)$ can be understood in terms of the Young measures generated by sequences $(\gamma\asc{u_j-(u_j)})_j\subset\ALift(\Omega\times\R^m)$.

\begin{proposition}[Extended Representation]\label{lemextendedrepresentation}
Let $\bnu\in\YLift(\Omega\times\R^m)$, $(\gamma_j)_j\subset\Lift(\Omega\times\R^m)$ be such that $\gamma_j\toY\bnu$, and let $(c_j)_j\subset\R^m$ be such that $c_j\to 0\in\R^m$. Let $f\in\RL(\Omega\times\R^m)$.
\begin{enumerate}[(i)]
\item If $\lambda_\nu(\overline{\Omega}\times\infty\pd\bB^m)=0$ so that $\bnu\equiv\bnu\restrict(\overline{\Omega}\times\R^m)$ (where $\bnu\restrict(\overline{\Omega}\times\R^m)$ is as described in Theorem~\ref{thm:restrictionsofyms}), we have that
\begin{align}
\begin{split}\label{eqymeq}
\lim_{j\to\infty}\int Pf(x,y+c_j,P\gamma_j)&=\ddprb{f,\bnu}\\
&=\int_{\Omega\times\R^m}\ip{f}{\nu}\;\dd\iota_\nu(x,y)+\int_{\overline{\Omega}\times\R^m}\ip{f^\infty}{\nu^\infty}\;\dd\lambda_\nu(x,y).
\end{split}
\end{align}

\item If instead $\lambda_\nu(\overline{\Omega}\times\infty\pd\bB^m)\geq 0$ and we assume in addition that $\sigma f^\infty$ exists according to Definition~\ref{defextendedrecessionfn}, we also have that
\begin{align}
\begin{split}\label{eqymeqextended}
\lim_{j\to\infty}\int Pf(x,y+c_j,P\gamma_j)&=\ddprb{f,\bnu}\\
&=\int_{\Omega\times\R^m}\ip{f}{\nu}\;\dd\iota_\nu(x,y)+\int_{\overline{\Omega}\times\sigma\R^m}\ip{\sigma f^\infty}{\nu^\infty}\;\dd\lambda_\nu(x,y).
\end{split}
\end{align}
\end{enumerate}
\end{proposition}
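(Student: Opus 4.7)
The plan is to approximate an integrand $f \in \RL(\Omega\times\R^m)$ by integrands in $\mbfE(\Omega\times\R^m)$ and reduce to the defining convergence $\gamma_j \toY \bnu$, which gives the conclusion only on $\mbfE$; the perturbation $c_j \to 0$ will be absorbed at each approximation step via Lemma~\ref{lemliftingsperturb}. The single a priori bound required is the uniform moment estimate
\[
\sup_j \int_\Omega |\asc{\gamma_j}(x)|^{d/(d-1)}\,\dd x < \infty,
\]
obtained by combining $|Du| \leq \pi_\#|\gamma_j|$ from Lemma~\ref{lemliftingspushforward} with the strict continuity of the embedding in Proposition~\ref{ctsembedding}. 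By Corollary~\ref{lemstructureliftingym} this rewrites as a uniform bound on the $|y|^{d/(d-1)}$-moment of $\iota_{\delta\asc{\gamma_j}}$, and an analogous bound for $\iota_\nu$ is supplied by the Poincar\'e condition~\eqref{eqpoincare} built into the Young measure definition.

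For part (i), fix $\varepsilon > 0$. First I would apply Scorza--Dragoni to the Carath\'eodory function $f$ to obtain a compact $K_\varepsilon \subset \overline{\Omega}$ with $\lL(\Omega\setminus K_\varepsilon) < \varepsilon$ on which $f$ is jointly continuous, extend continuously to $\overline{\Omega}\times\R^m\times\R^{m\times d}$ via Tietze while preserving the growth bound, and multiply by a $y$-cutoff $\chi_R \in \C_c(\R^m;[0,1])$ equal to $1$ on $B(0,R)$. The resulting $f_{R,\varepsilon}$ lies in $\mbfE(\Omega\times\R^m)$, since its compact support in $y$ forces both it and its recession $f_{R,\varepsilon}^\infty$ to vanish continuously at $\overline{\Omega}\times\infty\pd\bB^m\times\R^{m\times d}$, and so Lemma~\ref{lemliftingsperturb} together with $\gamma_j\toY\bnu$ yields
\[
\lim_{j\to\infty}\int Pf_{R,\varepsilon}(x,y+c_j,P\gamma_j) = \ddprb{f_{R,\varepsilon},\bnu}.
\]
It then remains to control three errors: the Scorza--Dragoni defect on $(\Omega\setminus K_\varepsilon)\times\R^m\times\R^{m\times d}$ (vanishing as $\varepsilon \to 0$ via the linear-in-$A$ growth and the total variation bound $\sup_j|\gamma_j|(\Omega\times\R^m)<\infty$); the $y$-tail error on $\{|y|\geq R\}$ (vanishing as $R \to \infty$ by the uniform $|y|^{d/(d-1)}$-moment bound, H\"older's inequality, and $p < d/(d-1)$); and the recession mismatch at infinity, which is \emph{absent} in part (i) because the hypothesis $\lambda_\nu(\overline{\Omega}\times\infty\pd\bB^m) = 0$ means only the piece of $f^\infty$ over $\overline{\Omega}\times\R^m\times\pd\bB^{m\times d}$ contributes to $\ddprb{f,\bnu}$, and there $f^\infty$ is continuous and captured by $f_{R,\varepsilon}^\infty$ on $K_\varepsilon \cap B(0,R)$.

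For part (ii), the main obstacle is that a sharp $y$-cutoff necessarily erases the mass of $\lambda_\nu$ on $\overline{\Omega}\times\infty\pd\bB^m$, so the approximants $f_{R,\varepsilon}$ above no longer capture the full duality product. The extended recession $\sigma f^\infty$ of Definition~\ref{defextendedrecessionfn} is designed precisely to recover this loss, providing the continuous extension of $f^\infty$ to $\overline{\Omega}\times\sigma\R^m\times\R^{m\times d}$ along the Poincar\'e-admissible regime $|y|^{d/(d-1)} = \BigO(t)$. I would therefore replace the hard cutoff by a joint $(y,A)$-cutoff of the form $\chi_R(|y|/(1+|A|)^{(d-1)/d})$, which preserves exactly the regime accessible to the generating sequences and produces a modified integrand whose continuous extension to $\overline{\Omega}\times\infty\pd\bB^m\times\R^{m\times d}$ agrees with $\sigma f^\infty$ rather than $0$; it again lies in $\mbfE(\Omega\times\R^m)$ by construction. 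Lemma~\ref{prelimlemrecessioncontrol} applied to $f - \sigma f^\infty$, combined with the uniform $|y|^{d/(d-1)}$-moment bound, shows that the intermediate regime contributes negligibly as $R \to \infty$, while the boundary mass on $\overline{\Omega}\times\infty\pd\bB^m$ is recovered by dominated convergence in the finite Radon measure $\lambda_\nu$ on $\overline{\Omega}\times\sigma\R^m$, completing the argument.
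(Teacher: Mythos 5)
Your overall strategy (approximate $f$ by $\mbfE$-integrands, absorb $c_j$ via Lemma~\ref{lemliftingsperturb}, pass to the limit) is the right one, but the order in which you perform the approximations creates error terms that your a priori bound cannot close. The moment estimate $\sup_j\int_\Omega|\asc{\gamma_j}|^{d/(d-1)}\,\dd x<\infty$ controls only the \emph{regular} part $\iota_{\delta_j}=\gr_\#^{\asc{\gamma_j}}(\lL\restrict\Omega)$ of the elementary Young measures $\bdelta\asc{\gamma_j}$; it says nothing about the singular part $\lambda_{\delta_j}=|\gamma_j^s|$, which is where the $|A|$-contributions live. Applying Scorza--Dragoni, Tietze, and the $y$-cutoff directly to $f$ (which still has linear $|A|$-growth) therefore leaves two uncontrolled errors: the Scorza--Dragoni defect contributes a term $\sim|\gamma_j|((\Omega\setminus K_\varepsilon)\times\R^m)$, which is bounded but does not tend to $0$ as $\varepsilon\downarrow 0$ uniformly in $j$, since $\pi_\#|\gamma_j|$ can concentrate; and the $y$-tail error contributes $\sim|\gamma_j|(\Omega\times\{|y|>R\})$, whose $|A|$-part is not controlled by the moment bound or H\"older --- a general lifting $\gamma_j$ can place an $\Omega(1)$ amount of graph-singular mass in $\Omega\times\{|y|>R\}$ for every $R$ (cf.\ Example~\ref{exstrictliftinginclusion}). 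There is also a technical issue in that Tietze does not apply directly to unbounded functions, and even if one extends $Sf$ the resulting integrand is not automatically in $\mbfE(\Omega\times\R^m)$ without further care about the recession.

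The paper avoids all of this by first decomposing $f=(f-f^\infty)+f^\infty$, so that Scorza--Dragoni is never applied to an integrand that grows in $A$: for the residual $f-f^\infty$ the $A$-cutoff comes \emph{first}, and only then does one perform the $y$-cutoff and Scorza--Dragoni on a \emph{bounded} integrand, while the $A$-cutoff error is controlled by Lemma~\ref{prelimlemrecessioncontrol} together with tightness of $(S^{-*}\bdelta\asc{\gamma_j})_j$ (case (i), via $S^{-*}\bnu(\overline\Omega\times\infty\pd\bB^m\times\overline{\bB^{m\times d}})=0$ plus outer regularity) or the Poincar\'e bound~\eqref{eqpoincare} on the region $\{|y|^{d/(d-1)}(1-|B|)>K\}$ (case (ii)); the one-homogeneous part $f^\infty$ is jointly continuous by Definition~\ref{defrecessionfunction}, so needs no Scorza--Dragoni at all. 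I would also flag that your proposed joint cutoff $\chi_R\bigl(|y|/(1+|A|)^{(d-1)/d}\bigr)$ in part (ii) does not extend continuously to $\overline{\Omega}\times\sigma\R^m\times\overline{\bB^{m\times d}}$: written as $\chi_R\bigl(|y|(1-|B|)^{(d-1)/d}\bigr)$, its limit as $(y,B)\to(\infty e,B_0)$ with $|B_0|=1$ depends on the approach, so the modified integrand would not lie in $\mbfE(\Omega\times\R^m)$ as claimed.
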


\begin{proof}
We shall prove~\eqref{eqymeq} and~\eqref{eqymeqextended} simultaneously:

\emph{Step 1:} Assume first that $f\colon\overline{\Omega}\times\sigma\R^m\times\R^{m\times d}\to\R$ is Carath\'eodory and satisfies $|f(x,y,A)|\leq C$ for all $(x,y,A)\in\Omega\times\R^m\times\R^{m\times d}$ and some fixed $C>0$. By the Scorza--Dragoni Theorem, there exists a compact set $K_\varepsilon\Subset\Omega$ such that $\mathcal{L}^d(\Omega\setminus K_\varepsilon)\leq \varepsilon$ and $f\restrict(K_\varepsilon\times\sigma\R^m\times\R^{m\times d}$) is continuous. By the Tietze Extension Theorem, we can find a continuous function $g\in \C(\overline{\Omega}\times\sigma\R^m\times\R^{m\times d})$ which restricts to $f$ on $K_\varepsilon\times\sigma\R^m\times\R^{m\times d}$ and such that $g$ is bounded and that $\norm{g}_\infty=\norm{f}_\infty$. Since $g\in\mbfE(\Omega\times\R^m)$ with $\sigma g^\infty=\sigma f^\infty=0$, Lemma~\ref{lemliftingsperturb} together with Corollary~\ref{lemstructureliftingym} lets us deduce
\[
\lim_{j\to\infty}\int_{\Omega\times\R^m}Pg(x,y+c_j,P\gamma_j)=\ddprb{g,\bnu}=\int_\Omega \ip{g(x,\llbracket\bnu\rrbracket(x),\frarg)}{\nu_{x,\llbracket\bnu\rrbracket(x)}}\;\dd x.
\]
By the construction of $g$, however, we see that
\[
\left|\int_{\Omega\times\R^m}P(g-f)(x,y+c_j,P\gamma_j)\right|\leq 2\int_{\Omega\setminus K_\varepsilon}\norm{f}_\infty\;\dd x\quad\text{ for all }j\in\mbN,
\]
and that the same estimate holds for $\big|\int_{\Omega\times\R^m}P(g-f)(x,y,P\gamma)\big|$. Letting $\varepsilon\to 0$, we obtain
\[
\lim_{j\to\infty}\int_{\Omega\times\R^m}Pf(x,y+c_j,P\gamma_j)=\ddprb{f,\bnu}
\]
and so both~\eqref{eqymeq} and~\eqref{eqymeqextended} hold.

\emph{Step 2:} Now assume that $f\colon\overline{\Omega}\times\R^m\times\R^{m\times d}\to\R$ is Carath\'eodory and satisfies $|f(x,y,A)|\leq C$ for all $(x,y,A)\in\Omega\times\R^m\times\R^{m\times d}$ and some fixed $C>0$. For $K>0$, let $\varphi_K\in\C_c(\R^m;[0,1])$ be such that $\varphi_K(y)=1$ for $|y|\leq K$. Define $f_K\colon\overline{\Omega}\times\sigma\R^m\times\R^{m\times d}\to\R$ by
\[
f_K(x,y,A):=\begin{cases}
\varphi_K(y)f(x,y,A) \quad & \text{ if }(x,y,A)\in\overline{\Omega}\times\R^m\times\R^{m\times d},\\
0 \quad &\text{ if }(x,y,A)\in\overline{\Omega}\times\infty\pd\bB^m\times\R^{m\times d},
\end{cases}
\]
and note that each $f_K$ satisfies the hypotheses of Step~1. Since $f_K\to f$ pointwise in $\overline{\Omega}\times\R^m\times\R^{m\times d}$ as $K\to\infty$ and $\sigma f^\infty\equiv 0$, the Dominated Convergence Theorem lets us deduce
\begin{align*}
\lim_{K\to\infty}\ddprb{f_K,\bnu}&=\lim_{K\to\infty}\int_\Omega \ip{f_K(x,\llbracket\bnu\rrbracket(x),\frarg)}{\nu_{x,\llbracket\bnu\rrbracket(x)}}\;\dd x\\
&=\int_\Omega \ip{f(x,\llbracket\bnu\rrbracket(x),\frarg)}{\nu_{x,\llbracket\bnu\rrbracket(x)}}\;\dd x\\
&= \ddprb{f,\bnu},
\end{align*}
and, since Step~1 implies
 \[
\lim_{j\to\infty}\int Pf_K(x,y+c_j,P\gamma_j)=\ddprb{f_K,\bnu},
\]
we therefore deduce
\begin{align*}
\left|\lim_{j\to\infty}\int Pf(x,y+c_j,P\gamma_j)-\ddprb{f,\bnu}\right|&=\lim_{K\to\infty}\left|\lim_{j\to\infty}\int Pf(x,y+c_j,P\gamma_j)-\ddprb{f_K,\bnu}\right|\\
&\leq\lim_{K\to\infty}\left|\lim_{j\to\infty}\int P(f-f_K)(x,y+c_j,P\gamma_j)\right|.
\end{align*}
Now fix $\varepsilon>0$ and let $R>0$ be large enough that $C/(1+R)<\varepsilon$.  Recalling the map 
\[
S\colon\mbfE(\Omega\times\R^m)\to\C(\overline{\Omega}\times\sigma\R^m\times\overline{\bB^{m\times d}})
\]
introduced in Section~\ref{secymscompact} together with $S^{-*}:=(S^*)^{-1}$, the inverse of its adjoint, we see that Proposition~\ref{thmymsasmeasures} implies $S^{-*}\bnu(\overline{\Omega}\times\infty\pd\bB^m\times\bB^{m\times d})=0$ and so, by the outer regularity of Radon measures, there exists $K>0$ such that
\[
(S^{-*}\bnu)\left(\overline{\Omega}\times[\R^m\setminus B(0,K-1)]\times \overline{B(0,{R}/{(1+R)})}\right)<\frac{\varepsilon}{2}.
\]
Since $S^{-*}\bdelta\asc{\gamma_j}\wsc S^{-*}\bnu$ in $\mbfM(\overline{\Omega}\times\sigma\R^m\times\overline{\bB^{m\times d}})$, we must therefore have that
\[
(S^{-*}\bdelta\asc{\gamma_j})\left(\overline{\Omega}\times A_{K,R}\right)<\varepsilon,\quad A_{K,R}:=[\R^m\setminus B(0,K-1)]\times \overline{B(0,R/(1+R))}
\]
for all $j\in\mbN$ sufficiently large. Since our choice of $R$ implies $C/(1+R)<\varepsilon$, we have that, whenever $(x,y,B)\in\overline{\Omega}\times\R^m\times\overline{\bB^{m\times d}}$ and $|B|\geq R/(1+R)$ so that $1-|B|\leq 1/(1+R)$,
\[
|(Sf)(x,y,B)|=\left|(1-|B|)f\left(x,y,\frac{B}{1-|B|}\right)\right|\leq C(1-|B|)<\varepsilon.
\] 
Abbreviating $\bdelta_j:=\bdelta\asc{\gamma_j}$, we can now use the fact that $(1-\varphi_K(y+c_j))=0$ when $|y|\leq K-1$ and $|c_j|<1$, to estimate
\begin{align*}
\left|\lim_{j\to\infty}\int P(f-f_K)(x,y+c_j,P\gamma_j)\right|&=\left|\lim_{j\to\infty}\int S(f-f_K)(x,y+c_j,B)\;\dd S^{-*}\bdelta_j(x,y,B)\right|\\
&\leq \lim_{j\to\infty}\int_{\overline{\Omega}\times A_{K,R}}\norm{Sf}_\infty\;\dd S^{-*}\bdelta_j(x,y,B)\\
&\qquad+\lim_{j\to\infty}\int_{\overline{\Omega}\times\{|y|>K-1\}\times \{|B|>\frac{R}{1+R}\}}\varepsilon\;\dd S^{-*}\bdelta_j(x,y,B)\\
&\leq\norm{Sf}_\infty\limsup_{j\to\infty}(S^{-*}\bnu)\left(\overline{\Omega}\times A_{K,R}\right)\\
&\qquad +\varepsilon\sup_j\ddprb{\mathbbm{1}\otimes|\frarg|,\bdelta_j}\\
&\leq \varepsilon\left(\norm{Sf}_\infty+\sup_j\ddprb{\mathbbm{1}\otimes|\frarg|,\bdelta_j}\right).
\end{align*}
Since $\sup_j\ddprb{\mathbbm{1}\otimes|\frarg|,\bdelta_j}<\infty$ and $\varepsilon>0$ was arbitrary, we therefore deduce
\[
\lim_{K\to\infty}\left|\lim_{j\to\infty}\int P(f-f_K)(x,y+c_j,P\gamma_j)\right|=0,
\]
and hence that
\[
\lim_{j\to\infty}\int Pf(x,y+c_j,P\gamma_j)=\ddprb{f,\bnu},
\]
as required.

\emph{Step 3:} Now assume that $f\colon\overline{\Omega}\times\R^m\times\R^{m\times d}\to\R$ is Carath\'eodory and satisfies $|f(x,y,A)|\leq C(1+|A|)$ for some $C>0$ and is such that $f^\infty= 0$ if $\lambda_\nu(\overline{\Omega}\times\infty\pd\bB^m)=0$ and $\sigma f^\infty$ exists with $\sigma f^\infty=0$ if $\lambda_\nu(\overline{\Omega}\times\infty\pd\bB^m)>0$. Let $\varphi_R\in\C_c(\R^{m\times d};[0,1])$ be a cut-off function such that $\varphi_R(A)=1$ for $|A|\leq R$. Define $f_R\colon\overline{\Omega}\times\R^m\times\R^{m\times d}\to\R$ by $f_R(x,y,A):=\varphi_R(A)f(x,y,A)$ and note that $f_R$ satisfies the hypotheses of Step~2. Moreover, since $f_R\to f$ pointwise in $\overline{\Omega}\times\R^m\times\R^{m\times d}$ as $R\to \infty$, the Dominated Convergence Theorem implies that
\[
\lim_{R\to\infty}\ddprb{f_R,\bnu}=\int_\Omega \ip{f(x,\llbracket\bnu\rrbracket(x),\frarg)}{\nu_{x,\llbracket\bnu\rrbracket(x)}}\;\dd x= \ddprb{f,\bnu}.
\]
Splitting
\[
f=f_R+(f-f_R)
\]
and using the fact that Step~2 implies
\[
\lim_{j\to\infty}\int Pf_R(x,y+c_j,P\gamma_j)=\ddprb{f_R,\bnu},
\]
we deduce
\begin{align*}
\left|\lim_{j\to\infty}\int Pf(x,y+c_j,P\gamma_j)-\ddprb{f,\bnu}\right|&=\lim_{R\to\infty}\left|\lim_{j\to\infty}\int Pf(x,y+c_j,P\gamma_j)-\ddprb{f_R,\bnu}\right|\\
&\leq\lim_{R\to\infty}\left|\lim_{j\to\infty}\int P(f-f_R)(x,y+c_j,P\gamma_j)\right|.
\end{align*}

First we assume that $\lambda_\nu(\overline{\Omega}\times\infty\pd\bB^m)=0$ and prove~\eqref{eqymeq}. Since the condition $\lambda_\nu(\overline{\Omega}\times\infty\pd\bB^m)=0$ is equivalent to $S^{-*}\bnu(\overline{\Omega}\times\infty\pd\bB^m\times\pd\bB^{m\times d})=0$ and Proposition~\ref{thmymsasmeasures} also forces $S^{-*}\bnu(\overline{\Omega}\times\infty\pd\bB^m\times\bB^{m\times d})=0$ we have that 
\[
S^{-*}\bnu(\overline{\Omega}\times\infty\pd\bB^m\times\overline{\bB^{m\times d}})=0,
\]
which implies that the sequence of measures $(S^{-*}\bdelta\asc{\gamma_j})_j\subset\mbfM^+(\overline{\Omega}\times\sigma\R^m\times\overline{\bB^{m\times d}})$ is tight in $\overline{\Omega}\times\R^m\times\overline{\bB^{m\times d}}$. That is, for every $\varepsilon>0$ there exists $K>0$ such that
\begin{equation}\label{eqtightnessestimate}
\sup_{j\in\mbN}S^{-*}\bdelta\asc{\gamma_j}\left(\overline{\Omega}\times\{y\in\R^m\colon |y|>K\}\times\overline{\bB^{m\times d}}\right)<\varepsilon.
\end{equation}
For $\varepsilon>0$ fixed, let $K$ verify~\eqref{eqtightnessestimate}. By Lemma~\ref{prelimlemrecessioncontrol}, we have that, for all $R>0$ sufficiently large, $|f(x,y,A)|\leq\varepsilon(1+|A|)$ whenever $(x,y)\in\overline{\Omega\times B(0,K+1)}$ and $|A|\geq R$. Since $(f-f_R)(x,y,A)=0$ whenever $|A|<R$, and $|y|\leq K$ implies $|y+c_j|\leq K+1$ once $|c_j|\leq 1$, we therefore have that, once $R>0$ is sufficiently large,
\begin{align*}
\left|\lim_{j\to\infty}\int P(f-f_R)(x,y+c_j,P\gamma_j)\right|&\leq\left|\lim_{j\to\infty}\int_{\Omega\times  \overline{B(0,K)}} P(f-f_R)(x,y+c_j,P\gamma_j)\right|\\
&\qquad+\left|\lim_{j\to\infty}\int_{\Omega\times\R^m\setminus \overline{ B(0,K)}} P(f-f_R)(x,y+c_j,P\gamma_j)\right|\\
&\leq \varepsilon\lL(\Omega)+\varepsilon\limsup_{j\to\infty}|\gamma_j|(\Omega\times\R^m)\\
&\qquad +CS^{-*}\bdelta\asc{\gamma_j}\left(\overline{\Omega}\times\{y\in\R^m\colon |y|>K\}\times\overline{\bB^{m\times d}}\right)\\\
&\leq\varepsilon(\lL(\Omega)+\limsup_{j\to\infty}|\gamma_j|(\Omega\times\R^m)+1).
\end{align*}
Since $(\gamma_j)_j$ is a norm-bounded sequence in $\mbfM(\Omega\times\R^m;\R^{m\times d})$ and $\varepsilon>0$ was arbitrary, we therefore deduce that
\[
\lim_{R\to\infty}\left|\lim_{j\to\infty}\int P(f-f_R)(x,y+c_j,P\gamma_j)\right|=0
\]
and hence that~\eqref{eqymeq} holds.

Next, we prove that~\eqref{eqymeqextended} holds under the assumption that $\sigma f^\infty\equiv 0$. By Proposition~\ref{thmymsasmeasures}, we have that
\[
\sup_{j}\int_{\Omega\times\R^m}|y|^{d/(d-1)}\;\dd\iota_{\delta_j}(x,y)<\infty,\quad\bdelta_j:=\bdelta\asc{\gamma_j}.
\]
This is equivalent to the statement
\begin{equation}\label{eqdtightestimate}
\sup_{j}\int_{\overline{\Omega}\times\R^m\times\overline{\bB^{m\times d}}}|y|^{d/(d-1)}(1-|B|)\;\dd S^{-*}\bdelta\asc{\gamma_j}(x,y,B)<\infty.
\end{equation}
Now, if there were $\varepsilon>0$ such that, for all $K\in\mbN$,
\begin{equation}\label{eqwrongtightnessestimate}
\varepsilon<\limsup_{j\to\infty}S^{-*}\bdelta\asc{\gamma_j}\left(\overline{\Omega}\times\{(y,B)\in\R^m\times\overline{\bB^{m\times d}}\colon |y|^{d/(d-1)}(1-|B|)>K\}\right),
\end{equation}
we would have that
\begin{align*}
K\varepsilon &\leq K \limsup_{j\to\infty}S^{-*}\bdelta\asc{\gamma_j}\left(\overline{\Omega}\times\{(y,B)\in\R^m\times\overline{\bB^{m\times d}}\colon |y|^{d/(d-1)}(1-|B|)>K\}\right)\\
&\leq \sup_{j}\int_{\overline{\Omega}\times\R^m\times\overline{\bB^{m\times d}}}|y|^{d/(d-1)}(1-|B|)\;\dd S^{-*}\bdelta\asc{\gamma_j}(x,y,B).
\end{align*}
Letting $K\to\infty$ we would then obtain
\[
\limsup_{j\to\infty}\int_{\overline{\Omega}\times\R^m\times\overline{\bB^{m\times d}}}|y|^{d/(d-1)}(1-|B|)\;\dd S^{-*}\bdelta\asc{\gamma_j}(x,y,B)=\infty,
\]
contradicting~\eqref{eqdtightestimate}. Taking the converse of~\eqref{eqwrongtightnessestimate}, we therefore see that for any $\varepsilon>0$ it holds that, for all $K\in\mbN$ sufficiently large,
\begin{equation}\label{eqrighttightestimate}
\limsup_{j\to\infty}S^{-*}\bdelta\asc{\gamma_j}\left(\overline{\Omega}\times\{(y,B)\in\R^m\times\overline{\bB^{m\times d}}\colon |y|^{d/(d-1)}(1-|B|)>K\}\right)<\varepsilon.
\end{equation}
Fix $\varepsilon>0$ and let $K\in\mbN$ verify~\eqref{eqrighttightestimate}. By Lemma~\ref{prelimlemrecessioncontrol} we have that, once $R>0$ is sufficiently large, $|A|\geq R$ implies that
\[
|f(x,y,A)|\leq\varepsilon(1+|A|)\quad\text{ for all }x\in\overline{\Omega}\text{ and }y\in\R^m\text{ such that }|y|^{d/(d-1)}\leq (K+1)(1+|A|).
\]
Defining $A_K\subset\R^m\times\overline{\bB^{m\times d}}$ by
\[
A_K:=\left\{(y,B)\in\R^m\times\overline{\bB^{m\times d}}\colon|y|^{d/(d-1)}(1-|B|)\leq K+1\right\},
\]
we therefore have
\[
\limsup_{j\to\infty}S^{-*}\bdelta\asc{\gamma_j}\left(\overline{\Omega}\times[(\R^m\times\overline{\bB^{m\times d}})\setminus A_K]\right)<\varepsilon
\]
and that
\[
|Sf(x,y,B)|\leq\varepsilon\quad\text{ for }(x,y,B)\in\overline{\Omega}\times A_K\text{ whenever }|B|\geq\frac{R}{1+R}.
\]

Since $S(f-f_R)(x,y,B)=0$ whenever $|B|\leq R/(1+R)$, we can now estimate, for $R>0$ sufficiently large,
\begin{align*}
&\left|\lim_{j\to\infty}\int P(f-f_R)(x,y+c_j,P\gamma_j)\right|\\
&\qquad= \left|\lim_{j\to\infty}\int_{\overline{\Omega}\times A_K} S(f-f_R)(x,y+c_j,B)\;\dd S^{-*}\bdelta\asc{\gamma_j}(x,y,B)\right|\\
&\qquad\qquad +\left|\lim_{j\to\infty}\int_{\overline{\Omega}\times[(\R^m\times\bB^{m\times d})\setminus A_K]} S(f-f_R)(x,y+c_j,B)\;\dd S^{-*}\bdelta\asc{\gamma_j}(x,y,B)\right|\\
&\qquad\leq \lim_{j\to\infty}\int_{\overline{\Omega}\times A_K} \varepsilon\;\dd S^{-*}\bdelta\asc{\gamma_j}(x,y,B)+\norm{Sf}_\infty \sup_jS^{-*}\bdelta\asc{\gamma_j}(\overline{\Omega}\times[(\R^m\times\bB^{m\times d})\setminus A_K])\\
&\qquad\leq \varepsilon\left(\sup_j S^{-*}\bdelta\asc{\gamma_j}(\overline{\Omega}\times\R^m\times\overline{\bB^{m\times d}})+\norm{Sf}_\infty\right).
\end{align*}
Since $\varepsilon>0$ was arbitrary, we therefore deduce that
\[
\lim_{R\to 0}\left|\lim_{j\to\infty}\int P(f-f_R)(x,y+c_j,P\gamma_j)\right|=0
\]
and hence that
\[
\lim_{j\to\infty}\int Pf(x,y+c_j,P\gamma_j)=\ddprb{f,\bnu}
\]
as required.

\emph{Step 4:} Assume now that $f\colon\overline{\Omega}\times\R^m\times\R^{m\times d}\to\R$ is Carath\'eodory with $f^\infty\equiv 0$ as in the previous step, but that $f$ now satisfies the bound $|f(x,y,A)|\leq C(1+|y|^p+|A|)$ for some $p\in[1,d/(d-1))$. For $k\in\mbN$, define $f_k\colon\Omega\times\R^m\times\R^{m\times d}\to\R$ by
\[
f_k(x,y,A):=\left\{\mathbbm{1}_{\{|y|\leq k\}}(y)+\mathbbm{1}_{\{|y|>k\}}(y)\frac{1+k^p+|A|}{1+|y|^p+|A|}\right\}f(x,y,A).
\]
Each $f_k$ satisfies a bound of the form $|f_k(x,y,A)|\leq C_k(1+|A|)$ with $f_k^\infty= f^\infty\equiv 0$ and also satisfies $\sigma f_k^\infty=\sigma f^\infty=0$ if $\sigma f^\infty$ exists with $\sigma f^\infty=0$. It follows that $f_k$ falls under the hypotheses of Step~2. From the definition of $f_k$, we can estimate
\[
|f_k(x,y,A)-f(x,y,A)|\leq\mathbbm{1}_{\{|y|>k\}}(y)\frac{|y|^p-k^p}{1+|y|^p+|A|}|f(x,y,A)|\leq C\mathbbm{1}_{\{|y|>k\}}(y)(|y|^p-k^p),
\]
from which it follows that
\[
\left|\int_{\Omega\times\R^m}P(f-f_k)(x,y+c_j,P\gamma_j)\right|\leq C\int_{\{x\in\Omega:|\asc{\gamma_j}(x)+c_j|> k\}}|\asc{\gamma_j}(x)+c_j|^p-k^p\;\dd x
\]
and that an analogous estimate also holds for $\bnu$. Since $p<d/(d-1)$ and $c_j\to 0$, the sequence $(\asc{\gamma_j}+c_j)_j\subset\BV(\Omega;\R^m)$ is $p$-uniformly integrable from which it follows that this estimate is uniform in $j$. We can therefore let $k\to\infty$ to deduce
\begin{equation*}
\lim_{j\to\infty}\int_{\Omega\times\R^m}Pf(x,y+c_j,P\gamma_j)=\ddprb{f,\bnu}.
\end{equation*}

\emph{Step 5:} Now assume that $f=f^\infty$. We first prove~\eqref{eqymeq}. Since the sequence $(S^{-*}\bdelta\asc{\gamma_j})_j$ is tight in $\overline{\Omega}\times\R^m\times\overline{\bB^{m\times d}}$ it follows (see for instance Proposition~1.62 in~\cite{AmFuPa00FBVF}) that
\[
\lim_{j\to\infty}\int g(x,y,B)\;\dd S^{-*}\bdelta\asc{\gamma_j}(x,y,B)=\int g(x,y,B)\;\dd S^{-*}\bnu(x,y,B)=\ddprb{S^{-1}g,\bnu}
\]
for every $g\in\C_b(\overline{\Omega}\times\R^m\times\overline{\bB^{m\times d}})$. Setting $g=Sf^\infty$, we obtain
\[
\lim_{j\to\infty}\int_{\Omega\times\R^m}Pf^\infty(x,y,P\gamma_j)=\lim_{j\to\infty}\ip{Sf^\infty}{S^{-*}\bdelta\asc{\gamma_j}}=\ddprb{f^\infty,\bnu}.
\]
Since, for compact $K\subset\Omega\times\R^m$,
\[
\int_{K\times\overline{\bB^{m\times d}}}|B|\;\dd S^{-*}\bdelta\asc{\gamma_j}(x,y,B)=|\gamma_j|(K),
\]
we see that the tightness of the sequence $(S^{-*}\bdelta\asc{\gamma_j})_j$ implies that $(|\gamma_j|)_j$ is also tight in $\mbfM^+(\overline{\Omega}\times\R^m)$. For $\varepsilon>0$, let $R>0$ be so large that $\limsup_j|\gamma_j|(\Omega\times\{|y|>R\})\leq\varepsilon$ and let $\varphi_\varepsilon\in\C_0(\R^m;[0,1])$ be such that $\varphi_\varepsilon(y)=1$ for $|y|\leq R$. Since $\varphi_\varepsilon f\in\mbfE(\Omega\times\R^m)$ and $(1-\varphi_\varepsilon)$ is supported in $\{|y|>R\}$, Lemma~\ref{lemliftingsperturb} then implies
\begin{align*}
&\lim_{j\to\infty} \left|\int_{\Omega\times\R^m} Pf^\infty(x,y+c_j,P\gamma_j)-\int_{\Omega\times\R^m} Pf^\infty(x,y,P\gamma_j)\right|\\
&\qquad\leq \lim_{j\to\infty}\left|\int_{\Omega\times\R^m} P(\varphi_\varepsilon f^\infty)(x,y+c_j,P\gamma_j)-\int_{\Omega\times\R^m} P(\varphi_\varepsilon f^\infty)(x,y,P\gamma_j)\right|\\
&\qquad\qquad+ \lim_{j\to\infty}\left|\int_{\Omega\times\R^m} P((1-\varphi_\varepsilon) f^\infty)(x,y+c_j,P\gamma_j)-\int_{\Omega\times\R^m} P((1-\varphi_\varepsilon) f^\infty)(x,y,P\gamma_j)\right|\\
&\qquad\leq 0 +2\limsup_{j\to\infty}|\gamma_j|(\Omega\times\{|y|>R\})\\
&\qquad\leq 2\varepsilon.
\end{align*}
Thus, since $S(\varphi_\varepsilon f)\to Sf$ pointwise in $\overline{\Omega}\times\R^m\times\overline{\bB^{m\times d}}$ as $\varepsilon\to 0$,
\[
\lim_{j\to\infty}\int_{\Omega\times\R^m}Pf^\infty(x,y+c_j,P\gamma_j)=\ddprb{f^\infty,\bnu}
\]
as required.

To obtain~\eqref{eqymeqextended}, note that since $\sigma f^\infty\in\C(\overline{\Omega}\times\sigma\R^m\times\R^{m\times d})$ and is positively one-homogeneous in the final variable, it follows that $ f^\infty\in\mbfE(\Omega\times\R^m)$. Lemma~\ref{lemliftingsperturb} then implies that
\[
\lim_{j\to\infty}\int_{\Omega\times\R^m}Pf^\infty(x,y+c_j,P\gamma_j)=\ddprb{f^\infty,\bnu}
\]
as required.

\emph{Step 6:}
By Step~4 applied to the integrand $f-f^\infty$, we have that
\begin{equation}\label{eqextendedrepnorecession}
\lim_{j\to\infty}\int_{\Omega\times\R^m}P(f-f^\infty)(x,y+c_j,P\gamma_j)=\ddprb{f-f^\infty,\bnu}.
\end{equation}
By Step~5 applied to $f^\infty$, we also have
\begin{equation}\label{eqextendedreprecession}
\lim_{j\to\infty}\int_{\Omega\times\R^m}Pf^\infty(x,y+c_j,P\gamma_j)=\ddprb{f^\infty,\bnu}
\end{equation}
and so the result follows from adding~\eqref{eqextendedrepnorecession} and~\eqref{eqextendedreprecession}.
\end{proof}

\begin{corollary}\label{corymlsc}
Let $\bnu\in\YLift(\Omega\times\R^m)$, $(\gamma_j)_j\subset\Lift(\Omega\times\R^m)$ be such that $\gamma_j\toY\bnu$, $(c_j)_j\subset\R^m$ be such that $c_j\to 0\in\R^m$ and let $f\in\RBVw(\Omega\times\R^m)$. Then it holds that
\begin{equation}\label{eqymineq}
\liminf_{j\to\infty}\int_{\Omega\times\R^m}Pf(x,y+c_j,P\gamma_j)\geq \ddprb{f,\bnu\restrict(\overline{\Omega}\times\R^m)}.
\end{equation}
\end{corollary}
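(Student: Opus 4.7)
The plan is to reduce to the case of a non-negative integrand by absorbing the lower bound, and then to bootstrap the equality from Proposition~\ref{lemextendedrepresentation}(ii) into the desired inequality using a monotone approximation from below.

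First, since $f\in\RBVw(\Omega\times\R^m)$, by Definition~\ref{defrepresentationf} there exists $h\in\RL(\Omega\times\R^m)$ with $h\geq 0$ and $\sigma h^\infty\equiv 0$ such that $f\geq -h$. Set $\tilde f:=f+h$, so that $\tilde f\geq 0$. Because $h\in\RL$ and $\sigma h^\infty$ exists, Proposition~\ref{lemextendedrepresentation}(ii) applied to $h$ (and the shifts $c_j$) yields
\[
\lim_{j\to\infty}\int_{\Omega\times\R^m} Ph(x,y+c_j,P\gamma_j)=\ddprb{h,\bnu}=\ddprb{h,\bnu\restrict(\overline{\Omega}\times\R^m)},
\]
where the second equality holds because $\sigma h^\infty\equiv 0$ on $\overline{\Omega}\times\infty\pd\bB^m\times\R^{m\times d}$. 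Since $Pf=P\tilde f-Ph$ pointwise (using that $\tilde f^\infty=f^\infty+h^\infty=f^\infty$ exists) and $\liminf(a_j-b_j)\geq \liminf a_j-\lim b_j$ when the second limit exists, it suffices to establish
\[
\liminf_{j\to\infty}\int_{\Omega\times\R^m} P\tilde f(x,y+c_j,P\gamma_j)\geq \ddprb{\tilde f,\bnu\restrict(\overline{\Omega}\times\R^m)}.
\]

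To prove this, I will approximate $\tilde f\geq 0$ from below by integrands to which Proposition~\ref{lemextendedrepresentation}(ii) applies. Fix a family of cut-off functions $\psi_n\in\C([0,\infty);[0,1])$ with $\psi_n\equiv 1$ on $[0,n]$, $\psi_n\equiv 0$ on $[2n,\infty)$, and $\psi_n\leq\psi_{n+1}$. Define
\[
\phi_n(x,y,A):=\psi_n(|y|)\,\tilde f(x,y,A).
\]
Then $\phi_n$ is Carath\'eodory, satisfies $0\leq \phi_n\leq\tilde f$ with $\phi_n\nearrow \tilde f$ pointwise, and obeys the bound $|\phi_n(x,y,A)|\leq C_n(1+|A|)$ (because $\phi_n$ vanishes for $|y|>2n$), so $\phi_n\in\RL(\Omega\times\R^m)$. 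Moreover $\phi_n^\infty(x,y,A)=\psi_n(|y|)\,\tilde f^\infty(x,y,A)$ exists and the cut-off in $y$ forces $\sigma\phi_n^\infty$ to exist with $\sigma\phi_n^\infty\equiv 0$ on $\overline{\Omega}\times\infty\pd\bB^m\times\R^{m\times d}$. Proposition~\ref{lemextendedrepresentation}(ii) therefore applies to $\phi_n$ and gives
\[
\lim_{j\to\infty}\int_{\Omega\times\R^m} P\phi_n(x,y+c_j,P\gamma_j)=\ddprb{\phi_n,\bnu}=\ddprb{\phi_n,\bnu\restrict(\overline{\Omega}\times\R^m)}.
\]

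Now $\phi_n\leq\tilde f$ with both non-negative implies $P\phi_n\leq P\tilde f$ pointwise (the perspective construction preserves order for positive scalars and, at $t=0$, $\tilde f^\infty\geq\phi_n^\infty$). Integrating against the positive measure $|P\gamma_j|$ and taking $\liminf_j$ gives
\[
\liminf_{j\to\infty}\int_{\Omega\times\R^m} P\tilde f(x,y+c_j,P\gamma_j)\geq \ddprb{\phi_n,\bnu\restrict(\overline{\Omega}\times\R^m)}
\]
for every $n$. Finally, monotone convergence (applied separately to the regular part against $\iota_\nu$ and to the singular part against $\lambda_\nu\restrict(\overline{\Omega}\times\R^m)$, both using $\phi_n\nearrow \tilde f$ and $\phi_n^\infty\nearrow \tilde f^\infty$ with non-negative integrands) yields $\ddprb{\phi_n,\bnu\restrict(\overline{\Omega}\times\R^m)}\nearrow\ddprb{\tilde f,\bnu\restrict(\overline{\Omega}\times\R^m)}$ as $n\to\infty$. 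Combining with the equality for $h$ obtained in the first paragraph then gives~\eqref{eqymineq}.

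The main technical point is ensuring that the truncations $\phi_n$ genuinely fall under Proposition~\ref{lemextendedrepresentation}(ii); the cut-off in the $y$-variable is essential since it both provides the linear-in-$A$ growth bound and forces $\sigma\phi_n^\infty$ to exist (which would otherwise fail because $\tilde f^\infty$ has no control as $|y|\to\infty$). Everything else is standard monotone-convergence bookkeeping.
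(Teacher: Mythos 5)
Your proof is correct and follows essentially the same route as the paper: reduce to a non-negative integrand by absorbing the lower bound $-h$ (whose $\sigma h^\infty$ vanishes, so Proposition~\ref{lemextendedrepresentation}(ii) gives exact convergence for it), then approximate the non-negative integrand monotonically from below by $y$-truncated integrands in $\RL$ for which (ii) again gives equality, and finally pass to the limit by monotone convergence. The paper performs the two reductions in the opposite order (first truncation under a positivity assumption, then absorption of $-h$), but the content is identical.
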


\begin{proof}
First assume that $f\geq 0$. For $R>0$ let $\varphi_R\in\C_c(\R^m;[0,1])$ be such that $\varphi_R(y)=1$ if $|y|\leq R$ and define $f_R\colon\overline{\Omega}\times\R^m\times\R^{m\times d}\to[0,\infty)$ by $f_R(x,y,A):=\varphi_R(y)f(x,y,A)$. It follows that $f_R\in\RL(\Omega\times\R^m)$ and that $f_R\uparrow f$, $f^\infty_R\uparrow f^\infty$ in $\overline{\Omega}\times\R^m\times\R^{m\times d}$ as $R\to \infty$. In addition, $\sigma f_R^\infty$ exists for every $R>0$ and satisfies $\sigma f_R^\infty\restrict(\overline{\Omega}\times\infty\pd\bB^m\times\R^{m\times d})\equiv 0$. By Proposition~\ref{lemextendedrepresentation}~(ii) combined with the fact that $\sigma f_R^\infty\restrict(\overline{\Omega}\times\infty\pd\bB^m\times\R^{m\times d})=0$ implies $\ddpr{f_R,\bnu}=\ddprb{f_R,\bnu\restrict(\overline{\Omega}\times\R^m)}$, we have that
\[
\lim_{j\to\infty}\int Pf_R(x,y+c_j,P\gamma_j)=\ddprb{f_R,\bnu}=\ddprb{f_R,\bnu\restrict(\overline{\Omega}\times\R^m)}.
\]
The positivity of $f$ and the Monotone Convergence Theorem then imply that
\begin{align}
\begin{split}
\liminf_{j\to\infty}\int Pf(x,y+c_j,P\gamma_j)&\geq\lim_{R\to\infty}\lim_{j\to\infty}\int Pf_R(x,y+c_j,P\gamma_j)\\
&=\lim_{R\to\infty}\ddprb{f_R,\bnu\restrict(\overline{\Omega}\times\R^m)}\\
&=\ddprb{f,\bnu\restrict(\overline{\Omega}\times\R^m)}.
\end{split}
\end{align}

If we only assume that $f\in\RBVw(\Omega\times\R^m)$ then, letting $h\in\RL(\Omega\times\R^m)$ with $\sigma h^\infty=0$ be such that 
\[
f(x,y,A)\geq - h(x,y,A),
\]
we can combine the previous step applied to $f+h$ with the fact that Proposition~\ref{lemextendedrepresentation}~(ii) applies to $-h$ to deduce that
\begin{align*}
\liminf_{j\to\infty}\int Pf(x,y+c_j,P\gamma_j)&=\liminf_{j\to\infty}\int P(f+h)(x,y+c_j,P\gamma_j)-\lim_{j\to\infty}\int Ph(x,y+c_j,P\gamma_j)\\
&\geq\ddprb{f+h,\bnu\restrict(\overline{\Omega}\times\R^m)}-\ddprb{h,\bnu}\\
&=\ddprb{f+h,\bnu\restrict(\overline{\Omega}\times\R^m)}-\ddprb{h,\bnu\restrict(\overline{\Omega}\times\R^m)}\\
&=\ddprb{f,\bnu\restrict(\overline{\Omega}\times\R^m)},
\end{align*}
as required, where we used the fact that $\sigma h^\infty=0$ implies $\ddpr{h,\bnu}=\ddprb{h,\bnu\restrict(\overline{\Omega}\times\R^m)}$ to justify the penultimate equality.
\end{proof}

\section{Tangent Young measures and Jensen inequalities}\label{chaptangentliftingyms}
By Corollary~\ref{corymlsc} with $\gamma_j=\gamma\asc{u_j-(u_j)}$ and $c_j=(u_j)$, we know that, if $f\in\RBVw(\Omega\times\R^m)$ and $(u_j)_j\subset\BV(\Omega;\R^m)$, $u\in\BV_\#(\Omega;\R^m)$ are such that $u_j\wsc u$ then, by passing to a non-relabelled subsequence so that $(\gamma\asc{u_j-(u_j)})_j$ generates a (lifting) Young measure $\bnu\in\ALift(\Omega\times\R^m)$ with $\llbracket\bnu\rrbracket=u$, it holds that
\[
\liminf_{j\to\infty}\Fcal[u_j]=\liminf_{j\to\infty}\int_{\Omega\times\R^m}Pf(x,y+(u_j),P\gamma\asc{u-(u_j)})\geq\ddprb{f,\bnu\restrict(\overline{\Omega}\times\R^m)	}.
\]
Letting $\lambda_\nu^{\mathrm{gs}}:=\lambda_\nu-\frac{\dd\lambda_\nu}{\dd|\gamma\asc{u}|}|\gamma\asc{u}|\restrict(\overline{\Omega}\setminus\Jcal_u\times\R^m)$ denote the graph-singular part of $\lambda_\nu$ and using Corollary~\ref{lemstructureliftingym} together with the fact that $f^\infty\geq 0$, we can write
\begin{align*}
\ddprb{f,\bnu\restrict(\overline{\Omega}\times\R^m)}&=\int_{\Omega\times\R^m}\ip{f(x,y,\frarg)}{\nu_{x,y}}\;\dd\iota_\nu(x,y)\\
&\qquad+\int_{(\Omega\setminus\Jcal_u)\times\R^m}\frac{\dd\lambda_\nu}{\dd|\gamma\asc{u}|}(x,y)\ip{f^\infty(x,y,\frarg)}{\nu_{x,y}^\infty}\;\dd|\gamma\asc{u}|(x,y)\\
&\qquad+\int_{\overline{\Omega}\times\R^m}\ip{f^\infty(x,y,\frarg)}{\nu_{x,y}^\infty}\;\dd\lambda^{\mathrm{gs}}_\nu(x,y)\\
&\geq\int_\Omega\ip{f(x,u(x),\frarg)}{\nu_{x,u(x)}}\;\dd x\\
&\qquad+\int_{{\Omega}\setminus\Jcal_u}\frac{\dd\lambda_\nu}{\dd|\gamma\asc{u}|}(x,u(x))\langle f^\infty(x,u(x),\frarg),\nu_{x,u(x)}^\infty\rangle\;\dd|Du|(x)\\
&\qquad+\int_{\Jcal_u\times\R^m}\ip{f^\infty(x,y,\frarg)}{\nu_{x,y}^\infty}\;\dd\lambda^{\mathrm{gs}}_\nu(x,y).
\end{align*}
Using the equality $\frac{\dd|\gamma\asc{u}|}{\dd\iota_\nu}(x,u(x))=\frac{\dd|Du|}{\dd\lL}(x)=|\nabla u(x)|$, disintegrating
\[
  \lambda_\nu^{\mathrm{gs}} = \lambda_\nu=\pi_\#\lambda_\nu\otimes\rho \quad\text{ on } \Jcal_u \times \R^m
\]
and using again the positivity of $f^\infty$ to neglect the $(\Hcal^{d-1}\restrict\Jcal_u)\otimes\rho$-singular part of $\lambda^{gs}$, we can then deduce
\begin{align}
\begin{split}\label{eqymlscineq}
\liminf_{j\to\infty}\Fcal[u_j]
&\geq\int_{\Omega}\ip{f(x,u(x),\frarg)}{\nu_{x,u(x)}}+\frac{\dd\lambda_\nu}{\dd\iota_\nu}(x,u(x))\langle f^\infty(x,u(x),\frarg),\nu_{x,u(x)}^\infty\rangle\;\dd x\\
&\qquad+\int_{\Omega}\frac{\dd\lambda_\nu}{\dd|\gamma\asc{u}|}(x,u(x))\langle f^\infty(x,u(x),\frarg),\nu_{x,u(x)}^\infty\rangle\;\dd|D^cu|(x)\\
&\qquad+\int_{\Jcal_u}\frac{\dd\pi_\#\lambda_\nu}{\dd\Hcal^{d-1}\restrict\Jcal_u}(x)\int_{\R^m}\ip{f^\infty(x,y,\frarg)}{\nu_{x,y}^\infty}\;\dd\rho_x(y)\;\dd\Hcal^{d-1}(x).
\end{split}
\end{align}
The lower semicontinuity component of Theorem~\ref{wsclscthm} is now reduced to the task of obtaining the three pointwise inequalities
\[
\ip{f(x,u(x),\frarg)}{\nu_{x,u(x)}}+\frac{\dd\lambda_\nu}{\dd\iota_\nu}(x,u(x))\langle f^\infty(x,u(x),\frarg),\nu_{x,u(x)}^\infty\rangle\geq f(x,u(x),\nabla u(x))
\]
for $\lL$-almost every $x\in\Omega$,
\[
\frac{\dd\lambda_\nu}{\dd|\gamma\asc{u}|}(x,u(x))\langle f^\infty(x,u(x),\frarg),\nu_{x,u(x)}^\infty\rangle\geq f^\infty\left(x,u(x),\frac{\dd D^cu}{\dd|D^cu|}(x)\right)
\]
for $|D^cu|$-almost every $x\in\Omega$, and
\[
\frac{\dd\pi_\#\lambda_\nu}{\dd\Hcal^{d-1}\restrict\Jcal_u}(x)\int_{\R^m}\ip{f^\infty(x,y,\frarg)}{\nu_{x,y}^\infty}\;\dd\rho_x(y)\geq K_f[u]
\]
for $\Hcal^{d-1}$-almost every $x\in\Jcal_u$. In Section~\ref{sectangentyms}, we will show that the left hand side of each of these inequalities can be computed as a duality product $\ddpr{h,\bsigma}$ with $\bsigma\in\AYLift(\Omega\times\R^m)$. In the case of the first two inequalities, $h$ depends only on the $A$ variable, and only on the $y$ and $A$ variables in the third case. From there, the theory in Section~\ref{secjensenineq} uses the definitions of quasiconvexity and $K_f[u]$ to obtain the desired inequalities.

\subsection{Tangent Young measures}\label{sectangentyms}
Let $\bnu\in\YLift(\Omega\times\R^m)$ with $\llbracket\bnu\rrbracket=u\in\BV_\#(\Omega;\R^m)$ and, for $x_0\in\Dcal_u\cup\Ccal_u\cup\Jcal_u$, let $u^{r}$ be as defined in Theorem~\ref{thmbvblowup}. Let $(\gamma_j)_j\subset\Lift(\Omega\times\R^m)$ be a sequence which generates $\bnu$. Recalling
\[
c_r:=\begin{cases}1&\text{ if }x_0\in\Dcal_{u},\\
r&\text{ if }x_0\in\Jcal_{u},\\
\frac{r^d}{|Du|(B(x_0,r))}&\text{ if }x_0\in\Ccal_{u},
\end{cases}
\]
for each $r>0$, let
\[
\gamma_{j}^{r,c_r}=\frac{c_r}{r^d}(T_{\gamma_j}^{(x_0,r,c_r)})_\#\gamma_j\in\Lift(\bB^d\times\R^m)
\]
be a rescaling of $\gamma_j$ of the form discussed in Lemma~\ref{lemrestrictionsofliftings} for each $j\in\mbN$, where $T_{\gamma_j}^{(x_0,r,c_r)}\colon B(x_0,r)\times \R^m\to\bB^d\times\R^m$ denotes the homothety
\[
T_{\gamma_j}^{(x_0,r,c_r)}(x,y)=\left(\frac{x-x_0}{r},c_r\frac{y-(\asc{\gamma_j})_{x_0,r}}{r}\right).
\]
(Note that, according to the terminology of Lemma~\ref{lemrestrictionsofliftings}, it is not the case that $\gamma_j^{r,c_r}=\gamma_j^r$ here since $c_r$ in this context does not represent the volume fraction associated to $D\asc{\gamma_j}$ but is fixed as the volume fraction corresponding to $Du$ instead.) Since Lemma~\ref{lemrestrictionsofliftings} states that
\[
\gr^{\asc{\gamma_{j}^{r,c_r}}}_\#(\lL\restrict\bB^d)=\frac{1}{r^d}(T^{(x_0,r,c_r)}_{\gamma_j})_\#\gr^{\asc{\gamma_j}}_\#(\lL\restrict B(x_0,r)),
\]
we can therefore compute
\begin{align*}
P\gamma_{j}^{r,c_r}&=\left(\gamma_{j}^{r,c_r},\gr^{\asc{\gamma_{j}^{r,c_r}}}_\#(\lL\restrict\bB^d)\right)\\
&=\left(\frac{c_r}{r^d}(T_{\gamma_j}^{(x_0,r,c_r)})_\#{\gamma_j},\frac{1}{r^d}(T_{\gamma_j}^{(x_0,r,c_r)})_\#\gr^{\asc{{\gamma_j}}}_\#(\lL\restrict B(x_0,r))\right)\\
&=\frac{1}{r^d}(T^{(x_0,r,c_r)}_{\gamma_j})_\#\left(c_r{\gamma_j},\gr^{\asc{{\gamma_j}}}_\#(\lL\restrict B(x_0,r))\right)\\
&=\frac{1}{r^d}(T^{(x_0,r,c_r)}_{\gamma_j})_\#\left[\begin{pmatrix}c_r & 0\\ 0 & 1\end{pmatrix}P{\gamma_j}\right],
\end{align*}
where $\begin{pmatrix}c_r & 0\\ 0 & 1\end{pmatrix}\colon\R^{m\times d}\times\R\to\R^{m\times d}\times\R$ denotes the linear map $(A,t)\mapsto(c_r A,t)$.

Thanks to the positive one-homogeneity of $Pf$ in $(A,t)$ in conjunction with~\eqref{eqperspectiveobservation} from Section~\ref{secpreliminaries}, these observations imply that, for $f\in\mbfE(\Omega\times\R^m)$,
\begin{align*}
\int Pf(z,w,P\gamma_j^{(r,c_r)})&=\frac{1}{r^d}\int Pf\left(z,w,(T^{(x_0,r,c_r)}_{\gamma_j})_\#\left[\begin{pmatrix}c_r & 0\\ 0 & 1\end{pmatrix}P\gamma_j\right]\right)\\
&=\frac{1}{r^d}\int Pf\left({T^{(x_0,r,c_r)}_{\gamma_j}}(x,y),\begin{pmatrix}c_r & 0\\ 0 & 1\end{pmatrix}P\gamma_j\right).
\end{align*}

Define $f_{c_r}\in\mbfE(\Omega\times\R^m)$ by
\[
f_{c_r}(x,y,A):=f\left(T^{(x_0,r,c_r)}_{[\bnu]}(x,y),c_rA\right)=f\left(\frac{x-x_0}{r},c_r\frac{y-(u)_{x_0,r}}{r},c_rA\right)
\]
so that (recall~\eqref{eqrescaleabbrev})
\[
Pf_{c_r}(x,y,A)=Pf\left({T^{(x_0,r)}_{[\bnu]}}(x,y),\begin{pmatrix}c_r & 0\\ 0 & 1\end{pmatrix}(A,t)\right).
\]
Noting
\[
T^{(x_0,r,c_r)}_{\gamma_j}(x,y)=T^{(x_0,r)}_{[\bnu]}(x,y)+r^{-1}c_r(0,(\asc{\gamma_j})_{x_0,r}-(u)_{x_0,r}),
\]
we can therefore write
\[
\int Pf(z,w,P\gamma_{j}^{r,c_r})=r^{-d}\int Pf_{c_r}(x,y+r^{-1}c_r[(\asc{\gamma_j})_{x_0,r}-(u)_{x_0,r})],P\gamma_j).
\]
Since
\[
(\asc{\gamma_j})_{x_0,r}-(u)_{x_0,r}\to 0\quad\text{ as }j\to 0,
\]
Lemma~\ref{lemliftingsperturb} implies that
\begin{align*}
\lim_{j\to\infty}\int Pf(z,w,P\gamma_{j}^r)&=\frac{1}{r^d}\lim_{j\to\infty}\int Pf_{c_r}(x,y+r^{-1}c_r[(\asc{\gamma_j})_{x_0,r}-(u)_{x_0,r}],P\gamma_j)\\
&=\frac{1}{r^d}\lim_{j\to\infty}\int Pf_{c_r}(x,y,P\gamma_j)\\
&=\frac{1}{r^d}\ddprb{f_{c_r},\bnu}.
\end{align*}

This limit exists for any $f\in\mbfE(\bB^d\times\R^m)$, and so it follows that $\gamma_{j}^r\toY\bsigma_r$ as $j\to\infty$, where, by virtue of the definition of $f_{c_r}$, $\bsigma_r\in\YLift(\bB^d\times\R^m)$ is defined by

\begin{equation}\label{eqrescaledym}
\ddprb{f,\bsigma_r}=\frac{1}{r^d}\ddprB{f\left(\frac{\frarg-x_0}{r},c_r\left(\frac{\frarg-(u)_{x_0,r}}{r}\right),{c_r}\,\frarg\right),\bnu}.
\end{equation}
We will show that, for $\lL+|Du|$-almost every $x_0\in\Dcal_u\cup\Ccal_u\cup\Jcal_u$, the family $(\bsigma_r)_{r>0}$ (or at least a subsequence) converges weakly* to a limit as $r\to 0$. The primary tools in identifying this limit for $x_0\in\Dcal_u\cup\Ccal_u$ are Theorem~\ref{thmdiffusetangentlifting} which guarantees the strict convergence of the rescaled liftings $(\gamma\asc{u^r})_{r>0}$ and the graphical Besicovitch Derivation Theorem, Theorem~\ref{thmgeneralisedbesicovitch} introduced below. We note that the usual version of the Besicovitch Derivation Theorem and its generalisation, the Morse Derivation Theorem (see~\cite{Mors47PB} and also Theorems~2.22 and~5.52 in~\cite{AmFuPa00FBVF}) cannot be used in this situation, since the aspect ratio $(c_rr^{-1})/r^{-1}=c_r$ corresponding to the scaling present in~\eqref{eqrescaledym} (which, due to the need to apply Theorem~\ref{thmdiffusetangentlifting}, cannot be modified) will converge to $0$ for $x\in\Ccal_u$.

The following theorem demonstrates that, provided the denominator is a graphical measure, derivatives of measures can be computed using families of sets which are very different to the usual decreasing cylinders $B(x,r)\times B(y,r)$, so long as the family is sufficiently well behaved according to the differentiating measure.
\begin{theorem}[Generalised Besicovitch Differentiation Theorem for graphical measures]\label{thmgeneralisedbesicovitch}
Given $u\colon\Omega\to\R^m$, let $\eta=\gr^u_\#\mu\in\mbfM^+(\Omega\times\R^m)$ be a $u$-graphical measure. For each $x\in\Omega$, let $(c_r^x)_{r > 0} \subset (0,\infty)$ satisfy $\lim_{r\downarrow 0}c_r^x=0$ and $(y_r^x)_{r > 0} \subset \R^m$ satisfy $\lim_{r\downarrow 0}y_r^x=u(x)$. Then:

\begin{enumerate}[(i)]
\item\label{graphbesicovitchsingular} If $\lambda$ is a (possibly vector-valued) measure on $\Omega\times\R^m$ satisfying $\lambda\perp\eta$, we have that 
\[
0=\frac{\dd\lambda}{\dd\eta}(x,u(x))=\lim_{r\to 0}\frac{\lambda\left(\overline{B(x,r)\times B(y_r^x,c_r^x)}\right)}{\pi_\#\eta(\overline{B(x,r)})}=\lim_{r\to 0}\frac{\lambda\left(\overline{B(x,r)\times B(y_r^x,c_r^x)}\right)}{\mu(\overline{B(x,r)})}
\]
for $\eta$-almost every $(x,u(x))\in\Omega\times\R^m$, where $\pi\colon\Omega\times\R^m\to\Omega$ is the projection map $\pi((x,y)):=x$.
\item\label{cylindricallebesgue} A cylindrical version of the Lebesgue Differentiation Theorem holds in the sense that, for any $f\in\Lp^1(\Omega\times\R^m,\eta)$ (i.e., the $\Lp^1$-space with respect to $\eta$),
\[
\lim_{r\to 0}\frac{1}{\mu(\overline{B(x,r)})}\int_{\overline{B(x,r)}\times \R^m}|f(\overline{x},y)-f(x,u(x))|\;\dd\eta(\overline{x},y)=0
\]
for $\mu$-almost every $x\in\Omega$.
\item\label{graphbesicovitchgeneral} If $\lambda\in\mbfM(\Omega\times\R^m)$, then
\[
\frac{\dd\lambda}{\dd\eta}(x,u(x))=\lim_{r\to 0}\frac{\lambda\left(\overline{B(x,r)\times B(y_r^x,c_r^x)}\right)}{\pi_\#\eta(\overline{B(x,r)})}=\lim_{r\to 0}\frac{\lambda\left(\overline{B(x,r)\times B(y_r^x,c_r^x)}\right)}{\mu(\overline{B(x,r)})}
\]
for $\eta$-almost every $(x,u(x))\in\Omega\times\R^m$ for which $(c_r^x)_r$ and $(y_r^x)_r$ are such that
\[
\lim_{r\to 0}\frac{\eta\left(\overline{B(x,r)\times B(y_r^x,c_r^x)}\right)}{\mu(\overline{B(x,r)})} = 1.
\]
\end{enumerate}
 \end{theorem}

\begin{proof}
We first establish~\eqref{graphbesicovitchsingular}: let $\lambda$ be a measure on $\Omega\times\R^m$ satisfying $\lambda\perp\eta$ and define for $x \in \Omega$ such that $\mu(\overline{B(x,r)}) > 0$ for all $r > 0$ the function
\[
F(x):=\limsup_{r\to 0}\frac{|\lambda|(\overline{B(x,r)\times  B(y_r^x,c_r^x)})}{\mu(\overline{B(x,r)})}.
\]
Let $Z \subset \Omega$ be such that $\mu(Z) = \mu(\Omega)$, $|\lambda|(\gr^u(Z))=0$ and such that at every $x \in Z$ it holds that $\mu(\overline{B(x,r)}) > 0$ for all $r > 0$. Such a set $Z$ exists since $\eta = \gr^u_\# \mu$ is singular to $\lambda$ and $\eta$ is carried by the set $\gr^u(\Omega)$.

Let $E\subset Z$ be a Borel set such that $F(x)>t>0$ for all $x\in E$ and for $\varepsilon\in(0,t)$ arbitrary let $A\supset \gr^u(E)$ be an open set. Define the family
\begin{align*}
\mathcal{F}:=&\Bigl\{\overline{B(x,r)\times B(y_r^x,c_r^x)}\colon\; x\in E,\text{ }\overline{B(x,r)\times B(y_r^x,{c_r^x})}\subset A\text{ and }\\
&\qquad|\lambda|(\overline{B(x,r)\times B(y_r^x,c_r^x)})\geq(t-\varepsilon)\mu(\overline{B(x,r)})\Bigr\}\subset\R^d\times\R^m.
\end{align*}
Since $c^x_r\downarrow 0$ and $y_r^x\to u(x)$ as $r\downarrow 0$ it follows that $\pi_\#\mathcal{F}$ is a fine cover for $E$ and so, by the Vitali--Besicovitch Covering Theorem, there exists a countable disjoint subfamily of $\pi_\#\mathcal{F}$, which we write as $\pi_\#\mathcal{F}'$ for some disjoint subcollection $\mathcal{F}'\subset\mathcal{F}$, that covers $\mu$-almost all of $E$. We therefore have that
\[
(t-\varepsilon)\mu(E)\leq(t-\varepsilon)\sum_{B\in\mathcal{F}'}\mu(\pi_\# B)\leq\sum_{B\in\mathcal{F}'}|\lambda|(B)\leq|\lambda|(A).
\]
First letting $\varepsilon\downarrow 0$ and then using the outer regularity of $|\lambda|$ to approximate $\gr^u(E)$ with a sequence of open sets, we obtain
\[
t\mu(E)\leq|\lambda|(\gr^u(E)).
\]
Since $|\lambda|(\gr^u(Z))=0$, also $|\lambda|(\gr^u(E))=0$ and if $E$ was such that $\eta(\gr^u(E))=\mu(E)>0$, then $t=0$, a contradiction. It follows that $F(x)=0$ for $\mu$-almost every $x\in\Omega$ and hence that
\[
\lim_{r\to 0}\frac{\lambda(\overline{B(x,r)\times B(y_r^x,c_r^x)})}{\mu(\overline{B(x,r)})}=\frac{\dd\lambda}{\dd\eta}(x,u(x))=0 \quad \text{ for }\eta\text{-a.e.\ }(x,u(x))\in\Omega\times\R^m,
\]
as required. 

Next, we prove~\eqref{cylindricallebesgue}: for $f\in\Lp^1(\Omega\times\R^m,\eta)$, the definition of $\eta=\gr^u_\#\mu$ implies
\begin{align*}
&\lim_{r\to 0}\frac{1}{\mu(\overline{B(x,r)})}\int_{\overline{B(x,r)}\times \R^m}|f(\overline{x},y)-f(x,u(x))|\;\dd\eta(\overline{x},y)\\
&\qquad =\lim_{r\to 0}\frac{1}{\mu(\overline{B(x,r)})}\int_{\overline{B(x,r)}}|f(\overline{x},u(\overline{x}))-f(x,u(x))|\;\dd\mu(\overline{x}).
\end{align*}
It can be checked that the conditions $f\in\Lp^1(\Omega\times\R^m,\eta)$ and $f\circ \gr^u\in\Lp^1(\Omega,\mu)$ are equivalent, and so it follows from $f\in\Lp^1(\Omega\times\R^m,\eta)$ that $\mu$-almost every $x\in\Omega$ is a $\mu$-Lebesgue point for $x\mapsto f(x,u(x))$. We therefore deduce that
\begin{equation}\label{eqlebesguepoint}
\lim_{r\to 0}\frac{1}{\mu(\overline{B(x,r)})}\int_{\overline{B(x,r)}\times \R^m}|f(\overline{x},y)-f(x,u(x))|\;\dd\eta(\overline{x},y)=0
\end{equation}
for $\mu$-almost every $x\in\Omega$, as required.

Finally, to prove~\eqref{graphbesicovitchgeneral} we argue as follows: for $\lambda\in\mbfM(\Omega\times\R^m)$ let
\[
\lambda = \frac{\dd\lambda}{\dd\eta}\eta + \lambda^s, \qquad\lambda\perp\eta
\]
be the usual Radon--Nikod\'{y}m decomposition of $\lambda$ with respect to $\eta$. Noting that~\eqref{cylindricallebesgue} implies
\begin{align*}
&\lim_{r\to 0}\frac{1}{\mu(\overline{B(x,r)})}\int_{\overline{B(x,r)\times B(y_r^x,c_r^x)}}\left|\frac{\dd\lambda}{\dd\eta}(\overline{x},y)-\frac{\dd\lambda}{\dd\eta}(x,u(x))\right|\;\dd\eta(\overline{x},y)\\
&\qquad \leq \lim_{r\to 0}\frac{1}{\mu(\overline{B(x,r)})}\int_{\overline{B(x,r)}\times \R^m}\left|\frac{\dd\lambda}{\dd\eta}(\overline{x},y)-\frac{\dd\lambda}{\dd\eta}(x,u(x))\right|\;\dd\eta(\overline{x},y)\\
&\qquad =0,
\end{align*}
we can use~\eqref{graphbesicovitchsingular} and~\eqref{cylindricallebesgue} together to see that
\begin{align*}
\lim_{r\to 0}\frac{\lambda\left(\overline{B(x,r)\times B(y_r^x,c_r^x)}\right)}{\mu(\overline{B(x,r)})} & = \lim_{r\to 0}\frac{1}{\mu(\overline{B(x,r)})}\int_{\overline{B(x,r)\times B(y_r^x,c_r^x)}}\frac{\dd\lambda}{\dd\eta}(\overline{x},y)\;\dd\eta(\overline{x},y) \\
&\qquad+ \lim_{r\to 0}\frac{\lambda^s\left(\overline{B(x,r)\times B(y_r^x,c_r^x)}\right)}{\mu(\overline{B(x,r)})}\\
& = \frac{\dd\lambda}{\dd\eta}(x,u(x)) \cdot \lim_{r\to 0}\frac{\eta\left(\overline{B(x,r)\times B(y_r^x,c_r^x)}\right)}{\mu(\overline{B(x,r)})} + 0
\end{align*}
for $\eta$-almost every $(x,u(x))\in\Omega\times\R^m$, at which point the conclusion follows.
\end{proof}

Using Theorem~\ref{thmgeneralisedbesicovitch}, we can prove that the behaviour of graphical measures under general homotheties is stable under multiplication by integrable functions:

\begin{lemma}\label{lemgraphicallebesgueconverg}
Let $\eta\in\mbfM^+(\Omega\times\R^m)$ be a $u$-graphical measure on $\Omega\times\R^m$ (that is, $\eta=\gr^u_\#\mu$ for some $\mu\in\mbfM^+(\Omega)$ and some $\mu$-measurable function $u\colon\Omega\to\R^m$) and let $x_0\in\Omega$, $r_n\downarrow 0,c_n\downarrow 0,a_n\downarrow 0$ and $(y_n)_n\subset\R^m$ with $y_n\to u(x_0)$ be such that
\[
a_nT_\#^{(x_0,r_n),(y_n,c_n)}\eta\wsc\eta^0\quad\text{ in }\quad\mbfM(\overline{\bB^d}\times\R^m).
\]
If $f\in\Lp^1(\Omega\times\R^m,\mu)$ and $x_0$ is an $\eta$-cylindrical Lebesgue point for $f$ in the sense of Theorem~\ref{thmgeneralisedbesicovitch}, then
\[
a_nT_\#^{(x_0,r_n),(y_n,c_n)}(f\eta)\wsc f(x_0,u(x_0))\eta^0,
\]
where $T^{(x_0,r_n),(y_n,c_n)}\colon B(x_0,r_n)\times\R^m\to\bB^d\times\R^m$ denotes the homothety
\[
T^{(x_0,r_n),(y_n,c_n)}(x,y)=\left(\frac{x-x_0}{r_n},\frac{y-y_n}{c_n}\right).
\]
Moreover, if $a_nT_\#^{(x_0,r_n),(y_n,c_n)}\eta\to\eta^0$ strictly, then $a_nT_\#^{(x_0,r_n),(y_n,c_n)}f\eta\to f(x_0)\eta^0$ strictly as well.
\end{lemma}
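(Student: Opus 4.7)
The plan is to split the integrand as $f(x,y) = f(x_0, u(x_0)) + g(x,y)$ where $g(x,y) := f(x,y) - f(x_0, u(x_0))$. Then
\[
a_n T^{(x_0,r_n),(y_n,c_n)}_\#(f\eta) = f(x_0, u(x_0))\cdot a_n T^{(x_0,r_n),(y_n,c_n)}_\#\eta \; + \; a_n T^{(x_0,r_n),(y_n,c_n)}_\#(g\eta),
\]
and the first summand converges weakly* to $f(x_0, u(x_0))\eta^0$ by hypothesis. Thus the entire proof reduces to showing that $a_n T^{(x_0,r_n),(y_n,c_n)}_\#(g\eta) \wsc 0$.

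For this, I would test against an arbitrary $\varphi \in C_c(\overline{\bB^d}\times\R^m)$ (say with $\supp\varphi \subset \overline{\bB^d}\times B(0,R)$). Abbreviating $T_n := T^{(x_0,r_n),(y_n,c_n)}$ and changing variables, the key estimate is
\[
\left|\int \varphi\,\dd\bigl[a_n (T_n)_\#(g\eta)\bigr]\right| \leq \|\varphi\|_\infty\cdot a_n \int_{\overline{B(x_0,r_n)}\times\R^m} |g(x,y)|\,\dd\eta(x,y),
\]
where we used that the preimage under $T_n$ of $\overline{\bB^d}\times\R^m$ is $\overline{B(x_0,r_n)}\times\R^m$ and simply discarded the $y$-localisation $B(y_n,c_n R)$, since the integrand is nonnegative. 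This bound factors as
\[
\|\varphi\|_\infty\cdot\bigl[a_n\mu(\overline{B(x_0,r_n)})\bigr]\cdot \frac{1}{\mu(\overline{B(x_0,r_n)})}\int_{\overline{B(x_0,r_n)}\times\R^m}|g|\,\dd\eta.
\]
The first bracket equals $a_n(T_n)_\#\eta(\overline{\bB^d}\times\R^m)$ and is uniformly bounded in $n$ by the Banach--Steinhaus principle applied to the weakly* convergent sequence $a_n(T_n)_\#\eta$. The second factor tends to zero by the cylindrical $\eta$-Lebesgue point hypothesis on $f$ (equivalently on $g$) stated in Theorem~\ref{thmgeneralisedbesicovitch}. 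This establishes the weak* convergence claim.

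For the strict convergence statement, since $\eta$ is positive and $f\eta$ has total variation $|f|\eta$, I would compute
\[
|a_n(T_n)_\#(f\eta)|(\overline{\bB^d}\times\R^m) = a_n\int_{\overline{B(x_0,r_n)}}|f(x,u(x))|\,\dd\mu(x) = a_n\mu(\overline{B(x_0,r_n)})\cdot\bigl(|f(x_0,u(x_0))| + o(1)\bigr),
\]
using the cylindrical Lebesgue point property for $|f|$. The hypothesis of strict convergence $a_n(T_n)_\#\eta\to\eta^0$ gives $a_n\mu(\overline{B(x_0,r_n)})\to\eta^0(\overline{\bB^d}\times\R^m)$, and multiplying yields the convergence of total variations to $|f(x_0,u(x_0))|\eta^0(\overline{\bB^d}\times\R^m) = |f(x_0,u(x_0))\eta^0|(\overline{\bB^d}\times\R^m)$. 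Combined with the weak* convergence already proved, this gives strict convergence.

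The main obstacle I anticipate is purely conceptual rather than technical: matching the specific cylinders $\overline{B(x_0,r_n)\times B(y_n,c_n)}$ appearing in the pushforward with the cylinders $\overline{B(x_0,r_n)}\times\R^m$ appearing in the Lebesgue-point statement of Theorem~\ref{thmgeneralisedbesicovitch}. This is resolved simply by the fact that $|g|\geq 0$ allows us to bound from above by integrating over the larger set $\overline{B(x_0,r_n)}\times\R^m$, at which point the Lebesgue-point estimate applies directly. A secondary minor point is the uniform bound on $a_n\mu(\overline{B(x_0,r_n)})$ on the non-compact product space $\overline{\bB^d}\times\R^m$, which follows from weak* boundedness of Radon measures via Banach--Steinhaus.
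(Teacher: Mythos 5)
Your proof is correct and follows essentially the same approach as the paper's: decompose $f$ around its value at the Lebesgue point, bound the remainder using the cylindrical Lebesgue-point estimate, and obtain the uniform bound on $a_n\mu(\overline{B(x_0,r_n)})$ from weak* boundedness. The only cosmetic difference is that the paper establishes in a single step that $a_n\bigl|(T_n)_\#\bigl[(f-f(x_0,u(x_0)))\eta\bigr]\bigr|(\overline{\bB^d}\times\R^m)\to 0$, from which both the weak* claim and the strict claim follow at once by the triangle inequality, whereas you prove the strict claim by a separate (but equivalent) direct evaluation of $|a_n(T_n)_\#(f\eta)|(\overline{\bB^d}\times\R^m)$, implicitly using that a Lebesgue point for $f$ is automatically one for $|f|$.
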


\begin{proof}
Since the sequence $(a_nT_\#^{(x_0,r_n),(y_n,c_n)}\eta)_n$ is bounded, it must be the case that 
\[
\sup_na_n\eta(\overline{B(x_0,r_n)}\times\R^m)=\sup_na_n T_\#^{(x_0,r_n),(y_n,c_n)}\eta(\overline{\bB^d}\times\R^m)<\infty.
\]
The Lebesgue point property of $x_0$ then implies
\begin{align*}
&\lim_{n\to \infty}a_n\left|T_\#^{(x_0,r_n),(y_n,c_n)}(f\eta)-T_\#^{(x_0,r_n),(y_n,c_n)}(f(x_0,u(x_0))\eta)\right|(\bB^d\times\R^m)\\
&\qquad \leq \sup_n\left\{a_n\mu(B(x_0,r_n))\right\} \\
&\qquad\qquad \cdot \lim_{n\to \infty}\frac{1}{\mu(\overline{B(x_0,r_n)})}\int_{\overline{B(x_0,r_n)}\times\R^m}|f(x,y)-f(x_0,u(x_0))|\;\dd\eta(x,y)\\
&\qquad =0.
\end{align*}
We therefore see that, for any $\varphi\in\C_0(\bB^d\times\R^m)$,
\begin{align*}
&\lim_{n\to\infty}\left|\ip{\varphi}{a_{n}T_\#^{(x_0,r_n),(y_n,c_n)}(f\eta)}-\ip{\varphi}{a_{n}T_\#^{(x_0,r_n),(y_n,c_n)}(f(x_0,u(x_0))\eta)}\right|\\
&\qquad \leq \lim_{n\to \infty}\norm{\varphi}_\infty a_n\left|T_\#^{(x_0,r_n),(y_n,c_n)}(f\eta)-T_\#^{(x_0,r_n),(y_n,c_n)}(f(x_0,u(x_0))\eta\right|(\overline{\bB^d}\times\R^m)\\
&\qquad=0.
\end{align*}
It also holds that 
\begin{align*}
&\lim_{n\to\infty}\left|\bigl|a_{n}T_\#^{(x_0,r_n),(y_n,c_n)}(f\eta)\bigr|(\overline{\bB^d}\times\R^m)-\bigl|a_{n}T_\#^{(x_0,r_n),(y_n,c_n)}f(x_0,u(x_0))\eta\bigr|(\overline{\bB^d}\times\R^m)\right|\\
&\qquad \leq \lim_{n\to\infty}a_{n}\left|T_\#^{(x_0,r_n),(y_n,c_n)}f\eta-T_\#^{(x_0,r_n),(y_n,c_n)}f(x_0,u(x_0))\eta\right|(\overline{\bB^d}\times\R^m)\\
&\qquad=0.
\end{align*}
These estimates prove the claims.
\end{proof}

\begin{theorem}\label{thmregtangentyms}
Let $\bnu\in\YLift(\Omega\times\R^m)$ and define $u:=\llbracket\bnu\rrbracket$. For $\lL$-almost every $x_0\in\Omega$, there exists a \textbf{regular tangent Young measure} $\bsigma\in\YLift(\bB^d\times\R^m)$ satisfying
\begin{enumerate}[(i)]
	\item $\llbracket\bsigma\rrbracket=u^0$ where $u^0(z):=\nabla u(x_0)z$, and $\iota_\sigma=\gr_\#^{u^0}(\lL\restrict\bB^d)$;
	\item $\sigma_{z,w}=\nu_{x_0,u(x_0)}$ for $\iota_\sigma$-almost every $(z,w)\in\bB^d\times\R^m$;
	\item $\lambda_\sigma=\frac{\dd\lambda_\nu}{\dd\iota_\nu}(x_0,u(x_0))\iota_\sigma$;
	\item $\sigma^\infty_{z,w}=\nu^\infty_{x_0,u(x_0)}$ for $\lambda_\sigma$-almost every $(z,w)\in\bB^d\times\R^m$.
\end{enumerate}
In addition, if $\bnu\in\AYLift(\Omega\times\R^m)$ then $\bsigma\in\AYLift(\bB^d\times\R^m)$.
\end{theorem}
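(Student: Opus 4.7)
The plan is to take $r \downarrow 0$ in the family of rescaled Young measures $\bsigma_r \in \YLift(\bB^d\times\R^m)$ defined by~\eqref{eqrescaledym} and to identify the limit with the $\bsigma$ in the statement. Since $\lL$-almost every $x_0 \in \Omega$ lies in $\Dcal_u$, we have $c_r \equiv 1$ and the rescaling map in~\eqref{eqrescaledym} reduces to the isotropic $T_r(x,y) = ((x-x_0)/r,(y-(u)_{x_0,r})/r)$. The key ingredients are the Young measure Compactness Theorem~\ref{thmymsseqcompact}, the graphical Besicovitch Derivation Theorem~\ref{thmgeneralisedbesicovitch}, the strict convergence $u^r \to u^0$ from Theorem~\ref{thmbvblowup}, and the passage-to-the-limit Lemma~\ref{lemgraphicallebesgueconverg}.

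First I would restrict to a full-$\lL$-measure set of ``good'' base points $x_0 \in \Dcal_u$ satisfying simultaneously: $x_0$ is a Lebesgue point of $\nabla u$; $(x_0,u(x_0))$ is a cylindrical $\iota_\nu$-Lebesgue point (in the sense of Theorem~\ref{thmgeneralisedbesicovitch}) of each scalar function $(x,y)\mapsto\ip{h_k(\frarg)}{\nu_{x,y}}$ and $(x,y)\mapsto\ip{h_k^\infty(\frarg)}{\nu^\infty_{x,y}}$ arising from a countable determining family $\{\varphi_k \otimes h_k\}_k$ as in Lemma~\ref{remtensorprods}; the Radon--Nikodym derivative $p := \frac{\dd\lambda_\nu}{\dd\iota_\nu}(x_0,u(x_0))$ exists; and the $\iota_\nu$-singular part $\lambda_\nu^{\mathrm{sing}}:=\lambda_\nu - p\,\iota_\nu$ satisfies $\lim_{r\downarrow 0}r^{-d}|\lambda_\nu^{\mathrm{sing}}|(\overline{B(x_0,r)}\times\R^m) = 0$ (which holds $\iota_\nu$-a.e.\ by the singular case of Theorem~\ref{thmgeneralisedbesicovitch}). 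Using $\iota_\nu = \gr^u_\#(\lL\restrict\Omega)$, a change of variables transforms the regular component of $r^{-d}\ddpr{g_r,\bnu}$ from~\eqref{eqrescaledym} into
\[
\int_{\bB^d} \ip{f(z, u^r(z), \frarg)}{\nu_{x_0+rz,u(x_0+rz)}}\,\dd z,
\]
which, by the Lebesgue-point property combined with $u^r \to u^0$ strictly and Lemma~\ref{lemgraphicallebesgueconverg}, converges to $\int_{\bB^d}\ip{f(z,u^0(z),\frarg)}{\nu_{x_0,u(x_0)}}\,\dd z$, establishing items (i) and (ii). The singular component decomposes analogously: $|\lambda_\nu^{\mathrm{sing}}|$ contributes nothing in the limit, while the $\iota_\nu$-absolutely continuous part produces exactly the prefactor $p$ and the constant value $\nu^\infty_{x_0,u(x_0)}$, yielding (iii) and (iv). Since this limit is identified on a determining family, Theorem~\ref{thmymsseqcompact} together with the sequential weak* closedness in Lemma~\ref{lemliftymsclosed} upgrade subsequential convergence of $(\bsigma_r)_r$ to convergence of the entire family.

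Membership of $\bsigma$ in $\AYLift(\bB^d\times\R^m)$ when $\bnu \in \AYLift(\Omega\times\R^m)$ then follows via an elementary generating sequence $(\gamma\asc{u_j})_j$ of $\bnu$: by Lemma~\ref{lemrestrictionsofliftings}, each rescaled lifting $(\gamma\asc{u_j})^{r,1}$ is the elementary lifting $\gamma\asc{u_j^r}$, so a diagonal extraction combined with Lemma~\ref{lemliftymsclosed} produces an elementary generating sequence for $\bsigma$. The main obstacle I anticipate is the rigorous verification of the cylindrical Lebesgue-point properties for $\nu$ and $\nu^\infty$: because these parametrised measures are only defined almost everywhere and are merely weakly* measurable, Theorem~\ref{thmgeneralisedbesicovitch} cannot be invoked on them as vector-valued objects directly, and one must argue scalar-function-by-scalar-function along the countable determining family. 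Coupled with the vanishing of $|\lambda_\nu^{\mathrm{sing}}|$ after rescaling against the graphical measure $\iota_\nu$---a situation unreachable by the classical Besicovitch or Morse derivation theorems since the relevant aspect ratios degenerate---this is exactly what the graphical framework of Theorem~\ref{thmgeneralisedbesicovitch} was designed to deliver.
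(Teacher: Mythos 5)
There is a genuine gap in your handling of the singular part $\lambda_\nu^s := \lambda_\nu - \frac{\dd\lambda_\nu}{\dd\iota_\nu}\iota_\nu$. You claim that $\lim_{r\downarrow 0} r^{-d}\lambda_\nu^{s}(\overline{B(x_0,r)}\times\R^m) = 0$ for $\lL$-a.e.\ $x_0$ ``by the singular case of Theorem~\ref{thmgeneralisedbesicovitch}'', but this is false: Theorem~\ref{thmgeneralisedbesicovitch} gives the vanishing of the derivative of $\lambda_\nu^s$ against $\iota_\nu$ only along \emph{narrowing} graphical windows $\overline{B(x_0,r)\times B(y_r^{x_0},c_r^{x_0})}$ with $c_r^{x_0}\downarrow 0$, not over the full cylinder $\overline{B(x_0,r)}\times\R^m$. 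The projection $\pi_\#\lambda_\nu^s$ need not be singular with respect to $\lL\restrict\Omega$ even though $\lambda_\nu^s\perp\iota_\nu = \gr^u_\#(\lL)$, so the ordinary Besicovitch theorem applied to $\pi_\#\lambda_\nu^s$ against $\lL$ only gives a \emph{finite} limit, not zero. The paper's proof explicitly flags this subtlety in its discussion of condition~(II) versus condition~(IV).

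The consequence is that your conclusion ``$\lambda_\nu^{\mathrm{sing}}$ contributes nothing in the limit'' does not follow. After extracting a weak* convergent subsequence $\bsigma_{r_n}\wsc\bsigma$, what one can show via the graphical Besicovitch theorem is that, for each $K$, the singular contribution over $\overline{\bB^d}\times K\bB^m$ vanishes; this forces $\lambda_\sigma - \frac{\dd\lambda_\nu}{\dd\iota_\nu}(x_0,u(x_0))\iota_\sigma$ to be \emph{concentrated on} $\overline{\bB^d}\times\infty\pd\bB^m$, but not to vanish. To obtain a genuine tangent Young measure satisfying items (iii) and (iv) you must then invoke the Restriction Theorem~\ref{thm:restrictionsofyms} to replace $\bsigma$ by $\bsigma\restrict(\overline{\bB^d}\times\R^m)$, discarding the mass that escapes to the sphere at infinity while remaining inside $\YLift$ (or $\AYLift$). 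Without this final restriction step --- which is precisely why Theorem~\ref{thm:restrictionsofyms} was established earlier in the paper --- the argument is incomplete. The remainder of your outline (determining family, change of variables, Lemma~\ref{lemgraphicallebesgueconverg} for the regular part, diagonal extraction for the $\AYLift$ claim) is essentially the paper's route.
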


\begin{proof}
Let $\Hbf:=\{\varphi_k\otimes h_k\}_{k\in\mbN}\subset\mbfE(\bB^d\times\R^m)$ be a countable collection of tensor products determining $\mathbf{LY}$-convergence as discussed in Remark~\ref{remtensorprods} and let $x_0\in\Dcal_u$ be such that
\begin{enumerate}[(I)]	
	\item\label{regtymcond1} the family $u^r$ as defined in Theorem~\ref{thmbvblowup} converges strongly to $u^0$ in $\BV_\#(\Omega;\R^m)$ as $r\to 0$;
	\item\label{regtymcond2} it holds that
		\[
	\lim_{r\to 0}\frac{\pi_\#\lambda_{\nu}^s(\overline{B(x_0,r)})}{r^d}<\infty,
	\]
	where $\lambda_\nu^s:=\lambda_\nu-\frac{\dd\lambda_\nu}{\dd\iota_\nu}\iota_\nu$ is the singular part of $\lambda_\nu$ with respect to $\iota_\nu$;
	\item\label{regtymcond3} for each $h_k\in\Hbf$, $(x_0,u(x_0))$ is a cylindrical $\iota_\nu$-Lebesgue point for the function
	\[
	(x,y)\mapsto\ip{h_k}{\nu_{x,y}}+\frac{\dd\lambda_\nu}{\dd\iota_{\nu}}(x,y)\ip{h_k^\infty}{\nu^\infty_{x,y}}
	\]
	in the sense of Theorem~\ref{thmgeneralisedbesicovitch};
	\item\label{regtymcond4} for each $K\in\mbN$,
	\begin{align*}
	\lim_{r\to 0}\frac{\lambda_{{\nu}}^s(\overline{B(x_0,r)\times B((u)_{x_0,r},{Kr})})}{r^d}&=\lim_{r\to 0}\frac{\lambda_{{\nu}}^s(\overline{B(x_0,r)\times B((u)_{x_0,r},{Kr})})}{\omega_d^{-1}\pi_\#\iota_\nu(\overline{B(x_0,r)})}\\
	&=\omega_d\frac{\dd\lambda^s}{\dd\iota_\nu}(x_0,u(x_0))\\
&=0.
	\end{align*}
\end{enumerate}
That~\eqref{regtymcond1} and~\eqref{regtymcond2} are satisfied for $\lL$-almost every $x_0\in\Omega$ follows from Theorem~\ref{thmbvblowup} and the Besicovitch Derivation Theorem. Note that, although $\lambda^s\perp\iota_\nu$ and $\iota_\nu=\gr_\#^u(\lL\restrict\Omega)$, it need not be the case that $\pi_\#\lambda^s\perp\lL\restrict\Omega$ and so we can only guarantee that the limit appearing in~\eqref{regtymcond2} is finite $\lL$-almost everywhere, rather than equal to $0$. For fixed $k,K\in\mbN$, \eqref{regtymcond3} and~\eqref{regtymcond4} are satisfied $\lL$-almost everywhere by Theorem~\ref{thmgeneralisedbesicovitch}.

For $x_0\in\Dcal_u$, it holds that $c_r=1$ for all $r>0$ and~\eqref{eqrescaledym} therefore reads as
\begin{align}
\begin{split}\label{eqregtymidentify}
&\ddprb{\varphi_k\otimes h_k,\bsigma_r}=\frac{1}{r^d}\ddprb{\varphi_k\left(\frac{\frarg-x_0}{r},\frac{\frarg-(u)_{x_0,r}}{r}\right)\otimes h_k,\bnu}\\
&\qquad=\frac{1}{r^d}\int\varphi_k\left(\frac{x-x_0}{r},\frac{y-(u)_{x_0,r}}{r}\right)\cdot\left\{\ip{h_k}{\nu_{x,y}}+\frac{\dd\lambda_{\nu}}{\dd\iota_{\nu}}(x,y)\ip{h_k^\infty}{\nu^\infty_{x,y}}\right\}\;\dd\iota_\nu(x,y)\\
&\qquad\qquad+\frac{1}{r^d}\int\varphi_k\left(\frac{x-x_0}{r},\frac{y-(u)_{x_0,r}}{r}\right)\ip{h_k^\infty}{\nu^\infty_{x,y}}\;\dd\lambda^s_\nu(x,y).
\end{split}
\end{align}
Since 
\[
\left(\frac{1}{r^d}(T^{(x_0,r)}_{\gamma\asc{u}})_\#\iota_\nu\right)\restrict(\bB^d\times\R^m)=\frac{1}{r^d}(T^{(x_0,r)}_{\gamma\asc{u}})_\#\gr_\#^u(\lL\restrict B(x_0,r))=\gr^{u^r}_\#(\lL\restrict\bB^d)
\]
and the convergence $u^r\to u^0$ in $\Lp^1(\bB^d;\R^m)$ implies that $\gr^{u^r}_\#(\lL\restrict\bB^d)\to \gr^{u^0}_\#(\lL\restrict\bB^d)$ strictly in $\mbfM(\overline{\bB^d}\times\R^m)$, we have that
\[
\frac{1}{r^d}(T^{(x_0,r)}_{\gamma\asc{u}})_\#\iota_\nu\to\gr_\#^{u^0}(\lL)\quad\text{ strictly in }\mbfM(\overline{\bB^d}\times\R^m).
\]
Using condition~\eqref{regtymcond3} and applying Lemma~\ref{lemgraphicallebesgueconverg}, we therefore see that for every $k\in\mbN$ the family $(\mu_r)_{r>0}\subset\mbfM(\overline{\bB^d}\times\R^m)$ defined by
\[
\mu_r:=\frac{1}{r^d}(T^{(x_0,r)}_{\gamma\asc{u}})_\#\left\{\ip{h_k}{\nu_{x,y}}+\frac{\dd\lambda_{\nu}}{\dd\iota_{\nu}}(x,y)\ip{h_k^\infty}{\nu^\infty_{x,y}}\right\}\iota_\nu
\]
converges strictly in $\mbfM(\overline{\bB^d}\times\R^m)$ to the limit
\[
\left\{\ip{h_k}{\nu_{x_0,u(x_0)}}+\frac{\dd\lambda_{\nu}}{\dd\iota_{\nu}}(x_0,u(x_0))\ip{h_k^\infty}{\nu^\infty_{x_0,u(x_0)}}\right\}\gr_\#^{u^0}(\lL\restrict\bB^d).
\]
It then follows immediately that	
\begin{align}
%\begin{split}\label{eqregtymidentify2}
&\lim_{r\to 0}\frac{1}{r^d}\int\varphi_k\left(\frac{x-x_0}{r},\frac{y-(u)_{x_0,r}}{r}\right)\left\{\ip{h_k}{\nu_{x,y}}+\frac{\dd\lambda_{\nu}}{\dd\iota_{\nu}}(x,y)\ip{h_k^\infty}{\nu^\infty_{x,y}}\right\}\;\dd\iota_\nu(x,y)\nonumber\\
&\qquad = \lim_{r\to 0}\int\varphi_k\left(z,w\right)\;\dd \mu_r(z,w)\label{eqregtymidentify2}\\
&\qquad =\int\varphi_k\left(z,w\right)\left\{\ip{h_k}{\nu_{x_0,u(x_0)}}+\frac{\dd\lambda_{\nu}}{\dd\iota_{\nu}}(x_0,u(x_0))\ip{h_k^\infty}{\nu^\infty_{x_0,u(x_0)}}\right\}\;\dd \left[\gr^{u^0}_\#(\lL\restrict\bB^d)\right](z,w).\nonumber
%\end{split}
\end{align}
Taking $\varphi_k=\mathbbm{1}$, $h_k=|\frarg|$ (note that we can always enlarge $\Hbf$ by a countable number of functions), this gives
\begin{align}
\begin{split}\label{eqregpart}
&\lim_{r\to 0}\frac{1}{r^d}\int\left\{|\nu_{x,y}|(\R^{m\times d})+\frac{\dd\lambda_{\nu}}{\dd\iota_{\nu}}(x,y)\right\}\;\dd\iota_\nu(x,y)\\
&\qquad \leq \gr_\#^{u^0}(\lL)(\bB^d\times\R^m)\left\{|\nu_{x_0,u(x_0)}|(\R^{m\times d})+\frac{\dd\lambda_{\nu}}{\dd\iota_{\nu}}(x_0,u(x_0))\right\}\\
&\qquad <\infty.
\end{split}
\end{align}
On the other hand, by condition~\eqref{regtymcond2} for $\mathbbm{1}\otimes|\frarg|\in\Hbf$, we observe
\[	
\lim_{r\to 0}\frac{1}{r^d}\lambda^s_\nu(\overline{B(x_0,r)}\times\sigma\R^m)\leq\lim_{r\to 0}\frac{\pi_\#\lambda_\nu^s(\overline{B(x_0,r)})}{r^d}<\infty.
\]
Combining this with~\eqref{eqregpart}, we see that $\lim_{r}\norm{\sigma_r}_\mbfY<\infty$. The sequential weak* closedness of $\YLift(\bB^d\times\R^m)$ in $\mbfY(\Omega\times\R^m;\R^{m\times d})$, see Lemma~\ref{lemliftymsclosed}, together with the Young Measure Compactness Theorem~\ref{thmymsseqcompact} then allows us to obtain a subsequence $r_n\downarrow 0$ such that $\bsigma_{r_n}\toY\bsigma$ for some $\bsigma\in\YLift(\bB^d\times\R^m)$ with $\llbracket\bsigma\rrbracket=\lim_{n}u^{r_n}=u^0$. By assuming that $\Hbf$ contains a countable set of functions $\{\varphi_k\otimes h_k\}$ with $h_k^\infty=0$ which are dense in $\C_0(\bB^d\times\R^m)$, we see that~\eqref{eqregtymidentify} in conjunction with with~\eqref{eqregtymidentify2} implies that $\sigma_{z,w}=\nu_{x_0,u(x_0)}$ for $\iota_\sigma$-almost every $(z,w)\in\bB^d\times\R^m$.

Next, we claim that $\lambda_\sigma^s:=\lambda_\sigma-\frac{\dd\lambda_\sigma}{\dd\iota_\sigma}\iota_\sigma$ satisfies $\supp\lambda_\sigma^s\subset\overline{\Omega}\times\infty\pd\bB^m$. By considering the integrand $\mathbbm{1}\otimes|\frarg|\in\mbfE(\bB^d\times\R^m)$, we see that the convergence $\ddpr{\mathbbm{1}\otimes|\frarg|,\bsigma_{r_n}}\to\ddprb{\mathbbm{1}\otimes|\frarg|,\bsigma}$ implies
\[
\frac{1}{r_n^d}(T^{(x_0,r_n)}_{\gamma\asc{u}})_\#\left(|\nu_{x,y}|(\R^{m\times d})\iota_\nu+\lambda_\nu\right)\wsc|\sigma_{z,w}|(\R^{m\times d})\iota_\sigma+\lambda_\sigma
\]
as $n\to\infty$ in $\mbfM^+(\overline{\Omega}\times\sigma\R^m)$. The computation~\eqref{eqregtymidentify2} has already shown that
%\begin{align*}
\[
\frac{1}{r_n^d}(T^{(x_0,r_n)}_{\gamma\asc{u}})_\#\left\{|\nu_{x,y}|(\R^{m\times d})+\frac{\dd\lambda_\nu}{\dd\iota_\nu}(x,y)\right\}\iota_\nu\to\left\{|\nu_{x_0,u(x_0)}|(\R^{m\times d})+\frac{\dd\lambda_\nu}{\dd\iota_\nu}(x_0,u(x_0))\right\}\iota_\sigma
 \]
%\end{align*}
strictly in $\mbfM^+(\overline{\bB^d}\times\R^m)$ (and hence strictly in $\mbfM^+(\overline{\bB^d}\times\sigma\R^m)$). Since $\sigma_{z,w}=\nu_{x_0,u(x_0)}$ almost everywhere, we see that
\[
\lambda_\sigma =\frac{\dd\lambda_\nu}{\dd\iota_\nu}(x_0,u(x_0))\iota_\sigma+\wstarlim_{n\to\infty}\frac{1}{r_n^d}(T^{(x_0,r)}_{\gamma\asc{u}})_\#\lambda^s_\nu.
\]
By the lower semicontinuity of the total variation,
\[
\left(\lambda_\sigma-\frac{\dd\lambda_\nu}{\dd\iota_\nu}(x_0,u(x_0))\iota_\sigma\right)(\overline{\bB^d}\times K\bB^m)\leq\liminf_{n\to\infty}\frac{1}{r_n^d}(T^{(x_0,r_n)}_{\gamma\asc{u}})_\#\lambda^s_\nu(\overline{\bB^d}\times K\bB^m).
\]
However, condition~\eqref{regtymcond4} implies
\[
\frac{1}{r_n^d}(T^{(x_0,r_n)}_{\gamma\asc{u}})_\#\lambda^s_\nu(\overline{\bB^d\times K\bB^m})=\frac{\lambda_{{\nu}}^s(\overline{B(x_0,r_n)\times B((u)_{x_0,r_n},{Kr_n})})}{r_n^d}\to 0\text{ as }n\to\infty,
\]
and so $(\lambda^s_\sigma-\frac{\dd\lambda_\nu}{\dd\iota_\nu}(x_0,u(x_0))\iota_\sigma)(\overline{\bB^d}\times K\bB^m)=0$ for each $K\in\mbN$. Letting $K\to\infty$, we see that $\frac{\dd\lambda_\sigma}{\dd\iota_\sigma}\equiv\frac{\dd\lambda_\nu}{\dd\iota_\nu}(x_0,u(x_0))$ and $\lambda_\sigma^s$ must be concentrated on $\overline{\bB^d}\times\infty\pd\bB^m$.

Applying Theorem~\ref{thm:restrictionsofyms}, we can therefore find a (not relabelled) Young measure $\bsigma=\bsigma\restrict(\overline{\bB^d}\times\R^m)\in\YLift(\bB^d\times\R^m)$ satisfying 
\begin{equation*}
\ddprb{\varphi_k\otimes h_k,\bsigma}=\int\varphi_k(z,\nabla u(x_0)z)\left\{\ip{h_k}{\nu_{x_0,u(x_0)}}+\frac{\dd\lambda_{\nu}}{\dd\iota_{\nu}}(x_0,u(x_0))\ip{h_k^\infty}{\nu^\infty_{x_0,u(x_0)}}\right\}\;\dd z
\end{equation*}
for every $\varphi_k\otimes h_k\in\Hbf$. Making use of the density property of $\Hbf$ in $\mbfE(\bB^d\times\R^m)$, we obtain the required result. That $\bsigma$ inherits membership of $\AYLift(\bB^d\times\R^m)$ from $\bnu$ follows from the fact that each $\bsigma_r$ is a member of $\AYLift(\bB^d\times\R^m)$, that $\AYLift(\bB^d\times\R^m)$ is closed under weak* convergence in $\mbfY(\bB^d\times\R^m;\R^{m\times d})$, and that Theorem~\ref{thm:restrictionsofyms} preserves membership of $\AYLift(\bB^d;\R^m)$.
\end{proof}

\begin{theorem}\label{thmcantortangentym}
Let $\bnu\in\YLift(\Omega\times\R^m)$ and define $u:=\llbracket\bnu\rrbracket\in\BV_\#(\Omega;\R^m)$. Then for $|D^cu|$-almost every $x_0\in\Omega$, there exists a \textbf{Cantor tangent Young measure} $\bsigma\in\YLift(\bB^d\times\R^m)$ such that
\begin{enumerate}[(i)]
	\item $[\bsigma]=\gamma\asc{u^0}\in\BV_\#(\bB^d;\R^m)$ for some $u^0$ of the form described in Theorem~\ref{thmbvblowup};
	\item $\sigma_{z,w}=\delta_0$ for $\iota_\sigma$-almost every $(z,w)\in\bB^d\times\R^m$;
	\item $\sigma^\infty_{z,w}=\nu^\infty_{x_0,u(x_0)}$ for $\lambda_\sigma$-almost every $(z,w)\in\bB^d\times\R^m$;
	\item $\lambda_\sigma=\frac{\dd\lambda_\nu}{\dd|\gamma\asc{u}|}(x_0,u(x_0))|\gamma\asc{u^0}|$.
\end{enumerate}
In addition, if $\bnu\in\AYLift(\Omega\times\R^m)$ then $\bsigma\in\AYLift(\bB^d\times\R^m)$.
\end{theorem}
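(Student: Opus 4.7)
My strategy parallels that of Theorem~\ref{thmregtangentyms}, with modifications dictated by the Cantor blow-up regime, in which $c_r = r^d/|Du|(B(x_0, r)) \downarrow 0$. I will first fix a countable determining family $\{\varphi_k \otimes h_k\}_k \subset \mbfE(\bB^d \times \R^m)$ as in Lemma~\ref{remtensorprods}, and declare $x_0 \in \Ccal_u$ \emph{admissible} if: (a) some subsequence $(u^{r_n})_n$ converges weakly* in $\BV(\bB^d;\R^m)$ to a limit $u^0$ of the form described in Theorem~\ref{thmbvblowup}(iii); (b) the derivative $\frac{\dd\lambda_\nu}{\dd|\gamma\asc{u}|}(x_0, u(x_0))$ exists and is finite; (c) for every $k$, the point $(x_0, u(x_0))$ is a cylindrical Lebesgue point (in the sense of Theorem~\ref{thmgeneralisedbesicovitch}) of $(x,y)\mapsto\langle h_k, \nu_{x,y}\rangle$ against $\iota_\nu$ and of $(x,y)\mapsto\frac{\dd\lambda_\nu}{\dd|\gamma\asc{u}|}(x,y)\langle h_k^\infty, \nu^\infty_{x,y}\rangle$ against $|\gamma\asc{u}|$; and (d) for every $K\in\mbN$,
\[
\lim_{r\downarrow 0}\frac{\lambda_\nu^{\mathrm{gs}}(\overline{B(x_0, r)\times B((u)_{x_0, r}, K c_r)})}{|Du|(B(x_0, r))}=0,
\]
where $\lambda_\nu^{\mathrm{gs}}:=\lambda_\nu-\frac{\dd\lambda_\nu}{\dd|\gamma\asc{u}|}|\gamma\asc{u}|$. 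That (a)--(d) hold at $|D^cu|$-almost every $x_0 \in \Ccal_u$ follows from Theorem~\ref{thmbvblowup}(iii) together with Theorem~\ref{thmgeneralisedbesicovitch} applied to the graphical differentiating measure $|\gamma\asc{u}|$, which makes sense away from $\Jcal_u$ since Theorem~\ref{thmliftingsstructure} gives $\gamma\asc{u}\restrict((\Omega\setminus\Jcal_u)\times\R^m)=\gr^u_\#(\nabla u\,\lL+D^c u)$ and hence $|\gamma\asc{u}|\restrict((\Omega\setminus\Jcal_u)\times\R^m)=\gr^u_\#(|Du|\restrict(\Omega\setminus\Jcal_u))$.

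At an admissible $x_0$ I will then appeal to Theorem~\ref{thmbvblowup}(iii) to pass to a (non-relabelled) subsequence and select $\tau_n \in (1 - 1/n, 1)$ such that, writing $\rho_n := \tau_n r_n$, the rescalings satisfy $u^{\rho_n}\to u^0$ \emph{strictly} in $\BV(\bB^d;\R^m)$. Proposition~\ref{lemmustrict} then upgrades this to strict convergence $\gamma\asc{u^{\rho_n}}\to\gamma\asc{u^0}$ in $\Lift(\bB^d\times\R^m)$, so that in particular $\gr^{u^{\rho_n}}_\#(\lL\restrict\bB^d)\to\gr^{u^0}_\#(\lL\restrict\bB^d)$ and $|\gamma\asc{u^{\rho_n}}|\to|\gamma\asc{u^0}|$ strictly. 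Applying the rescaling formula~\eqref{eqrescaledym} to $\varphi_k\otimes h_k$ and using the positive one-homogeneity of $h_k^\infty$ to extract the factor $c_{\rho_n}$, I obtain
\begin{align*}
\ddprb{\varphi_k\otimes h_k, \bsigma_{\rho_n}}&=\frac{1}{\rho_n^d}\int \varphi_k\!\left(\frac{x-x_0}{\rho_n},c_{\rho_n}\frac{y-(u)_{x_0,\rho_n}}{\rho_n}\right)\langle h_k(c_{\rho_n}\,\frarg),\nu_{x,y}\rangle\,\dd\iota_\nu(x,y)\\
&\quad+\frac{c_{\rho_n}}{\rho_n^d}\int \varphi_k\!\left(\frac{x-x_0}{\rho_n},c_{\rho_n}\frac{y-(u)_{x_0,\rho_n}}{\rho_n}\right)\langle h_k^\infty,\nu^\infty_{x,y}\rangle\,\dd\lambda_\nu(x,y),
\end{align*}
which I analyse term by term.

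For the regular term, since $c_{\rho_n}\to 0$, the linear growth of $h_k$ combined with the fact that each $\nu_{x,y}$ is a probability measure and the Dominated Convergence Theorem yield $\langle h_k(c_{\rho_n}\,\frarg), \nu_{x,y}\rangle \to h_k(0)$ at $\iota_\nu$-almost every $(x,y)$. Condition~(c), the strict convergence of $\rho_n^{-d}(T_\gamma^{(x_0,\rho_n,c_{\rho_n})})_\#\iota_\nu=\gr^{u^{\rho_n}}_\#(\lL\restrict\bB^d)$ to $\gr^{u^0}_\#(\lL\restrict\bB^d)$, and Lemma~\ref{lemgraphicallebesgueconverg} applied to the $\iota_\nu$-integrable majorant $(x,y)\mapsto\langle 1+|\frarg|,\nu_{x,y}\rangle$ then provide a Vitali-type passage to the limit $h_k(0)\int_{\bB^d}\varphi_k(z,u^0(z))\,\dd z$. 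This matches the prescription $\sigma_{z,w}=\delta_0$ on $\iota_\sigma=\gr^{u^0}_\#(\lL\restrict\bB^d)$. For the singular term I decompose $\lambda_\nu = \frac{\dd\lambda_\nu}{\dd|\gamma\asc{u}|}|\gamma\asc{u}|+\lambda_\nu^{\mathrm{gs}}$ and exploit the crucial identity $c_{\rho_n}/\rho_n^d = 1/|Du|(B(x_0,\rho_n))$. By Lemma~\ref{lemrestrictionsofliftings}, $(c_{\rho_n}/\rho_n^d)(T_\gamma^{(x_0,\rho_n,c_{\rho_n})})_\#|\gamma\asc{u}|=|\gamma\asc{u^{\rho_n}}|$, which converges strictly to $|\gamma\asc{u^0}|$; together with condition~(c) and Lemma~\ref{lemgraphicallebesgueconverg} (now with $\eta=|\gamma\asc{u}|$), the $|\gamma\asc{u}|$-absolutely continuous part converges to
\[
\frac{\dd\lambda_\nu}{\dd|\gamma\asc{u}|}(x_0,u(x_0))\,\langle h_k^\infty,\nu^\infty_{x_0,u(x_0)}\rangle\int \varphi_k(z,w)\,\dd|\gamma\asc{u^0}|(z,w),
\]
exactly the contribution predicted by the four stated properties of $\bsigma$.

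The main obstacle is condition~(d), which dictates that the graph-singular contribution vanishes: because the anisotropic cylinders $\overline{B(x_0,r)\times B((u)_{x_0,r},Kc_r)}$ have degenerating aspect ratio $c_r/r\downarrow 0$, neither the Besicovitch Derivation Theorem nor its Morse generalisation applies, and it is precisely here that the graphical Besicovitch Derivation Theorem~\ref{thmgeneralisedbesicovitch} is indispensable: it permits $c_r^{x_0}\downarrow 0$ and $y_r^{x_0}\to u(x_0)$ in complete generality, so the mutual singularity $\lambda_\nu^{\mathrm{gs}}\perp|\gamma\asc{u}|$ forces the desired cylindrical density to vanish $|D^cu|$-almost everywhere. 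Once (d) is in place, the Young measure Compactness Theorem~\ref{thmymsseqcompact} and the weak* closedness of $\YLift(\bB^d\times\R^m)$ (Lemma~\ref{lemliftymsclosed}) yield a limit $\bsigma\in\YLift(\bB^d\times\R^m)$ with $\llbracket\bsigma\rrbracket=u^0$, and Theorem~\ref{thm:restrictionsofyms} strips away any residual concentration of $\lambda_\sigma$ on $\overline{\bB^d}\times\infty\pd\bB^m$. Finally, as in the proof of Theorem~\ref{thmregtangentyms}, the procedure preserves membership in $\AYLift$: every rescaled $\bsigma_{\rho_n}$ lies in $\AYLift(\bB^d\times\R^m)$ (Lemma~\ref{lemrestrictionsofliftings}), weak* limits in $\AYLift$ remain in $\AYLift$ (Lemma~\ref{lemliftymsclosed}), and Theorem~\ref{thm:restrictionsofyms} preserves $\AYLift$.
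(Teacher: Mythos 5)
Your overall strategy — rescale by the Cantor factor $c_r=r^d/|Du|(B(x_0,r))$, use Theorem~\ref{thmdiffusetangentlifting}/Proposition~\ref{lemmustrict} to upgrade the $\BV$ blow-up to strict convergence of elementary liftings, split $\lambda_\nu$ into its $|\gamma\asc{u}|$-absolutely-continuous and graph-singular parts, and kill the graph-singular part via the graphical Besicovitch Derivation Theorem — is exactly the paper's. But there is a concrete error in your condition~(d) that propagates.

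The preimage of $\overline{\bB^d}\times K\bB^m$ under the homothety
\[
T^{(x_0,r,c_r)}_{\gamma\asc{u}}(x,y)=\Bigl(\frac{x-x_0}{r},\,c_r\frac{y-(u)_{x_0,r}}{r}\Bigr)
\]
is $\overline{B(x_0,r)}\times B\bigl((u)_{x_0,r},\,K r c_r^{-1}\bigr)$, \emph{not} $\overline{B(x_0,r)}\times B\bigl((u)_{x_0,r},\,K c_r\bigr)$. Consequently the quantity you must force to vanish is
\[
\frac{\lambda_\nu^{\mathrm{gs}}\bigl(\overline{B(x_0,r)\times B((u)_{x_0,r},\,K r c_r^{-1})}\bigr)}{|Du|(B(x_0,r))},
\]
and the vertical radius is $K r c_r^{-1}=K\,r^{1-d}|Du|(B(x_0,r))$. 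Unlike $c_r$ (which indeed tends to zero at every Cantor point, since $|D^cu|\perp\lL$), this quantity does \emph{not} tend to zero automatically: at a generic Cantor point the rate is not controlled. Theorem~\ref{thmgeneralisedbesicovitch} only applies if the vertical radii shrink, so without that control the graph-singular term cannot be discarded, and your argument stalls at the one place where it really has to work.

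What fixes this — and what you have omitted — is the preliminary selection of the blow-up sequence. By Proposition~3.92 in Ambrosio--Fusco--Pallara, at $|D^cu|$-a.e.\ $x_0$ there is a sequence $s_n\downarrow 0$ with $s_n^{1-d}|Du|(B(x_0,s_n))\to 0$; one then passes to a further subsequence and picks a \emph{single} $\tau\in(0,1)$ (Theorem~\ref{thmbvblowup}(iii) gives a fixed $\tau$ working for the whole sequence, not a $\tau_n\uparrow 1$ as you write) so that with $r_n:=\tau s_n$ one has both the strict convergence $u^{r_n}\to u^0$ and $r_n^{1-d}|Du|(B(x_0,r_n))\to 0$; this second property is condition (I) in the paper's proof and is precisely what makes $K r_n c_{r_n}^{-1}\to 0$, which in turn lets Theorem~\ref{thmgeneralisedbesicovitch} kill $\lambda_\nu^{\mathrm{gs}}$. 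Your ``aspect ratio $c_r/r\downarrow 0$'' assertion is also inconsistent with this: the relevant ratio $r c_r^{-1}/r=c_r^{-1}=|Du|(B(x_0,r))/r^d$ diverges at Cantor points, which is exactly why the classical Besicovitch and Morse theorems fail and a genuinely new derivation theorem is required.

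Two smaller omissions: you need an a priori bound on $\lim_r \pi_\#\lambda_\nu(\overline{B(x_0,r)})/|Du|(B(x_0,r))$ to guarantee $\sup_n\|\bsigma_{r_n}\|_{\mbfY}<\infty$ before invoking compactness (finiteness of $\frac{\dd\lambda_\nu}{\dd|\gamma\asc{u}|}(x_0,u(x_0))$ does not bound the graph-singular mass); and you also need to discard the jump contribution, via $\lim_r|D^ju|(\overline{B(x_0,r)})/|Du|(B(x_0,r))=0$, before Lemma~\ref{lemgraphicallebesgueconverg} can be applied to $|\gamma\asc{u}|\restrict((\Omega\setminus\Jcal_u)\times\R^m)$. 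Both hold $|D^cu|$-a.e.\ by the Besicovitch Derivation Theorem, but they should be part of your admissibility conditions.
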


\begin{proof}
As before, Let $\Hbf:=\{\varphi_k\otimes h_k\}_{k\in\mbN}\subset\mbfE(\bB^d\times\R^m)$ be a countable collection of tensor products determining $\mathbf{LY}$-convergence as discussed in Remark~\ref{remtensorprods} and let $x_0\in\Ccal_u$ be such that
\begin{enumerate}[(I)]
\item\label{cantortymcondstrict} there exists a sequence $r_n\downarrow 0$ for which $u^{r_n}$ (as defined in Theorem~\ref{thmbvblowup}) converges strictly in $\BV_\#(\bB^d;\R^m)$ to a limit $u^0$, $\gamma\asc{u^{r_n}}\to\gamma\asc{u^0}$ strictly, and 
\[
\lim_{n\to\infty} r_n^{1-d}{|Du|(B(x_0,r_n))}=0;
\]
\item\label{cantortymcondlebesguesing} for each $k\in\mbN$,
\[
\lim_{r\to 0}\frac{1}{|Du|(B(x_0,{r}))}\Bigg|\int_{B(x_0,{r})}\ip{h_k}{\nu_{x,u(x)}}+\frac{\dd\lambda_\nu}{\dd\iota_\nu}(x,u(x))\langle h^\infty_k,\nu^\infty_{x,u(x)}\rangle\;\dd x\Bigg|=0;
\]
\item \label{cantortymcondbesicovitch} it holds that
\[
\lim_{r\to 0}\frac{\pi_\#\lambda_\nu(\overline{B(x_0,r)})}{|Du|(B(x_0,r))}<\infty, \qquad\lim_{r\to 0}\frac{|D^ju|(\overline{B(x_0,r)})}{|Du|({B(x_0,r)})}=0;
\]
\item\label{cantortymcondlebesgue} for each $k\in\mbN$ and every $\varphi_k\otimes h_k\in\Hbf$, $(x_0,u(x_0))$ is a $\gamma\asc{u}\restrict((\Omega\setminus\Jcal_u)\times\R^m)$ cylindrical Lebesgue point for the function
\[
(x,y)\mapsto \frac{\dd\lambda_\nu}{\dd |\gamma\asc{u}|}(x,y)\ip{h^\infty_k}{\nu^\infty_{x,y}}
\]
in the sense of Theorem~\ref{thmgeneralisedbesicovitch};
\item\label{cantortymcondsing} for each $K\in\mbN$,
\begin{align*}
&\lim_{n\to \infty}\frac{\lambda_\nu^{\mathrm{gs}}(\overline{B(x_0,r_n)\times B((u)_{x_0,r_n},Kr_n c_{r_n}^{-1})})}{|{D}u|(B(x_0,r_n))}\\
&\qquad =\lim_{n\to\infty}\frac{\lambda_\nu^{\mathrm{gs}}(\overline{B(x_0,r_n)\times B((u)_{x_0,r_n},Kr_n c_{r_n}^{-1})})}{\pi_\#|\gamma\asc{u}|(\overline{B(x_0,r_n)})}\\
&\qquad=\frac{\dd\lambda_\nu^{\mathrm{gs}}}{\dd |\gamma\asc{u}|}(x_0,u(x_0))=0,
\end{align*}
where $\lambda_\nu^{\mathrm{gs}}:=\lambda_\nu-\frac{\dd\lambda_\nu}{\dd|\gamma\asc{u}|}|\gamma\asc{u}|\restrict((\Omega\setminus\Jcal_u)\times\R^m)$ is the graph-singular part of $\lambda_\nu$.
\end{enumerate}

By Proposition~3.92 in~\cite{AmFuPa00FBVF}, $|D^cu|$-almost every $x_0\in\Omega$ admits a sequence $s_n\downarrow 0$ such that $\lim_n s_n^{1-d}|Du|(B(x_0,s_n))=0$. By Theorem~\ref{thmdiffusetangentlifting}, we can (upon passing to a non-relabelled subsequence) find $\tau\in(0,1)$ such that $(u^{\tau s_n})_{n}$ and $(\gamma\asc{u^{\tau s_n}})_n$ converge strictly in $\BV(\bB^d;\R^m)$ and $\ALift(\bB^d\times\R^m)$ to limits $u^0$ and $\gamma\asc{u^0}$, respectively. Since
\[
\lim_{n\to\infty}(\tau s_n)^{1-d}|Du|(B(x_0,\tau s_n))\leq\tau^{1-d}\lim_{n\to\infty}s_n^{1-d}|Du|(B(x_0,s_n))=0,
\]
we can take $r_n:=\tau s_n$ to see that condition~\eqref{cantortymcondstrict} holds for $|D^cu|$-almost every $x_0\in\Ccal_u$. Condition~\eqref{cantortymcondlebesguesing} follows from the Besicovitch Derivation Theorem. For $|Du|$-almost every $x_0\in\Ccal_u$,
\[
\lim_{r\to 0}\frac{|Du|(B(x_0,r))}{|D^cu|(B(x_0,r))}=\lim_{r\to 0}\frac{|Du|(\overline{B(x_0,r)})}{|D^cu|(\overline{B(x_0,r)})}=1.
\]
Since $|D^cu|(\pd B(x_0,r))=0$, this implies that $|D^cu|(B(x_0,r))=|D^cu|(\overline{B(x_0,r)})$ for every $r>0$ and $x_0$. Thus, 
\[
\lim_{r\downarrow 0}\frac{|Du|(B(x_0,r))}{|Du|(\overline{B(x_0,r)})}=1\quad\text{ for $|Du|$-almost every $x_0\in\Ccal_u$.}
\]
We see then that \eqref{cantortymcondbesicovitch} also follows from the Besicovitch Derivation Theorem (recall that $|D^ju|$ is concentrated on $\Jcal_u$ and that $\Ccal_u\cap\Jcal_u=\emptyset$). conditions~\eqref{cantortymcondlebesgue} and~\eqref{cantortymcondsing} follow from Theorem~\ref{thmgeneralisedbesicovitch} combined with the fact that $\gamma\asc{u}\restrict((\Omega\setminus\Jcal_u)\times\R^m)=\gr_\#^u(\nabla u\lL+D^cu)$ is a $u$-graphical measure, that 
\[
\lim_{r\downarrow 0}\frac{|\nabla u\lL+D^cu|(B(x_0,r))}{|Du|(B(x_0,r))}=1\quad\text{ for $|Du|$-almost every $x_0\in\Omega\setminus\Jcal_u$,}
\]
that $(u)_{x_0,r}\to u(x_0)$ for $|D^cu|$-almost every $x_0\in\Ccal_u$, and that (by virtue of condition~\eqref{cantortymcondstrict}) $Kr_n c_{r_n}^{-1}=Kr_n^{1-d}|Du|(B(x_0,r_n))\to 0$ as $n\to\infty$.

Relabelling $\bsigma_n:=\bsigma_{r_n}$ and $c_n:=c_{r_n}=r_n^d/|Du|(B(x_0,r_n))$, from~\eqref{eqrescaledym} we get for positively one-homogeneous $h_k$,
\begin{equation}\label{eqcantorymeq}
\ddprb{\varphi_k\otimes h_k,\bsigma_n}=\frac{c_n}{r_n^d}\ddprB{\varphi_k\left(\frac{\frarg-x_0}{r},c_n\left(\frac{\frarg-(u)_{x_0,r_n}}{r_n}\right)\right)\otimes h_k,\bnu}.
\end{equation}
Since we can always assume that $\mathbbm{1}\otimes|\frarg|\in\Hbf$, conditions~\eqref{cantortymcondlebesguesing} and~\eqref{cantortymcondbesicovitch} imply
\begin{align*}
\limsup_{n\to\infty}\norm{\bsigma_n}_\mbfY&=\limsup_{n\to\infty}\left\{\frac{1}{|Du|(B(x_0,r_n))}\int_{B(x_0,r_n)}\ip{|\frarg|}{\nu_{x,u(x)}}\;\dd x+\frac{\pi_\#\lambda_\nu(\overline{B(x_0,r_n)})}{|Du|(B(x_0,r_n))}\right\}\\
&<\infty.
\end{align*}
Passing to a (non-relabelled) subsequence in $n$, we are therefore guaranteed the existence of $\bsigma\in\YLift(\bB^d\times\R^m)$ such that $\bsigma_n\wsc\bsigma$ as $n\to\infty$.

First, we show that $\sigma_{z,w}=\delta_0$ for $\iota_\sigma$-almost every $(z,w)\in\bB^d\times\R^m$. By Corollary~\ref{lemstructureliftingym} and the fact that $\llbracket\bsigma_n\rrbracket=u^{r_n}$, we observe first that $\llbracket\bsigma\rrbracket=u^0$ and hence that $\iota_\sigma=\gr^{u^0}_\#(\lL\restrict\bB^d)$. Let $\varphi\in\C_c(\R^{m\times d};[0,1])$. Since $\mathbbm{1}\otimes\varphi(\frarg)|\frarg|\in\mbfE(\bB^d\times\R^m)$ with $\sigma(\mathbbm{1}\otimes\varphi(\frarg)|\frarg|)^\infty\equiv 0$, we must have that
\[
\ddprb{\mathbbm{1}\otimes\varphi(\frarg)|\frarg|,\bsigma}=\int_{\bB^d}\ip{\varphi(\frarg)|\frarg|}{\sigma_{z,u^0(z)}}\;\dd z.
\]
On the other hand, by~\eqref{eqrescaledym} in combination with condition~\eqref{cantortymcondlebesguesing},
\begin{align*}
\lim_{n\to\infty}\ddprb{\mathbbm{1}\otimes\varphi(\frarg)|\frarg|,\bsigma_n}&=\lim_{n\to\infty}\frac{1}{r_n^d}\int_{B(x_0,r_n)}\ip{\varphi(c_{r_n}\frarg)|c_{r_n}\frarg|}{\nu_{x,u(x)}}\;\dd x\\
&=\lim_{n\to\infty}\frac{1}{|Du|(B(x_0,r_n))}\int_{B(x_0,r_n)}\ip{\varphi(c_{r_n}\frarg)|\frarg|}{\nu_{x,u(x)}}\;\dd x\\
&\leq \lim_{n\to\infty}\frac{1}{|Du|(B(x_0,r_n))}\int_{B(x_0,r_n)}\ip{|\frarg|}{\nu_{x,u(x)}}\;\dd x\\
&=0.
\end{align*}
It follows that $\ip{\varphi(\frarg)|\frarg|}{\sigma_{z,u^0(z)}}=0$ for $\lL$-almost every $z\in\bB^d$ for every $\varphi\in\C_c(\R^{m\times d};[0,1])$, which is only possible if $\sigma_{z,u^0(z)}=\delta_0$ for $\lL$-almost every $z\in\bB^d$.

Estimating next the regular part of~\eqref{eqcantorymeq}, we see that condition~\eqref{cantortymcondlebesguesing} forces
\begin{align}
\begin{split}\label{eqregvanishes}
&\Bigg|\frac{c_n}{r_n^d}\int\varphi_k\bigg(\frac{x-x_0}{r_n},c_n\bigg(\frac{y-(u)_{x_0,r_n}}{r_n}\bigg)\bigg)\\
&\qquad\qquad\qquad\qquad\quad\cdot\left\{\ip{h_k}{\nu_{x,y}}+\frac{\dd\lambda_\nu}{\dd\iota_\nu}(x,y)\ip{h^\infty_k}{\nu^\infty_{x,y}}\right\}\;\dd\iota_\nu(x,y)\Bigg|\\
&\qquad=\Bigg|\frac{c_n}{r_n^d}\int\varphi_k\bigg(\frac{x-x_0}{r_n},c_n\bigg(\frac{u(x)-(u)_{x_0,r_n}}{r_n}\bigg)\bigg)\\
&\qquad\qquad\qquad\qquad\quad\cdot \left\{\ip{h_k}{\nu_{x,u(x)}}+\frac{\dd\lambda_\nu}{\dd\iota_\nu}(x,u(x))\langle h^\infty_k,\nu^\infty_{x,u(x)}\rangle\right\}\;\dd x\Bigg|\\
&\qquad\leq  \frac{\norm{\varphi_k}_\infty}{|Du|(B(x_0,r_n))}\int_{B(x_0,r_n)}\Bigl|\ip{h_k}{\nu_{x,u(x)}}+\frac{\dd\lambda_\nu}{\dd\iota_\nu}(x,u(x))\langle h^\infty_k,\nu^\infty_{x,u(x)}\rangle\Bigr|\;\dd x\\
&\qquad \to 0\text{ as }r_n\to 0.
\end{split}
\end{align}

Now note that, by condition~\eqref{cantortymcondbesicovitch} and the fact that $\Jcal_{u^{r}}=\{z\in\bB^d\colon x_0+rz\in\Jcal_u\}=T^{(x_0,r)}(\Jcal_u)$, it holds that
\begin{equation}\label{eqjumpvanishes}
\lim_{r\to 0}|\gamma\asc{u^{r}}|(\Jcal_{u^{r}}\times\R^m)=\lim_{r\to 0}\frac{|D^ju|(B(x_0,r))}{|Du|(B(x_0,r))}=0.
\end{equation}
Abbreviating $u^n:=u^{r_n}$, we see that~\eqref{eqjumpvanishes} combined with condition~\eqref{cantortymcondstrict} implies
\begin{equation}\label{eqstrictconverg}
(\gamma\asc{u^n}\restrict(\bB^d\setminus\Jcal_{u^n}\times\R^m))_n\text{ converges strictly to }\gamma\asc{u^0}\text{ in }\mbfM(\bB^d\times\R^m;\R^{m\times d}).
\end{equation}
Recalling the notation of Lemma~\ref{lemrestrictionsofliftings} to write
\[
\gamma\asc{u^n}\restrict(\bB^d\setminus\Jcal_{u^n}\times\R^m)=\frac{c_{n}}{r_n^d}\left(T_{\gamma\asc{u}}^{(x_0,r_n)}\right)_\#\bigl[\gamma\asc{u}\restrict((\Omega\setminus\Jcal_u)\times\R^m)\bigr],
\]
we therefore deduce from~\eqref{eqstrictconverg} in combination with Lemma~\ref{lemgraphicallebesgueconverg} and condition~\eqref{cantortymcondlebesgue} that the family $(\mu^k_n)_{n\in\mbN}\subset\mbfM(\bB^d\times\R^m)$ defined by
\[
\mu^k_n:=\frac{c_{n}}{r_n^d}\left(T_{\gamma\asc{u}}^{(x_0,r_n)}\right)_\#\left(\frac{\dd\lambda_\nu}{\dd |\gamma\asc{u}|}(x,y)\ip{h^\infty_k}{\nu^\infty_{x,y}}|\gamma\asc{u}|\restrict((\Omega\setminus\Jcal_u)\times\R^m)\right)
\]
converges strictly to the limit
\[
\left(\frac{\dd\lambda_\nu}{\dd |\gamma\asc{u}|}(x_0,u(x_0))\ip{h^\infty_k}{\nu^\infty_{x_0,u(x_0)}}\right)|\gamma\asc{u^0}|\quad\text{ for each fixed }k\in\mbN.
\]
Thus, for $\varphi\in\C(\overline{\bB^d}\times\sigma\R^m)$,
\begin{align}
&\lim_{n\to\infty}\frac{c_n}{r_n^d}\int_{(\Omega\setminus\Jcal_u)\times\R^m}\varphi\left(\frac{x-x_0}{r_n},c_n\left(\frac{y-(u)_{x_0,r_n}}{r_n}\right)\right)\frac{\dd\lambda_\nu}{\dd |\gamma\asc{u}|}(x,y)\ip{h^\infty_k}{\nu^\infty_{x,y}}\;\dd|\gamma\asc{u}|(x,y)\nonumber\\
&\qquad=\lim_{n\to\infty}\int\varphi\left(z,w\right)\;\dd\mu^k_n(z,w)\label{eqidentifylim}\\
&\qquad=\int \varphi(z,w)\frac{\dd\lambda_\nu}{\dd |\gamma\asc{u}|}(x_0,u(x_0))\langle h^\infty_k,\nu^\infty_{x_0,u(x_0)}\rangle\;\dd|\gamma\asc{u^0}|(z,w)\nonumber.
\end{align}
Taking $h_k=|\frarg|$ in~\eqref{eqidentifylim} and~\eqref{eqregvanishes} implies
\[
\lambda_\sigma=\frac{\dd\lambda_\nu}{\dd|\gamma\asc{u}|}(x_0,u(x_0))|\gamma\asc{u^0}|+\underset{n\to\infty}{\wstarlim}\;\frac{c_n}{r_n^d}\left(T_{\gamma\asc{u}}^{(x_0,r_n)}\right)_\#\lambda_\nu^{\mathrm{gs}},
\]
where the weak* limit is to be understood in $\mbfM^+(\overline{\bB^d}\times\sigma\R^m)$ and we recall $\lambda_\nu^{\mathrm{gs}}=\lambda_\nu-\frac{\dd\lambda_\nu}{\dd|\gamma\asc{u}|}|\gamma\asc{u}|\restrict((\Omega\setminus\Jcal_u)\times\R^m)$.
By condition~\eqref{cantortymcondsing}, however, we see that for $K\in\mbN$,
\begin{align*}
\lim_{n\to\infty}\frac{c_n}{r_n^d}\Bigg|\left(T_{\gamma\asc{u}}^{(x_0,r_n)}\right)_\#\lambda^{\mathrm{gs}}_\nu(\overline{\bB^d}\times K\bB^m)\Bigg|&=\lim_{n\to\infty}\frac{\lambda_\nu^{\mathrm{gs}}(\overline{B(x_0,r_n)\times B((u)_{x_0,r_n},Kr_nc_n^{-1})})}{|{D}u|(B(x_0,r))}\\
&=0.
\end{align*}
Furthermore, by the lower semicontinuity of the total variation,
\begin{align*}
&\left|\lambda_\sigma-\frac{\dd\lambda_\nu}{\dd|\gamma\asc{u}|}(x_0,u(x_0))|\gamma\asc{u^0}|\right|(\overline{\bB^d}\times K\bB^m)\\
&\qquad\leq \liminf_{n\to \infty}\frac{c_n}{r_n^d}\left(T_{\gamma\asc{u}}^{(x_0,r_n)}\right)_\#\lambda^{\mathrm{gs}}_\nu(\overline{\bB^d}\times K\bB^m)\\
&\qquad=0\quad\text{ for evey }K\in\mbN,
\end{align*}
which implies simultaneously that $\frac{\dd\lambda_\sigma}{\dd|\gamma\asc{u^0}|}\equiv\mathrm{const}\equiv\frac{\dd\lambda_\nu}{\dd|\gamma\asc{u}|}(x_0,u(x_0))$ and that $\lambda_\sigma^{\mathrm{gs}}$ is concentrated within the set $\overline{\bB^d}\times\infty\pd\bB^m$. An application of Theorem~\ref{thm:restrictionsofyms} now lets us replace $\bsigma$ by a (non-relabelled) Young measure $\bsigma=\bsigma\restrict(\overline{\bB^d}\times\R^m)$ in $\YLift(\bB^d\times\R^m)$ such that
\[
\ddprb{\varphi_k\otimes h_k,\bsigma}=\int_{\bB^d\times\R^m}\varphi_k(z,w)\frac{\dd\lambda_\nu}{\dd |\gamma\asc{u}|}(x_0,u(x_0))\langle h^\infty_k,\nu^\infty_{x_0,u(x_0)}\rangle\;\dd|\gamma\asc{u^0}|(z,w)
\]
whenever $h_k$ is positively one-homogeneous from which the statement of the theorem follows. The fact that $\bsigma\in\AYLift(\bB^d\times\R^m)$ if $\bnu\in\AYLift(\bB^d\times\R^m)$ follows for the same reasons as in Theorem~\ref{thmregtangentyms}.
\end{proof}

\begin{theorem}\label{thmjumptangentyms}
Let $\bnu\in\YLift(\Omega\times\R^m)$ with $\llbracket\bnu\rrbracket=u$. Then for $\mathcal{H}^{d-1}$-almost every $x_0\in\mathcal{J}_u$, there exists a \textbf{jump tangent Young measure} $\bsigma\in\YLift(\bB^d\times\R^m)$ with the following properties:
\begin{enumerate}[(i)]
\item $\llbracket\bsigma\rrbracket=u^0$, where $u^0$ is as defined in case~\eqref{bvblowupjump} of Theorem~\ref{thmbvblowup}.
\item $\sigma_{z,w}=\delta_0$ for $\iota_\sigma$-a.e $(z,w)\in\bB^d\times\R^m$,
\item $\sigma^\infty_{z,w}=\nu^\infty_{x_0,w+(u^\pm)}$ for $\lambda_\sigma$-a.e $(z,w)\in\bB^d\times\R^m$, where $u^\pm$ is given in Definition~\ref{defjumpblowup} and $(u^\pm)=\dashint_{\bB^d}u^\pm(z)\;\dd z=\frac{1}{2}(u^+(x_0)+u^-(x_0))$,
\item $\lambda_\sigma\in\mbfM^+(\Omega\times\R^m)$ is defined via
\[
\int_{\bB^d\times\R^m}\varphi(z,w)\;\dd\lambda_\sigma(z,w)=\frac{\dd\pi_\#\lambda_\nu}{\dd\Hcal^{d-1}\restrict\Jcal_u}(x_0)\int_{n_u(x_0)^\perp\cap\bB^d}\int_{\R^m}\varphi(z,w-(u^\pm))\;\dd\rho_{x_0}(w)\;\dd\Hcal^{d-1}(z)
\]
for every $\varphi\in\C_b(\Omega\times\R^m)$, where $\rho\colon\overline{\Omega}\to\sigma\R^m$ is defined for $\pi_\#\lambda_\nu$-almost every $x_0\in\overline{\Omega}$ by the disintegration $\lambda_\nu=\pi_\#\lambda_\nu\otimes\rho$.
\end{enumerate} 

Moreover, if $\bnu\in\AYLift(\Omega\times\R^m)$, then $\bsigma\in\AYLift(\bB^d\times\R^m)$.
\end{theorem}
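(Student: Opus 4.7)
The plan is to adapt the scheme of Theorems~\ref{thmregtangentyms} and~\ref{thmcantortangentym}: fix a countable determining family $\Hbf = \{\varphi_k \otimes h_k\}_{k\in\mbN}$ per Lemma~\ref{remtensorprods}, consider the rescaled Young measures $\bsigma_r \in \YLift(\bB^d \times \R^m)$ given by~\eqref{eqrescaledym} (with $c_r = r$ at jump points, which leaves the $y$-variable merely translated), and extract a weak* limit $\bsigma_{r_n} \wsc \bsigma$ via Theorem~\ref{thmymsseqcompact}. Unlike in the diffuse cases, the relevant approximate tangent object for $\pi_\#\lambda_\nu$ at a jump point is $(d-1)$-dimensional and supported on $n_u(x_0)^\perp$. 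Using the disintegration $\lambda_\nu = \pi_\#\lambda_\nu \otimes \rho$, I would select $x_0 \in \Jcal_u$ of full $\Hcal^{d-1}$-measure satisfying: (a) the jump triple and the strict blow-up $u^r \to u^0$ in $\BV(\bB^d;\R^m)$ of Theorem~\ref{thmbvblowup}(ii), so in particular $(u)_{x_0,r} \to (u^\pm)$; (b) $\pi_\#\lambda_\nu$ has finite $(d-1)$-density $\theta(x_0)$ and $\mu_r := r^{1-d}(T_x^{(x_0,r)})_\#\pi_\#\lambda_\nu$ converges strictly to $\theta(x_0)\Hcal^{d-1}\restrict(n_u^\perp \cap \bB^d)$ on $\overline{\bB^d}$ (where $T_x^{(x_0,r)}(x) := (x-x_0)/r$), which holds by Besicovitch differentiation applied to the $\Hcal^{d-1}\restrict\Jcal_u$-singular part of $\pi_\#\lambda_\nu$ and to $\pi_\#\lambda_\nu\restrict(\Omega\setminus\Jcal_u)$, together with the $\Hcal^{d-1}$-rectifiability of $\Jcal_u$; (c) fixing a countable family $\{\chi_i\}_i$ dense in $\C_c(\R^m)$, $x_0$ is a $\pi_\#\lambda_\nu$-Lebesgue point of every scalar function
\[
G_{i,k}(x) := \int_{\R^m} \chi_i(y-(u^\pm))\,\ip{h_k^\infty(x_0,y,\frarg)}{\nu^\infty_{x,y}}\,\dd\rho_x(y),
\]
which is possible for countably many $G_{i,k}$ simultaneously by classical Lebesgue differentiation. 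Condition~(b) gives $\sup_r\norm{\bsigma_r}_\mbfY<\infty$, so Theorem~\ref{thmymsseqcompact} with Lemma~\ref{lemliftymsclosed} provides a weak* subsequential limit $\bsigma\in\YLift(\bB^d\times\R^m)$; since $\llbracket\bsigma_r\rrbracket = u^r \to u^0$, Corollary~\ref{lemstructureliftingym} yields $\iota_\sigma = \gr^{u^0}_\#(\lL\restrict\bB^d)$.

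The regular part of $\ddpr{\varphi_k\otimes h_k,\bsigma_r}$ for bounded $h_k$ becomes, after change of variables, $\int_{\bB^d}\varphi_k(z,u^r(z))\,\ip{h_k(r\,\frarg)}{\nu_{x_0+rz,u(x_0+rz)}}\,\dd z$, which tends to $\int_{\bB^d}\varphi_k(z,u^0(z))h_k(0)\,\dd z$ by $\Lp^1$-convergence of $u^r$ and uniform convergence $h_k(r\,\frarg)\to h_k(0)$ on bounded sets; this forces $\sigma_{z,w}=\delta_0$ $\iota_\sigma$-almost everywhere. For the singular part with $h_k$ positively $1$-homogeneous, density (Stone--Weierstrass) lets us test with $\varphi_k=\psi\otimes\chi_i$. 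Replacing $(u)_{x_0,r}$ by $(u^\pm)$ introduces an error of order $\omega_{\chi_i}(|(u)_{x_0,r}-(u^\pm)|)\cdot\mu_r(\overline{\bB^d})$ which vanishes by~(a),(b). Pushing forward by $T_x^{(x_0,r)}$ then reduces the integral to $\int_{\bB^d}\psi(z)G_{i,k}(x_0+rz)\,\dd\mu_r(z)$, and condition~(c) (yielding $G_{i,k}(x_0+r\,\frarg)\to G_{i,k}(x_0)$ in $\Lp^1(\mu_r)$) combined with the strict convergence of $\mu_r$ produces the limit
\[
\theta(x_0)\,G_{i,k}(x_0)\int_{n_u^\perp\cap\bB^d}\psi(z)\,\dd\Hcal^{d-1}(z),
\]
which, via the substitution $w=y-(u^\pm)$ inside $G_{i,k}(x_0)$, rewrites as $\int \psi(z)\chi_i(w)\,\ip{h_k}{\nu^\infty_{x_0,w+(u^\pm)}}\,\dd\lambda_\sigma(z,w)$ for the $\lambda_\sigma$ in the statement, thereby identifying both $\lambda_\sigma$ and $\sigma^\infty_{z,w}=\nu^\infty_{x_0,w+(u^\pm)}$.

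The main obstacle is precisely this singular-part identification: the parametrized family $\nu^\infty$ is only measurable in $(x,y)$, so we cannot pass pointwise to limits as $(x,y)\to(x_0,w+(u^\pm))$. The decisive maneuver is to factor the $x$-dependence into the scalar averages $G_{i,k}$, which do admit classical Lebesgue points against $\pi_\#\lambda_\nu$, while the $z$-dependence is handled by the strict convergence of $\mu_r$ and the $w$-dependence by the uniform continuity of the $\chi_i$. Extension from the dense tensor-product class $\{\psi\otimes\chi_i\otimes h_k\}$ to arbitrary $f\in\mbfE(\bB^d\times\R^m)$ follows from the determining property of $\Hbf$ by a diagonal argument. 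Finally, $\AYLift$-membership is preserved because Lemma~\ref{lemrestrictionsofliftings} shows each $\bsigma_r$ lies in $\AYLift$ whenever $\bnu$ does, and this property passes to the weak* limit via Lemma~\ref{lemliftymsclosed} and Theorem~\ref{thm:restrictionsofyms}, exactly as in the two preceding tangent-measure theorems.
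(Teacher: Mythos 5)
Your proposal follows the architecture of the paper's proof quite closely—rescaled Young measures, compactness, a Lebesgue-point argument for the $x$-dependence of $\nu^\infty$, a density-of-tensor-products argument, and the restriction Theorem~\ref{thm:restrictionsofyms} at the end—but there is a genuine gap: you have omitted the Besicovitch-type differentiation condition that controls the \emph{regular} part of the rescaled Young measure at jump points. Concretely, the paper imposes (for each $h_k$ in the determining family) that
\[
\lim_{r\to 0}\frac{1}{r^{d-1}}\int_{B(x_0,r)}\bigl|\ip{h_k}{\nu_{x,u(x)}}\bigr|\,\dd x=0,
\]
which holds $\Hcal^{d-1}\restrict\Jcal_u$-almost everywhere because the absolutely continuous measure $|\ip{h_k}{\nu_{x,u(x)}}|\lL\restrict\Omega$ is singular with respect to $\Hcal^{d-1}\restrict\Jcal_u$. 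This condition is needed in two places that your conditions (a)--(c) do not cover. First, the Young measure mass $\norm{\bsigma_r}_\mbfY$ equals
\[
\frac{1}{r^{d-1}}\int_{B(x_0,r)}\ip{|\frarg|}{\nu_{x,u(x)}}\,\dd x + \frac{\pi_\#\lambda_\nu(\overline{B(x_0,r)})}{r^{d-1}},
\]
and while your condition~(b) controls the second summand, nothing in (a)--(c) controls the first: the $\Lp^1$-function $x\mapsto\ip{|\frarg|}{\nu_{x,u(x)}}$ may blow up near the $\lL$-null set $\Jcal_u$ at a rate that makes this term unbounded. Second, your argument that the regular part of $\ddpr{\varphi\otimes h,\bsigma_r}$ converges to $\int_{\bB^d}\varphi(z,u^0(z))h(0)\,\dd z$ ``by uniform convergence $h(r\,\frarg)\to h(0)$ on bounded sets'' is not sound, because $\nu_{x_0+rz,u(x_0+rz)}$ may have mass escaping to infinity in the $A$-variable as $r\to 0$, and $h(r\,\frarg)$ does not converge uniformly on all of $\R^{m\times d}$. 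Splitting $\ip{h(r\frarg)-h(0)}{\nu}$ at scale $|A|\leq\varepsilon/r$ shows the tail contribution is exactly the quantity $\frac{1}{r^{d-1}}\int_{B(x_0,r)}\ip{|\frarg|}{\nu_{x,u(x)}}\,\dd x$ again, so the same Besicovitch condition is unavoidable. Once you add it (it is in any case available $\Hcal^{d-1}\restrict\Jcal_u$-a.e.), both the mass bound and the identification $\sigma_{z,w}=\delta_0$ go through as in the proofs of Theorems~\ref{thmregtangentyms} and~\ref{thmcantortangentym}.

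A secondary issue is the self-reference in your condition (c): the function $G_{i,k}$ is built from $\chi_i(\frarg-(u^\pm))$, and $(u^\pm)=(u^\pm)(x_0)$ depends on the very point at which you ask for a Lebesgue point. The paper avoids this by taking $\{\psi_j\}$ dense in $\C(\sigma\R^m)$ (without a shift), imposing the Lebesgue-point property for $x\mapsto\int\psi_j(y)\ip{h_k}{\nu^\infty_{x,y}}\,\dd\rho_x(y)$, upgrading to weak* convergence of the whole family of rescaled measures $r^{1-d}T_\#^{(x_0,r)}\bigl[\ip{h_k}{\nu^\infty}\lambda_\nu\bigr]$, and only then testing with the shifted function $\psi_j(\frarg-(u^\pm))$, which is admissible once weak* convergence of the measures is established. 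Your self-referential formulation can be rescued with a countable dense set of shift parameters and the uniform Lipschitz dependence $c\mapsto\chi_i(\frarg-c)$, but as written it is a gap.
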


\begin{proof}
Let $\Gbf\subset\C(\overline{\bB^d})\times\C(\sigma\R^m)\times\C(\overline{\bB^{m\times d}})$ be a countable collection of tensor products $\{\varphi_i\otimes\psi_j\otimes h_k\}_{(i,j,k)\in\mbN^3}$ whose span is dense in $\C(\overline{\bB^d}\times\sigma\R^m\times\overline{\bB^{m\times d}})$, and which is such that $\{\varphi_i\}_{i\in\mbN}$, $\{\psi_j\}_{j\in\mbN}$ are dense in $\C(\overline{\bB^d})$ and $\C(\sigma\R^m)$, respectively, and $\{h_k\}_{k\in\mbN}$ contains a countable collection of positively one-homogeneous functions whose restriction to $\pd\bB^{m\times d}$ is dense in $\C(\pd\bB^{m\times d})$.

Let $x_0\in\Jcal_u$ be such that
\begin{enumerate}[(I)]
\item\label{jumptymcondlebesgue} $\displaystyle\frac{\dd\pi_\#\lambda_\nu}{\dd\mathcal{H}^{d-1}\restrictsmall\mathcal{J}_u}\mathcal{H}^{d-1}\restrict\mathcal{J}_u$ admits an approximate tangent plane at $x_0$;
\item\label{jumptymcondlebesguesing} for each $k\in\mbN$, 
\[
\lim_{r\to 0}\frac{1}{r^{d-1}}\int_{B(x_0,r)}\bigl|\ip{h_k}{\nu_{x,u(x)}}\bigr|\;\dd x=0;
\]
\item\label{jumptymcondlebesguerho} for each $(j,k)\in\mbN^2$, $x_0$ is a $\frac{\dd\pi_\#\lambda_\nu}{\dd\mathcal{H}^{d-1}\restrictsmall\mathcal{J}_u}\mathcal{H}^{d-1}\restrictsmall\mathcal{J}_u$-Lebesgue point for the function
\[
x\mapsto\int_{\sigma\R^m}\psi_j(y)\ip{h_k}{\nu^\infty_{x,y}}\;\dd\rho_{x}(y);
\]
\item\label{jumptymcondsing} it holds that
\[
\lim_{r\to 0}\frac{\eta^*(\overline{B(x_0,r)})}{r^{d-1}}=0,
\]
where $\eta^*:=\pi_\#\lambda_\nu-\frac{\dd\pi_\#\lambda_\nu}{\dd\mathcal{H}^{d-1}\restrictsmall\mathcal{J}_u}\mathcal{H}^{d-1}\restrictsmall\mathcal{J}_u$ is the singular part of $\pi_\#\lambda_\nu$ with respect to $\mathcal{H}^{d-1}\restrict\mathcal{J}_u$.
\end{enumerate}
That these conditions can be satisfied simultaneously for $\Hcal^{d-1}$-almost every $x_0\in\Jcal_u$ follows from the existence of tangent planes at $\Hcal^{d-1}$-almost every point for rectifiable measures, the Besicovitch Differentiation Theorem, and the Lebesgue Differentiation Theorem.

As in the proof of Theorem~\ref{thmcantortangentym}, conditions~\eqref{jumptymcondlebesgue},~\eqref{jumptymcondlebesguesing} and~\eqref{jumptymcondsing} imply that
\begin{align*}
\limsup_{n\to\infty}\norm{\bsigma_r}_\mbfY&=\limsup_{r\to 0}\left\{\frac{1}{r^{d-1}}\int_{B(x_0,r)}\ip{|\frarg|}{\nu_{x,u(x)}}\;\dd x+\frac{\pi_\#\lambda_\nu(\overline{B(x_0,r)})}{r^{d-1}}\right\}\\
&<\infty,
\end{align*}
from which we can deduce that the family $(\bsigma_r)_{r>0}$ is $\norm{\frarg}_\mbfY$-bounded.

For exactly the same reasons as in the proof of Theorem~\ref{thmcantortangentym}, we obtain that $\sigma_{z,w}=\delta_0$ for $\iota_\sigma$, where $\bsigma$ is any limit point of $(\bsigma_{r_n})_{n}$ for any sequence $r_n\downarrow 0$.

For $x_0\in\Jcal_u$, $c_r=r$ for all $r>0$ and so, for positively one-homogeneous $h_k$,~\eqref{eqrescaledym} reads
\begin{align*}
\ddprb{\varphi_i\otimes\psi_j\otimes h_k,\bsigma_r}&=\frac{1}{r^{d-1}}\ddprB{\varphi_i\left(\frac{\frarg-x_0}{r}\right)\otimes\psi_j\left(\frarg-(u)_{x_0,r}\right)\otimes h_k,\bnu}\\
&=\frac{1}{r^{d-1}}\int\varphi_i\left(\frac{x-x_0}{r}\right)\psi_j\left(y-(u)_{x_0,r}\right)\ip{h_k}{\nu_{x,y}}\;\dd \iota_\nu(x,y)\\
&\qquad+\frac{1}{r^{d-1}}\int\varphi_i\left(\frac{x-x_0}{r}\right)\psi_j\left(y-(u)_{x_0,r}\right)\ip{h_k}{\nu^\infty_{x,y}}\;\dd\lambda_\nu(x,y).
\end{align*}
First, we observe that condition~\eqref{jumptymcondlebesguesing} implies
\begin{align}
\begin{split}\label{eqjumptymregpart}
&\lim_{r\to 0}\Bigg|\frac{1}{r^{d-1}}\int\varphi_i\left(\frac{x-x_0}{r}\right)\psi_j\left(y-(u)_{x_0,r}\right)\ip{h_k}{\nu_{x,y}}\;\dd \iota_\nu(x,y)\Bigg|\\
&\qquad \leq\lim_{r\to 0}\frac{\norm{\varphi_i\otimes\psi_j}_\infty}{r^{d-1}}\int_{B(x_0,r)}\bigl|\ip{h_k}{\nu_{x,u(x)}}\bigr|\;\dd x\\
&\qquad =0,
\end{split}
\end{align}
from which we conclude $\sigma_{z,w}=\delta_0$ for $\iota_\sigma$-almost every $(z,w)\in\bB^d\times\R^m$. 

Now, combining conditions~\eqref{jumptymcondlebesgue} and~\eqref{jumptymcondsing}, we see that 
\[
r^{1-d}T_\#^{(x_0,r)}\pi_\#\lambda_\nu=r^{1-d}T_\#^{(x_0,r)}\frac{\dd\pi_\#\lambda_\nu}{\dd\mathcal{H}^{d-1}\restrictsmall\mathcal{J}_u}\mathcal{H}^{d-1}\restrict\mathcal{J}_u+r^{1-d}T_\#^{(x_0,r)}\eta^*
\]
converges weakly* in $\mbfM^+(\overline{\bB^d})$ as $r \todown 0$ to 
\[
\mu:=\frac{\dd\pi_\#\lambda_\nu}{\dd\Hcal^{d-1}\restrict\Jcal_u}(x_0)\Hcal^{d-1}\restrict\left(n_u(x_0)^\perp\cap\bB^d\right).
\]
By condition~\eqref{jumptymcondlebesguerho}, we can deduce that the family $(\beta_r^{j,k})_{r>0}\subset\mbfM^+(\overline{\bB^d})$ defined by
\[
\beta_r^{j,k}:=r^{1-d}T_\#^{(x_0,r)}\left(\left[\int_{\sigma\R^m}\psi_j(y)\ip{h_k}{\nu^\infty_{x,y}}\;\dd\rho(y)\right]\pi_\#\lambda_\nu\right)
\]
converges weakly* in $\mbfM^+(\overline{\bB^d})$ to the measure
\[
\beta_0^{j,k}:=\left[\int_{\sigma\R^m}\psi_j(y)\ip{h_k}{\nu^\infty_{x_0,y}}\;\dd\rho_{x_0}(y)\right]\mu.
\]
Letting $\theta\colon\overline{\bB^d}\times\sigma\R^m\to\mbfM^1(\pd\bB^{m\times d})$ denote the weakly* $(\mu\otimes\rho_{x_0})$-measurable parametrised measure defined by $\theta_{z,w}:=\nu^\infty_{x_0,w}$ for $(z,w)\in\bB^d\times\R^m$, it then follows that
\begin{align*}
\lim_{r\to 0}\ip{\varphi_i\otimes\psi_j}{r^{1-d}T_\#^{(x_0,r)}\bigl[\ip{h_k}{\nu^\infty}\lambda_\nu\bigr]}=&\lim_{r\to 0}\ip{\varphi_i}{\beta_r^{j,k}}\\
=&\ip{\varphi_i}{\beta_0^{j,k}}\\
=&\ip{\varphi_i\otimes\psi_j}{\ip{h_k}{\theta}\mu\otimes\rho_{x_0}}.
\end{align*}
for each $(i,j,k)\in\mbN^3$. Since the span of $\{\varphi_i\otimes\psi_j\}_{(i,j)\in\mbN^2}$ is a countable dense subset of $\C(\overline{\bB^d}\times\sigma\R^m)$, we deduce that
\begin{equation}\label{eqjumptymequality}
r^{1-d}T_\#^{(x_0,r)}\bigl[\ip{h_k}{\nu^\infty}\lambda_\nu\bigr]\wsc\ip{h_k}{\theta}\mu\otimes\rho_{x_0}\quad\text{ in }\mbfM^+(\overline{\bB^d}\times\sigma\R^m)\text{ as }r\to 0
\end{equation}
for each $k\in\mbN$.

Noting that
\[
\lim_{r\to 0}\, (u)_{x_0,r}=\frac{u^+(x_0)+u^-(x_0)}{2}=(u^\pm),
\]
we see that
\[	
\lim_{r\to 0}\sup_{(z,w)\in\bB^d\times\R^m}|g(z,w-(u)_{x_0,r})-g(z,w-(u^\pm))|=0\quad\text{ for all }g\in\C(\overline{\bB^d}\times\sigma\R^m).
\]
% and, for $c\in\R^m$, letting $V^c$ denote the shift map $V^c\colon\overline{\bB^d}\times\sigma\R^m\to\overline{\bB^d}\times\sigma\R^m$ given by $V^c(z,y)=(z,y+c)$ we can apply Lemma~\ref{lemctspertubation} to the family
%\[
%\left(r^{1-d}V^{-(u)_{x_0,r}}_\# T_\#^{(x_0,r)}\ip{h_k}{\nu^\infty}\lambda_\nu\right)_{r>0}
%\]
%to deduce that $r^{1-d}V^{-(u)_{x_0,r}}_\# T_\#^{(x_0,r)}\ip{h_k}{\nu^\infty}\lambda_\nu$ converges weakly* in $\mbfM^+(\overline{\bB^d}\times\sigma\R^m)$ to
%\[
%{V}_\#^{-(u^\pm)}\ip{h_k}{\theta}\mu\otimes\rho_{x_0}.
%\]
%as $r\to 0$. 
We can therefore use~\eqref{eqjumptymequality} to compute
\begin{align*}
&\lim_{r\to 0}\frac{1}{r^{d-1}}\int\varphi_i\left(\frac{x-x_0}{r}\right)\psi_j\left(y-(u)_{x_0,r}\right)\ip{h_k}{\nu^\infty_{x,y}}\;\dd\lambda_\nu(x,y)\\
&\qquad =\lim_{r\to 0}\frac{1}{r^{d-1}}\int\varphi_i\left(\frac{x-x_0}{r}\right)\psi_j\left(y-(u^\pm)\right)\ip{h_k}{\nu^\infty_{x,y}}\;\dd\lambda_\nu(x,y)\\
&\qquad =\lim_{r\to 0}\int(\varphi_i\otimes\psi_j)\left(z,w-(u)^\pm\right)\;\dd\bigl[r^{1-d}T_\#^{(x_0,r)}[\ip{h_k}{\nu^\infty}\lambda_\nu]\bigr](z,w)\\
&\qquad =\int (\varphi_i\otimes\psi_j)(z,w-(u)^\pm)\;\dd\bigl[\ip{h_k}{\theta}\mu\otimes\rho_{x_0}\bigr](z,w)\\
&\qquad =\int (\varphi_i\otimes\psi_j)(z,w-(u)^\pm)\ip{h_k}{\nu_{x_0,w}^\infty}\;\dd[\mu\otimes\rho_{x_0}](z,w).
\end{align*}
Combining this with~\eqref{eqjumptymregpart}, we have therefore shown
\begin{equation}\label{eqjumptymequality2}
\lim_{r\to 0}\, \ddprb{\varphi_i\otimes\psi_j\otimes h_k,\bsigma_r}=\int (\varphi_i\otimes\psi_j)(z,w-(u)^\pm)\ip{h_k}{\nu_{x_0,w}^\infty}\;\dd\left[\mu\otimes\rho_{x_0}\right](z,w)
\end{equation}
for every $(i,j)\in\mbN^2$ and $k$ such that $h_k$ is positively one-homogeneous. 

Since $(\sigma_r)_{r>0}$ is $\norm{\frarg}_\mbfY$-bounded, we can invoke Theorem~\ref{thmymsseqcompact} to find a sequence $(\sigma_{r_n})_n$ such that $\sigma_n\wsc\sigma$ for some $\sigma\in\YLift(\bB^d\times\R^m)$ as $n\to\infty$. The density properties of the span of $\{\varphi_i\otimes\psi_j\otimes h_k\}_{(i,j,k)\in\mbN^3}$ combined with~\eqref{eqjumptymequality2} let us deduce that that
\[
\ddprb{f,\bsigma}=\int \ip{f(z,w-(u)^\pm),\frarg)}{\nu_{x_0,w}^\infty}\;\dd\left[\mu\otimes\rho_{x_0}\right](z,w)
\]
for every $f\in\mbfE(\bB^d\times\R^m)$ which is positively one-homogeneous in the final variable. Since we have already noted that we must have $\sigma_{z,w}=\delta_0$, an application of Theorem~\ref{thm:restrictionsofyms} to ensure that $\lambda_\sigma(\overline{\bB^d}\times\infty\pd\bB^m)=0$ leaves us with the desired conclusion. That $\bsigma\in\AYLift(\bB^d\times\R^m)$ if $\bnu\in\AYLift(\bB^d\times\R^m)$ follows again from the closedness of $\AYLift(\bB^d\times\R^m)$ (recall Lemma~\ref{lemliftymsclosed}) in $\YLift(\bB^d\times\R^m)$ under weak* Young measure convergence.
\end{proof}

\begin{remark}\label{remrescaledymconvergence}
Note that the proofs of Theorems~\ref{thmregtangentyms} and~\ref{thmcantortangentym} imply the following result: let $\bnu\in\YLift(\Omega\times\R^m)$ with $\llbracket \bnu\rrbracket=u$. Then, for $\lL+|D^cu|$-almost every $x\in\Dcal_u\cup\Ccal_u$, whenever $r_n\downarrow 0$ is a sequence such that
\[
(u^{r_n})_n\text{ converges strictly to $u^0$ in }\BV(\bB^d;\R^m)\text{ and }r_nc_{r_n}\to 0\text{ as $n\to\infty$}
\]
(note that these conditions hold for every sequence $r_n\downarrow 0$ if $x\in\Dcal_u$), the rescaled sequence $(\bsigma_{r_n})_{n\in\mbN}$ defined by~\eqref{eqrescaledym} converges in $\YLift(\bB^d\times\R^m)$ to a limit $\btau\in\YLift(\bB^d\times\R^m)$ satisfying 
\[
\btau\restrict(\overline{\bB^d}\times\R^m)=\bsigma\qquad\text{and}\qquad\llbracket\btau\rrbracket=u^0.
\]
Here, $\bsigma$ is a tangent Young measure of the form described by Theorem~\ref{thmregtangentyms} if $x\in\Dcal_u$, and by Theorem~\ref{thmcantortangentym} if $x\in\Ccal_u$, and $\btau\restrict(\overline{\bB^d}\times\R^m)$ is the restriction of $\btau$ to $\overline{\bB^d}\times\R^m$ introduced in Theorem~\ref{thm:restrictionsofyms}.
\end{remark}

\subsection{Jensen inequalities}\label{secjensenineq}
Let $\bnu\in\AYLift(\Omega\times\R^m)$ and $x\in\Dcal_u\cup\Ccal_u\cup\Jcal_u$ be such that $\bnu$ admits a tangent Young measure $\bsigma\in\AYLift(\Omega\times\R^m)$ at $(x,u(x))$ and let $f\in\mbfR(\Omega\times\R^m)$ be quasiconvex in the final variable. By Lemma~\ref{lemymprescribedboundary}, there exists a sequence $(u_j)_j\subset\C_\#^\infty(\bB^d;\R^m)$ such that $\gamma\asc{u_j}\toY \bsigma$ and for which ${u_j}|_{\pd\bB^d}-\llbracket\bsigma\rrbracket|_{\pd\bB^d}=c_j$ for some sequence $(c_j)_j\subset\R^m$ converging to $0$. For $x\in\Dcal_u\cup\Ccal_u$, define
\[
h:=\begin{cases}f(x,u(x),\frarg)&\text{ if }x\in\Dcal_u,\\
f^\infty(x,u(x),\frarg)&\text{ if }x\in\Ccal_u,
\end{cases}
\]
so that $\mathbbm{1}\otimes h\in\mbfE(\bB^d\times\R^m)$.

\begin{lemma}\label{lemlebesguejensen}
Let $\bnu\in\AYLift(\Omega\times\R^m)$, with $u=\llbracket\bnu\rrbracket$ and let $f\in\mbfR(\Omega\times\R^m)$ be quasiconvex in the final variable. For $\mathcal{L}^d$-almost every $x\in\Omega$, it holds that
\begin{align*}
\ip{f(x,u(x),\frarg)}{\nu_{x,u(x)}}&+\frac{\dd\lambda_\nu}{\dd\iota_{\nu}}(x,u(x))\ip{f^\infty(x,u(x),\frarg)}{\nu^\infty_{x,u(x)}}\\
&\geq f\left(x,u(x),\ip{\id}{\nu_{x,u(x)}}+\frac{\dd\lambda_\nu}{\dd\iota_{\nu}}(x,u(x))\langle\id,\nu^\infty_{x,u(x)}\rangle\right)\\
&=f(x,u(x),\nabla u(x)).
\end{align*}
\end{lemma}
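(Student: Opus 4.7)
The plan is to combine the regular tangent Young measure construction of Theorem~\ref{thmregtangentyms} with a smooth generating sequence provided by Lemma~\ref{lemymprescribedboundary} and then invoke quasiconvexity of $f(x_0,u(x_0),\frarg)$ on this sequence. We restrict to those $x_0\in\Omega$ for which simultaneously: (a) a regular tangent Young measure $\bsigma\in\AYLift(\bB^d\times\R^m)$ to $\bnu$ exists at $(x_0,u(x_0))$, with the explicit form given in Theorem~\ref{thmregtangentyms}; and (b) the Besicovitch Derivation Theorem gives $\frac{\dd\gamma}{\dd\iota_\nu}(x_0,u(x_0))=\nabla u(x_0)$, where $\gamma:=\asc{\bnu}$. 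Condition (b) is $\lL$-a.e.\ valid because Theorem~\ref{thmliftingsstructure} combined with $\iota_\nu=\gr^u_\#(\lL\restrict\Omega)$ implies $\gamma\restrict((\Omega\setminus\Jcal_u)\times\R^m)=\gr^u_\#(\nabla u\,\lL+D^cu)$, and $|D^cu|\perp\lL$.

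To obtain the equality appearing on the right hand side of the lemma's statement, I test $\bnu$ against the integrand $\varphi\otimes\id\in\mbfE(\Omega\times\R^m)$ for arbitrary $\varphi\in\C_0(\Omega\times\R^m)$. By the definition of $\asc{\bnu}$,
\[
\int\varphi\,\dd\gamma=\int\varphi(x,y)\ip{\id}{\nu_{x,y}}\,\dd\iota_\nu(x,y)+\int\varphi(x,y)\ip{\id}{\nu^\infty_{x,y}}\,\dd\lambda_\nu(x,y),
\]
so that $\gamma=\ip{\id}{\nu}\iota_\nu+\ip{\id}{\nu^\infty}\lambda_\nu$ as measures. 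Taking Radon--Nikodym derivatives of both sides with respect to $\iota_\nu$ at the point $(x_0,u(x_0))$ and using (b) yields
\[
\nabla u(x_0)=\ip{\id}{\nu_{x_0,u(x_0)}}+\frac{\dd\lambda_\nu}{\dd\iota_\nu}(x_0,u(x_0))\ip{\id}{\nu^\infty_{x_0,u(x_0)}}.
\]

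For the inequality, set $h(A):=f(x_0,u(x_0),A)$, regarded as an element of $\Rbf(\Omega\times\R^m)$ constant in $(x,y)$; it satisfies $|h(A)|\leq C(1+|A|)$ and has recession function $h^\infty(A)=f^\infty(x_0,u(x_0),A)$. By Theorem~\ref{thmregtangentyms}, $\llbracket\bsigma\rrbracket=u^0(z):=\nabla u(x_0)z$, and $\lambda_\sigma=\tfrac{\dd\lambda_\nu}{\dd\iota_\nu}(x_0,u(x_0))\,\iota_\sigma$ is concentrated on $\bB^d\times\R^m$, so in particular $\lambda_\sigma(\pd\bB^d\times\sigma\R^m)=0$ and $\lambda_\sigma(\overline{\bB^d}\times\infty\pd\bB^m)=0$. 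Lemma~\ref{lemymprescribedboundary} therefore supplies $(u_j)_j\subset\C^\infty_\#(\bB^d;\R^m)$ with $\gamma\asc{u_j}\toY\bsigma$ and $u_j|_{\pd\bB^d}=u^0|_{\pd\bB^d}+c_j$ with $c_j\to 0$. Proposition~\ref{lemextendedrepresentation}(i) applied to $h$ (with translation sequence zero, since $h$ is constant in $y$) then gives
\[
\lim_{j\to\infty}\int_{\bB^d}h(\nabla u_j(z))\,\dd z=\ddprb{\mathbbm{1}\otimes h,\bsigma}=\omega_d\left[\ip{h}{\nu_{x_0,u(x_0)}}+\frac{\dd\lambda_\nu}{\dd\iota_\nu}(x_0,u(x_0))\ip{h^\infty}{\nu^\infty_{x_0,u(x_0)}}\right],
\]
where the second equality uses $\sigma_{z,w}=\nu_{x_0,u(x_0)}$, $\sigma^\infty_{z,w}=\nu^\infty_{x_0,u(x_0)}$, and $\iota_\sigma(\bB^d\times\R^m)=\omega_d$.

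On the other hand, $\varphi_j:=u_j-c_j-u^0$ vanishes on $\pd\bB^d$, so it extends to an element of $\Wrm^{1,\infty}_0(\bB^d;\R^m)$; quasiconvexity of $h$ (which is equivalent to the standard Morrey inequality for Lipschitz test fields) gives
\[
\int_{\bB^d}h(\nabla u_j(z))\,\dd z=\int_{\bB^d}h(\nabla u(x_0)+\nabla \varphi_j(z))\,\dd z\geq\omega_d\, h(\nabla u(x_0)).
\]
Dividing by $\omega_d$ and passing to the limit establishes the desired inequality. The only real subtlety is ensuring that the boundary shift $c_j$ from Lemma~\ref{lemymprescribedboundary} does not spoil quasiconvexity, which is resolved by absorbing the constant into the perturbation $\varphi_j$; everything else is a direct unpacking of the structure of the regular tangent Young measure.
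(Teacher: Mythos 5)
Your proposal follows essentially the same route as the paper: take a regular tangent Young measure $\bsigma$ at a good point, use Lemma~\ref{lemymprescribedboundary} to produce a smooth generating sequence $(u_j)_j$ whose traces agree (up to a vanishing constant) with $z\mapsto\nabla u(x_0)z$, and then pass quasiconvexity of $h:=f(x_0,u(x_0),\frarg)$ through the limit $\gamma\asc{u_j}\toY\bsigma$. Your additional preliminary step proving the barycentre identity $\nabla u(x_0)=\ip{\id}{\nu_{x_0,u(x_0)}}+\tfrac{\dd\lambda_\nu}{\dd\iota_\nu}(x_0,u(x_0))\ip{\id}{\nu^\infty_{x_0,u(x_0)}}$ via testing with $\varphi\otimes\id$ and Theorem~\ref{thmliftingsstructure} is a welcome piece of bookkeeping that the paper leaves implicit.

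One small slip: knowing that $\varphi_j:=u_j-c_j-u^0$ has vanishing $\BV$-trace on $\pd\bB^d$ does \emph{not} let you conclude $\varphi_j\in\W^{1,\infty}_0(\bB^d;\R^m)$; the sequence from Lemma~\ref{lemymprescribedboundary} is only $\BV$-bounded, so $\nabla u_j$ can be unbounded and $\varphi_j$ lies only in $\W^{1,1}_0\cap\C^\infty(\bB^d;\R^m)$. The quasiconvexity inequality you invoke therefore cannot be reduced to the ``Lipschitz test field'' formulation directly. The gap is easily closed because a quasiconvex function with (at most) linear growth satisfies the Morrey inequality for all $\W^{1,1}_0$ perturbations (a standard fact, and one the paper's own proof relies on with the same lack of comment), so this is a matter of justification rather than of substance.
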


\begin{proof}
By Theorem~\ref{thmregtangentyms}, there exists a regular tangent Young measure $\bsigma\in\YLift(\bB^d\times\R^m)$ with $\llbracket\bsigma\rrbracket(z)=\nabla u(x)z$ for all $z\in\bB^d$. The boundary condition satisfied by each $u_j$ then implies that ${\nabla u_j}|_{\pd\bB^d}(z)=\nabla u(x)z$ for all $z\in\pd\bB^d$ and that we can therefore write $\nabla u_j=\nabla u(x)+\nabla v_j$ for some $\BV$-bounded sequence $(v_j)_j\subset\C^\infty_0(\bB^d;\R^m)$. Thus, using the quasiconvexity of $h$,
\begin{align*}
\frac{1}{\omega_d}\ddprb{\mathbbm{1}\otimes h,\bsigma}&=\lim_{j\to\infty}\dashint_{\bB^d} h(\nabla u_j(z))\;\dd z\\
&=\lim_{j\to\infty}\dashint_{\bB^d} h(\nabla u(x)+\nabla v_j(z))\;\dd z\\
&\geq h(\nabla u(x))\\
&=f(x,u(x),\nabla u(x)).
\end{align*}
Since Theorem~\ref{thmregtangentyms} now states that
\begin{align*}
\frac{1}{\omega_d}\ddprb{\mathbbm{1}\otimes h,\bsigma}&=\dashint_{\bB^d} \left\{\ip{h}{\nu_{x,u(x)}}+\frac{\dd\lambda_\nu}{\dd\iota_\nu}(x,u(x))\langle h^\infty,\nu^\infty_{x,u(x)}\rangle\right\}\;\dd z\\
&= \ip{f(x,u(x),\frarg)}{\nu_{x,u(x)}}+\frac{\dd\lambda_\nu}{\dd\iota_\nu}(x,u(x))\langle f^\infty(x,u(x),\frarg),\nu^\infty_{x,u(x)}\rangle,
\end{align*}
we obtain the desired result.
\end{proof}

\begin{lemma}\label{lemcantorjensen}
Let $\bnu\in\YLift(\Omega\times\R^m)$ with $u=\llbracket\bnu\rrbracket$ and let $f\in\mbfR(\Omega\times\R^m)$ be positively one-homogeneous and quasiconvex in the final variable. For $|D^c u|$-almost every $x\in\Omega$, it holds that
\begin{align*}
\frac{\dd\lambda_\nu}{\dd|\gamma\asc{u}|}(x,u(x))\langle f^\infty(x,u(x),\frarg),\nu^\infty_{x,u(x)}\rangle &\geq f^\infty\left(x,u(x),\frac{\dd\lambda_\nu}{\dd|\gamma\asc{u}|}(x,u(x))\langle\id,\nu^\infty_{x,u(x)}\rangle\right)\\
&=f^\infty\left(x,u(x),\frac{\dd D u}{\dd|D u|}(x)\right).
\end{align*}
\end{lemma}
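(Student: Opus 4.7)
The approach mirrors the proof of Lemma~\ref{lemlebesguejensen} with the Cantor tangent of Theorem~\ref{thmcantortangentym} replacing the regular one. At a $|D^cu|$-generic $x_0\in\Ccal_u$, that theorem (applied after an approximation step reducing to $\bnu\in\AYLift(\Omega\times\R^m)$) provides a tangent $\bsigma\in\AYLift(\bB^d\times\R^m)$ with $\llbracket\bsigma\rrbracket=u^0$, $u^0(z)=\eta(x_0)\gamma(\ip{z}{\zeta(x_0)})$, and $\frac{\dd D^cu}{\dd|D^cu|}(x_0)=\eta(x_0)\otimes\zeta(x_0)$. Write $C:=\frac{\dd\lambda_\nu}{\dd|\gamma\asc{u}|}(x_0,u(x_0))$; if $C=0$ both sides of the desired inequality vanish by positive one-homogeneity, so I assume $C>0$. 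Lemma~\ref{lemymprescribedboundary} then produces a generating sequence $(u_j)_j\subset\C^\infty_\#(\bB^d;\R^m)$ with $u_j|_{\pd\bB^d}=u^0|_{\pd\bB^d}+c_j$ and $c_j\to 0$.

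Next, I compute $\ddprb{\mathbbm{1}\otimes f(x_0,u(x_0),\frarg),\bsigma}$ two ways. Since $\mathbbm{1}\otimes f(x_0,u(x_0),\frarg)\in\mbfE(\bB^d\times\R^m)$, generation gives
\[
\ddprb{\mathbbm{1}\otimes f(x_0,u(x_0),\frarg),\bsigma}=\lim_{j\to\infty}\int_{\bB^d}f(x_0,u(x_0),\nabla u_j(z))\,\dd z.
\]
On the other hand, using $\sigma_{z,w}=\delta_0$ (together with $f(x_0,u(x_0),0)=0$), the constancy $\sigma^\infty_{z,w}=\nu^\infty_{x_0,u(x_0)}$, and $\lambda_\sigma=C|\gamma\asc{u^0}|$, the same product collapses to $C\cdot\ip{f(x_0,u(x_0),\frarg)}{\nu^\infty_{x_0,u(x_0)}}\cdot|Du^0|(\bB^d)$, where I use $|\gamma\asc{u^0}|(\bB^d\times\R^m)=|Du^0|(\bB^d)$ because $u^0$ has no jumps. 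Stokes' Theorem together with $\int_{\pd\bB^d}c_j\otimes n\,\dd\Hcal^{d-1}=0$ yields $\int_{\bB^d}\nabla u_j=Du^0(\bB^d)=|Du^0|(\bB^d)\,\eta\otimes\zeta$ for every $j$; projecting the barycentre identity $\pi_\#\asc{\bsigma}=Du^0$ onto $\bB^d$ then gives $C\cdot\ip{\id}{\nu^\infty_{x_0,u(x_0)}}=\eta\otimes\zeta=\frac{\dd Du}{\dd|Du|}(x_0)$, which is the equality asserted in the lemma.

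The inequality then reduces, after dividing by $|Du^0|(\bB^d)>0$ (non-triviality of the Cantor profile $\gamma$) and invoking one-homogeneity, to
\[
\lim_{j\to\infty}\dashint_{\bB^d}f(x_0,u(x_0),\nabla u_j)\,\dd z\;\geq\;f\!\left(x_0,u(x_0),\dashint_{\bB^d}\nabla u_j\,\dd z\right),
\]
with constant mean-gradient $A_0:=\omega_d^{-1}|Du^0|(\bB^d)\,\eta\otimes\zeta$. Were the $u_j$ to have affine boundary data $A_0z+\text{const}$, this would follow from quasiconvexity applied to the compactly supported perturbation $u_j-A_0z$. The main obstacle is that $u_j|_{\pd\bB^d}=u^0|_{\pd\bB^d}+c_j$ is not affine, precisely because $\gamma$ has a nontrivial Cantor part. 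I would resolve this by a boundary-modification scheme: construct $\tilde u_j\in\C^\infty(\bB^d;\R^m)$ with $\tilde u_j|_{\pd\bB^d}=A_0z+b_j$ by gluing $u_j$ on $B(0,1-\delta_j)$ to the affine function $A_0z+b_j$ across a thin annular shell $\bB^d\setminus\overline{B(0,1-\delta_j)}$, selecting $\delta_j\downarrow 0$ slowly enough that, thanks to the linear-growth bound $|f|\leq C(1+|\frarg|)$ and the absence of concentration of $|\gamma\asc{u^0}|$ near $\pd\bB^d$ (a consequence of the strict convergence $\gamma\asc{u^{r_n}}\to\gamma\asc{u^0}$ from item~(I) of the proof of Theorem~\ref{thmcantortangentym}), the difference $|\int_{\bB^d}f(\nabla u_j)-\int_{\bB^d}f(\nabla\tilde u_j)|$ vanishes in the limit. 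Applying quasiconvexity to $\tilde u_j-A_0z\in\C^\infty_c(\bB^d;\R^m)$ then closes the argument.
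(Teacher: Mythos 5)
There is a genuine gap in your argument, and it is precisely the gap that the paper's proof is designed to avoid.

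Your boundary-modification scheme fails because the Cantor limit $u^0$ has a non-affine trace on $\pd\bB^d$. After gluing $u_j$ to the affine function $A_0 z + b_j$ via a cut-off $\chi_j$ supported in $B(0,1-\delta_j)$, the gradient acquires the commutator term $\nabla\chi_j \otimes (u_j - A_0 z - b_j)$. Its $\Lp^1$ norm over the annulus is of order $\delta_j^{-1}\int_{\text{annulus}}|u_j - A_0 z - b_j|\,\dd z$, which (using $u_j\to u^0$ in $\Lp^1$ and the annulus width $\sim\delta_j$) tends to a boundary-average $\sim\int_{\pd\bB^d}|u^0(z)-A_0 z - b_j|\,\dd\Hcal^{d-1}(z)$ as $\delta_j\downarrow 0$. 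For $u^0(z)=\eta\gamma(\ip{z}{\zeta})$ with $\gamma$ a genuine Cantor profile, $u^0 - A_0 z$ is not constant on $\pd\bB^d$, so this quantity is strictly positive and does not vanish for any choice of $b_j$. The strict convergence $\gamma\asc{u^{r_n}}\to\gamma\asc{u^0}$ does control $|Du^0|$ near $\pd\bB^d$, but this is irrelevant here: the obstruction is the $\Lp^1(\pd\bB^d)$ mismatch of the trace, not derivative concentration. You would need $\gamma$ to be affine for the gluing to work, which is exactly the non-generic case. A secondary issue: your ``approximation step reducing to $\bnu\in\AYLift$'' is unjustified — there is no density of $\AYLift$ in $\YLift$ in any relevant sense, and the paper explicitly remarks, immediately after the lemma, that only $\bnu\in\YLift$ is needed.

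The paper's proof takes a fundamentally different route that does not use quasiconvexity through boundary-value test functions at all. It uses the barycentre computation (which you essentially reproduce) only to identify $\frac{\dd\lambda_\nu}{\dd|\gamma\asc{u}|}(x,u(x))\langle\id,\nu^\infty_{x,u(x)}\rangle$ with $\frac{\dd Du}{\dd|Du|}(x)$, and then observes via Alberti's Rank One Theorem that this matrix is rank one for $|D^cu|$-a.e.\ $x$. The Kirchheim--Kristensen theorem then guarantees that the positively one-homogeneous rank-one convex function $h=f^\infty(x,u(x),\frarg)$ is \emph{convex} at every rank-one matrix, so the desired inequality becomes a plain Jensen inequality for the probability measure $\nu^\infty_{x,u(x)}$ with rank-one barycentre. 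This sidesteps the Cantor boundary-data problem entirely. You should replace the final gluing argument with this rank-one convexity at rank-one points reduction.
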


\begin{proof}
Employing Theorem~\ref{thmcantortangentym} we see that, for $|D^cu|$-almost every $x\in\Omega$, there exists a Cantor Tangent Young measure $\bsigma\in\YLift(\bB^d\times\R^m)$ with $\llbracket\bsigma\rrbracket=u^0$ for some $u^0\in\BV(\bB^d;\R^m)$ satisfying $\frac{\dd Du^0}{\dd|Du^0|}(z)=\frac{\dd Du}{\dd|Du|}(x)$ for all $z\in\bB^d$, $\sigma_{z,w}=\delta_0$ $\iota_\sigma$-almost everywhere, $\lambda_\sigma=\frac{\dd\lambda_\nu}{\dd|\gamma\asc{u}|}(x,u(x))|\gamma\asc{u^0}|$, and $\sigma^\infty_{z,w}=\nu^\infty_{x,u(x)}$ $\lambda_\sigma$-almost everywhere. Letting $(\gamma_j)_j\subset\Lift(\bB^m\times\R^m)$ be such that $\gamma_j\toY\bsigma$ and testing with the integrand $\varphi\otimes\id\in\mbfE(\bB^d\times\R^m)$ for $\varphi\in\C_0(\bB^d)$ arbitrary, we can compute
\begin{align*}
\lim_{j\to\infty}\int_{\bB^d}\varphi(z)\;\dd D\asc{\gamma_j}(z)&=\lim_{j\to\infty}\ddprb{\varphi\otimes\id,\bdelta\asc{\gamma_j}}\\
&=\ddprb{\varphi\otimes\id,\bsigma}\\
&=\int_{\bB^d}\varphi(z)\frac{\dd\lambda_\nu}{\dd|\gamma\asc{u}|}(x,u(x))\langle\id,\nu_{x,u(x)}^\infty\rangle\;\dd \pi_\#|\gamma\asc{u^0}|(z)\\
&=\int_{\bB^d}\varphi(z)\frac{\dd\lambda_\nu}{\dd|\gamma\asc{u}|}(x,u(x))\langle\id,\nu_{x,u(x)}^\infty\rangle\;\dd |D u^0|(z).
\end{align*}
Since $\varphi\in\C_0(\bB^d)$ was arbitrary and $\gamma_j\toY\bsigma$ implies $u^0=\llbracket\bsigma\rrbracket=\wstarlim_j\asc{\gamma_j}$ in $\BV(\bB^d;\R^m)$, we see that it must be the case that
\[
\frac{\dd\lambda_\nu}{\dd|\gamma\asc{u}|}(x,u(x))\langle\id,\nu_{x,u(x)}^\infty\rangle|Du^0|=Du^0\qquad\text{ in }\mbfM(\bB^d;\R^{m\times d}),
\]
and hence that
\begin{equation}\label{eqidentifybarycentre}
\langle\id,\nu_{x,u(x)}^\infty\rangle\frac{\dd\lambda_\nu}{\dd|\gamma\asc{u}|}(x,u(x))=\frac{\dd Du^0}{\dd|D u^0|}(z)=\frac{\dd Du}{\dd|Du|}(x).
\end{equation}
By Alberti's Rank One Theorem~\cite{Albe93ROPD}, $\frac{\dd Du}{\dd|Du|}(x)$ is a rank-one matrix, so we have obtained that the barycentre $\langle\id,\nu^\infty_{x,u(x)}\rangle$ of the probability measure $\nu^\infty_{x,u(x)}$ is a rank-one matrix. As the recession function of a quasiconvex function with linear growth, $h$ is a quasiconvex (and hence rank-one convex) positively one-homogeneous function. A result due to Kirchheim \& Kristensen~\cite{KirKri16ROCF} states that positively one-homogeneous and rank-one convex functions are in fact convex at all rank-one points $A\in\R^{m\times d}$. By Jensen's inequality, the homogeneity of $h$, and~\eqref{eqidentifybarycentre}, we therefore deduce
\begin{align*}
\frac{\dd\lambda_\nu}{\dd|\gamma\asc{u}|}(x,u(x))\langle h,\nu^\infty_{x,u(x)}\rangle&\geq\frac{\dd\lambda_\nu}{\dd|\gamma\asc{u}|}(x,u(x))h\left(\langle\id,\nu^\infty_{x,u(x)}\rangle\right) \\
&=h\left(\frac{\dd\lambda_\nu}{\dd|\gamma\asc{u}|}(x,u(x))\langle\id,\nu^\infty_{x,u(x)}\rangle\right)\\
&=h\left(\frac{\dd Du}{\dd|Du|}(x)\right).
\end{align*}
Recalling the definition of $h$, the desired conclusion follows.	
\end{proof}
It is interesting to note that we need only $\bnu\in\YLift(\Omega\times\R^m)$ rather than $\bnu\in\AYLift(\Omega\times\R^m)$ for Lemma~\ref{lemcantorjensen} to hold.
\begin{lemma}\label{lemjumpjensen}
Let $f\in\RBVw(\Omega\times\R^m)$, and $\bnu\in\AYLift(\Omega\times\R^m)$ with $\llbracket \bnu\rrbracket=u$ and disintegrate $\lambda_\nu=\pi_\#\lambda_\nu\otimes\rho$. For $\mathcal{H}^{d-1}$-almost every $x\in\mathcal{J}_{u}$, it holds that
\[
\frac{\dd\pi_\#\lambda_\nu}{\dd\Hcal^{d-1}\restrict\Jcal_u}(x)\int_{\R^m}\ip{f^\infty(x,y,\frarg)}{\nu^\infty_{x,y}}\;\dd\rho_x(y)\geq K_f[u](x),
\]
where $K_f[u]\colon\Jcal_u\to\R$ is defined by
\[
K_f[u](x):=\inf\left\{\frac{1}{\omega_{d-1}}\int_{\bB^d}f^\infty(x,\varphi(y),\nabla\varphi(y))\;\dd y\colon\varphi\in\Acal_u(x)\right\}
\]
and we recall that
\[
\Acal_u(x):=\left\{\varphi\in\left(\C^\infty\cap\Lp^\infty\right)(\bB^d;\R^m)\colon\;\varphi= u^\pm_{x}\text{ on }\pd\bB^d\right\}
\]
where $u^\pm$ is as given in Definition~\ref{defjumpblowup}.
\end{lemma}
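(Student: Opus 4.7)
The strategy mirrors Lemmas~\ref{lemlebesguejensen} and~\ref{lemcantorjensen}: extract a jump tangent Young measure at $x$, use its generating sequence to build a near-recovery test function for $K_f[u](x)$, then invoke the definition of $K_f[u]$ as an infimum. For $\Hcal^{d-1}$-almost every $x\in\Jcal_u$, Theorem~\ref{thmjumptangentyms} furnishes a jump tangent Young measure $\bsigma\in\AYLift(\bB^d\times\R^m)$ with $\llbracket\bsigma\rrbracket=u^\pm-(u^\pm)$, $\sigma_{z,w}=\delta_0$, $\sigma^\infty_{z,w}=\nu^\infty_{x,w+(u^\pm)}$, and $\lambda_\sigma$ concentrated on $(n_u(x)^\perp\cap\bB^d)\times\R^m$. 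Since $n_u(x)^\perp\cap\pd\bB^d$ has $\Hcal^{d-1}$-measure zero, $\lambda_\sigma(\pd\bB^d\times\sigma\R^m)=0$, so Lemma~\ref{lemymprescribedboundary} supplies a generating sequence $(u_j)_j\subset\C^\infty_\#(\bB^d;\R^m)$ and translations $c_j\to 0$ with $\gamma\asc{u_j}\toY\bsigma$ and ${u_j}|_{\pd\bB^d}=(u^\pm-(u^\pm))|_{\pd\bB^d}+c_j$.

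Setting $v_j:=u_j+(u^\pm)$ and $g(z,w,A):=f^\infty(x,w+(u^\pm),A)$, the upper bound $f\leq C(1+|y|^{d/(d-1)}+|A|)$ forces $f^\infty(x,y,A)\leq C|A|$ uniformly in $y$, so $g\in\RL(\bB^d\times\R^m)$ with $g=g^\infty$. Proposition~\ref{lemextendedrepresentation}(i), applicable because $\lambda_\sigma(\overline{\bB^d}\times\infty\pd\bB^m)=0$, then yields
\[
\lim_{j\to\infty}\int_{\bB^d}f^\infty(x,v_j(z),\nabla v_j(z))\,\dd z=\ddprb{g,\bsigma}.
\]
The regular part of $\ddprb{g,\bsigma}$ vanishes because $\sigma_{z,w}=\delta_0$ and $g(z,w,0)=0$, while feeding $\sigma^\infty_{z,w}=\nu^\infty_{x,w+(u^\pm)}$ into the pushforward formula defining $\lambda_\sigma$ exactly undoes the shift by $(u^\pm)$, leaving
\[
\ddprb{g,\bsigma}=\omega_{d-1}\,\frac{\dd\pi_\#\lambda_\nu}{\dd\Hcal^{d-1}\restrict\Jcal_u}(x)\int_{\R^m}\ip{f^\infty(x,w,\frarg)}{\nu^\infty_{x,w}}\,\dd\rho_x(w).
\]

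It remains to perturb $(v_j)_j$ into a sequence $(\tilde v_j)_j\subset\Acal_u(x)$ without increasing the limit of its energy; then $\omega_{d-1}K_f[u](x)\leq\int f^\infty(x,\tilde v_j,\nabla\tilde v_j)\,\dd z$ for every $j$ by the definition of $K_f[u]$, and taking $\liminf$ gives the claim. Exploiting that $u^\pm$ is $0$-homogeneous in $z$, a natural candidate is $\tilde v_j(z):=v_j(z)-c_j\eta_{\delta_j}(|z|)$, where $\eta_\delta\in\C^\infty([0,1];[0,1])$ is supported in $[1-\delta,1]$ and equals $1$ at $|z|=1$; this achieves the trace $\tilde v_j|_{\pd\bB^d}=u^\pm$ while preserving smoothness. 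A subsequent composition with a radial range-cutoff of the type used in the proof of Theorem~\ref{thm:restrictionsofyms}, with radius chosen $\gg\norm{u^\pm}_\infty$, enforces the $\Lp^\infty$ requirement at the cost of an energy error bounded by $C\int_{\{|v_j|>R\}}|\nabla v_j|\,\dd z$, which vanishes as $R\to\infty$ uniformly in $j$ via the $\BV$-boundedness of $(u_j)_j$ and Proposition~\ref{ctsembedding}. The main obstacle is the annular correction itself: the extra energy on $\{1-\delta_j<|z|<1\}$ is bounded by $C|c_j|/\delta_j$ times the shell volume plus $C\int_{\{1-\delta_j<|z|<1\}}|\nabla u_j|\,\dd z$, and forcing both to $0$ along the sequence requires a De~Giorgi-style averaging argument selecting $\delta_j\to 0$ along good radii where $|\nabla u_j|$ is not too concentrated. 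The uniform bound $f^\infty\leq C|A|$, which decouples the energy estimate from the middle variable, is what makes this bookkeeping feasible.
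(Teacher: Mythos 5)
Your overall strategy — pull the jump tangent Young measure from Theorem~\ref{thmjumptangentyms}, generate it with a smooth sequence via Lemma~\ref{lemymprescribedboundary}, identify $\ddprb{g,\bsigma}$ with the quantity on the left of the claimed inequality, and then bound the generating integrals from below by $\omega_{d-1}K_f[u](x)$ — is the right one, and your computation of $\ddprb{g,\bsigma}$ in terms of $\frac{\dd\pi_\#\lambda_\nu}{\dd\Hcal^{d-1}\restrictsmall\Jcal_u}(x)$ and $\rho_x$ is correct. But you take an unnecessary and incompletely executed detour at the crucial step: you apply Proposition~\ref{lemextendedrepresentation}(i) with the fixed shift $(u^\pm)$ only, so that $v_j:=u_j+(u^\pm)$ has trace $u^\pm_x+c_j$ on $\pd\bB^d$ rather than $u^\pm_x$, and then you must repair this with an annular cutoff $\tilde v_j=v_j-c_j\eta_{\delta_j}$ together with a separate $\Lp^\infty$ truncation. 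The cutoff introduces an extra energy $C\int_{\{1-\delta_j<|z|<1\}}|\nabla u_j|\,\dd z$ that you propose to kill with a De~Giorgi good-radius selection — but this step is only sketched, not carried out, and it is precisely where the bookkeeping becomes delicate (one needs a double limit in $j$ and $\delta$, or a quantitative good-slice argument, and a diagonalisation). As written, the proof has a genuine gap here.

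The paper's argument closes this gap without any modification of the sequence by exploiting that Proposition~\ref{lemextendedrepresentation}(i) already permits a $j$-dependent translation $c_j\to 0$ in the middle variable. Applied with the integrand $f^\infty(x,\frarg+(u^\pm),\frarg)$ and the shift $w\mapsto w-c_j$, it gives directly
\[
\ddprb{f^\infty(x,\frarg+(u^\pm),\frarg),\bsigma}=\lim_{j\to\infty}\int_{\bB^d}f^\infty\bigl(x,u_j(z)-c_j+(u^\pm),\nabla u_j(z)\bigr)\,\dd z.
\]
Since $u_j-c_j=u^0=u^\pm-(u^\pm)$ on $\pd\bB^d$ by the boundary prescription of Lemma~\ref{lemymprescribedboundary}, the competitor $u_j-c_j+(u^\pm)$ already satisfies the trace condition exactly, and lies in $\Acal_u(x)$ for large $j$. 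Each integral on the right is therefore bounded below by $\omega_{d-1}K_f[u](x)$ directly from the definition of $K_f[u]$ as an infimum, with no cutoff, no good radii, and no annular error to control. The translation tolerance of Proposition~\ref{lemextendedrepresentation}(i) is the observation you should have leveraged; once you see it, the boundary discrepancy $c_j$ is absorbed into the integrand rather than into the test function.
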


\begin{proof}
Let $\bsigma\in\AYLift(\bB^d\times\R^m)$ be the jump tangent Young measure to $\bnu$ at $x$ whose existence is guaranteed for $\Hcal^{d-1}\restrict\Jcal_u$-almost every $x\in\Omega$ by Theorem~\ref{thmjumptangentyms}. Apply Lemma~\ref{lemymprescribedboundary} to obtain a sequence $(u_j)_j\subset\C_\#^\infty(\Omega;\R^m)$ with $\gamma\asc{u_j}\toY\bsigma$ and $u_j-c_j=\llbracket\bsigma\rrbracket=u^0$ (where $u^0$ is defined in Theorem~\ref{thmbvblowup}) on $\pd\bB^d$ for some sequence $c_j\to 0$. Since $\gamma\asc{u_j}\toY$ $\bsigma$, it holds that $u_j\wsc u^0$ in $\BV(\bB^d;\R^m)$ and hence that $u_j+(u^\pm)\wsc u^\pm$ in $\BV(\bB^d;\R^m)$ with $u_j-c_j+(u^\pm)\in\Acal_u(x)$ for $j$ large enough. Since $f^\infty\in\RL(\Omega\times\R^m)$ and $\lambda_\sigma(\Omega\times\R^m)=0$, Proposition~\ref{lemextendedrepresentation}~(i) lets us deduce that
\begin{align*}
\ddprb{f^\infty(x,\frarg+(u^\pm),\frarg),\bsigma}&=\lim_{j\to\infty}\int Pf^\infty(x,y-c_j+(u^\pm),P\gamma\asc{u_j})\\
&=\lim_{j\to\infty}\int_{\bB^d}f^\infty(x,u_j(z)-c_j+(u^\pm),\nabla u_j(z))\;\dd z\\
&\geq\omega_{d-1} K_f[u](x).
\end{align*}
Since Theorem~\ref{thmjumptangentyms} implies
\begin{align*}
&\ddprb{f^\infty(x,\frarg+(u^\pm),\frarg),\bsigma}\\
&\qquad =\int_{\overline{\bB^d}\times\R^m}\ip{f^\infty(x,y+(u^\pm),\frarg)}{\sigma^\infty_{z,y}}\;\dd\lambda_\sigma(z,w)\\
&\qquad =\int_{\bB^d}\frac{\dd\pi_\#\lambda_\nu}{\dd\Hcal^{d-1}\restrict\Jcal_u}(x)\int_{\R^m}\ip{f^\infty(x,w,\frarg)}{\nu^\infty_{x,w}}\;\dd \rho_x(w)\;	\dd\Hcal^{d-1}\restrict n_u(x_0)^\perp(z)\\
&\qquad =\omega_{d-1}\frac{\dd\pi_\#\lambda_\nu}{\dd\Hcal^{d-1}\restrict\Jcal_u}(x)\int_{\R^m}\ip{f^\infty(x,y,\frarg)}{\nu^\infty_{x,y}}\;\dd\rho_x(y),
\end{align*}
the conclusion follows.
\end{proof}

\begin{theorem}\label{thmwsclsc}
Let $f\in\RBVw(\Omega\times\R^m)$ be such that $f(x,y,\frarg)$ is quasiconvex for every $(x,y)\in\overline{\Omega}\times\R^m$. If $(u_j)_j\subset\BV(\Omega;\R^m)$ and $u\in\BV(\Omega;\R^m)$ are such that $u_j\wsc u$, then
\begin{align*}
\liminf_{j\to\infty}\Fcal[u_j]&\geq\int_\Omega f(x,u(x),\nabla u(x))\;\dd x+\int_\Omega f^\infty\left(x,u(x),\frac{\dd D^cu}{\dd|D^c u|}(x)\right)\;\dd|D^c u|(x)\\
&\qquad+\int_{\mathcal{J}_u}K_{f}[u](x)\;\dd\mathcal{H}^{d-1}(x).
\end{align*}
\end{theorem}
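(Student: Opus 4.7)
The strategy is to assemble the machinery developed throughout Sections~\ref{chapliftings}--\ref{chaptangentliftingyms}, essentially implementing the schematic computation~\eqref{eqymlscineq} from the preamble of this section and combining it with the pointwise Jensen inequalities just proved. First, I would assume without loss of generality that $\liminf_j \Fcal[u_j] < \infty$ and, along a non-relabelled subsequence realising the $\liminf$, extract a further subsequence as follows. Using the representation~\eqref{eqliftingrep}, $\Fcal[u_j] = \int Pf(x,y+(u_j),P\gamma\asc{u_j-(u_j)})$, and since $u_j \wsc u$ forces $u_j - (u_j) \wsc u - (u) =: v$ in $\BV_\#(\Omega;\R^m)$, the elementary liftings $\gamma\asc{u_j-(u_j)}$ are bounded in $\mbfM(\Omega\times\R^m;\R^{m\times d})$. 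By the Young measure compactness theorem (Theorem~\ref{thmymsseqcompact}) combined with Lemma~\ref{lemliftymsclosed}, I may extract a further subsequence along which $\gamma\asc{u_j-(u_j)}$ generates a Young measure $\bnu \in \AYLift(\Omega\times\R^m)$ with $\llbracket\bnu\rrbracket = v$. Membership in $\AYLift$ (rather than merely $\YLift$) is crucial so that the approximate-recovery-based Jensen inequalities of Lemmas~\ref{lemlebesguejensen} and~\ref{lemjumpjensen} may be invoked later.

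Next, I would set $\tilde{f}(x,y,A) := f(x,y+(u),A)$ (still in $\RBVw(\Omega\times\R^m)$ and quasiconvex in $A$) and $c_j := (u_j) - (u) \to 0$, so that $\Fcal[u_j] = \int P\tilde{f}(x,y+c_j,P\gamma\asc{u_j-(u_j)})$. Corollary~\ref{corymlsc} then gives
\[
\liminf_{j\to\infty}\Fcal[u_j] \;\geq\; \ddprb{\tilde{f},\bnu\restrict(\overline{\Omega}\times\R^m)}.
\]
The right-hand side is then decomposed exactly as in the preamble: use $\iota_\nu = \gr^{v}_\#(\lL\restrict\Omega)$ (Corollary~\ref{lemstructureliftingym}), the Structure Theorem~\ref{thmliftingsstructure} which identifies $\gamma\asc{v}\restrict((\Omega\setminus\Jcal_v)\times\R^m) = \gr^v_\#(\nabla v\,\lL + D^c v)$, split $\lambda_\nu$ into its part absolutely continuous with respect to $|\gamma\asc{v}|\restrict((\Omega\setminus\Jcal_v)\times\R^m)$ and its graph-singular part, and disintegrate the latter over $\Jcal_v = \Jcal_u$ as $\lambda_\nu = \pi_\#\lambda_\nu\otimes\rho$. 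Because $\tilde{f}^\infty = f^\infty \geq 0$ (a consequence of $f \in \RBVw$: the lower bound $f \geq -h$ with $\sigma h^\infty \equiv 0$ forces $f^\infty \geq 0$), the various contributions from singular parts orthogonal to the three reference measures $\lL$, $|D^c u|$, and $\Hcal^{d-1}\restrict\Jcal_u$ can all be dropped from below. The resulting lower bound is precisely the right-hand side of~\eqref{eqymlscineq} with $u$ replaced by $v$ and $f$ replaced by $\tilde{f}$.

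Finally, I would apply the three Jensen inequalities pointwise: Lemma~\ref{lemlebesguejensen} bounds the $\lL$-integrand below by $\tilde{f}(x,v(x),\nabla v(x)) = f(x,u(x),\nabla u(x))$; Lemma~\ref{lemcantorjensen} (for which $\bnu\in\YLift$ suffices) bounds the $|D^c u|$-integrand below by $\tilde{f}^\infty(x,v(x),\tfrac{\dd D^c u}{\dd|D^c u|}(x)) = f^\infty(x,u(x),\tfrac{\dd D^c u}{\dd|D^c u|}(x))$; and Lemma~\ref{lemjumpjensen} bounds the $\Hcal^{d-1}\restrict\Jcal_u$-integrand below by $K_{\tilde{f}}[v](x) = K_f[u](x)$, the last equality following from the change of variables $\psi = \varphi + (u)$ in the infimum defining $K$ (since $\Acal_v(x) + (u) = \Acal_u(x)$ and $\nabla\psi = \nabla\varphi$). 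Summing these three pointwise bounds yields the theorem.

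\textbf{Main obstacle.} All substantive work has been done in the tangent-Young-measure machinery of Section~\ref{chaptangentliftingyms} and in the Jensen inequalities; what remains is bookkeeping. The two subtle points are (i) ensuring the generated Young measure $\bnu$ lies in $\AYLift$ rather than merely $\YLift$, which is automatic because the generating sequence $(\gamma\asc{u_j-(u_j)})_j$ consists of elementary liftings and $\AYLift$ is sequentially weakly* closed in $\YLift$, and (ii) carefully absorbing the centering $(u)$ via the shift $\tilde{f}(x,y,A) = f(x,y+(u),A)$, checking at the end that this shift respects each of the three Jensen inequalities and, in particular, the definition of the surface energy density~$K_f[u]$.
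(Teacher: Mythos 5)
Your proposal is correct and follows essentially the same route as the paper: generate a Young measure $\bnu\in\AYLift$ from the elementary liftings $\gamma\asc{u_j-(u_j)}$, apply Corollary~\ref{corymlsc} and the decomposition~\eqref{eqymlscineq}, and then invoke the three Jensen inequalities from Section~\ref{secjensenineq}. The only difference is organizational: you absorb the mean shift $(u)$ into $\tilde{f}$ from the outset, whereas the paper first proves the mean-zero case $u\in\BV_\#$ and then reduces the general case to it via $f_u(x,y,A):=f(x,y+(u),A)$ — both handle the shift identically, including the observation $K_{f_u}[\overline{u}]=K_f[u]$.
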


\begin{proof}
First, assume that $u\in\BV_\#(\Omega;\R^m)$. By the discussion at the start of this section resulting in the inequality~\eqref{eqymlscineq}, we have that
\begin{align*}
\liminf_{j\to\infty}\Fcal[u_j]&\geq\int_{\Omega}\ip{f(x,u(x),\frarg)}{\nu_{x,u(x)}}+\frac{\dd\lambda_\nu}{\dd\iota_\nu}(x,u(x))\langle f^\infty(x,u(x),\frarg),\nu_{x,u(x)}^\infty\rangle\;\dd x\\
&\qquad+\int_{\Omega}\frac{\dd\lambda_\nu}{\dd|\gamma\asc{u}|}(x,u(x))\langle f^\infty(x,u(x),\frarg),\nu_{x,u(x)}^\infty\rangle\;\dd|D^cu|(x)\\
&\qquad+\int_{\Jcal_u}\frac{\dd\pi_\#\lambda_\nu}{\dd\Hcal^{d-1}\restrict\Jcal_u}(x)\int_{\R^m}\ip{f^\infty(x,y,\frarg)}{\nu_{x,y}^\infty}\;\dd\rho_x(y)\;\dd\Hcal^{d-1}(x).
\end{align*}
Applying Lemmas~\ref{lemlebesguejensen},~\ref{lemcantorjensen}, and~\ref{lemjumpjensen} respectively to each of the three terms featuring above, we obtain
\begin{align*}
\liminf_{j\to\infty}\Fcal[u_j]&\geq\int_\Omega f(x,u(x),\nabla u(x))\;\dd x+\int_\Omega f^\infty\left(x,u(x),\frac{\dd D^cu}{\dd|D^c u|}(x)\right)\;\dd|D^c u|(x)\\
&\qquad+\int_{\mathcal{J}_u}K_f[u](x)\;\dd\mathcal{H}^{d-1}(x).
\end{align*}

For the general case where $(u)\neq 0$, define $f_u\in\RBVw(\Omega\times\R^m)$ by $f_u(x,y,A)=f(x,y+(u),A)$ and let $\Fcal_u\colon\BV(\Omega;\R^m)\to[0,\infty)$ be the functional given by
\[
\Fcal_u[v]:=\int Pf_u(x,y,P\gamma\asc{v-(v)}).
\]
Defining $\overline{u}:=u-(u)$ and noting that $D\overline{u}=Du$, $\Jcal_u=\Jcal_{\overline{u}}$, $K_{f_u}[\overline{u}]=K_f[u]$, we then see that
\begin{align*}
\liminf_{j\to\infty}\Fcal[u_j]&=\liminf_{j\to\infty}\Fcal_u[u_j-(u)]\\
&\geq \int_\Omega f_u(x,\overline{u}(x),\nabla \overline{u}(x))\;\dd x+\int_\Omega f_u^\infty\left(x,\overline{u}(x),\frac{\dd D^c\overline{u}}{\dd|D^c \overline{u}|}(x)\right)\;\dd|D^c \overline{u}|(x)\\
&\qquad+\int_{\mathcal{J}_{\overline{u}}}K_{f_u}[\overline{u}](x)\;\dd\mathcal{H}^{d-1}(x)\\
&=\int_\Omega f(x,{u}(x),\nabla{u}(x))\;\dd x+\int_\Omega f^\infty\left(x,{u}(x),\frac{\dd D^c{u}}{\dd|D^c{u}|}(x)\right)\;\dd|D^c {u}|(x)\\
&\qquad+\int_{\mathcal{J}_{u}}K_{f}[{u}](x)\;\dd\mathcal{H}^{d-1}(x)
\end{align*}
as required.
\end{proof}

\section{Recovery sequences and relaxation}\label{chaprecoveryseqs}
This section is devoted to the construction of weak* approximate recovery sequences to show that the lower bound obtained in Theorem~\ref{thmwsclsc}  for $\Fcalrw$ is attained. The recovery sequences in this section are constructed globally without making use of the De Giorgi--Letta Theorem to localise around points $x\in\Dcal_u\cup\Ccal_u\cup\Jcal_u$. We therefore do not require the existence of a finite $(|Du|+\lL)$-absolutely continuous measure which dominates $\Fcalrw$.

Example~\ref{exbadrecoveryseq} below demonstrates the existence of a continuous integrand $f\in\RBVw(\Omega\times\R^m)$ which is convex in the final variable and whose associated weak* relaxation $\Fcalrw$ attains a global minimum at $0$, yet which does not possess any weakly* convergent minimising sequences. In the full vector-valued, $u$-dependent case we can thus only expect to find recovery sequences which converge in the strong $\Lp^1(\Omega;\R^m)$-topology. We must therefore find a way of constructing {approximate recovery sequences} $(u_j)_j\subset\C^\infty(\Omega;\R^m)$ such that $u_j\wsc u$ in $\BV(\Omega;\R^m)$ and $\lim_j|\Fcal[u_j]-\Fcalrw[u]|\leq\varepsilon$ for each $\varepsilon>0$.

\begin{example}[Recovery sequences can be badly behaved]\label{exbadrecoveryseq}
Let $u\in\BV((-1,1);\R^2)$ be given by 
\[u(x):=
\begin{cases}
\begin{pmatrix}
0\\
0\end{pmatrix}&\text{ if }x\leq 0,\\
\begin{pmatrix}
0\\
1\end{pmatrix}&\text{ if }x>0,
\end{cases}
\]
and, for $j\geq 4$, define the sequence $u_j\in\W^{1,1}(\R;\R^2)$ by
\begin{align*}
u_j(x):=&\begin{pmatrix}
j^2x\\
0
\end{pmatrix}\mathbbm{1}_{(0,1/j)}(x)+\begin{pmatrix}
j\\
j(x-1/j)
\end{pmatrix}\mathbbm{1}_{[1/j,2/j]}(x)+\begin{pmatrix}
j-j^2(x-2/j)\\
1
\end{pmatrix}\mathbbm{1}_{(2/j,3/j)}(x)\\
&\qquad+\begin{pmatrix}
0\\
1
\end{pmatrix}\mathbbm{1}_{[3/j,\infty)}(x)
\end{align*}
so that $|Du_j|(-1,1)=2j+1$ and $u_j\to u$ pointwise almost everywhere as $j\to\infty$. Let $f\in\RBVw((-1,1)\times\R^2)$ be given by
\[
f(x,y,A):=\Phi(y)|A|,\qquad\Phi\left(\begin{pmatrix}y_1\\y_2\end{pmatrix}\right):=\frac{y_2(1-y_2)}{1+|y_2|^2}\mathrm{e}^{-|y_1|},
\]
and also define $(u_j^k)_k\subset\W^{1,1}((-1,1);\R^2)$ by $u_j^k(x)=u_j(kx)$. We can see that 
\[
|D u_j^k|(-1,1)=|Du_j|(-1,1)\quad\text{and}\quad\wstarlim_{k\to\infty}u^k_j= u\quad\text{ for each fixed }j.
\]
By a change of coordinates, it is also clear that
\[
\lim_{j\to\infty}\lim_{k\to\infty}\int_{-1}^1 f(x,u_j^k(x),\nabla u_j^k(x))\;\dd x=\lim_{j\to\infty}\int_{-1}^{1}\Phi(u_j(x))|\nabla u_j(x)|\;\dd x=0,
\]
which demonstrates that $\Fcalrw[u]=0$ and that $K_f[u]\equiv 0$.

Now assume that $(v_j)_j\subset\W^{1,1}((-1,1);\R^2)$ is such that $v_j\wsc u$. We will show that
\[
\liminf_{j\to\infty}\int_{-1}^1f(x,v_j(x),\nabla v_j(x))\;\dd x>0.
\]
By the Sobolev Embedding Theorem in one dimension it holds that $\sup_j\norm{v_j}_\infty<\infty$, which implies that, for some $\delta>0$,
\begin{align*}
\lim_{j\to\infty}\int_{-1}^{1}\Phi(v_j(x))|\nabla v_j(x)|\;\dd x&\geq\lim_{j\to\infty}\mathrm{e}^{-\norm{v_j}_\infty}\int_{-1}^1\frac{(v_j(x))_2(1-(v_j(x))_2)}{1+|(v_j(x))_2|^2}|\nabla v_j(x)|\;\dd x\\
&\geq\delta\lim_{j\to\infty}\int_{-1}^1\frac{(v_j(x))_2(1-(v_j(x))_2)}{1+|(v_j(x))_2|^2}|\nabla v_j(x)|\;\dd x\\
&\geq\delta\lim_{j\to\infty}\int_{-1}^1\frac{(v_j(x))_2(1-(v_j(x))_2)}{1+|(v_j(x))_2|^2}|\nabla (v_j(x))_2|\;\dd x.
\end{align*}
Define the sequence $(w_j)_j\subset\W^{1,1}((-1,1))$ by $w_j(z)=(v_j(z))_2$ so that $w_j\wsc w:=\mathbbm{1}_{[0,1)}$ in $\BV((-1,1))$. By Lemma~\ref{lemliftingsperturb}, we have that
\[
\lim_{j\to\infty}\int_{-1}^1\frac{w_j(x)(1-w_j(x))}{1+|w_j(x)|^2}|\nabla w_j(x)|\;\dd x=\lim_{j\to\infty}\int \frac{(y+(w))(1-y-(w))}{1+(y+(w))^2}\;\dd|\gamma\asc{w_j-(w_j)}|(x,y).
\]

Since $(\gamma\asc{w_j-(w_j)})_j$ is a norm bounded sequence in $\Lift((-1,1)\times\R)$, Lemma~\ref{lemliftingcompactness} combined with Corollary~\ref{cor1dliftings} lets us deduce that $\gamma\asc{w_j-(w_j)}\wsc\gamma\asc{w-(w)}$. By Reshetnyak's Lower Semicontinuity Theorem then, it holds that 
\begin{align*}
\lim_{j\to\infty}\int_{-1}^1\frac{w_j(x)(1-w_j(x))}{1+|w_j(x)|^2}|\nabla w_j(x)|\;\dd x &\geq \int\frac{(y+(w))(1-y-(w))}{1+(y+(w))^2}\;\dd|\gamma\asc{w-(w)}|(x,y)\\
&=\int_0^1\frac{w^\theta(0)(1-w^\theta(0))}{1+(w^\theta(0))^2}\;\dd\theta\\
&=\int\frac{\theta(1-\theta)}{1+\theta^2}\;\dd\theta.
\end{align*}
Thus,
\begin{align*}
\lim_{j\to\infty}\int_{-1}^{1}\Phi(v_j(x))|\nabla v_j(x)|\;\dd x&\geq\delta\lim_{j\to\infty}\int_{-1}^1\frac{w_j(x)(1-w_j(x))}{1+|w_j(x)|^2}|\nabla w_j(x)|\;\dd x\\
&\geq\delta\int_0^1\frac{\theta(1-\theta)}{1+\theta^2}\;\dd\theta\\
&>0.
\end{align*}
We have therefore shown that
\[
\liminf_{j\to\infty}\int_{-1}^{1}\Phi(v_j(x))|\nabla v_j(x)|\;\dd x>0
\]
for any sequence $(v_j)_j\subset\W^{1,1}((-1,1);\R^2)$ such that $v_j\wsc u$. It follows that $u$ is a global minimiser for $\Fcal_{**}$ possessing no weakly* convergent recovery sequences. This is in stark contrast to the situations where either $f$ is $u$-independent (see~~\cite{KriRin10RSI}), where recovery sequences can always be found which converge area-strictly in $\BV(\Omega;\R^m)$, or $(v_j)_j$ converges weakly in $\W^{1,p}(\Omega;\R^m)$ for some $p>1$, where recovery sequences can be found which converge strongly in $\W^{1,p}$.
\end{example}

\subsection{Surface energies}

In order to construct approximate recovery sequences, we first consider the crucial surface part.

%The following corollary is now a consequence of measurability of $K_f[u]$ and the fact that $K_f[u]\ll|Du|$ and the discussion of rectifiable measures featuring in Section~\ref{secmeasuretheory}:
\begin{lemma}\label{lemK_frecoverysequence}
Let $f\in\RBVw(\Omega\times\R^m)$ and $u\in\BV(\Omega;\R^m)$. Then for $\Hcal^{d-1}$-almost every $x_0\in\Jcal_u$ and every $\varepsilon>0$, there exists a sequence $(u_j)_j\subset\Acal_u(x_0)$ such that
\[
u_j\wsc u^\pm_{x_0}\text{ in }\BV(\bB^d;\R^m),\quad u_j\to u^\pm_{x_0}\text{ in }\Lp^{d/(d-1)}(\bB^d;\R^m),
\]
\[
\lim_{r\to 0}\lim_{j\to\infty}\left|\int_{\bB^d}f^\infty(x_0+rz, u_j(z),\nabla u_j(z))\;\dd z-K_f[u](x_0)\right|<\varepsilon,
\]
and
\[
K_f[u](x_0)=\lim_{r\to 0}r^{1-d}\frac{1}{\omega_{d-1}}\int_{\overline{B_r(x_0)}}K_f[u](x)\;\dd\Hcal^{d-1}\restrict\Jcal_u(x).
\]
\end{lemma}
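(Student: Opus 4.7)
The claim splits into two largely independent assertions: the Besicovitch-type averaging identity for $K_f[u]$, and the construction of the approximate recovery sequence $(u_j)$. For the averaging identity, Lemma~\ref{lemenergymeasurable} makes $K_f[u]$ measurable with respect to $\Hcal^{d-1}\restrict\Jcal_u$, and the Corollary following it makes $\mu := K_f[u]\Hcal^{d-1}\restrict\Jcal_u$ a $(d-1)$-rectifiable Radon measure on $\Omega$ (provided the total mass is finite, which may be assumed by truncation since otherwise the surface term in Theorem~\ref{wsclscthm} is trivially $+\infty$). The standard $(d-1)$-density theorem for rectifiable measures then gives
\[
\lim_{r\downarrow 0}\frac{\mu(\overline{B(x_0,r)})}{\omega_{d-1}\,r^{d-1}}=K_f[u](x_0)
\]
at $\Hcal^{d-1}\restrict\Jcal_u$-a.e.\ $x_0\in\Jcal_u$, which is the final assertion.

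For the construction, fix $\varepsilon>0$ and choose a near-minimiser $\varphi\in\Acal_u(x_0)$ with $\omega_{d-1}^{-1}\int_{\bB^d}f^\infty(x_0,\varphi,\nabla\varphi)\,\dd z\le K_f[u](x_0)+\varepsilon/2$. Setting $n:=n_u(x_0)$ and letting $P$ denote projection onto $n^\perp$, define on the oblate spheroid $E_j:=\{z\in\bB^d:j^2(z\cdot n)^2+|Pz|^2<1\}$ the piecewise profile $\tilde u_j(z):=\varphi(j(z\cdot n)n+Pz)$, extended by $\tilde u_j(z):=u^\pm_{x_0}(z)$ on $\bB^d\setminus E_j$; the zero-homogeneity of $u^\pm_{x_0}$ produces continuity across $\pd E_j$, and since $\bB^d\cap n^\perp\subset E_j$ contains the entire jump set of $u^\pm_{x_0}$, the function $\tilde u_j$ is smooth outside $E_j$. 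A mollification at scale $\delta_j\ll j^{-1}$ near $\pd E_j$ (with the spheroid slightly inset from the equator of $\pd\bB^d$ to leave the boundary trace undisturbed) yields $u_j\in\Acal_u(x_0)$. Since $\Lcal^d(E_j)\sim j^{-1}$, $u_j\to u^\pm_{x_0}$ in $L^1(\bB^d;\R^m)$; a direct estimate bounds $|Du_j|(\bB^d)$ uniformly, giving $u_j\wsc u^\pm_{x_0}$ in $\BV$, with $L^{d/(d-1)}$-convergence following from Proposition~\ref{ctsembedding} along a strictly convergent subsequence.

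The energy is computed via the change of variables $z\mapsto w:=j(z\cdot n)n+Pz$ on $E_j$ (Jacobian $j$) combined with the one-homogeneity of $f^\infty(x,y,\cdot)$, transforming the inner integral into $\int_{\bB^d}f^\infty(x_0+r(Pw+j^{-1}(w\cdot n)n),\varphi(w),\partial_n\varphi(w)\otimes n+j^{-1}\nabla_P\varphi(w))\,\dd w$ up to a vanishing mollification correction. Letting $j\to\infty$ by dominated convergence (using the growth bound~\eqref{eqintrogrowth}), and then $r\downarrow 0$ via a Scorza--Dragoni selection of a continuity set for $f^\infty$ in $x$ combined with the Lebesgue-point property of $\Jcal_u$ at $x_0$, delivers the limit $\int_{\bB^d}f^\infty(x_0,\varphi,\partial_n\varphi\otimes n)\,\dd w$; the contribution from $\bB^d\setminus E_j$ vanishes because $u^\pm_{x_0}$ has zero classical gradient there.

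The main obstacle is that this final value depends only on the normal component $\partial_n\varphi\otimes n$, whereas the near-minimality of $\varphi$ controls the full gradient $\nabla\varphi$. The discrepancy is bounded by $C\int_{\bB^d}|\nabla_P\varphi|\,\dd w$ using the linear growth of $f^\infty$, and is closed by requiring the initial $\varphi$ to be essentially one-dimensional so that this quantity is at most $\omega_{d-1}\varepsilon/(2C)$. Such a choice is available via the standard reduction---valid for quasiconvex surface integrands---of the cell-problem infimum~\eqref{eqsurfacedensity} to profiles of the form $\Phi(z\cdot n)$ (obtained by tangential averaging of an arbitrary near-minimiser and appeal to quasiconvexity), modulo a thin boundary-layer correction near the equator of $\pd\bB^d$ needed to enforce the discontinuous trace $u^\pm_{x_0}$; the combined cost of the reduction and the boundary-layer correction can be absorbed into the remaining $\varepsilon/2$ budget.
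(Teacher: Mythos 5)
The averaging identity and the first two convergence claims are handled sensibly, but the core of the construction has a genuine gap. Your profile $\tilde u_j(z)=\varphi(j(z\cdot n)n+Pz)$ on the oblate spheroid, after the change of variables and the passage $j\to\infty$, produces the energy
\[
\int_{\bB^d}f^\infty\bigl(x_0,\varphi(w),\partial_n\varphi(w)\otimes n\bigr)\,\dd w,
\]
in which the tangential component $\nabla_P\varphi$ has vanished. You notice this and try to close the gap by invoking a ``standard reduction --- valid for quasiconvex surface integrands --- of the cell-problem infimum~\eqref{eqsurfacedensity} to profiles of the form $\Phi(z\cdot n)$.'' No such reduction exists: the whole point of defining $K_f[u]$ as an infimum over all of $\Acal_u(x)$ is that for vector-valued $u$ and non-convex $f^\infty$ the optimal profile may have genuinely $d$-dimensional gradient structure, and the one-dimensional infimum $\inf_\Phi\int f^\infty(x_0,\Phi,\Phi'\otimes n)$ can be strictly smaller. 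Tangential averaging of a near-minimiser followed by Jensen works for \emph{convex} integrands, not quasiconvex ones. Worse, Lemma~\ref{lemK_frecoverysequence} assumes only $f\in\RBVw(\Omega\times\R^m)$ --- quasiconvexity is not a hypothesis here at all --- so the claimed reduction is unavailable on two independent counts.

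The paper avoids this problem entirely by never squashing. Instead of compressing $\varphi$ in the normal direction, one covers the unit disc $n_u(x_0)^\perp\cap\bB^d$ by a Vitali--Besicovitch family of small balls $B(z_i,r_i)$ (with $z_i\in n_u(x_0)^\perp$, $r_i\le 1/k$), and tiles $\bB^d$ by \emph{isotropically} rescaled copies $v((z-z_i)/r_i)$ of the near-minimiser $v$ on $B(z_i,r_i)$, extended by $u^\pm_{x_0}$ outside. The isotropic rescaling sends $\nabla v$ to $r_i^{-1}\nabla v((z-z_i)/r_i)$ and the domain has volume $\sim r_i^d$, so the positive one-homogeneity of $f^\infty$ in $A$ gives each tile energy exactly $r_i^{d-1}\int_{\bB^d}f^\infty(x_i+\ldots,v,\nabla v)\,\dd z$ --- the full gradient, not just its normal component. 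Summing over the cover recovers $\omega_{d-1}$ as a total weight, and the $x$-dependence is controlled by a modulus of continuity for $f^\infty$ on $\overline{\Omega}\times\norm{v}_\infty\overline{\bB^m}\times\pd\bB^{m\times d}$ plus the approximate tangent plane of $K_f[u]\Hcal^{d-1}\restrict\Jcal_u$ at $x_0$. A final mollification (Propositions~\ref{propfixedbdary} and~\ref{ctsembedding}) converts each $v_k$ to a member of $\Acal_u(x_0)$. If you want to salvage your argument you should replace the spheroid compression by this isotropic ball-tiling, which is the step that makes the recovery sequence's energy match the $d$-dimensional cell-problem value rather than its one-dimensional shadow.
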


\begin{proof}
We shall assume that $x_0$ is a point at which $K_f[u]\Hcal^{d-1}\restrict\Jcal_u$ admits an approximate tangent plane. Let $\varepsilon>0$ and $v\in\Acal_u(x_0)$ be such that
\[
\left|\frac{1}{\omega_{d-1}}\int_{\bB^d}f^\infty(x_0,v(z),\nabla v(z))\;\dd z- K_f[u](x_0)\right|<\varepsilon.
\]
Now, let $k\in\mbN$ and, using the Vitali--Besicovitch Covering Theorem, let $\{B(z_i,r_i)\}_{i\in\mbN}$ be a countable collection of balls such that each centre $z_i$ ($i\in\mbN$) is contained in the hyperplane 
\[
n_u(x_0)^\perp=\{z \colon\; z\cdot n_u(x_0)=0\},
\]
$B(z_i,r_i)\subset\bB^d$, $0<r_i\leq\frac{1}{k}$, and
\[
  \Hcal^{d-1}(n_u(x_0)^\perp\cap \left(\bB^d\setminus\bigcup_{i\in\N} B(z_i,r_i)) \right)=0.
\]
Define $v_k\in\W^{1,1}(\bB^d;\R^m)$ by
\[
v_k(z):=\begin{cases}
v\left(\frac{z-z_i}{r_i}\right)&\text{ if }z\in B(z_i,r_i)\text{ (}i \in \N\text{)},\\
u^\pm_{x_0}(z)&\text{ otherwise.}
\end{cases}
\]
That $v_k\in\W^{1,1}(\bB^d;\R^m)$ follows from the boundary condition satisfied by $v$ and the fact that each $z_i\in n_u(x_0)^\perp$. It also still clearly holds that $v_k=u^\pm$ on $\pd\bB^d$. The sequence $(v_k)_k$ is uniformly bounded in $\Lp^\infty(\bB^d;\R^m)$ by $\norm{v}_{\infty}$ and converges pointwise $\lL$-almost everywhere to $u^\pm_{x_0}$, which implies that $v_k\to u^\pm_{x_0}$ in $\Lp^q(\bB^d;\R^m)$ as $k\to\infty$ for every $q\in[1,\infty)$. We can also compute that $|Dv_k|(\bB^d)=|Dv|(\bB^d)$ from which we deduce $v_k\wsc v$ in $\BV(\bB^d;\R^m)$.

We can further observe after changing coordinates that, for any $r\leq 1$,
\begin{align*}
\int_{\bB^d}f^\infty(x_0+rz,v_k(z),&\nabla v_k(z))\;\dd z=\sum_{i\in\mbN}r_i^{d-1}\int_{\bB^d}f^\infty(x_0+r(z_i+r_iz),v(z),\nabla v(z))\;\dd z.
\end{align*}
Since $f^\infty$ is uniformly continuous on the compact set $\overline{\Omega}\times(\norm{v}_{\Lp^\infty}\overline{\bB^m})\times\pd\bB^{m\times d}$, there exists a modulus of continuity $m\colon[0,\infty)\to[0,\infty)$ such that
\begin{align*}
\left|f^\infty\left(x_0+r(z_i+r_iz),v(z),\frac{\nabla v(z)}{|\nabla v(z)|}\right)-f^\infty\left(x_0+rz_i,v(z),\frac{\nabla v(z)}{|\nabla v(z)|}\right)\right|
\leq m(rr_i|z|)\leq m\left({r}/{k}\right).
\end{align*}
Thus, using the one-homogeneity of $f^\infty$,
\begin{align*}
&\left|\int_{\bB^d}f^\infty(x_0+r(z_i+r_iz),v(z),\nabla v(z))-f^\infty(x_0+rz_i,v(z),\nabla v(z))\;\dd z\right|\\
&\qquad\leq m\left({r}/{k}\right) \int_{\bB^d}|\nabla v(z)|\;\dd z
\end{align*}
for every $i\in\mbN$. For $k$ large enough such that $ m(r/k)\int_{\bB^d}|\nabla v(z)|\;\dd z\leq\varepsilon$, this implies,
\[
\int_{\bB^d}f^\infty(x_0+rz,v_k(z),\nabla v_k(z))\;\dd z=\sum_{i\in\mbN}r_i^{d-1}\int_{\bB^d}f^\infty(x_0+rz_i,v(z),\nabla v(z))\;\dd z+O(\varepsilon).
\]
For $\overline{z}\in B(z_i,r_i)$ and $z\in\bB^d$, we can similarly estimate
\begin{align*}
&\left|\int_{\bB^d}f^\infty\left(x_0+rz_i,v(z),\nabla v(z)\right)\;\dd z-\int_{\bB^d}f^\infty\left(x_0+r\overline{z},v(z),\nabla v(z)\right)\;\dd z\right|\\
&\qquad \leq  m({r}/{k})\int_{\bB^d}|\nabla v(z)|\;\dd z,
\end{align*}
from which it follows that
\begin{align*}
&\Bigg|\int_{\bB^d} f^\infty(x_0+rz_i,v(z),\nabla v(z))\;\dd z\\
&\qquad\qquad-\dashint_{B(z_i,r_i)\cap n_u(x_0)^\perp}\int_{\bB^d}f^\infty\left(x_0+r\overline{z},v(z),\nabla v(z)\right)\;\dd z\;\dd\Hcal^{d-1}(\overline{z})\Bigg|\\
&\qquad\leq m(r/k)|Dv|(\bB^d).
\end{align*}
Hence, for $k$ sufficiently large,
\begin{align*}
&\int_{\bB^d}f^\infty(x_0+rz,v_k(z),\nabla v_k(z))\;\dd z\\
&\qquad=\sum_{i\in\mbN}r_i^{d-1}\dashint_{B(z_i,r_i)\cap n_u(x_0)^\perp}\int_{\bB^d}f^\infty\left(x_0+r\overline{z},v(z),\nabla v(z)\right)\;\dd z\;\dd\Hcal^{d-1}(\overline{z})+O(\varepsilon)\\
&\qquad=\sum_{i\in\mbN}\frac{1}{\omega_{d-1}}\int_{B(z_i,r_i)\cap n_u(x_0)^\perp}\int_{\bB^d}f^\infty\left(x_0+r\overline{z},v(z),\nabla v(z)\right)\;\dd z\;\dd\Hcal^{d-1}(\overline{z})+O(\varepsilon).
\end{align*}
Since $\Hcal^{d-1}(n_u(x_0)^\perp\cap(\bB^d\setminus\bigcup_i B(z_i,r_i)))=0$, this implies
\begin{align*}
\int_{\bB^d}f^\infty(x_0+rz,v_k(z),\nabla v_k(z))\;\dd z&=\dashint_{\bB^d\cap n_u(x_0)^\perp}\int_{\bB^d}f^\infty\left(x_0+r\overline{z},v(z),\nabla v(z)\right)\;\dd z\;\dd\Hcal^{d-1}(\overline{z})\\
&\qquad+O(\varepsilon).
\end{align*}
Finally noting
\[
\bigl|f^\infty\left(x_0+r\overline{z},v(z),\nabla v(z)\right)-f^\infty\left(x_0,v(z),\nabla v(z)\right)\bigr|\leq m(r)|\nabla v(z)|
\]
for every $z,\overline{z}\in\bB^d$, and integrating first in $z$ with respect to $\lL\restrict\bB^d$ and then in $\overline{z}$ with respect to $\Hcal^{d-1}\restrict n_u(x_0)^\perp$, we deduce
\begin{equation}\label{eqK_festimate}
\lim_{r\to 0}\lim_{k\to\infty}\left|\int_{\bB^d}f^\infty(x_0+rz,v_k(z),\nabla v_k(z))\;\dd z-K_f[u](x_0)\right|\leq 2\omega_{d-1}\varepsilon+\varepsilon.
\end{equation}
Applying Propositions~\ref{propfixedbdary} and~\ref{ctsembedding} to each $v_k$ and using the fact that $|Dv_k|(\Omega)=|Dv|(\Omega)$, we can obtain a sequence $(w_k)_k\subset\Acal_u(x_0)$ with $\norm{w_k}_{\Lp^\infty}\leq\norm{v}_{\Lp^\infty}$, $w_k\wsc u^\pm_{x_0}$ in $\BV(\bB^d;\R^m)$, $w_k\to u^\pm_{x_0}$ in $\Lp^{d/(d-1)}(\bB^d;\R^m)$, and which, by the same reasoning as before, satisfies
\[
\lim_{r\to 0}\lim_{k\to\infty}\left|\int_{\bB^d}f^\infty(x_0+rz,w_k(z),\nabla w_k(z))\;\dd z-\int_{\bB^d}f^\infty(x_0,w_k(z),\nabla w_k(z))\;\dd z\right|=0,
\]
and also
\[
\left|\int_{\bB^d}f^\infty(x_0,v_k(z),\nabla v_k(z))\;\dd z-\int_{\bB^d}f^\infty(x_0,w_k(z),\nabla w_k(z))\;\dd z\right|\leq\frac{1}{k}.
\]
Hence,
\[
\lim_{r\to 0}\lim_{k\to\infty}\left|\int_{\bB^d}f^\infty(x_0+rz,v_k(z),\nabla v_k(z))\;\dd z-\int_{\bB^d}f^\infty(x_0+rz	,w_k(z),\nabla w_k(z))\;\dd z\right|=0
\]
and it follows from~\eqref{eqK_festimate} that
\[
\limsup_{r\to 0}\lim_{k\to\infty}\left|\frac{1}{\omega_{d-1}}\int_{\bB^d}f^\infty(x_0+rz,w_k(z),\nabla w_k(z))\;\dd z-K_f[u](x_0)\right|< 2\omega_{d-1}\varepsilon + \varepsilon.
\]
Since $\varepsilon>0$ was arbitrary, we have obtained the desired conclusion.

The final assertion follows from the existence of an approximate tangent plane to the measure $K_f[u]\Hcal^{d-1}\restrict\Jcal_u$ at $x_0$.
\end{proof}

\subsection{Primitive recovery sequences}
In the following proposition, we explicitly construct $\Lp^1$-recovery sequences in $\BV(\Omega;\R^m)$ for $\Fcalrw$ in the case where $f=f^\infty$.
\begin{proposition}\label{proprecessionrecoveryseq}
Let $f\in\C(\overline{\Omega}\times\R^m\times\R^{m\times d})$ be a positively one-homogeneous integrand, let $u\in\BV(\Omega;\R^m)$ and assume that 
\[
0\leq f(x,y,A)\leq C|A|\qquad\text{ for all }(x,y,A)\in\overline{\Omega}\times\R^m\times\R^{m\times d}
\]
for some $C>0$. Then there exists a sequence $(u_j)_j\subset\BV(\Omega;\R^m)$ such that $u_j\to u$ in $\Lp^1(\Omega;\R^m)$, $u_j$ and $\nabla u_j$ converge pointwise $\lL$-almost everywhere to $u$ and $\nabla u$ respectively, $u_j\to u$ in $\Lp^{d/(d-1)}(\bB^d;\R^m)$, and
\begin{align*}
&\lim_{j\to\infty}\left(\int_\Omega f(x,u_j(x),\nabla u_j(x))\;\dd x+\int_\Omega\int_0^1 f\left(x,u_j^\theta(x),\frac{\dd D^su_j}{\dd|D^s u_j|}(x)\right)\;\dd\theta\;\dd|D^su_j|(x)\right)\\
&\qquad=\int_\Omega f(x,u(x),\nabla u(x))\;\dd x+\int_\Omega f\left(x,u(x),\frac{\dd D^cu}{\dd|D^c u|}(x)\right)\;\dd|D^c u|(x)\\
&\qquad\qquad +\int_{\Jcal_u}K_f[u](x)\;\dd\Hcal^{d-1}(x).
\end{align*}
\end{proposition}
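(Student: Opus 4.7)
The plan is to construct $u_j$ by gluing a standard mollification of $u$ outside a small neighbourhood of $\Jcal_u$ to the locally optimal jump profiles supplied by Lemma~\ref{lemK_frecoverysequence} inside that neighbourhood. Since $f=f^\infty$ is continuous and positively one-homogeneous in $A$, its perspective integrand $Pf$ is continuous on $\overline{\Omega}\times\R^m\times(\R^{m\times d}\times\R)$, so Reshetnyak's theorem (via Theorem~\ref{thmareastrictcontinuity}) will handle the absolutely continuous and Cantor parts automatically via area-strict convergence of the mollification. The principal obstacle will be controlling the spurious interface energy produced on the spheres where the two pieces are joined.

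Fix $\varepsilon>0$. Apply the Vitali--Besicovitch covering theorem to the finite Radon measure $(1+K_f[u])\Hcal^{d-1}\restrict\Jcal_u$ to obtain a disjoint family $\{\overline{B(x_i,r_i)}\}_{i\in\N}\subset\Omega$ with $x_i\in\Jcal_u$, $r_i<\varepsilon$, and $\Hcal^{d-1}(\Jcal_u\setminus\bigcup_i B(x_i,r_i))=0$. By thinning, each $x_i$ can be taken to satisfy the conclusions of Lemma~\ref{lemK_frecoverysequence} and of Theorem~\ref{thmbvblowup}~\eqref{bvblowupjump}, so in particular $u(x_i+r\,\sbullet\,)\to u^\pm_{x_i}$ strictly in $\BV(\bB^d;\R^m)$. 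Since $|Du|(\pd B(x_i,r))=0$ for all but countably many $r$, we also arrange $|Du|(\pd B(x_i,r_i))=0$; then the trace of $u$ on $\pd B(x_i,r_i)$ is defined and, by continuity of the trace under strict convergence (Theorem~3.88 in~\cite{AmFuPa00FBVF}),
\[
\rho_i := r_i^{1-d}\!\int_{\pd B(x_i,r_i)}\!\bigl|u-u^\pm_{x_i}((\,\sbullet\,-x_i)/r_i)\bigr|\,\dd\Hcal^{d-1}\to 0\quad\text{as }r_i\downarrow 0.
\]

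For each $i$, Lemma~\ref{lemK_frecoverysequence} provides $(v_j^i)_j\subset\Acal_u(x_i)\subset(\C^\infty\cap\Lp^\infty)(\bB^d;\R^m)$ with $v_j^i=u^\pm_{x_i}$ on $\pd\bB^d$, $v_j^i\to u^\pm_{x_i}$ in $\Lp^{d/(d-1)}$, and $\limsup_j|\omega_{d-1}^{-1}\int_{\bB^d}f(x_i+r_iz,v_j^i,\nabla v_j^i)\,\dd z-K_f[u](x_i)|<\varepsilon$. Set $w_j^i(x):=v_j^i((x-x_i)/r_i)$ on $B(x_i,r_i)$. Extend $u$ past $\pd\Omega$ using Proposition~\ref{propfixedbdary} and set $u_\rho:=u\conv\phi_\rho$, for which $u_\rho\to u$ area-strictly on $\Omega_\varepsilon:=\Omega\setminus\bigcup_iB(x_i,r_i)$ (because $|Du|$ does not charge $\pd\Omega_\varepsilon$). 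Define the candidate
\[
u_{j,\rho} := w_j^i\ \text{on}\ B(x_i,r_i),\qquad u_{j,\rho}:=u_\rho\ \text{on}\ \Omega_\varepsilon,
\]
which lies in $\BV(\Omega;\R^m)$ with singular part supported on $\bigcup_i\pd B(x_i,r_i)$. Theorem~\ref{thmareastrictcontinuity} applied on $\Omega_\varepsilon$ produces the AC and Cantor terms in the limit $\rho\downarrow 0$, while the final clause of Lemma~\ref{lemK_frecoverysequence} (identifying $K_f[u](x_i)$ as the tangent ratio of $K_f[u]\Hcal^{d-1}\restrict\Jcal_u$) gives $\sum_i\int_{B(x_i,r_i)}f(x,w_j^i,\nabla w_j^i)\,\dd x\leq\int_{\Jcal_u}K_f[u]\,\dd\Hcal^{d-1}+C\varepsilon$ for $j$ large.

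The hardest step is estimating the surface-jump energy of $u_{j,\rho}$ across $\bigcup_i\pd B(x_i,r_i)$, which by $0\leq f(x,y,A)\leq C|A|$ is dominated by $C\sum_i\|w_j^i-u_\rho\|_{\Lp^1(\pd B(x_i,r_i))}$. Splitting via the triangle inequality into $r_i^{d-1}\rho_i$ (from the trace estimate above, vanishing with $r_i$) plus $\|u-u_\rho\|_{\Lp^1(\pd B(x_i,r_i))}$, and using a Fubini-type slight perturbation of each $r_i$ (allowed because only countably many radii are forbidden) to select $\rho=\rho_j\downarrow 0$ fast enough that $\sum_i\|u-u_{\rho_j}\|_{\Lp^1(\pd B(x_i,r_i))}\leq\varepsilon$, the total interface cost is $\leq C\varepsilon$ for $j$ large. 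Setting $u_j:=u_{j,\rho_j}$ and combining the three estimates, $\limsup_j\Fcal[u_j]$ does not exceed the right-hand side of the proposition plus $C\varepsilon$; the matching lower bound follows from Theorem~\ref{thmwsclsc}, and a diagonalization in $\varepsilon\downarrow 0$ yields the desired recovery sequence. Convergence $u_j\to u$ in $\Lp^1$ is immediate from the construction, $\Lp^{d/(d-1)}$-convergence follows from Proposition~\ref{ctsembedding}, and the pointwise a.e.\ convergences of $u_j$ and $\nabla u_j$ follow from standard mollification theory on $\Omega_\varepsilon$ combined with the fact that $\lL(\bigcup_iB(x_i,r_i))\to 0$ as $\varepsilon\downarrow 0$, upon passing to a further subsequence.
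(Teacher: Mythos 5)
Your construction is genuinely different from the paper's: you glue the blow-up jump profiles to a mollification $u_\rho$ of $u$ with a \emph{sharp} discontinuity on the spheres $\pd B(x_i,r_i)$, controlling the spurious interface energy via $\Lp^1$ bounds on the trace mismatch, whereas the paper leaves $u$ unchanged outside the balls and inserts a smooth transition annulus $\{\tau\leq|x-x_i|/r_i<1\}$ inside each ball using a cut-off $\eta_\tau$, so that no boundary jump is created and the interface cost is estimated by $\norm{\nabla\eta_\tau}_\infty$ against $|Du|$. Your route avoids the $\tau$-layer bookkeeping but pays for it by having to control a countable sum $\sum_i\int_{\pd B(x_i,r_i)}|u-u_\rho|$, which is delicate; this is fixable (restrict to the finite subfamily $\{B(x_i,r_i)\}_{i\leq N_j}$ before choosing $\rho_j$, and perturb the $r_i$ so that traces behave), but it is a weaker point than the paper's scale-invariant transition estimate.

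There is, however, a genuine gap in the final step: you obtain $\limsup_j\Fcal[u_j]\leq\text{RHS}+C\varepsilon$ and then claim ``the matching lower bound follows from Theorem~\ref{thmwsclsc}.'' This appeal is invalid for two reasons. First, Theorem~\ref{thmwsclsc} requires $f(x,y,\frarg)$ to be quasiconvex, but Proposition~\ref{proprecessionrecoveryseq} assumes only continuity, positive one-homogeneity, nonnegativity and a linear upper bound; the recession function of a nonconvex integrand will typically fail to be quasiconvex, and the Proposition must hold without that hypothesis. Second, Theorem~\ref{thmwsclsc} applies to sequences converging \emph{weakly*} in $\BV$, while the Proposition (and Example~\ref{exbadrecoveryseq}) only guarantees $\Lp^1$-convergence; your $u_j$ need not even have uniformly bounded variation, since Lemma~\ref{lemK_frecoverysequence} gives no uniform-in-$x_0$ bound on $\sup_j|Dv_j^{i}|(\bB^d)$. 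The fix is available in your own construction: by nonnegativity of $f$, dropping the interface/jump terms gives $\Fcal[u_j]\geq\int_{\Omega_\varepsilon}Pf(\frarg,\frarg,P\gamma\asc{u_{\rho_j}-(u_{\rho_j})})+\sum_{i}\int_{B(x_i,r_i)}f(x,w_j^i,\nabla w_j^i)\,\dd x$, and since the single-ball estimate from Lemma~\ref{lemK_frecoverysequence} is two-sided (the absolute value of the difference is small), the $\liminf$ bound follows from the same aggregation that gives the $\limsup$. You should also make explicit the uniformization step (decomposing $\Jcal_u$ into a set on which $\rho_i$ and the single-point convergence of Lemma~\ref{lemK_frecoverysequence} hold at a uniform rate, plus an exceptional set of small $\Kfrak_f[u]$-measure), since ``by thinning'' does not by itself convert the pointwise convergences $\rho_i\to 0$ as $r_i\downarrow 0$ into the summable bound $\sum_i r_i^{d-1}\rho_i\leq C\varepsilon$ that your interface estimate needs; the paper's $L_0^\varepsilon$ construction is precisely this step. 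Finally, to recover the full Cantor integral over $\Omega$ in the limit, choose the covering inside an open $U_\varepsilon\supset\Jcal_u$ with $(\lL+|D^cu|)(U_\varepsilon)<\varepsilon$, not merely $\lL(U_\varepsilon)<\varepsilon$, otherwise $\int_{\Omega_\varepsilon}f(x,u,\frarg)\,\dd|D^cu|$ need not converge to $\int_{\Omega}f(x,u,\frarg)\,\dd|D^cu|$.
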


\begin{proof}
For brevity, abbreviate $\Kfrak_f[u]:=K_f[u]\Hcal^{d-1}\restrict\Jcal_u$ so that
\[
\int_{A\cap\Jcal_u} K_f[u](x)\;\dd\Hcal^{d-1}(x)=\Kfrak_f[u](A)\quad\text{ for every Borel set $A\subset\Omega$.}
\]
Now let $L_0$ be the set of points $x\in\mathcal{J}_u$ which are such that
\begin{enumerate}[(I)]
\item $x$ satisfies the conclusions of Lemma~\ref{lemK_frecoverysequence};
\item\label{eqrecoverytangentcond} $\Kfrak_f[u]$ and $|Du|$ possess approximate tangent planes at $x$;
\item\label{eqrecoveryblowupcond} $u(x+r\frarg)$ converges strictly  in $\BV(\bB^d;\R^m)$ to $u^\pm_x$ as $r\to 0$ as discussed after Definition~\ref{defjumpblowup}.
\end{enumerate}
By Lemma~\ref{lemK_frecoverysequence}, the Lebesgue Differentiation Theorem, the definition of $\mathcal{J}_u$, and the fact that $\mathcal{J}_u$ is countably $\Hcal^{d-1}$-rectifiable, we have that $\mathcal{H}^{d-1}(\mathcal{J}_u\setminus L_0)=0$.

For $i=1,2$, let $F_i\colon L_0\times(0,1]\to\R$ be the functions
\[
F_1(x,r):=\dashint_{B(x,r)}\left|u^\pm_x\left(\frac{\overline{x}-x}{r}\right)-u(x)\right|+\left|u^\pm_x\left(\frac{\overline{x}-x}{r}\right)-u(x)\right|^\frac{d}{d-1}\;\dd \overline{x},
\]
\[
 F_2(x,r):=\frac{1}{\omega_{d-1}}r^{1-d}\frac{|Du|(B(x,r))}{|u^+(x)-u^-(x)|}.
\]
It follows from~\eqref{eqrecoverytangentcond} and~\eqref{eqrecoveryblowupcond} combined with Proposition~\ref{ctsembedding} that $\lim_{r\downarrow 0}F_1(x,r)=0$ and $\lim_{r\downarrow 0}F_2(x,r)=1$ for each $x\in L_0$. Since the $F_i$ are $(\Hcal^{d-1}\restrict\Jcal_u)\times(\Lcal^1\restrict(0,1])$-measurable and hence $(\Kfrak_f[u])\times(\Lcal^1\restrict(0,1])$-measurable, we can therefore write $L_0$ as the following countable union of increasing $\Kfrak_f[u]$-measurable sets for any $\varepsilon>0$:
\[
L_0=\bigcup_{k\in\mbN}\left\{x\in L_0\colon F_1(x,r)\leq\varepsilon\frac{|Du|(B(x,r))}{r^{d-1}},\;\; F_2(x,r)\in(1-\varepsilon,1+\varepsilon)\text{ for all }r\leq \frac{1}{k} \right\}.
\]
Hence, for fixed $\varepsilon>0$ we can write $L_0=L_0^{\varepsilon}\cup E^{\varepsilon}$ where $\Kfrak_f[u](E^{\varepsilon})<\varepsilon$ and, for some $k_{\varepsilon}\in\mbN$,
\[
L_0^{\varepsilon}\subset\left\{x\in L_0\colon F_1(x,y)\leq\varepsilon\frac{|Du|(B(x,r))}{r^{d-1}},\quad F_2(x,r)\in(1-\varepsilon,1+\varepsilon)\text{ for all }r\leq \frac{1}{k_\varepsilon} \right\}.
\]	
By the outer regularity of Radon measures, there exists an open set $U_\varepsilon$ with $\Jcal_u\subset U_\varepsilon$ and $\lL(U_\varepsilon)<\varepsilon$. For a fixed $x\in L_0$, the fact that $\Kfrak_f[u]$ possesses an approximate tangent plane at $x$ implies that
\[
\lim_{r\to 0}\frac{\Kfrak_f[u]\left(B(x,r)\right)}{r^{d-1}}=\omega_{d-1}\frac{\dd\Kfrak_f[u]}{\dd\Hcal^{d-1}\restrict\Jcal_u}(x)=\omega_{d-1}K_f[u](x).
\]
Since $|Du|$ also possesses an approximate tangent plane at $x$ and $|u^+(x)-u^-(x)|>0$ for every $x\in\Jcal_u$, we have that
\[
\lim_{r\to 0}\frac{|Du|(B(x,r))}{r^{d-1}}=\omega_{d-1}\frac{\dd|Du|}{\dd\Hcal^{d-1}\restrict\Jcal_u}(x)=\omega_{d-1}|u^+(x)-u^-(x)|>0,
\]
and so we can deduce that, for all $r>0$ sufficiently small,
\[
\left|\frac{\Kfrak_f[u]\left(B(x,r)\right)}{r^{d-1}}-\omega_{d-1}K_f[u](x)\right|\leq\frac{\varepsilon}{2}\frac{|Du|(B(x,r))}{r^{d-1}}.
\]
On the other hand, Lemma~\ref{lemK_frecoverysequence} implies that, for all $r$ sufficiently small,
\[
\inf_{\substack{v\in\Acal_u(x)\\ \norm{v}_{\Lp^{1^*}}\leq 2\norm{u^\pm_{x}}_{\Lp^{1^*}}}}\left|\int_{\bB^d}f\left(x+\tau rz,v\left(z\right),\nabla v(z)\right)\dd z-\omega_{d-1}K_f[u](x)\right|\leq\frac{\varepsilon}{2}\frac{|Du|(B(x,r))}{r^{d-1}}
\]
for any $\tau\in(0,1)$, where $1^*:=d/(d-1)$. It therefore follows that, for $\tau\in(0,1)$ fixed, the collection
\begin{align*}
\Gcal^{\tau, \varepsilon}&:=\Bigg\{\overline{B(x,r)}\colon x\in L_0^{\varepsilon},\;r\leq\frac{1}{k_{\varepsilon}},\;\overline{B(x,r)}\Subset U_\varepsilon\text{ and }\\
&\inf_{\substack{v\in\Acal_u(x)\\ \norm{v}_{\Lp^{1^*}}\leq 2\norm{u^\pm_{x}}_{\Lp^{1^*}}}}\left|\int_{\bB^d}f\left(x+\tau rz,v\left(z\right),\nabla v(z)\right)\dd z-\frac{\Kfrak_f[u]\left(B(x,r)\right)}{r^{d-1}}\right|<\varepsilon\frac{|Du|((B(x,r))}{r^{d-1}}\Bigg\}
\end{align*}
is a fine cover for $L_0^{\varepsilon}$ and so, by the Vitali--Besicovitch Covering Theorem, there exists a countable disjoint set $\Hcal^{\tau,\varepsilon}\subset \Gcal^{\tau,\varepsilon}$ whose union covers $\Kfrak_f[u]$-almost all of $L_0^{\varepsilon}$.

Let $\overline{B(x_1,r_1)},\,\overline{B(x_{2},r_{2})}\ldots$ be a sequence of elements from $\Hcal^{\tau, \varepsilon}$ such that there exists an increasing sequence $N_1,\,N_2\ldots$ in $\mbN$ with
\[
\Hcal^{d-1}\left(L_0^{\varepsilon}\setminus\bigcup_{i=1}^{N_j}\overline{B(x_{i},r_{i})}\right)\leq\frac{1}{j}.
\]
Let $\eta_\tau\in\C_c^\infty(\bB^d;[0,1])$ be such that $\eta_\tau \equiv 1$ on $\tau\bB^d$.

Fix $j$. For $i=1\ldots N_j$, let $v^\tau_i\in\Acal_u(x_i)$ be such that $\norm{v^\tau_i}_{\Lp^{1^*}}\leq 2\norm{u_{x_i}^\pm}_{\Lp^{1^*}}$ and
\[
\left|\int_{\bB^d}f\left(x+\tau r_iz,v^\tau_i\left(z\right),\nabla v^\tau_i(z)\right)\;\dd z-\frac{\Kfrak_f[u]\left(B(x_i,{r_i})\right)}{r_i^{d-1}}\right|<\varepsilon\frac{|Du|((B(x_i,{r_i}))}{r_i^{d-1}}.
\] 
Define
$(w_i^{\tau})_i\subset\BV(\bB^d;\R^m)$ by
\[
w_i^\tau(z):=\begin{cases}
v^\tau_i\left(\frac{z}{\tau}\right)&\text{ if }|z|<\tau,\\
u^\pm_{x_i}(z)&\text{ if }\tau\leq|z|<1.
\end{cases}
\]
We can now set
\[
v_j^{\varepsilon,\tau}(x):=\sum_{i=1}^{N_j}w_i^{\tau}\left(\frac{x-x_i}{r_i}\right)\eta_\tau\left(\frac{x-x_i}{r_i}\right)
\]
and
\[
u_j^{\varepsilon,\tau}(x):=u(x)\left(1-\sum_{i=1}^{N_j}\eta_\tau\left(\frac{x-x_i}{r_i}\right)\right) +v_j^{\varepsilon,\tau}(x).
\]
Invoking the criterion for membership of $L_0^\varepsilon$ involving $F_1$, we have that
\begin{align*}
\norm{v_j^{\varepsilon,\tau}}^{1^*}_{\Lp^{1^*}}&\leq \sum_{i=1}^\infty r_i^{d}\norm{w_i^{\tau}}^{1^*}_{\Lp^{1^*}}\\
&\leq (2^{1^*}+1)\sum_{i=1}^\infty r_i^{d}\norm{u_{x_i}^\pm}^{1^*}_{\Lp^{1^*}}\\
&\leq (2^{1^*}+1)\sum_{i=1}^\infty r_i^{d}\left(\varepsilon r_i^{1-d}|Du|(B(x_i,{r_i}))+\dashint_{B(x_i,{r_i})}|u(x)|^{1^*}\;\dd x\right)\\
&\leq (2^{1^*}+1)\left(\frac{\varepsilon}{k_\varepsilon}|Du|\left(\bigcup\Fcal^\varepsilon\right)+\int_{\bigcup\Fcal^\varepsilon}|u(x)|^{1^*}\;\dd x\right)\\
&\leq (2^{1^*}+1)\left(\frac{\varepsilon}{k_\varepsilon}|Du|(U_\varepsilon)+\int_{U_\varepsilon}|u(x)|^{1^*}\;\dd x\right),
\end{align*}
which implies that $v_j^{\varepsilon,\tau}\to 0$ in $\Lp^{1^*}(\Omega;\R^m)$ as $\varepsilon\to 0$ independently of $\tau$ and $j$ and hence that $u_j^{\varepsilon,\tau}\to u$ in $\Lp^{1^*}(\Omega;\R^m)$ as $\varepsilon\to 0$ independently of $\tau$ and $j$.

Now observe
\begin{align}
\begin{split}\label{eqrecoverdecomp1}
&\int_{\bigcup_{i=1}^{N_j} B(x_i,r_i)}\int_0^1 f\left(x,\left(u_j^{\varepsilon,\tau}\right)^\theta(x),\frac{\dd D u_j^{\varepsilon,\tau}}{\dd|Du_j^{\varepsilon,\tau}|}(x)\right)\;\dd\theta\;\dd|Du_j^{\varepsilon,\tau}|(x)\\
&\qquad =\sum_{i=1}^{N_j}\Bigg\{\int_{B(x_i,\tau r_i)}f\left(x,v^\tau_i\left(\frac{x-x_i}{\tau r_i}\right),\frac{1}{\tau r_i}\nabla v^\tau_i\left(\frac{x-x_i}{\tau r_i}\right)\right)\;\dd x\\
&\qquad\qquad+\int_{\left\{\tau\leq\frac{|x-x_i|}{r_i}<1\right\}}\int_0^1 f\left(x,\left(u_j^{\varepsilon,\tau}(x)\right)^\theta,\frac{\dd D u_j^{\varepsilon,\tau}}{\dd|D u_j^{\varepsilon,\tau}|}(x)\right)\;\dd\theta\;\dd|D u_j^{\varepsilon,\tau}|(x) \Bigg\}.
\end{split}
\end{align}
Changing coordinates, we can manipulate the first term in this expression as follows
\begin{align}
\begin{split}\label{eqrecoverydecomp1.5}
&\sum_{i=1}^{N_j}\int_{B(x_i,\tau r_i)}f\left(x,v^\tau_i\left(\frac{x-x_i}{\tau r_i}\right),\frac{1}{\tau r_i}\nabla v^\tau_i\left(\frac{x-x_i}{\tau r_i}\right)\right)\;\dd x\\
&\qquad =\sum_{i=1}^{N_j}\tau^{d-1} r_i^{d-1}\int_{\bB^d}f\left(x_i+\tau r_i z,v^\tau_i\left(z\right),\nabla v^\tau_i\left(z\right)\right)\;\dd z.
\end{split}
\end{align}
Since $\Hcal^{\tau,\varepsilon}$ is a fine cover for $L_0^\varepsilon$ with respect to $\Kfrak_f[u]$, we can write
\begin{align*}
&\lim_{j\to\infty}\Bigg|\Bigg(\tau^{d-1}\sum_{i=1}^{N_j}r_i^{d-1}\int_{\bB^d}f\left(x_i+\tau r_i z,v^\tau_i\left(z\right),\nabla v^\tau_i(z)\right)\;\dd z\Bigg)-\Kfrak_f[u](\Jcal_u)\Bigg|\\
&\qquad \leq\tau^{d-1}\sum_{B(x_i,r_i)\in\Hcal^{\varepsilon}}r_i^{d-1}\Bigg|\int_{\bB^d}f\left(x_i+\tau r_i z,v_i^\tau\left(z\right),\nabla v_i^\tau(z)\right)\;\dd z-r_i^{1-d}\Kfrak_f[u](\overline{B(x_i,r_i)})\Bigg|\\
&\qquad\qquad+(1-\tau^{d-1})\Kfrak_f[u](L_0^\varepsilon)+\Kfrak_f[u](\Jcal_u\setminus L_0^{\varepsilon}).
\end{align*}
By our choice of $v^\tau_i$ and the fact that $\Kfrak_f[u](\Jcal_u\setminus L_0^\varepsilon)<\varepsilon$, we therefore have that
\begin{align}
\begin{split}\label{eqrecoverydecomp2}
&\lim_{j\to\infty}\Bigg|\Bigg(\tau^{d-1}\sum_{i=1}^{N_j}r_i^{d-1}\int_{\bB^d}f\left(x_i+\tau r_i z,v^\tau_i\left(z\right),\nabla v^\tau_i(z)\right)\;\dd z\Bigg)-\Kfrak_f[u](\Jcal_u)\Bigg|\\
&\qquad \leq\tau^{d-1}\sum_{B(x_i,r_i)\in\Hcal^{\varepsilon}}\varepsilon|Du|(B(x_i,r_i))+(1-\tau^{d-1})\Kfrak_f[u](\Jcal_u)+\varepsilon\\
&\qquad \leq\varepsilon|Du|(\Omega)+(1-\tau^{d-1})\Kfrak_f[u](\Jcal_u)+\varepsilon.
\end{split}
\end{align}

For each $x$ satisfying $\tau\leq\frac{|x-x_i|}{r_i}<1$,
\[
u_j^{\varepsilon,\tau}(x)=u(x)+\eta_\tau\left(\frac{x-x_i}{r_i}\right)\left[u^\pm_{x_i}\left(\frac{x-x_i}{r_i}\right)-u(x)\right], 
\]
and so we can use the product rule to deduce
\begin{align*}
|Du_j^{\varepsilon,\tau}|\left(\left\{x\colon\tau\leq\frac{|x-x_i|}{r_i}<1\right\}\right)\leq &\frac{1}{r_i}\norm{\nabla\eta_\tau}_\infty\int_{\left\{\tau\leq\frac{|x-x_i|}{r_i}<1\right\}}\left|u^\pm_{x_i}\left(\frac{x-x_i}{r_i}\right)-u(x)\right|\;\dd x\\
&\qquad+r_i^{d-1}(1-\tau^{d-1})\omega_{d-1}|u^+(x_i)-u^-(x_i)|\\
&\qquad+|Du|(B(x_i,r_i))-|Du|(B(x_i,\tau r_i)).
\end{align*}
Since $f$ satisfies $0\leq f(x,y,A)\leq C|A|$ for some $C>0$, we can therefore estimate
\begin{align*}
&\int_{\left\{\tau\leq\frac{|x-x_i|}{r_i}<1\right\}}\int_0^1 f\left(x,\left(u_j^{\varepsilon,\tau}(x)\right)^\theta,\frac{D u_j^{\varepsilon,\tau}}{|D u_j^{\varepsilon,\tau}|}(x)\right)\;\dd\theta\;\dd|D u_j^{\varepsilon,\tau}|(x)\\
&\qquad\leq C\int_{\left\{\tau\leq\frac{|x-x_i|}{r_i}<1\right\}}\;\dd|Du_j^{\varepsilon,\tau}|(x)\\
&\qquad\leq C\biggl(r_i^{d-1}\norm{\nabla\eta_\tau}_\infty\dashint_{B(x_i,r_i)}\left|u^\pm_{x_i}\left(\frac{x-x_i}{r_i}\right)-u(x)\right|\;\dd x\\
&\qquad\qquad\qquad+r_i^{d-1}(1-\tau^{d-1})\omega_{d-1}|u^+(x_i)-u^-(x_i)|\\
&\qquad\qquad\qquad+|Du|(B(x_i,r_i))-|Du|(B(x_i,\tau r_i))\biggr).
\end{align*}
The membership criterion of for $L_0^{\varepsilon}$ with respect to $F_2$ implies that
\begin{align*}
|Du|(B(x_i,\tau r_i))\geq(1-\varepsilon)\tau^{d-1}\omega_{d-1}r_i^{d-1}|u^+(x_i)-u^-(x_i)|\geq\tau^{d-1}\frac{1-\varepsilon}{1+\varepsilon}|Du|(B(x_i,r_i)),
\end{align*}
which gives
\[
|Du|(B(x_i,r_i))-|Du|(B(x_i,\tau r_i))\leq |Du|(B(x_i,r_i))\left(1-\tau^{d-1}\frac{1-\varepsilon}{1+\varepsilon}\right).
\]
From the same criterion, we also deduce
\[
r_i^{d-1}(1-\tau^{d-1})\omega_{d-1}|u^+(x_i)-u^-(x_i)|\leq\frac{1-\tau^{d-1}}{1-\varepsilon}|Du|(B(x_i,r_i)).
\]
Thus, using also the membership criterion for $L_0^\varepsilon$ with respect to $F_1$ to bound 
\[
r_i^{d-1}\norm{\nabla\eta_\tau}_\infty\dashint_{B(x_i,r_i)}\left|u^\pm_{x_i}\left(\frac{x-x_i}{r_i}\right)-u(x)\right|\;\dd x\leq\varepsilon\norm{\nabla\eta_\tau}_\infty|Du|(B(x_i,r_i)),
\]
we obtain, for $\varepsilon<1/2$,
\begin{align*}
&\int_{\left\{\tau\leq\frac{|x-x_i|}{r_i}<1\right\}}\int_0^1 f\left(x,\left(u_j^{\varepsilon,\tau}(x)\right)^\theta,\frac{D u_j^{\varepsilon,\tau}}{|D u_j^{\varepsilon,\tau}|}(x)\right)\;\dd\theta\;\dd|D u_j^{\varepsilon,\tau}|(x)\\
&\qquad\leq C\biggl(\varepsilon\norm{\nabla\eta_\tau}_\infty|Du|(B(x_i,{r_i}))+\frac{1-\tau^{d-1}}{1-\varepsilon}|Du|(B(x_i,{r_i}))\\
&\qquad\qquad\qquad+\left(1-\tau^{d-1}\frac{1-\varepsilon}{1+\varepsilon}\right)|Du|(B(x_i,{r_i}))\biggr)\\
&\qquad\leq C\left(\varepsilon\norm{\nabla\eta_\tau}_\infty+3(1-\tau^{d-1})+2\varepsilon\right)|Du|(B(x_i,{r_i})).
\end{align*}
Hence, for $\varepsilon<1/2$,
\begin{align}
\begin{split}\label{eqrecoverydecomp3}
&\left|\sum_{i=1}^{N_j}\int_{\left\{\tau\leq\frac{|x-x_i|}{r_i}<1\right\}}\int_0^1 f\left(x,\left(u_j^{\varepsilon,\tau}(x)\right)^\theta,\frac{D u_j^{\varepsilon,\tau}}{|D u_j^{\varepsilon,\tau}|}(x)\right)\;\dd\theta\;\dd|D u_j^{\varepsilon,\tau}|(x)\right|\\
&\qquad\leq C\left(\varepsilon\norm{\nabla\eta_\tau}_\infty+3(1-\tau^{d-1})+2\varepsilon\right)|Du|(U_\varepsilon).
\end{split}
\end{align}
Combining~\eqref{eqrecoverdecomp1},~\eqref{eqrecoverydecomp1.5},~\eqref{eqrecoverydecomp2}, and~\eqref{eqrecoverydecomp3}, we finally deduce
\[
\lim_{\tau\to 1}\lim_{\varepsilon\to 0}\lim_{j\to\infty}\int_{\bigcup_{i=1}^{N_j}B(x_i,r_i)}\int_0^1 f\left(x,\left(u_j^{\varepsilon,\tau}\right)^\theta(x),\frac{\dd D u_j^{\varepsilon,\tau}}{\dd|Du_j^{\varepsilon,\tau}|}(x)\right)\;\dd\theta\;\dd|Du_j^{\varepsilon,\tau}|(x)=\Kfrak_f[u](\Jcal_u).
\]
Since $u_j^{\varepsilon,\tau}\equiv u$ in $\Omega\setminus U_\varepsilon$, we can use a diagonal argument to obtain a sequence $(u_j)_j\subset\BV(\Omega;\R^m)$ satisfying $u_j\to u$ in $\Lp^1(\Omega;\R^m)$, $u_j(x)=u(x)$, $\nabla u_j(x)=\nabla u(x)$ in $\Omega_j:=\Omega\setminus U_{1/j}$ and which is such that
\begin{align*}
&\lim_{j\to\infty}\int_\Omega f(x,u_j(x),\nabla u_j(x))\;\dd x+\int_\Omega\int_0^1 f\left(x,u^\theta_j(x),\frac{\dd D^su_j}{\dd |D^s u_j|}(x)\right)\;\dd|D^s u_j|(x)\\
&\qquad=\int_\Omega f(x,u(x),\nabla u(x))\;\dd x+\int_\Omega f\left(x,u(x),\frac{\dd D^cu}{\dd |D^c u|}(x)\right)\;\dd|D^c u|(x)+\Kfrak_f[u](\Jcal_u),
\end{align*}
as required.
\end{proof}

\subsection{Recovery sequences}
We are now in a position to construct approximate recovery sequences for $\Fcalrw$, finally allowing us to complete the proof of Theorem~\ref{wsclscthm}.
\begin{theorem}[Approximate recovery sequences]\label{thmepsilonrecoverysequences}
Let $u\in\BV(\Omega;\R^m)$ and $f\in\RBVw(\Omega\times\R^m)$ be such that $f(x,y,\frarg)$ is quasiconvex for every $(x,y)\in\overline{\Omega}\times\R^m$. For any $\varepsilon>0$, there exists a sequence $(u_j)_j\subset\C^\infty(\Omega;\R^m)$ such that $u_j\wsc u$ in $\BV(\Omega;\R^m)$ and
\begin{align*}
\biggl|\lim_{j\to\infty}\Fcal[u_j]&-\Big(\int_\Omega f(x,u(x),\nabla u(x))\;\dd x+\int_\Omega f^\infty\left(x,u(x),\frac{\dd D^c u}{\dd|D^c u|}(x)\right)\;\dd|D^c u|(x)\\
&\qquad\qquad+\int_{\mathcal{J}_u}K_f[u](x)\;\dd\mathcal{H}^{d-1}(x)\Big)\biggr|\leq\varepsilon.
\end{align*}
\end{theorem}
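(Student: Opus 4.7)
The plan is to construct approximate recovery sequences in two stages: first an $\Lp^1$-recovery sequence $(v_j)\subset\BV(\Omega;\R^m)$ whose energies $\Fcal[v_j]$ converge to within $C\varepsilon$ of the target functional
\[
\Gcal[u]:=\int_\Omega f(x,u,\nabla u)\,\dd x+\int_\Omega f^\infty\!\left(x,u,\frac{\dd D^c u}{\dd|D^c u|}\right)\,\dd|D^c u|+\int_{\Jcal_u}K_f[u]\,\dd\Hcal^{d-1},
\]
and then a smooth approximation $(u_j)\subset\C^\infty(\Omega;\R^m)$ weakly* convergent to $u$ carrying essentially the same energies. Combined with the sharp lower bound $\liminf_j\Fcal[u_j]\geq\Gcal[u]$ furnished by Theorem~\ref{thmwsclsc}, this will yield the claimed inequality after absorbing $C$ into $\varepsilon$.

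For the first stage I would apply Proposition~\ref{proprecessionrecoveryseq} to the positively one-homogeneous integrand $f^\infty$, which satisfies the required bound $0\leq f^\infty(x,y,A)\leq C|A|$ by the growth hypothesis on $f$ together with $f^\infty\geq 0$. This yields a $\BV$-bounded sequence $(v_j)$ with $v_j\to u$ in $\Lp^1(\Omega;\R^m)$ and $\Lp^{d/(d-1)}(\Omega;\R^m)$, pointwise a.e.\ convergence of $v_j$ and $\nabla v_j$ to $u$ and $\nabla u$, and supports of the modifications contained in neighbourhoods $U_j\supset\Jcal_u$ with $\lL(U_j)\to 0$, along which the full $f^\infty$-based functional converges precisely to the three terms of $\Gcal[u]$ in which $f$ is replaced by $f^\infty$. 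Inspecting the construction shows that at each jump point $x_i$ one is free to choose any element of $\Acal_u(x_i)$ that is $\varepsilon$-almost $K_f[u]$-minimising; exploiting this together with a Lusin-type truncation one arranges that, on the bulk of $\Jcal_u$ carrying all but $\varepsilon$ of the mass of $K_f[u]\Hcal^{d-1}\restrict\Jcal_u$, the profile $\varphi_i\in\Acal_u(x_i)$ obeys uniform bounds $\|\varphi_i\|_{\Lp^\infty}+\|\nabla\varphi_i\|_{\Lp^1}\leq G_\varepsilon$; this in turn forces $\|v_j\|_{\Lp^\infty(U_j)}\leq M_\varepsilon$ and $\|\nabla v_j\|_{\Lp^1(U_j)}\leq C_\varepsilon$ uniformly in $j$.

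The second stage splits the Lebesgue part of $\Fcal[v_j]$ as $f=(f-f^\infty)+f^\infty$ and treats the correction using $(f-f^\infty)^\infty\equiv 0$. Writing $g:=f-f^\infty$, Lemma~\ref{prelimlemrecessioncontrol} supplies, for every $\varepsilon'>0$, a radius $R=R(\varepsilon',M_\varepsilon)>0$ such that $|g(x,y,A)|\leq\varepsilon'(1+|A|)$ whenever $|y|\leq M_\varepsilon$ and $|A|\geq R$. Splitting $U_j$ into the regions $\{|\nabla v_j|\leq R\}$ and $\{|\nabla v_j|>R\}$ and using $\lL(U_j)\to 0$ together with the uniform bound $\|\nabla v_j\|_{\Lp^1(U_j)}\leq C_\varepsilon$ then yields
\[
\limsup_{j\to\infty}\int_{U_j}|g(x,v_j,\nabla v_j)|\,\dd x\leq \varepsilon' C_\varepsilon,
\]
which vanishes upon sending $\varepsilon'\to 0$. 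Since $v_j\equiv u$ on $\Omega\setminus U_j$, the Dominated Convergence Theorem applied to the integrable function $g(\cdot,u,\nabla u)$ handles the complement, so adding the $f^\infty$-limit from the first stage produces $\lim_j\Fcal[v_j]\leq\Gcal[u]+C\varepsilon$.

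Finally, each $v_j$ is approximated by a sequence $(v_j^k)\subset\C^\infty(\Omega;\R^m)$ converging area-strictly in $\BV(\Omega;\R^m)$ to $v_j$ via Proposition~\ref{propfixedbdary}; the area-strict continuity of $\Fcal$ from Theorem~\ref{thmareastrictcontinuity} (applicable since the growth bound for $\RBVw$-integrands is satisfied) then yields $\Fcal[v_j^k]\to\Fcal[v_j]$ and $|Dv_j^k|(\Omega)\to|Dv_j|(\Omega)$ as $k\to\infty$. A diagonal extraction produces $u_j:=v_j^{k_j}\in\C^\infty(\Omega;\R^m)$ weakly* convergent to $u$ in $\BV(\Omega;\R^m)$ with $\lim_j\Fcal[u_j]\leq\Gcal[u]+C\varepsilon$, concluding the argument. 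The main obstacle is the lack of coercivity: $f^\infty$ may vanish in non-trivial directions of the gradient, so an energetically optimal jump profile need not have controlled $\Lp^1$-gradient, which is precisely what the regular-part analysis of stage two requires. This forces the use of suboptimal profiles and hence the $\varepsilon$-approximation; Example~\ref{exbadrecoveryseq} confirms that in the non-coercive vectorial setting exact recovery sequences need not exist at all, so the presence of $\varepsilon$ in the statement is genuinely unavoidable.
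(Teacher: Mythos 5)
Your proposal takes a genuinely different route from the paper. The paper proves the coercive case first (Step~1), where $c|A|\leq f^\infty$ guarantees $\BV$-boundedness of the sequence from Proposition~\ref{proprecessionrecoveryseq}, then deploys the full Young measure machinery (Corollary~\ref{corymlsc}, Theorem~\ref{thm:restrictionsofyms}, Lemma~\ref{lemapproxymprescribedboundary}, Proposition~\ref{lemextendedrepresentation}, and the squeeze with Theorem~\ref{thmwsclsc}) to handle the $(f-f^\infty)$-correction via truncations $h_k$ and $d/(d-1)$-uniform integrability. The general case is then reduced to the coercive one by regularising to $f_\rho:=f+\rho|A|$ and sending $\rho\downarrow 0$ using $K_{f_\rho}[u]\downarrow K_f[u]$ and monotone convergence. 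Your scheme instead modifies the profile-selection step inside Proposition~\ref{proprecessionrecoveryseq} to force uniform $\Lp^\infty$ and $\Lp^1$-gradient bounds via Lusin truncation, which makes the sequence $\BV$-bounded without coercivity and lets you dispatch the $(f-f^\infty)$-correction directly by Lemma~\ref{prelimlemrecessioncontrol}, avoiding both the Young measure machinery and the $\rho$-regularisation. This is more elementary if it can be carried through.

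The step that needs substantially more care is the claim that one can choose, on a compact subset of $\Jcal_u$ carrying all but $\varepsilon$ of the mass of $K_f[u]\Hcal^{d-1}\restrict\Jcal_u$, profiles $\varphi_i\in\Acal_u(x_i)$ that are simultaneously $\varepsilon$-almost minimising and satisfy $\|\varphi_i\|_\infty+\|\nabla\varphi_i\|_{\Lp^1}\leq G_\varepsilon$. The natural construction (fix a good profile at $x_0$ and carry it to nearby $x$ by rotation/scaling, as in the proof of Lemma~\ref{lemenergymeasurable}) does give locally uniform norm bounds, but it preserves only the numerical value of the rescaled integral, not $\varepsilon$-minimality relative to $K_f[u](x)$ itself: Lemma~\ref{lemenergymeasurable} shows $K_f[u]$ is merely upper semicontinuous, so $K_f[u](x)$ can drop discontinuously as $x\to x_0$, in which case a profile inherited from $x_0$ may badly overshoot the local infimum. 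To repair this you must additionally invoke Lusin's theorem for the measurable function $K_f[u]$ itself to obtain a compact $K_\varepsilon\subset\Jcal_u$ on which $K_f[u]$ is continuous, carry out the rotation/scaling construction on $K_\varepsilon$, and only then can you deduce uniform $\varepsilon$-minimality together with uniform bounds. As written, your one-sentence gesture toward ``a Lusin-type truncation'' elides exactly this point, and it is the crux of why the direct approach is not immediate; the paper's $\rho|A|$-regularisation is precisely a device that sidesteps having to control the profiles at all.

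A smaller remark: you should also verify that the additional norm constraints on the profiles do not interfere with the remaining conditions in Proposition~\ref{proprecessionrecoveryseq}, in particular the constraint $\norm{v_i}_{\Lp^{d/(d-1)}}\leq 2\norm{u^\pm_{x_i}}_{\Lp^{d/(d-1)}}$ and the compatibility with the membership criteria $F_1,F_2$ defining $L_0^\varepsilon$. These are all compatible, but the proof should say so explicitly rather than invoke ``inspecting the construction''.
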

\begin{proof}
\emph{Step 1:} Assume first that $f\in\RBVw(\Omega\times\R^m)$ and that $f^\infty$ satisfies $c|A|\leq f^\infty(x,y,A)\leq C|A|$ for some choice of constants $C>c>0$. Let $(u_j)_j\subset\BV(\Omega;\R^m)$ be the sequence provided by Proposition~\ref{proprecessionrecoveryseq}, such that
\begin{align}
\begin{split}\label{eqfinfinitylimit}
&\lim_{j\to\infty}\int_\Omega f^\infty(x,u_j(x),\nabla u_j(x))\;\dd x+\int_\Omega\int_0^1f^\infty\left(x,u^\theta(x),\frac{\dd D^su_j}{\dd|D^s u_j|}(x)\right)\;\dd\theta\dd|D^su_j|(x)\\
&\qquad=\int_\Omega f^\infty(x,u(x),\nabla u(x))\;\dd x+\int_\Omega f^\infty\left(x,u(x),\frac{\dd D^cu}{\dd|D^c u|}(x)\right)\;\dd|D^c u|(x)\\
&\qquad\qquad +\int_{\Jcal_u} K_{f}[u](x)\;\dd\Hcal^{d-1}(x).
\end{split}
\end{align}
Since $f^\infty$ is assumed to be coercive, $(u_j)_j$ is a bounded sequence in $\BV(\Omega;\R^m)$ and so (upon passing to a nonrelabelled subsequence), we can assume that $\gamma\asc{u_j-(u_j)_\Omega}\toY\bnu$ for some $\bnu\in\AYLift(\Omega\times\R^m)$ with $\llbracket\bnu\rrbracket=u-(u)_\Omega$ which, by virtue of Corollary~\ref{corymlsc} satisfies
\begin{align*}
&\int_\Omega f^\infty(x,u(x),\nabla u(x))\;\dd x+\int_\Omega f^\infty\left(x,u(x),\frac{\dd D^cu}{\dd|D^c u|}(x)\right)\;\dd|D^c u|(x)\\
&\qquad\qquad +\int_{\Jcal_u} K_{f}[u](x)\;\dd\Hcal^{d-1}(x)\\
&\qquad=\lim_{j\to\infty}\int_\Omega f^\infty(x,u_j(x),\nabla u_j(x))\;\dd x+\int_\Omega\int_0^1f^\infty\left(x,u_j^\theta(x),\frac{\dd D^su_j}{\dd|D^s u_j|}(x)\right)\;\dd\theta\dd|D^su_j|(x)\\
&\qquad\geq\ddprb{f^\infty(\frarg,\frarg+(u)_\Omega,\frarg),\bnu\restrict(\overline{\Omega}\times\R^m)}.
\end{align*}
Theorem~\ref{thm:restrictionsofyms} implies that $\bnu\restrict(\overline{\Omega}\times\R^m)\in\AYLift(\Omega\times\R^m)$ and so, since $\llbracket\bnu\restrict(\overline{\Omega}\times\R^m)\rrbracket=u-(u)_\Omega$, from Lemma~\ref{lemapproxymprescribedboundary} we get that there exists a sequence $(v_j)_j\subset(\BV_\#\cap\C^\infty)(\Omega;\R^m)$ converging weakly* to $u-(u)_\Omega$ and which is such that $\gamma\asc{v_j}\toY\bnu\restrict(\overline{\Omega}\times\R^m)$. Proposition~\ref{lemextendedrepresentation}~(i) applied to $f$, $\bnu\restrict(\overline{\Omega}\times\R^m)$, and $(\gamma\asc{v_j})_j$ together with Theorem~\ref{thmwsclsc} applied to $(v_j+(u)_\Omega)_j$ therefore yield
\begin{align*}
&\ddprb{f^\infty(\frarg,\frarg+(u)_\Omega,\frarg),\bnu\restrict(\overline{\Omega}\times\R^m)}\\
&\qquad=\lim_{j\to\infty}\int_\Omega f^\infty(x,v_j(x)+(u)_\Omega,\nabla v_j(x))\;\dd x\\
&\qquad\geq \int_\Omega f^\infty(x,u(x),\nabla u(x))\;\dd x+\int_\Omega f^\infty\left(x,u(x),\frac{\dd D^cu}{\dd|D^c u|}(x)\right)\;\dd|D^c u|(x)\\
&\qquad\qquad +\int_{\Jcal_u} K_{f}[u](x)\;\dd\Hcal^{d-1}(x).
\end{align*}
It follows that
\begin{align*}
&\ddprb{f^\infty(\frarg,\frarg+(u)_\Omega,\frarg),\bnu\restrict(\overline{\Omega}\times\R^m)}\\
&\qquad= \int_\Omega f^\infty(x,u(x),\nabla u(x))\;\dd x+\int_\Omega f^\infty\left(x,u(x),\frac{\dd D^cu}{\dd|D^c u|}(x)\right)\;\dd|D^c u|(x)\\
&\qquad\qquad +\int_{\Jcal_u} K_{f}[u](x)\;\dd\Hcal^{d-1}(x)
\end{align*}
and so we can assume $\lambda_\nu(\overline{\Omega}\times\infty\pd\bB^m)=0$.

By testing with integrands $\varphi\otimes h$ for $\varphi\in\C_c(\Omega\times\R^m)$ and $h\in\C_c(\R^{m\times d})$, we see that the pointwise almost everywhere convergence of $\nabla u_j$ to $\nabla u$ implies that $\nu_{x,u(x)}=\delta_{\nabla u(x)}$ for almost every $x\in\Omega$.

Define $h:=f-f^\infty$ and, repeating the construction used in Step~4 of the proof of Proposition~\ref{lemextendedrepresentation}, define $h_k\in\RL(\Omega\times\R^m)$ by
\[
h_k(x,y,A):=\left\{1+\mathbbm{1}_{\{|y|> k\}}(y)\frac{k^{d/(d-1)}-|y|^{d/(d-1)}}{1+|y|^{d/(d-1)}+|A|}\right\}h(x,y,A).
\]
Proposition~\ref{lemextendedrepresentation}~(i) combined with the fact that we can assume $\lambda_\nu(\overline{\Omega}\times\infty\pd\bB^m)=0$ implies that
\begin{align*}
\lim_{j\to\infty}\int_\Omega h_k(x,u_j(x),\nabla u_j(x))\;\dd x & =\int_{\Omega\times\R^m} \ip{h_k(x,y+(u)_\Omega,\frarg)}{\nu_{x,y}}\;\dd\iota_{\nu}(x,y)\\
&=\int_{\Omega}h_k(x,u(x),\nabla u(x))\;\dd x
\end{align*}
for every $k\in\mbN$.

Since $|h(x,y,A)-h_k(x,y,A)|\leq C\mathbbm{1}_{\{|y|< k\}}(y)(|y|^{d/(d-1)}-k^{d/(d-1)})$ and the sequence $(u_j)_j$ is $d/(d-1)$-uniformly integrable, for any $\varepsilon>0$ there exists $k_\varepsilon\in\mbN$ such that $k	\geq k_\varepsilon$ implies
\begin{align*}
&\biggl|\lim_{j\to\infty}\int_\Omega h(x,u_j(x),\nabla u_j(x))\;\dd x -\lim_{j\to\infty}\int_\Omega h_k(x,u_j(x),\nabla u_j(x))\;\dd x\biggr|\\
&\qquad \leq \lim_{j\to\infty}C\int_{\{|u_j(x)|\geq k\}}|y|^{d/(d-1)}-k^{d/(d-1)}\;\dd x\\
&\qquad \leq\varepsilon.
\end{align*}
Thus,
\begin{align}
\begin{split}\label{eqtildeflimit}
\lim_{j\to\infty}\int_\Omega h(x,u_j(x),\nabla u_j(x))\;\dd x&=\lim_{k\to\infty}\int_{\Omega}h_k(x,u(x),\nabla u(x))\;\dd x\\
&=\int_\Omega h(x,u(x),\nabla u(x))\;\dd x.
\end{split}
\end{align}

Adding equations~\eqref{eqfinfinitylimit} and~\eqref{eqtildeflimit}, we obtain
\begin{align*}
\lim_{j\to\infty}\Fcal[u_j]&=\int_\Omega f(x,u(x),\nabla u(x))\;\dd x+\int_\Omega f^\infty\left(x,u(x),\frac{\dd D^cu}{\dd |D^c u|}(x)\right)\;\dd|D^c u|(x)\\
&\qquad+\int_{\mathcal{J}_u}{K}_f[u](x)\;\dd\mathcal{H}^{d-1}(x).
\end{align*}
An application of Theorem~\ref{thmareastrictcontinuity} together with the area-strict density of $\C^\infty(\Omega;\R^m)$ in $\BV(\Omega;\R^m)$ to each $\Fcal[u_j]$ combined with a diagonal argument now leaves us with the desired result in the case where $f^\infty$ is coercive.

\emph{Step 2:} Now assume just that $f\in\RBVw(\Omega\times\R^m)$ and define $f_\rho\in\RBVw(\Omega\times\R^m)$ by
\[
f_\rho(x,y,A):=f(x,y,A)+\rho|A|.
\]
Clearly, $f_\rho\downarrow f$ pointwise as $\rho\downarrow 0$ and so, by the Monotone Convergence Theorem, we have that
\[
\int_\Omega f_\rho(x,u(x),\nabla u(x))\;\dd x \to\int_\Omega f(x,u(x),\nabla u(x))\;\dd x
\]
and
\[
\int_\Omega f^\infty_\rho\left(x,u(x),\frac{\dd D^cu}{\dd|D^cu|}(x)\right)\;\dd |D^cu|(x) \to\int_\Omega f^\infty\left(x,u(x),\frac{\dd D^cu}{\dd|D^cu|}(x)\right)\;\dd |D^cu|(x)
\]
as $\rho\downarrow 0$. Since $f_\rho\geq f$, we have that $K_{f_\rho}[u]\geq K_f[u]$, and in fact it holds that $K_{f_\rho}[u]\downarrow K_f[u]$ pointwise as $\rho\downarrow 0$. This follows from the estimate
\begin{align*}
\lim_{\rho\to 0}K_{f_\rho}[u](x_0)\leq\lim_{\rho\to 0}\int f^\infty_\rho(x_0,\varphi(y),\nabla\varphi(y))\;\dd y&=\int f^\infty(x_0,\varphi(y),\nabla\varphi(y))\;\dd y\\
&\leq K_f[u](x_0)+\varepsilon
\end{align*}
for any $\varphi\in\Acal_u(x_0)$ such that $\int f^\infty(x_0,\varphi(y),\nabla\varphi(y))\;\dd y\leq K_f[u](x_0)+\varepsilon$. Thus,
\[
\int_{\Jcal_u}K_{f_\rho}[u](x)\;\dd\Hcal^{d-1}(x)\to\int_{\Jcal_u}K_f[u](x)\;\dd\Hcal^{d-1}(x)\text{ as $\rho\to 0$.}
\]
Defining $\Fcal_\rho\colon\BV(\Omega;\R^m)\to[0,\infty)$ by
\[
\Fcal_\rho[u]:=\int_\Omega f_\rho(x,u(x),\nabla u(x))\;\dd x+\int_\Omega\int_0^1f^\infty_\rho\left(x,u^\theta(x),\frac{\dd D^su}{\dd |D^su|}(x)\right)\;\dd\theta\;\dd|D^su|(x),
\]
we therefore have that $\Fcal_\rho[u]\to\Fcal[u]$ as $\rho\to 0$ for every $u\in\BV(\Omega;\R^m)$.

For each fixed $\rho>0$, Step~1 implies that there exists a sequence $(u^\rho_j)_j\subset\C^\infty(\Omega;\R^m)$ with $u^\rho_j\wsc u$ in $\BV(\Omega;\R^m)$ and 
\begin{align*}
\lim_{j\to\infty}\Fcal_\rho[u^\rho_j]&=\int_\Omega f_\rho(x,u(x),\nabla u(x))\;\dd x+\int_\Omega f^\infty_\rho\left(x,u(x),\frac{\dd D^cu}{\dd|D^cu|}(x)\right)\;\dd |D^cu|(x)\\
&\qquad+\int_{\Jcal_u}K_{f_\rho}[u](x)\;\dd\Hcal^{d-1}(x).
\end{align*}
Since $f$ is quasiconvex in the final variable, Theorem~\ref{thmwsclsc} implies that
\begin{align*}
&\int_\Omega f(x,u(x),\nabla u(x))\;\dd x+\int_\Omega f^\infty\left(x,u(x),\frac{\dd D^cu}{\dd|D^cu|}(x)\right)\;\dd |D^cu|(x)\\
&\qquad\qquad+\int_{\Jcal_u}K_{f}[u](x)\;\dd\Hcal^{d-1}(x)\\
&\qquad\leq\liminf_{j\to\infty}\Fcal[u^\rho_j]
\end{align*}
for every $\rho>0$. Thus,
\begin{align*}
&\int_\Omega f(x,u(x),\nabla u(x))\;\dd x+\int_\Omega f^\infty\left(x,u(x),\frac{\dd D^cu}{\dd|D^cu|}(x)\right)\;\dd |D^cu|(x)\\
&\qquad\qquad+\int_{\Jcal_u}K_{f}[u](x)\;\dd\Hcal^{d-1}(x)\\
&\qquad\leq\lim_{\rho\to 0}\liminf_{j\to\infty}\Fcal[u^\rho_j]\\
&\qquad\leq\lim_{\rho\to 0}\lim_{j\to\infty}\Fcal_\rho[u^\rho_j]\\
&\qquad=\lim_{\rho\to 0} \biggl( \int_\Omega f_\rho(x,u(x),\nabla u(x))\;\dd x+\int_\Omega f^\infty_\rho\left(x,u(x),\frac{\dd D^cu}{\dd|D^cu|}(x)\right)\;\dd |D^cu|(x)\\
&\qquad\qquad\quad\qquad+\int_{\Jcal_u}K_{f_\rho}[u](x)\;\dd\Hcal^{d-1}(x) \biggr)\\
&\qquad=\int_\Omega f(x,u(x),\nabla u(x))\;\dd x+\int_\Omega f^\infty\left(x,u(x),\frac{\dd D^cu}{\dd|D^cu|}(x)\right)\;\dd |D^cu|(x)\\
&\qquad\qquad+\int_{\Jcal_u}K_{f}[u](x)\;\dd\Hcal^{d-1}(x),
\end{align*}
which leads us to conclude
\begin{align*}
\lim_{\rho\to 0}\liminf_{j\to\infty}\Fcal[u^\rho_j]&=\int_\Omega f(x,u(x),\nabla u(x))\;\dd x+\int_\Omega f^\infty\left(x,u(x),\frac{\dd D^cu}{\dd|D^cu|}(x)\right)\;\dd |D^cu|(x)\\
&\qquad+\int_{\Jcal_u}K_{f}[u](x)\;\dd\Hcal^{d-1}(x).
\end{align*}
For any $\varepsilon>0$, we can therefore find a fixed $\rho_\varepsilon>0$ such that $u_j^{\rho_\varepsilon}\wsc u$ in $\BV(\Omega;\R^m)$ and
\begin{align*}
\Big|\lim_{j\to\infty}\Fcal[u^{\rho_\varepsilon}_j]&-\int_\Omega f(x,u(x),\nabla u(x))\;\dd x+\int_\Omega f^\infty\left(x,u(x),\frac{\dd D^cu}{\dd|D^cu|}(x)\right)\;\dd |D^cu|(x)\\
&\qquad+\int_{\Jcal_u}K_{f}[u](x)\;\dd\Hcal^{d-1}(x)\Big|\leq\varepsilon,
\end{align*}
which suffices to prove the claim.
\end{proof}

\begin{remark}
Interestingly, Theorem~\ref{thmwsclsc} is used in the proof of Theorem~\ref{thmepsilonrecoverysequences} which implies that we are only able to construct approximate recovery sequences for $\Fcalrw$ when $f$ is quasiconvex in the final variable.
\end{remark}

As a side remark, the existence of $\Lp^1$-convergent recovery sequences now follows directly from Theorem~\ref{thmepsilonrecoverysequences} combined with a diagonal argument.
\begin{corollary}[$\Lp^1$-Recovery Sequences]
Let $u\in\BV(\Omega;\R^m)$ and let $f\in\RBVw(\Omega\times\R^m)$ be such that $f(x,y,\frarg)$ is quasiconvex for every $(x,y)\in\overline{\Omega}\times\R^m$. Then there exists a sequence $(u_j)_j\in\C^\infty(\Omega;\R^m)$ such that $u_j\to u$ in $\Lp^1(\Omega;\R^m)$ and
\begin{align*}
\lim_{j\to\infty}\Fcal[u_j]=&\int_\Omega f(x,u(x),\nabla u(x))\;\dd x+\int_\Omega f^\infty\left(x,u(x),\frac{\dd D^c u}{\dd|D^c u|}(x)\right)\;\dd|D^c u|(x)\\
&\qquad+\int_{\mathcal{J}_u}K_f[u](x)\;\dd\mathcal{H}^{d-1}(x).
\end{align*}
\end{corollary}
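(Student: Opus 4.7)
The strategy is a straightforward diagonalization of the approximate recovery sequences furnished by Theorem~\ref{thmepsilonrecoverysequences}, combined with the fact that weak* convergence in $\BV(\Omega;\R^m)$ implies strong convergence in $\Lp^1(\Omega;\R^m)$. Denote the right-hand side of the claimed identity by $\Gcal[u]$, i.e., set
\[
\Gcal[u]:=\int_\Omega f(x,u,\nabla u)\,\dd x+\int_\Omega f^\infty\!\left(x,u,\tfrac{\dd D^cu}{\dd|D^cu|}\right)\dd|D^cu|+\int_{\Jcal_u}K_f[u]\,\dd\Hcal^{d-1}.
\]

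First, for each $k\in\N$, I would apply Theorem~\ref{thmepsilonrecoverysequences} with $\varepsilon=1/k$ to obtain a sequence $(u_j^k)_j\subset\C^\infty(\Omega;\R^m)$ such that $u_j^k\wsc u$ in $\BV(\Omega;\R^m)$ as $j\to\infty$ and
\[
\bigl|\lim_{j\to\infty}\Fcal[u_j^k]-\Gcal[u]\bigr|\leq\tfrac{1}{k}.
\]
Since $\BV(\Omega;\R^m)$ embeds compactly into $\Lp^1(\Omega;\R^m)$, weak* convergence $u_j^k\wsc u$ implies $u_j^k\to u$ in $\Lp^1(\Omega;\R^m)$ as $j\to\infty$ for each fixed $k$.

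Next, for each $k\in\N$ I would pick $j_k$ large enough so that both $\|u^k_{j_k}-u\|_{\Lp^1(\Omega;\R^m)}\leq 1/k$ and $|\Fcal[u^k_{j_k}]-\lim_{j\to\infty}\Fcal[u_j^k]|\leq 1/k$ hold. Setting $u_k:=u^k_{j_k}$, the triangle inequality yields
\[
\|u_k-u\|_{\Lp^1}\leq\tfrac{1}{k}\qquad\text{and}\qquad \bigl|\Fcal[u_k]-\Gcal[u]\bigr|\leq\tfrac{2}{k},
\]
so that $u_k\to u$ in $\Lp^1(\Omega;\R^m)$ and $\lim_{k\to\infty}\Fcal[u_k]=\Gcal[u]$, as required.

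There is essentially no obstacle to carrying out this plan: the hard work has already been done in Theorem~\ref{thmepsilonrecoverysequences} (which produces the weak* approximate recovery sequences via the cut-and-paste construction based on $K_f[u]\Hcal^{d-1}\restrict\Jcal_u$ and the coercive approximations $f_\rho=f+\rho|\,\sbullet\,|$). The only mild point worth noting is that we do not need to appeal to lower semicontinuity (Theorem~\ref{thmwsclsc}) here, because we already have the two-sided estimate $|\Fcal[u_k]-\Gcal[u]|\leq 2/k$ directly from the diagonal construction; thus the limit identity holds without requiring $\Gcal$ to coincide with the $\Lp^1$-relaxation $\Fcalro$ (which in general it need not, cf.\ Example~\ref{exbadrecoveryseq}).
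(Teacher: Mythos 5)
Your proof is correct and is precisely the diagonal argument the paper has in mind: the paper states the corollary ``follows directly from Theorem~\ref{thmepsilonrecoverysequences} combined with a diagonal argument'' without elaborating, and you have supplied exactly the expected details (extract $(u_j^k)_j$ with error $\leq 1/k$, use that weak* convergence in $\BV$ gives $\Lp^1$-convergence, then diagonalize).
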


\subsection{Relaxation}
Combining the results of Sections~\ref{chaptangentliftingyms} and~\ref{chaprecoveryseqs}, we can finally complete the proof of Theorem~\ref{wsclscthm} stated in Section~\ref{secintro}:

\begin{theorem}[Theorem~\ref{wsclscthm}]\label{thmwscrelaxation}
Let $f\in\RBVw(\Omega\times\R^m)$ be such that $f(x,y,\frarg)$ is quasiconvex for every $(x,y)\in\Omega\times\R^m$. The $\BV$-weak* relaxation of the functional $\Fcal\colon\BV(\Omega;\R^m)\to\R$,
\[
\Fcal[u]:=\int_\Omega f(x,u(x),\nabla u(x))\;\dd x+\int_\Omega f^\infty\left(x,u^\theta(x),\frac{\dd D^su}{\dd|D^su|}(x)\right)\;\dd\theta\;\dd|D^su|(x)
\]
is given by
\begin{align*}
\Fcalrw[u]&=\int_\Omega f(x,u(x),\nabla u(x))\;\dd x+\int_\Omega f^\infty\left(x,u(x),\frac{\dd D^cu}{\dd|D^cu|}(x)\right)\;\dd|D^c u|(x)\\
&\qquad+\int_{\Jcal_u} K_f[u](x)\;\dd\Hcal^{d-1}(x).
\end{align*}
\end{theorem}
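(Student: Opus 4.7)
The plan is to establish Theorem~\ref{thmwscrelaxation} as a short combination of two results that were essentially the main contributions of Sections~\ref{chaptangentliftingyms} and~\ref{chaprecoveryseqs}: the lower semicontinuity Theorem~\ref{thmwsclsc} and the existence of approximate recovery sequences, Theorem~\ref{thmepsilonrecoverysequences}. Since $\Fcal$ is defined on all of $\BV(\Omega;\R^m)$ via~\eqref{eqctsfunctional}, the weak* relaxation is understood as
\[
\Fcalrw[u]=\inf\setb{\liminf_{j\to\infty}\Fcal[u_j]}{(u_j)_j\subset\BV(\Omega;\R^m),\;u_j\wsc u},
\]
and I would prove the two matching inequalities separately.

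For the lower bound $\Fcalrw[u]\geq \text{RHS}$, let $(u_j)_j\subset\BV(\Omega;\R^m)$ be an arbitrary weakly* convergent sequence with limit $u$. Theorem~\ref{thmwsclsc}, whose hypotheses (Carath\'eodory, quasiconvexity in $A$, and the $\RBVw$ growth bound) are exactly those assumed here, applies directly to yield
\[
\liminf_{j\to\infty}\Fcal[u_j]\geq\int_\Omega f(x,u,\nabla u)\;\dd x+\int_\Omega f^\infty\left(x,u,\tfrac{\dd D^cu}{\dd|D^cu|}\right)\dd|D^cu|+\int_{\Jcal_u}K_f[u]\;\dd\Hcal^{d-1}.
\]
Passing to the infimum over such sequences gives $\Fcalrw[u]\geq\text{RHS}$.

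For the upper bound $\Fcalrw[u]\leq \text{RHS}$, I would apply Theorem~\ref{thmepsilonrecoverysequences} to obtain, for each $\varepsilon>0$, a sequence $(u_j^\varepsilon)_j\subset\C^\infty(\Omega;\R^m)$ with $u_j^\varepsilon\wsc u$ in $\BV(\Omega;\R^m)$ and
\[
\left|\lim_{j\to\infty}\Fcal[u_j^\varepsilon]-\text{RHS}\right|\leq\varepsilon.
\]
From the definition of $\Fcalrw$ (smooth functions being a valid choice of admissible sequence because $\C^\infty\subset\BV$), we conclude
\[
\Fcalrw[u]\leq\liminf_{j\to\infty}\Fcal[u_j^\varepsilon]\leq\text{RHS}+\varepsilon,
\]
and letting $\varepsilon\downarrow 0$ yields $\Fcalrw[u]\leq\text{RHS}$.

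There is essentially no remaining obstacle: both inequalities fit together cleanly once Theorems~\ref{thmwsclsc} and~\ref{thmepsilonrecoverysequences} are in hand. The one subtlety worth noting is that, in contrast to the analogous relaxation problems treated in~\cite{FonMul93RQFB,FonLeo01LSC}, Example~\ref{exbadrecoveryseq} shows that genuine (exact) recovery sequences need not exist, so one cannot strengthen ``$\leq \text{RHS}+\varepsilon$'' to ``$= \text{RHS}$'' along a single diagonal sequence; however, this does not affect the identification of $\Fcalrw$, because the infimum in its definition still equals the right-hand side after letting $\varepsilon\downarrow 0$. All the genuine analytical difficulty — the blow-up procedure via tangent lifting Young measures for the lower bound, and the rectifiable cut-and-paste surface construction for the upper bound — has already been discharged in the preceding two sections.
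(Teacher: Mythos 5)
Your proof is correct and follows exactly the same route as the paper: both obtain the lower bound by invoking Theorem~\ref{thmwsclsc} for arbitrary weakly* convergent $\BV$-sequences, and the upper bound by invoking Theorem~\ref{thmepsilonrecoverysequences} to produce approximate recovery sequences, then letting $\varepsilon\downarrow 0$. Your remark about the failure of exact recovery sequences (Example~\ref{exbadrecoveryseq}) and why this does not obstruct the identification of $\Fcalrw$ is an accurate and welcome observation, though not strictly needed for the argument.
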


\begin{proof}
Recalling the definition of $\Fcalrw$, we see that Theorem~\ref{thmwsclsc} implies
\begin{align*}
\Fcalrw[u]&\geq\int_\Omega f(x,u(x),\nabla u(x))\;\dd x+\int_\Omega f^\infty\left(x,u(x),\frac{\dd D^cu}{\dd|D^cu|}(x)\right)\;\dd|D^c u|(x)\\
&\qquad+\int_{\Jcal_u} K_f[u](x)\;\dd\Hcal^{d-1}(x).
\end{align*}
On the other hand, for $\varepsilon>0$ arbitrary, Theorem~\ref{thmepsilonrecoverysequences} guarantees the existence of a sequence $(u_j)_j\subset\BV(\Omega;\R^m)$ such that $u_j\wsc u$
and
\begin{align*}
\Fcalrw[u]&\leq\limsup_{j\to\infty}\Fcal[u_j]\\
&\leq\int_\Omega f(x,u(x),\nabla u(x))\;\dd x+\int_\Omega f^\infty\left(x,u(x),\frac{\dd D^cu}{\dd|D^cu|}(x)\right)\;\dd|D^c u|(x)\\
&\qquad+\int_{\Jcal_u} K_f[u](x)\;\dd\Hcal^{d-1}(x)+\varepsilon=\Fcalrw[u]+\varepsilon
\end{align*}
and so the desired conclusion follows.
\end{proof}

\section{Compliance with ethical standards}
\noindent\textbf{Funding:} G.S.'s contribution to this work forms part of their PhD thesis and was supported by the UK Engineering and Physical Sciences Research Council (EPSRC) grant EP/H023348/1 for the University of Cambridge Centre for Doctoral Training, the Cambridge Centre for Analysis. This project has received funding from the European Research Council (ERC) under the European Union's Horizon 2020 research and innovation programme, grant agreement No 757254 (SINGULARITY). F.~R.\ also acknowledges the support from an EPSRC Research Fellowship on Singularities in Nonlinear PDEs (EP/L018934/1).

\bigskip

\noindent\textbf{Conflict of Interest:} The authors declare that they have no conflict of interest.

\bibliography{math_bib}
\bibliographystyle{plain}
\end{document}